\documentclass{amsart}
\usepackage[T1]{fontenc}
\usepackage{amsmath,a4wide}
\usepackage{amssymb}
\usepackage{amscd}
\usepackage{epsfig}
\usepackage[T1]{fontenc}
\usepackage{epsfig}
\usepackage{amsmath}
\usepackage{graphicx}
\usepackage{subfigure}
\usepackage{mathrsfs}
\usepackage{shuffle}
\usepackage{epstopdf}
\usepackage{color}
\usepackage{booktabs}
\usepackage{ifpdf}
\usepackage{cite}
\usepackage{amsthm}
\usepackage{amsmath}
\usepackage{dsfont}
\definecolor{myblue}{rgb}{0.2,0,0.9}
\definecolor{blue-violet}{rgb}{0.54, 0.17, 0.89}
\usepackage{enumitem}
\setlist[enumerate]{label={\upshape(\roman*)}}
\usepackage{algorithm,algorithmic}

\usepackage{pgfplots}
\usepgfplotslibrary{groupplots}
\pgfplotsset{compat=1.12}

\usepackage{mathtools}
\mathtoolsset{showonlyrefs}

\usepackage{tikz}
\usepackage{tikz-3dplot}
\usetikzlibrary{arrows,intersections,positioning}
\usepackage{boldline}
\usepackage{multirow}
\usepackage[margin=1.32in]{geometry}

\usepackage{varioref}
\usepackage{xr-hyper}
\usepackage{hyperref}
\hypersetup{colorlinks,linkcolor=blue,citecolor=blue,filecolor=red,urlcolor=blue}

\definecolor{myblue}{rgb}{0.2,0,0.9}
\definecolor{blue_violet}{rgb}{0.54, 0.17, 0.89}
\definecolor{darkgreen}{rgb}{0,0.35,0}

\allowdisplaybreaks

\makeatletter
\DeclareRobustCommand*\cal{\@fontswitch\relax\mathcal}
\makeatother

\makeatletter
\newcommand{\labeltext}[3][]{%
\@bsphack%
\csname phantomsection\endcsname
\def\tst{#1}%
\def\labelmarkup{\emph}
\def\refmarkup{}%
\ifx\tst\empty\def\@currentlabel{\refmarkup{#2}}{\label{#3}}%
\else\def\@currentlabel{\refmarkup{#1}}{\label{#3}}\fi%
\@esphack%
\labelmarkup{#2}
}
\makeatother

\newtheorem{theorem}{Theorem}[section]
\newtheorem{proposition}[theorem]{Proposition}

\newtheorem{corollary}[theorem]{Corollary}
\newtheorem{lemma}[theorem]{Lemma}
\numberwithin{equation}{section}

\theoremstyle{definition}
\newtheorem{remark}[theorem]{Remark}
\newtheorem{example}[theorem]{Example}
\newtheorem{definition}[theorem]{Definition}

\DeclareMathOperator{\Lip}{Lip}

\DeclareMathOperator{\Pol}{Pol}
\DeclareMathOperator*{\linspan}{span}
\DeclareMathOperator{\id}{id}
\DeclareMathOperator{\supp}{supp}
\DeclareMathOperator{\imag}{Im}

\DeclareMathOperator{\ad}{ad}

\usepackage{xparse}
\makeatletter
\RenewDocumentCommand{\title}{om}{%
\IfNoValueTF{#1}
{\gdef\shorttitle{}}
{\gdef\shorttitle{#1}}%
\gdef\@title{#2}%
}
\makeatother

\usepackage{scalerel,stackengine}
\stackMath
\newcommand\reallywidehat[1]{%
\savestack{\tmpbox}{\stretchto{%
		\scaleto{%
			\scalerel*[\widthof{\ensuremath{#1}}]{\kern-.6pt\bigwedge\kern-.6pt}%
			{\rule[-\textheight/2]{1ex}{\textheight}}
		}{\textheight}%
	}{0.5ex}}%
\stackon[1pt]{#1}{\tmpbox}%
}

\makeatletter
\def\@tocline#1#2#3#4#5#6#7{\relax
\ifnum #1>\c@tocdepth 
\else
\par \addpenalty\@secpenalty\addvspace{#2}%
\begingroup \hyphenpenalty\@M
\@ifempty{#4}{%
	\@tempdima\csname r@tocindent\number#1\endcsname\relax
}{%
	\@tempdima#4\relax
}%
\parindent\z@ \leftskip#3\relax \advance\leftskip\@tempdima\relax
\rightskip\@pnumwidth plus4em \parfillskip-\@pnumwidth
#5\leavevmode\hskip-\@tempdima
\ifcase #1
\or\or \hskip 2em \or \hskip 2em \else \hskip 3em \fi%
#6\nobreak\relax
\hfill\hbox to\@pnumwidth{\@tocpagenum{#7}}\par
\nobreak
\endgroup
\fi}
\makeatother

\title[Weighted universal approximation of differentiable maps]{Weighted universal approximation of differentiable maps on infinite-dimensional manifolds}

\author[P.~Schmocker]{Philipp Schmocker}
\address{Department of Mathematics, ETH Zurich, Switzerland}
\email{philipp.schmocker@math.ethz.ch}

\author[J.~Teichmann]{Josef Teichmann}
\address{Department of Mathematics, ETH Zurich, Switzerland}
\email{jteichma@math.ethz.ch}

\thanks{\textit{Key words:} Functional input neural networks, weighted universal approximation, infinite-dimensional manifold, Stone-Weierstrass theorem, Nachbin theorem, non-anticipative functional, signature, path space}
\thanks{\textit{MSC2020 Subject Classification:} 26F16, 41A65, 41A81, 46E15, 58C20, 60L10, 68T07}

\date{\today}

\begin{document}

\begin{abstract}
	We generalize the universal approximation theorem for \emph{functional input neural networks (FNN)} to differentiable maps by including the approximation of the derivatives. A FNN maps the input from a possibly infinite-dimensional weighted manifold to the real-valued hidden layer, on which a non-linear scalar activation function is applied, and then returns the output into a Banach space via some linear readouts. By proving a weighted Nachbin theorem, we establish a universal approximation theorem for differentiable maps, which goes beyond the usual formulation on compact sets and also includes the approximation of the derivatives. This leads us to approximation results for non-anticipative functionals including the horizontal and vertical derivatives. As a further application, we show that linear functions of the signature are able to approximate path space functionals including their directional derivatives.
\end{abstract}

\vspace{-1.em}
\maketitle

\vspace{-1.em}
{
	\hypersetup{linkcolor=black}
	\tableofcontents
}

\section{Introduction}
\label{intro}

In recent years, machine learning has transformed a wide range of scientific domains with major breakthroughs in image classification \cite{krizhevsky12}, speech recognition \cite{hinton12}, and computer games \cite{silver16}. Along these advances, one of the oldest branches of mathematical analysis -- approximation theory -- has again attracted more attention: Given a target function, can a model class approximate it to arbitrary accuracy? In this paper, we consider \emph{functional input neural networks (FNNs)} introduced in \cite{cuchiero23}, which extend classical neural networks between Euclidean spaces to infinite-dimensional spaces. In particular, we are interested whether such neural networks can also include the approximation of the directional derivatives. This contributes to the rigorous mathematical understanding of supervised machine learning methods in artificial intelligence (see \cite{turing50,mitchell97,montavon12,gbc16}).

Neural networks between Euclidean spaces were discovered in the seminal work~\cite{mcculloch43} of W.~McCulloch and W.~Pitts. They mimic the functionality of a human brain consisting of connections between neurons, i.e., the data is fed into the network, sent along various connections, transformed in the neurons, and then finally returned as output. In mathematical terms, a neural network can be described by a composition of affine and non-linear maps, where the affine maps describe the connections between neurons and the non-linear map describes the transformation of the data inside a neuron. Neural networks enjoy the so-called universal approximation property, meaning that they can approximate any continuous function uniformly on compact subsets of the Euclidean space. This fundamental result goes back to G.~Cybenko~\cite{cybenko89} and K.~Hornik~\cite{hornik89} who established in so-called universal approximation theorems (UATs) denseness of the set of neural networks in suitable function spaces. These UATs were then also extended to differentiable functions by taking into account the simultaneous approximation of the derivatives (see \cite{hsw90,hornik91}). Subsequently, other works \cite{barron93,candes98,bgkp17} related the approximation error to the network complexity by proving quantitative approximation rates under more restrictive assumptions on the target function.

The main objective of this article is to generalize the universal approximation theorem (UAT) for functional input neural networks in \cite{cuchiero23} to differentiable maps, in the sense that not only the values of a given map are approximated but also its directional derivatives. To this end, we extend the weighted framework of \cite{cuchiero23} by introducing additional weight functions on the higher-order tangent spaces of the input manifold, which in turn requires a slight adaptation of Bastiani calculus~\cite{bastiani64} to our $\sigma$-compact setting. The weights control the functions and their derivatives outside of large compact subsets, which allows us to formulate UATs including the derivatives beyond the usual approximation on compacta. This is relevant for the approximation of stochastic processes as their realizations usually do not stay in a compact path space almost surely. In particular, the weighted setting has been applied as a theoretical framework for generalized Feller processes and their semigroups (see, e.g., \cite{roeckner06,doersek10,cuchiero20,blessing22}), which is important for the approximation of solutions of stochastic (partial) differential equations (see, e.g., \cite{gierjatowicz20,kidger21,cohen21,salvi22,gazzani22}).

In order to establish the universal approximation property of functional input neural networks (FNNs), we first prove a Nachbin theorem in our weighted setting. The original Nachbin theorem, established by L.~Nachbin in \cite{nachbin49} over finite-dimensional manifolds, generalizes the classical Stone-Weierstrass theorem by including the approximation of the derivatives. This result was later extended by J.B.~Prolla and C.S.~Guerreiro in \cite{prolla76} as well as R.M.~Aron and J.B.~Prolla in \cite{aron80} to infinite-dimensional Banach spaces by using either the compact-open topology or the topology of compact convergence, both yielding approximation results over compact subsets of the input space. Only \cite{nachbin91} proved a weighted approximation result including the derivatives for polynomials on the Euclidean space. In contrast, our weighted Nachbin theorem is able to approximate a given function and its derivatives globally over an entire chart of an infinite-dimensional manifold.

By applying the weighted Nachbin theorem, we can lift the universal approximation theorem (UAT) of neural networks over the real line to a UAT for functional input neural networks (FNNs) defined on infinite-dimensional weighted manifolds. However, even on the real line, the weighted setting requires a global UAT, which is fundamentally different from classical UATs over compact subsets (see, e.g., \cite{cybenko89,hornik91,chen95}). Indeed, the weighted UATs in \cite[Proposition~4.4~(A3)]{cuchiero23} and \cite[Theorem~2.7]{schmocker26} rely on J.~Korevaar's distributional extension \cite{korevaar65} of N.~Wiener’s Tauberian theorem \cite{wiener32} to obtain sufficient conditions on the Fourier transform of the activation function, ensuring that it is discriminatory for the corresponding linear functionals (in the sense of \cite{cybenko89}). To include the approximation of the derivatives, \cite[Theorem~2.7]{schmocker26} followed \cite{hsw90,hornik91} and mollified the linear functionals, which allows us to apply integration by parts to eliminate the derivatives from the linear functionals. Finally, we assume the bounded approximation property to lift the UAT from finite-dimensional spaces to infinite-dimensional input and output spaces.

Let us remark that there is of course an extensive literature on infinite-dimensional generalizations of neural networks. Early contributions \cite{chen95,mhaskar97b,stinchcombe99,rossi05} studied the approximation of nonlinear functionals. More recent developments address approximation on non-Euclidean domains~\cite{kratsios20,galimberti22,kratsios23}, on Fréchet spaces~\cite{benth21}, on topological spaces~\cite{galimberti25,ismailov26}, and approximation rates for nonlinear functionals on $L^p$ spaces~\cite{song23}. Moreover, in the setting of adapted maps between suitably defined discrete-time path spaces, echo-state network architectures were shown to be universal \cite{grigoryeva18,gonon23}, while so-called metric hypertransformers were introduced in \cite{acciaio22}. In the context of learning solution operators for partial differential equations, we further refer to the works on the deep Galerkin method~\cite{sirignano18}, physics-informed neural networks~\cite{raissi19}, Fourier neural operators~\cite{li20}, neural integral operators~\cite{stuart21}, DeepONets~\cite{lu21,lanthaler22}, and generative equilibrium operators~\cite{kratsios25}.

Apart from neural networks, there are many other families serving as universal approximators on function spaces. By using the weighted Nachbin theorem, we show that linear functions of the signature are able to approximate a given path space functional including its directional derivatives. The signature plays a central role in rough path theory, introduced by T.~Lyons in \cite{lyons07} (see also the textbooks \cite{friz10,friz20}), and can be interpreted as polynomials on path space. More precisely, we prove that a path space functional can be approximated with linear functions of the signature on the whole path space, which extends the global universal approximation theorem (UAT) in \cite[Theorem~5.4]{cuchiero23} by including the approximation of the derivatives.

The remainder of this article is structured as follows. In Section~\ref{sec:w_dom}, we introduce weighted domains and manifolds, and characterize maps defined thereon. In Section~\ref{sec:nachbin}, we prove weighted Nachbin theorems, which are used to show universal approximation theorems for functional input neural networks in Section~\ref{sec:w_uat}. Subsequently, we apply these weighted approximation results to non-anticipative functionals in Section~\ref{sec:naf} and to linear functions of the signature in Section~\ref{sec:sig}. Finally, we provide two numerical examples in Section~\ref{sec:numerics}. Some proofs are given in Appendices~\ref{app:skorokhod}--\ref{app:naf}.

\subsection{Notation}
\label{sec:notation}

As usual, we denote by $\mathbb{N} := \lbrace 1, 2, 3, \ldots \rbrace$ and $\mathbb{N}_0 := \mathbb{N} \cup \lbrace 0 \rbrace$ the sets of natural numbers. For $n \in \mathbb{N}$, we define $\mathcal{S}_n$ as the set of permutations $\sigma: \lbrace 1,\ldots,n \rbrace \rightarrow \lbrace 1,\ldots,n \rbrace$, whereas $\mathscr{P}_n$ denotes the set of partitions $\pi := \lbrace \pi_1,\ldots,\pi_{\vert\pi\vert} \rbrace$ of $\lbrace 1,\ldots,n \rbrace$, consisting of disjoint subsets $\pi_1,\ldots,\pi_{\vert\pi\vert} \subseteq \lbrace 1,\ldots,n \rbrace$ with $\pi_1 \cup \ldots \cup \pi_{\vert\pi\vert} = \lbrace 1,\ldots,n \rbrace$. Moreover, we introduce the set of multi-indices as $\mathbb{N}^d_{0,n} := \left\lbrace \alpha := (\alpha_1,\ldots,\alpha_d) \in \mathbb{N}_0^d: \vert \alpha \vert \leq n \right\rbrace$ with $\vert \alpha \vert := \alpha_1 + \ldots + \alpha_d$. In addition, $\mathbb{R}$ and $\mathbb{C}$ (with imaginary unit $\mathbf{i} = \sqrt{-1} \in \mathbb{C}$) represent the sets of real and complex numbers, respectively. Furthermore, for $d,m \in \mathbb{N}$, we denote by $\mathbb{R}^d$ the $d$-dimensional Euclidean space equipped with the norm $\Vert x \Vert = \big(\sum_{i=1}^d x_i^2 \big)^{1/2}$, while $\mathbb{R}^{d \times m}$ denotes the vector space of matrices $A := (a_{i,j})_{i=1,\ldots,d}^{j=1,\ldots,m} \in \mathbb{R}^{d \times m}$ equipped with the Frobenius norm $\Vert A \Vert := \big( \sum_{i=1}^d \sum_{j=1}^m \vert a_{i,j} \vert^2 \big)^{1/2}$.

Moreover, a topological space $(X,\tau_X)$ is called \emph{Hausdorff} if for distinct points $x,y \in X$ there exist open sets $U,V \in \tau_X$ with $x \in U$ and $y \in V$ such that $U \cap V = \emptyset$. In addition, a \emph{topological vector space} $(X,\tau_X)$ is a vector space $X$ equipped with a topology $\tau_X$ such that addition $X \times X \ni (x_1,x_2) \mapsto x_1+x_2 \in X$ and scalar multiplication $\mathbb{R} \times X \ni (\lambda,x) \mapsto \lambda x \in X$ are both continuous. Let us remark that only vector spaces over $\mathbb{R}$ are considered in this paper. Furthermore, for topological spaces $(X,\tau_X)$ and $(Y,\tau_Y)$, we denote by $\mathcal{F}_X := \sigma(\tau_X)$ the Borel $\sigma$-algebra of $(X,\tau_X)$, and define $C^0(X;Y)$ as the vector space of continuous maps $f: X \rightarrow Y$.

In addition, a \emph{locally convex topological vector space} $(X,\tau_X)$ is a Hausdorff topological vector space such that $\tau_X$ admits a $0$-neighborhood basis consisting of balanced and convex sets. In this case, the topology $\tau_X$ is equivalently generated by a fundamental system of seminorms $\mathfrak{P}_{(X,\tau_X)}$, i.e., by sets of the form $\lbrace x \in X: p(x) < \varepsilon \rbrace$, for $\varepsilon > 0$ and $p \in \mathfrak{P}_{(X,\tau_X)}$ (see \cite[Section~II.4]{schaefer99}). A seminorm is a map $p: X \rightarrow [0,\infty)$ such that for every $\lambda \in \mathbb{R}$ and $x_1,x_2 \in X$ it holds that $p(\lambda x_1) = \vert \lambda \vert p(x_1)$ and $p(x_1+x_2) \leq p(x_1)+p(x_2)$, while $\mathfrak{P}_{(X,\tau_X)}$ is fundamental if for every $p_1,p_2 \in \mathfrak{P}_{(X,\tau_X)}$ there exist $C > 0$ and $p_3 \in \mathfrak{P}_{(X,\tau_X)}$ such that for every $x \in X$ we have $\max(p_1(x),p_2(x)) \leq C p_3(x)$. If $\mathfrak{P}_{(X,\tau_X)}$ is countable (i.e., $(X,\tau_X)$ is metrizable) and $(X,\tau_X)$ is complete, then $(X,\tau_X)$ is called a \emph{Fr\'echet space}. Moreover, if $\mathfrak{P}_{(X,\tau_X)}$ consists only of one norm $\Vert \cdot \Vert_X$ and $(X,\tau_X)$ is complete, then $(X,\Vert \cdot \Vert_X)$ is called a \emph{Banach space}. In this case, $B^X_r(x) := \lbrace y \in X: \Vert y-x \Vert_X < r \rbrace$ and $\overline{B}^X_r(x) := \lbrace y \in X: \Vert y-x \Vert_X \leq r \rbrace$ denote the open and closed ball of radius $r > 0$ around $x \in X$. When $x = 0$, we set $B^X_r := B^X_r(0)$ and $\overline{B}^X_r := \overline{B}^X_r(0)$.

Furthermore, for a family of locally convex topological vector spaces $(X_i,\tau_{X_i})_{i \in I}$, we consider the Cartesian product $\prod_{i \in I} X_i := \left\lbrace (x_i)_{i \in I}: x_i \in X_i \right\rbrace$, which is equipped with the product topology $\prod_{i \in I} \tau_{X_i}$ defined as the initial topology with respect to the projections
\begin{equation}
	\label{eq:def:product_top}
	\pi_i:
	\begin{cases}
		\prod_{i \in I} X_i \quad & \rightarrow \quad X_i \\
		(x_i)_{i \in I} \quad & \mapsto \quad x_i
	\end{cases},
	\quad\quad i \in I,
\end{equation}
i.e., the weakest topology on $\prod_{i \in I} X_i$ such that the mappings \eqref{eq:def:product_top} are continuous. Then, $(\prod_{i \in I} X_i,\prod_{i \in I} \tau_{X_i})$ is again a locally convex topological vector space (see \cite[p.~52]{schaefer99}). For example, if $(X_n,\Vert \cdot \Vert_{X_n})_{n=1,\ldots,N}$ are Banach spaces, with finite $N \in \mathbb{N}$, then the product topology $\prod_{n=1}^N \tau_{X_n}$ on $\prod_{n=1}^N X_n$ is generated by the norm $\Vert (x_n)_{n=1,\ldots,N} \Vert_{\prod_{n=1}^N X_n} := \sum_{n=1}^N \Vert x_n \Vert_{X_n}$.

Moreover, for two locally convex topological vector spaces $(X,\tau_X)$ and $(Y,\tau_Y)$, we denote by $L(X;Y)$ the vector space of continuous linear maps $T: X \rightarrow Y$, which is equipped (unless otherwise specified) with the topology of uniform convergence on bounded subsets of $X$. In particular, if $(X,\Vert \cdot \Vert_X)$ and $(Y,\Vert \cdot \Vert_Y)$ are normed vector spaces (resp.~Banach spaces), then $L(X;Y)$ is under the norm $\Vert T \Vert_{L(X;Y)} := \sup_{x \in X, \, \Vert x \Vert_X \leq 1} \Vert T(x) \Vert_Y$ again a normed vector space (resp.~Banach space). On the other hand, if $Y = \mathbb{R}$, the space $X^* := L(X;\mathbb{R})$ is the dual space of $(X,\tau_X)$ consisting of continuous linear functionals $l: X \rightarrow \mathbb{R}$.

In addition, for two locally convex topological vector spaces $(X,\tau_X)$ and $(Y,\tau_Y)$, a continuous linear map $T: X \rightarrow Y$ is called compact if there exists a $0$-neighborhood $U$ of $(X,\tau_X)$ such that $T(U)$ is relatively compact in $(Y,\tau_Y)$ (see, e.g., \cite[p.~98]{schaefer99}). If $(X,\tau_X)$ is a normed vector space $(X,\Vert \cdot \Vert_X)$, this is equivalent to the condition that for every $\Vert \cdot \Vert_X$-bounded subset $B \subseteq X$ the image $T(B)$ is relatively compact in $(Y,\tau_Y)$. Indeed, the latter implies that $T(B^X_1)$ is relatively compact in $(Y,\tau_Y)$. Conversely, if there exists a $0$-neighborhood $U$ of $(X,\Vert \cdot \Vert_X)$ with $B^X_r \subseteq U$, for some $r > 0$, then for every $\Vert \cdot \Vert_X$-bounded subset $B \subseteq B^X_R \subset X$, with some $R > 0$, it holds that $T(B) \subseteq T(B^X_R) \subseteq \frac{R}{r} T(B^X_r) \subseteq \frac{R}{r} T(U)$, where $\frac{R}{r} T(B^X_r)$ is relatively compact in $(Y,\tau_Y)$.

Furthermore, a Banach space $(X,\Vert \cdot \Vert_X)$ is called a \emph{dual Banach space} if there exists an isometric isomorphism $\mathfrak{I}: X \rightarrow E^*$ into the dual of another Banach space $(E,\Vert \cdot \Vert_E)$, called a \emph{predual}. Then, the dual pairing $E \times X \ni (e,x) \mapsto \langle e,x \rangle_{E \times X} := \mathfrak{I}(x)(e) \in \mathbb{R}$ is continuous. Hence, $X$ can be equipped with a weak-$*$-topology generated by sets of the form $\left\lbrace x \in X: \langle e,x \rangle_{E \times X} \in U \right\rbrace$, for $e \in E$ and $U \subseteq \mathbb{R}$ open (see also \cite[Appendix~A]{cuchiero23}).

Moreover, for locally convex topological vector spaces $(X,\tau_X)$ and $(Y,\tau_Y)$, we denote by $X^* \otimes Y := (X,\tau_X)^* \otimes Y := \linspan\left\lbrace X \ni x \mapsto \ell(x) y \in Y: \ell \in X^*, \, y \in Y \right\rbrace \subseteq L(X;Y)$ the vector subspace of finite rank operators, i.e., continuous linear maps $T: X \rightarrow Y$ with finite-dimensional range. Then, $(X,\tau_X)$ is said to have the \emph{approximation property (AP)} if the identity $\id_X: X \rightarrow X$ belongs to the closure of $X^* \otimes X$ with respect to uniform convergence on relatively compact subsets of $(X,\tau_X)$, i.e., there exists a net $(T_\gamma)_\gamma \subseteq X^* \otimes X$ approximating the identity $\id_X: X \rightarrow X$ uniformly on relatively compact subsets of $(X,\tau_X)$ (see \cite[Section~III.9]{schaefer99}). Furthermore, we say that $(X,\tau_X)$ has the \emph{($\mathfrak{Q}_X$-) bounded approximation property (BAP)} if there exists a set of seminorms $\mathfrak{Q}_X$ generating the topology $\tau_{\mathfrak{Q}_X}$ on $X$ with $\tau_{\mathfrak{Q}_X} \supseteq \tau_X$ such that $(X,\tau_X)$ has AP with finite rank operators $(T_\gamma)_\gamma \subseteq (X,\tau_X)^* \otimes X$ and for every $p \in \mathfrak{P}_{(X,\tau_X)}$ there exist $q \in \mathfrak{Q}_X$ and $\lambda \geq 0$ with $p(T_\gamma(x)) \leq \lambda q(x)$ for all $\gamma$ and $x \in X$. If $(X,\tau_X)$ is a Banach space, $(X,\Vert \cdot \Vert_X)$ has AP if and only if for every $\varepsilon > 0$ and relatively compact subset $K \subset X$ there exists some $T \in X^* \otimes X$ such that $\sup_{x \in K} \Vert x-T(x) \Vert_X < \varepsilon$. Moreover, $(X,\Vert \cdot \Vert_X)$ has BAP if and only if there exists a constant $\lambda \geq 1$ such that $(X,\Vert \cdot \Vert_X)$ has AP with finite rank operators $(T_\gamma)_\gamma \subseteq X^* \otimes X$ satisfying $\Vert T_\gamma \Vert_{L(X;X)} \leq \lambda$ (see also \cite[Section~1.e]{lindenstrauss96}). In addition, for $U \subseteq X$, we say that $(U,\tau_X)$ has (B)AP if $(X,\tau_X)$ has (B)AP with net of finite rank operators $(T_\gamma)_\gamma \subseteq X^* \otimes X$ satisfying $T_\gamma(U) \subseteq U$.

In addition, for $U \subseteq \mathbb{R}$ open, we denote by $C^\infty_c(U;\mathbb{C})$ the vector space of smooth functions $g: U \rightarrow \mathbb{C}$ with compact support $\supp(g) := \overline{\left\lbrace s \in U: g(s) \neq 0 \right\rbrace}$ contained in $U$. Furthermore, $\mathscr{S}(\mathbb{R};\mathbb{C})$ represents the Schwartz space consisting of smooth functions $g: \mathbb{R} \rightarrow \mathbb{C}$ with finite seminorms $\max_{j=0,\ldots,n} \sup_{s \in \mathbb{R}} \big( (1+\vert s \vert^2)^n \vert g^{(j)}(s) \vert \big)$, for all $n \in \mathbb{N}_0$, which generate the topology of $\mathscr{S}(\mathbb{R};\mathbb{C})$. Then, its dual space $\mathscr{S}'(\mathbb{R};\mathbb{C})$ consists of linear functionals $T: \mathscr{S}(\mathbb{R};\mathbb{C}) \rightarrow \mathbb{C}$ called tempered distributions. For example, $\rho \in C^0(\mathbb{R};\mathbb{C})$ with $\sup_{s \in \mathbb{R}} \frac{\vert \rho(s) \vert}{(1+\vert s \vert^2)^n} < \infty$, for some $n \in \mathbb{N}_0$, induces the tempered distribution $g \mapsto T_\rho(g) := \int_{\mathbb{R}} \rho(s) g(s) ds \in \mathscr{S}'(\mathbb{R};\mathbb{C})$. Moreover, the support of any $T \in \mathscr{S}'(\mathbb{R};\mathbb{C})$ is defined as the complement of the largest open set $U \subseteq \mathbb{R}$ on which $T \in \mathscr{S}'(\mathbb{R};\mathbb{C})$ vanishes, i.e., $T(g) = 0$ for all $g \in C^\infty_c(U;\mathbb{C})$. In addition, the Fourier transform of any $g \in L^1(\mathbb{R};\mathbb{C})$ is defined as $\mathbb{R} \ni \xi \mapsto \widehat{g}(\xi) = \frac{1}{\sqrt{2\pi}} \int_{\mathbb{R}} e^{-i \xi s} g(s) ds \in \mathbb{C}$, while the Fourier transform of any $T \in \mathscr{S}'(\mathbb{R};\mathbb{C})$ is defined by $\big( g \mapsto \widehat{T}(g) := T(\widehat{g}) \big) \in \mathscr{S}'(\mathbb{R};\mathbb{C})$. For more details, we refer to \cite[Chapters~7 and 9]{folland92}.

In addition, if the functions are real-valued, we use the abbreviations $C^k(U) := C^k(U;\mathbb{R})$, $C^\infty_c(U) := C^\infty_c(U;\mathbb{R})$, $L^p(\Omega) := L^p(\Omega;\mathbb{R})$, $C^\alpha(S) := C^\alpha(S;\mathbb{R})$, $D^{\alpha,1}([0,T]) := D^{\alpha,1}([0,T];\mathbb{R})$, etc. Most of the function spaces are introduced in the following sections.

\subsection{Bastiani calculus on \texorpdfstring{$\sigma$}{sigma}-compact spaces}

In this section, we first recall the notion of Bastiani calculus (also known as Keller's $C^1_c$-theory, see \cite{bastiani64,keller74} and also \cite{berger77,gloeckner02,walter12,schmeding23}) and then introduce a slight generalization onto $\sigma$-compact spaces. For an open subset $U \subseteq X$ of a locally convex topological vector space $(X,\tau_X)$ as input space and a locally convex topological vector space $(Y,\tau_Y)$ as output space, we define the directional derivative of a map $f: U \rightarrow Y$ at the point $u \in U$ in direction $v \in X$ (if it exists) as
\begin{equation}
	\label{eq:def:dir_der}
	df(u;v) := D_v f(u) := \lim_{h \rightarrow 0} \frac{f(u+hv) - f(u)}{h}.
\end{equation}
For $j \geq 2$, we define the $j$-th order directional derivatives of a map $f: U \rightarrow Y$ at the point $u \in U$ in directions $v_1,\ldots,v_j \in X$ (if they exist) as
\begin{equation}
	\label{eq:def:dir_der2}
	d^j f(u;v_1,\ldots,v_j) := D_{v_j} \cdots D_{v_1} f(u).
\end{equation}
Then, for $k \in \mathbb{N}_0$, the $C^k$-space $C^k(U;Y)$ in the sense of Bastiani is defined as the vector space of maps $f: U \rightarrow Y$ whose $j$-th order directional derivatives exist and the mappings $U \times X^j \ni (u,v_1,\ldots,v_j) \mapsto d^j f(u;v_1,\ldots,v_j) \in Y$ are continuous, for all $j = 0,\ldots,k$, with $d^0 f := f$.

Moreover, if the input space $(X,\tau_X)$ is $\sigma$-compact, we define $C^k_{loc}(U;Y)$ as the vector space of maps $f: U \rightarrow Y$ whose $j$-th order directional derivatives exist and the mappings $d^j f\vert_K: K \rightarrow Y$ are continuous, for any compact subset $K \subseteq U \times X^j$ and $j = 0,\ldots,k$. Compared to Bastiani calculus with globally continuous mappings $d^j f: U \times X^j \rightarrow Y$, $j = 0,\ldots,k$, we only require them to be continuous on compact subsets, implying that $C^k(U;Y) \subseteq C^k_{loc}(U;Y)$. However, if $(X,\tau_X)$ is locally compact or first countable (ensuring that $(X,\tau_X)$ is compactly generated, see \cite[Lemma~46.3]{munkres14}), every mapping $U \times X^j \ni (u,v_1,\ldots,v_j) \mapsto d^j f(u;v_1,\ldots,v_j) \in Y$ that is continuous on compacta, is also globally continuous (see \cite[Lemma~46.4]{munkres14}), whence the two notions are equivalent. Therefore, our notion of $C^k_{loc}$-maps is stronger than G\^{a}teaux differentiability (except on finite-dimensional spaces), but weaker than Fr\'echet differentiability (except on finite-dimensional spaces). 

In order to establish some properties of our $C^k_{loc}$-differential calculus that are known for Bastiani calculus (see, e.g., \cite{bastiani64,keller74,gloeckner02,schmeding23}), we first prove the following auxiliary lemmas. For an open interval $I \subseteq \mathbb{R}$, $a,b \in \mathbb{R}$, and $f \in C^0(I;Y)$, we say that the \emph{weak integral} $\int_a^b f(t) dt$ exists if there is a point $y \in Y$ such that for every $\ell \in Y^*$ it holds that $\ell(y) = \int_a^b \ell(f(t)) dt$.

\begin{lemma}[Fundamental theorem of calculus]
	\label{lem:foc}
	Let $0 \in I \subseteq \mathbb{R}$ be an open interval and let $c \in C^1_{loc}(I;Y)$. Then, for every $h \in I$, the weak integral $\int_0^h c'(t) dt$ exists and satisfies
	\begin{equation}
		c(h) - c(0) = \int_0^h c'(t) dt.
	\end{equation}
\end{lemma}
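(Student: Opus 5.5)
The natural candidate for the weak integral is $y := c(h) - c(0)$. By the definition of the weak integral, it then suffices to show that for every $\ell \in Y^*$,
\begin{equation}
	\ell(c(h) - c(0)) = \int_0^h \ell(c'(t))\, dt .
\end{equation}
The plan is to reduce this to the classical scalar fundamental theorem of calculus applied to $g := \ell \circ c : I \rightarrow \mathbb{R}$.

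\textbf{Step 1: $g$ is differentiable with $g' = \ell \circ c'$.} Fix $t \in I$. Since $\ell$ is continuous and linear, it passes through the limit of difference quotients:
\begin{equation}
	g'(t) = \lim_{s \rightarrow 0} \frac{\ell(c(t+s)) - \ell(c(t))}{s} = \ell\Big( \lim_{s \rightarrow 0} \frac{c(t+s) - c(t)}{s} \Big) = \ell(c'(t)) .
\end{equation}
Here $c'(t) = dc(t;1)$ exists by the assumption $c \in C^1_{loc}(I;Y)$.

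\textbf{Step 2: $g'$ is continuous.} This is the only point where the local notion of differentiability matters. By assumption, $(t,v) \mapsto dc(t;v)$ is continuous on compact subsets of $I \times \mathbb{R}$. For any compact interval $J \subseteq I$, the set $J \times \{1\}$ is compact, so $t \mapsto c'(t) = dc(t;1)$ is continuous on $J$. Since continuity is a local property and every point of the open interval $I$ has a compact interval neighborhood inside $I$ (equivalently, $\mathbb{R}$ is locally compact, so the $C^1_{loc}$ and $C^1$ notions agree here), $c'$ is continuous on all of $I$. Composing with the continuous functional $\ell$ shows that $g' = \ell \circ c'$ is continuous on $I$.

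\textbf{Step 3: conclude.} By Steps 1 and 2, $g \in C^1(I;\mathbb{R})$. The classical fundamental theorem of calculus on the segment between $0$ and $h$, which lies in $I$ because $I$ is an interval, gives
\begin{equation}
	\ell(c(h)) - \ell(c(0)) = g(h) - g(0) = \int_0^h \ell(c'(t))\, dt .
\end{equation}
By linearity of $\ell$, the left-hand side equals $\ell(c(h) - c(0))$. This holds for every $\ell \in Y^*$, so $y = c(h) - c(0)$ is the weak integral $\int_0^h c'(t)\,dt$, which proves both its existence and the claimed identity.

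Uniqueness of the weak integral is not needed for the statement. It does hold, however, because $Y$ is Hausdorff and locally convex, so by Hahn–Banach $Y^*$ separates points.
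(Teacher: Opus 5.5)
Your proof is correct and matches the paper's argument: you take $y := c(h)-c(0)$, reduce to the scalar fundamental theorem of calculus for $\ell\circ c$ with $(\ell\circ c)' = \ell\circ c'$, and use that $c'$ is continuous on $I$. The only difference is that you justify two facts the paper states without proof: continuous linear functionals commute with the difference-quotient limit, and continuity of $dc$ on compacta gives $c' \in C^0(I;Y)$ because $\mathbb{R}$ is locally compact.
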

\begin{proof}
	Since $c \in C^1_{loc}(I;Y)$, we have $c' \in C^0(I;Y)$. Hence, we can apply the fundamental theorem of calculus for real-valued functions to conclude for every $\ell \in Y^*$ and $h \in I$ that
	\begin{equation}
		\ell(c(h)-c(0)) = \ell(c(h)) - \ell(c(0)) = \int_0^h (\ell \circ c)'(t) dt = \int_0^h \ell(c'(t)) dt.
	\end{equation}
	Hence, $y := c(h)-c(0) \in Y$ satisfies the defining properties of the weak integral.
\end{proof}

Note that this fundamental theorem of calculus for $C^1_{loc}$-curves with values in a locally convex topological vector space $(Y,\tau_Y)$ holds irrespective of completeness of $Y$. Moreover, as an application of the bipolar theorem (see, e.g., \cite[Theorem~IV.1.5]{schaefer99}), we obtain the following result.

\begin{lemma}
	\label{lem:wint_bound}
	Let $a,b \in \mathbb{R}$ and $f \in C^0([a,b];Y)$ such that the weak integral $\int_a^b f(t) dt$ exists. Then, for every $p_Y \in \mathfrak{P}_{(Y,\tau_Y)}$, it holds that
	\begin{equation}
		p_Y\left( \int_a^b f(t) dt \right) \leq \vert b-a \vert \sup_{t \in [a,b]} p_Y(f(t)).
	\end{equation}
\end{lemma}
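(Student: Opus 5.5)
Set $y := \int_a^b f(t)\,dt$ and $C := \vert b-a\vert \sup_{t\in[a,b]} p_Y(f(t))$. The plan is to use the bipolar theorem: $y$ lies in a closed absolutely convex set exactly when it is annihilated, in the polar sense, by every functional in that set's polar. So it suffices to test $y$ against continuous linear functionals dominated by $p_Y$. First, $C$ is finite. Indeed, $[a,b]$ is compact, $f$ is continuous, and $p_Y$ is continuous, so $t\mapsto p_Y(f(t))$ is a continuous real function on a compact interval and attains its maximum.

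Next I would dispose of the degenerate case $C=0$ alongside the general one. Fix $\varepsilon>0$ and consider the closed absolutely convex set $V_\varepsilon := \{x\in Y: p_Y(x)\le C+\varepsilon\}$. It is closed because $p_Y$ is continuous, and absolutely convex because $p_Y$ is a seminorm. Its polar is $V_\varepsilon^\circ = \{\ell\in Y^*: \vert\ell(x)\vert\le 1 \text{ for all } x\in V_\varepsilon\}$.

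For $\ell\in V_\varepsilon^\circ$ I first show $\vert \ell(x)\vert \le p_Y(x)/(C+\varepsilon)$ for every $x\in Y$.
\begin{itemize}
\item If $p_Y(x)>0$, scale $x$ by $(C+\varepsilon)/p_Y(x)$; the scaled vector lies in $V_\varepsilon$, which gives the bound.
\item If $p_Y(x)=0$, then $\lambda x\in V_\varepsilon$ for all $\lambda>0$. Hence $\lambda\vert\ell(x)\vert\le 1$ for all $\lambda$, which forces $\ell(x)=0$.
\end{itemize}

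With this bound, the defining property of the weak integral gives
\[
\vert \ell(y)\vert = \Big\vert \int_a^b \ell(f(t))\,dt\Big\vert \le \vert b-a\vert \sup_{t\in[a,b]} \frac{p_Y(f(t))}{C+\varepsilon} = \frac{C}{C+\varepsilon}\le 1 .
\]
Thus $y\in V_\varepsilon^{\circ\circ}$. By the bipolar theorem \cite[Theorem~IV.1.5]{schaefer99}, applied to the dual pair $(Y,Y^*)$, $V_\varepsilon^{\circ\circ}$ is the $\sigma(Y,Y^*)$-closed absolutely convex hull of $V_\varepsilon$.

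The one point needing care is that $V_\varepsilon$ is closed in the original topology, whereas the bipolar theorem requires weak closedness. This is exactly where local convexity enters: by Mazur's theorem (closed convex sets are weakly closed, again via Hahn--Banach), $V_\varepsilon$ is $\sigma(Y,Y^*)$-closed. Hence $V_\varepsilon^{\circ\circ}=V_\varepsilon$, and so $p_Y(y)\le C+\varepsilon$. Letting $\varepsilon\downarrow 0$ yields the claim.

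The $\varepsilon$ is needed only because $C$ may be zero; it avoids dividing by zero and avoids working with the set $\{p_Y\le 0\}$. Alternatively, one could apply Hahn--Banach directly to obtain $\ell\in Y^*$ with $\ell(y)=p_Y(y)$ and $\vert\ell\vert\le p_Y$, which gives the same estimate in one line.
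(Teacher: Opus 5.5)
Your proof is correct and follows the same route as the paper, which obtains this lemma as a direct application of the bipolar theorem \cite[Theorem~IV.1.5]{schaefer99} without further detail. Your treatment of the closed absolutely convex set $V_\varepsilon$, its weak closedness, and the limit $\varepsilon \downarrow 0$ supplies exactly those details, and the one-line Hahn--Banach alternative you mention at the end is also valid.
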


Next, we prove the following properties of our $C^k_{loc}$-differential calculus including the linearity of the differential and the chain rule, which are known for Bastiani calculus (see, e.g., \cite{bastiani64,keller74,gloeckner02,schmeding23}).

\begin{proposition}
	\label{prop:db_prop}
	Let $f \in C^1_{loc}(U;Y)$. Then, the following holds true:
	\begin{enumerate}
		\item\label{prop:db_prop:1} For every $u \in U$ the map $X \ni v \mapsto df(u;v) \in Y$ is linear and in $C^0_{loc}(X;Y)$.
		\item\label{prop:db_prop:2} Let $V \subseteq Y$ be open with $f(U) \subseteq V$, let $(Y,\tau_Y)$ be $\sigma$-compact, let $(Z,\tau_Z)$ be another locally convex topological vector space, and let $g \in C^1_{loc}(V;Z)$. Then, $g \circ f \in C^1_{loc}(U;Z)$ and for every $u \in U$ and $v \in X$ we have $d(g \circ f)(u;v) = dg(f(u);df(u;v))$.
		\item\label{prop:db_prop:3} If $f \in C^k_{loc}(U;Y)$ with $k \geq 2$, then for every $u \in U$ and $v_1,\ldots,v_k \in X$ it holds that $d\big(d^{k-1} f(\cdot;v_1,\ldots,v_{k-1})\big)(u;v_k) = d^k f(u;v_1,\ldots,v_k)$.
		\item\label{prop:db_prop:4} If $f \in C^k_{loc}(U;Y)$, then for every $u \in U$, $v_1,\ldots,v_k \in X$, and every permutation $\sigma \in \mathcal{S}_k$ we have $d^k f(u;v_{\sigma(1)},\ldots,v_{\sigma(k)}) = d^k f(u;v_1,\ldots,v_k)$.
	\end{enumerate}
\end{proposition}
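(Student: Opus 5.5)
The plan is to follow the standard proofs from Bastiani calculus. The one new ingredient is that continuity of $d^jf$ is only available on compact subsets, so every limit argument has to be run along convergent sequences or nets whose range, together with the limit, sits inside a compact set. The basic tools are Lemma~\ref{lem:foc} and Lemma~\ref{lem:wint_bound}, applied to curves $t\mapsto f(u+tv)$. Such a curve is in $C^1_{loc}$ on a small interval, because $\{u+tv\}\times\{v\}$ with $t$ in a compact interval is a compact subset of $U\times X$.

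For \ref{prop:db_prop:1}, homogeneity $df(u;\lambda v)=\lambda\, df(u;v)$ follows directly from the definition \eqref{eq:def:dir_der} by substituting $h\mapsto \lambda h$. For additivity, I write
\begin{equation}
f(u+h(v+w))-f(u)=\big[f(u+hv+hw)-f(u+hv)\big]+\big[f(u+hv)-f(u)\big].
\end{equation}
By Lemma~\ref{lem:foc}, the first bracket divided by $h$ equals $\int_0^1 df(u+hv+shw;w)\,ds$ as a weak integral. Then
\begin{equation}
p_Y\Big(\int_0^1 \big(df(u+hv+shw;w)-df(u;w)\big)\,ds\Big)\le \sup_{s\in[0,1]} p_Y\big(df(u+hv+shw;w)-df(u;w)\big)
\end{equation}
by Lemma~\ref{lem:wint_bound}. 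This supremum tends to $0$ as $h\to0$, because $df$ is continuous, hence uniformly continuous, on the compact set $\{u+h'v+sh'w: |h'|\le h_0,\ s\in[0,1]\}\times\{w\}$. Continuity of $v\mapsto df(u;v)$ on compacta of $X$ is simply the restriction of the joint continuity of $df$ on compact sets of the form $\{u\}\times K$.

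For \ref{prop:db_prop:3}, I only need to unwind the definitions. The derivative of $u\mapsto d^{k-1}f(u;v_1,\dots,v_{k-1})$ in direction $v_k$ is exactly $D_{v_k}D_{v_{k-1}}\cdots D_{v_1}f(u)$, which by \eqref{eq:def:dir_der2} is $d^kf(u;v_1,\dots,v_k)$. For \ref{prop:db_prop:4}, it suffices to treat adjacent transpositions, and by \ref{prop:db_prop:3} these reduce to symmetry of second derivatives of $g:=d^{k-2}f(\cdot;v_1,\dots,v_{k-2})\in C^2_{loc}$. I test with $\ell\in Y^*$ and consider the real function $(s,t)\mapsto \ell(g(u+sv+tw))$ near $0\in\mathbb{R}^2$. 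Its mixed partials are $\ell(d^2g(u+sv+tw;v,w))$ and $\ell(d^2g(u+sv+tw;w,v))$. Both are continuous in $(s,t)$, since they are composites with continuous maps into a compact set. The classical Schwarz theorem then gives equality after applying $\ell$, and Hahn--Banach (using that $Y$ is Hausdorff and locally convex) gives equality in $Y$.

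The main obstacle is the chain rule \ref{prop:db_prop:2}. I would write
\begin{equation}
\frac{g(f(u+hv))-g(f(u))}{h}=\int_0^1 dg\Big(f(u)+s\big(f(u+hv)-f(u)\big);\tfrac{f(u+hv)-f(u)}{h}\Big)\,ds,
\end{equation}
which is valid for small $h$ by Lemma~\ref{lem:foc}, provided the segment stays in $V$. The point is that, as $h\to0$, the pair of arguments converges to $(f(u),df(u;v))$ uniformly in $s$. For a sequence $h_n\to0$, the points $y_n:=f(u+h_nv)$ together with their limit form a compact set, and the same holds for the difference quotients $z_n:=(y_n-f(u))/h_n$ together with $df(u;v)$. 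So all the arguments lie in a compact subset of $V\times Y$: continuous images of $[0,1]\times(\{y_n\}\cup\{f(u)\})\times(\{z_n\}\cup\{df(u;v)\})$. The segments stay in $V$ for large $n$, since a compact segment converging to $\{f(u)\}$ eventually lies in the open set $V$. On that compact set $dg$ is uniformly continuous, and Lemma~\ref{lem:wint_bound} then gives convergence along every sequence $h_n\to0$, which suffices for a limit in $\mathbb{R}$. The same compactness argument, applied to convergent data $(u_n,v_n)$ in a compact subset of $U\times X$, shows that $(u,v)\mapsto dg(f(u);df(u;v))$ is continuous on compacta. The $\sigma$-compactness of $Y$ is used only so that $C^1_{loc}(V;Z)$ is defined in the sense of the paper. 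The step needing most care is verifying that these sets really are compact, since compactness is all that the $C^k_{loc}$ hypotheses make available.
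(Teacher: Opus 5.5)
Your proposal is correct and follows essentially the paper's route: the fundamental theorem of calculus along segments, Lemma~\ref{lem:wint_bound}, and uniform continuity of the derivative on a compact set for \ref{prop:db_prop:1}--\ref{prop:db_prop:2}; unwinding the definition for \ref{prop:db_prop:3}; and the finite-dimensional Schwarz theorem for \ref{prop:db_prop:4}. The paper applies Schwarz directly to $F(h_1,\ldots,h_k)=f(u+h_1v_1+\ldots+h_kv_k)$, whereas you reduce to adjacent transpositions and scalar functions via Hahn--Banach; for the swap of slots $i,i+1$ you should take $g=d^{i-1}f(\cdot;v_1,\ldots,v_{i-1})$ and then differentiate further, since $g=d^{k-2}f$ only covers the last two slots.

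In \ref{prop:db_prop:2} the paper avoids sequences by using the continuous extension $w(h)=df(u;v)+r(h)$ of the difference quotient on $[-\delta,\delta]$ and splits $dg$ by linearity, and it does not spell out continuity on compacta of $(u,v)\mapsto dg(f(u);df(u;v))$. You do address this, but you should argue it by composing $(f,df)\vert_K$ with $dg$ restricted to the compact image (or with nets) rather than with sequences $(u_n,v_n)$, because compact subsets of $U\times X$ need not be sequential.
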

\begin{proof}
	For \ref{prop:db_prop:1} we fix some $u \in U$. Then, for every $v \in X$ and $\lambda \in \mathbb{R}$, the homogeneity $df(u;\lambda v) = \lambda df(u;v)$ follows from \eqref{eq:def:dir_der}. For linearity of $X \ni v \mapsto df(u;v) \in Y$, we fix some $\varepsilon > 0$, $p_Y \in \mathfrak{P}_{(Y,\tau_Y)}$, $u \in U$, $v_1,v_2 \in X$, and $\delta > 0$ such that $u+rv_1+sv_2 \in U$ for all $r,s \in [-\delta,\delta]$. Then, by applying Lemma~\ref{lem:foc} twice, it follows for every $h \in (-\delta,\delta)$ that
	\begin{equation}
		\label{eq:prop:db_prop:proof1}
		\begin{aligned}
			f(u+h(v_1+v_2)) - f(u) & = f(u+hv_1) - f(u) + \int_0^1 df(u+hv_1+shv_2;hv_2) ds \\
			& = \int_0^1 df(u+shv_1;hv_1) ds + \int_0^1 df(u+hv_1+shv_2;hv_2) ds \\
			& = h(df(u;v_1) + df(u;v_2)) + \int_0^1 \left( df(u+shv_1;hv_1) - df(u;hv_1) \right) ds \\
			& \quad\quad + \int_0^1 \left( df(u+hv_1+shv_2;hv_2) - df(u;hv_2) \right) ds,
		\end{aligned}
	\end{equation}
	where all integrals exist as weak integrals. Moreover, by using that $f \in C^1_{loc}(U;Y)$ and the image of $[-\delta,\delta] \ni s \mapsto u+sv_1 \in U$ is compact in $U$, we conclude that $[-\delta,\delta] \ni s \mapsto df(u+sv_1;v_1) - df(u;v_1) \in Y$ is continuous, thus uniformly continuous, whence there exists some $\delta_1 \in (0,\delta)$ such that for every $h \in (-\delta_1,\delta_1)$ it holds that
	\begin{equation}
		\label{eq:prop:db_prop:proof2}
		\begin{aligned}
			& p_Y\left( \frac{1}{h} \int_0^1 \left( df(u+shv_1;hv_1) - df(u;hv_1) \right) ds \right) \\
			& = p_Y\left( \int_0^1 \left( df(u+shv_1;v_1) - df(u;v_1) \right) ds \right) \\
			& \leq \sup_{s \in [0,1]} p_Y\left( df(u+shv_1;v_1) - df(u;v_1) \right) < \frac{\varepsilon}{2},
		\end{aligned}
	\end{equation}
	where we have applied Lemma~\ref{lem:wint_bound} for the first inequality. Similarly, by using that $[-\delta,\delta]^2 \ni (r,s) \mapsto df(u+rv_1+sv_2;v_1) - df(u;v_1) \in Y$ is continuous, thus uniformly continuous, there exists some $\delta_2 \in (0,\delta)$ such that for every $h \in (-\delta_2,\delta_2)$ we have
	\begin{equation}
		\label{eq:prop:db_prop:proof3}
		\begin{aligned}
			& p_Y\left( \frac{1}{h} \int_0^1 \left( df(u+hv_1+shv_2;hv_2) - df(u;hv_2) \right) ds \right) \\
			& = p_Y\left( \int_0^1 \left( df(u+hv_1+shv_2;v_2) - df(u;v_2) \right) ds \right) \\
			& \leq \sup_{s \in [0,1]} p_Y\left( df(u+hv_1+shv_2;v_2) - df(u;v_2) \right) < \frac{\varepsilon}{2}.
		\end{aligned}
	\end{equation}
	Hence, by inserting \eqref{eq:prop:db_prop:proof2}--\eqref{eq:prop:db_prop:proof3} into \eqref{eq:prop:db_prop:proof1} and defining $\delta_0 := \min(\delta_1,\delta_2) > 0$, it follows for every $h \in (-\delta_0,\delta_0)$ that
	\begin{equation}
		\begin{aligned}
			& p_Y\left( \frac{f(u+h(v_1+v_2)) - f(u)}{h} - \left( df(u;v_1) + df(u;v_2) \right) \right) \\
			& \leq p_Y\left( \frac{1}{h} \int_0^1 \left( df(u+shv_1;hv_1) - df(u;hv_1) \right) ds \right) \\
			& \quad\quad + p_Y\left( \frac{1}{h} \int_0^1 \left( df(u+hv_1+shv_2;hv_2) - df(u;hv_2) \right) ds \right) \\
			& < \frac{\varepsilon}{2} + \frac{\varepsilon}{2} = \varepsilon.
		\end{aligned}
	\end{equation}
	Since $\varepsilon > 0$ was chosen arbitrarily, this and the homogeneity show that $X \ni v \mapsto df(u;v) \in Y$ is linear. Finally, we use that $f \in C^1_{loc}(U;Y)$ to see that $(v \mapsto df(u;v)) \in C^0_{loc}(X;Y)$.
	
	For \ref{prop:db_prop:2}, we fix some $u \in U$, $v \in X$, and $\delta > 0$ such that $u+sv \in U$ for all $s \in [-\delta,\delta]$. Then, by using that $df(u;v)$ exists and is continuous on compacta, there exists a continuous function $r: [-\delta,\delta] \rightarrow Y$ with $r(0) = 0$ such that for every $h \in [-\delta,\delta]$ we have
	\begin{equation}
		(g \circ f)(u+hv) = g\left( f(u) + h\left( df(u;v) + r(h) \right) \right).
	\end{equation}
	Defining $w(h) := df(u;v) + r(h)$ and shrinking $\delta > 0$ if necessary, we can assume that $f(u) + sh w(h) \in V$ for all $h \in [-\delta,\delta]$ and $s \in I$, where $I$ is an open interval containing $[0,1]$. Moreover, since $f \in C^1_{loc}(U;Y)$ and $g \in C^1_{loc}(V;Z)$, the curve $I \ni s \mapsto c(s) := g(f(u)+hsw(h)) \in Z$ is continuously differentiable with $c'(s) = dg(f(u)+hsw(h);hw(h)) = hdg(f(u)+hsw(h);w(h))$, whence Lemma~\ref{lem:foc} implies for every $h \in [-\delta,\delta]$ that		
	\begin{equation}
		\label{eq:prop:db_prop:proof4}
		\begin{aligned}
			& (g \circ f)(u+hv) = c(1) = c(0) + \int_0^1 c'(s) ds \\
			& = g(f(u)) + h \, dg(f(u);df(u;v)) \\
			& \quad\quad + h \int_0^1 \big( dg(f(u)+hsw(h);df(u;v)+r(h)) - dg(f(u);df(u;v)) \big) ds \\
			& = g(f(u)) + h \, dg(f(u);df(u;v)) \\
			& \quad\quad + h \int_0^1 \big( dg(f(u)+hsw(h);df(u;v)) + dg(f(u)+hsw(h);r(h)) - dg(f(u);df(u;v)) \big) ds,
		\end{aligned}
	\end{equation}
	where all integrals exist as weak integrals. Now, for every fixed $\varepsilon > 0$ and $p_Z \in \mathfrak{P}_{(Z,\tau_Z)}$, we use that $f \in C^1_{loc}(U;Y)$ and $g \in C^1_{loc}(V;Z)$ to conclude that
	\begin{equation}
		\begin{aligned}
			[-\delta,\delta] \times [0,1] \ni (h,s) \mapsto \Phi(h,s) & := dg(f(u)+hsw(h);r(h)) \in Z \\
			[-\delta,\delta] \times [0,1] \ni (h,s) \mapsto \Psi(h,s) & := dg(f(u)+hsw(h);df(u;v)) - dg(f(u);df(u;v)) \in Z
		\end{aligned}
	\end{equation}
	are continuous, thus uniformly continuous, with $\Phi(0,s) = \Psi(0,s) = 0$ for all $s \in [0,1]$. Hence, there exists some $\delta_0 \in (0,\delta)$ such that for every $(h,s) \in (-\delta_0,\delta_0) \times [0,1]$ it holds that $p_Z(\Phi(h,s)) < \varepsilon/2$ and $p_Z(\Psi(h,s)) < \varepsilon/2$, which implies by Lemma~\ref{lem:wint_bound} for every $h \in (-\delta_0,\delta_0)$ that
	\begin{equation}
		\label{eq:prop:db_prop:proof5}
		\begin{aligned}
			& p_Z\left( \int_0^1 \left( dg(f(u)+hsw(h);df(u;v)) + dg(f(u)+hsw(h);r(h)) - dg(f(u);df(u;v)) \right) ds \right) \\
			& \quad\quad \leq \sup_{(s,h) \in [0,1] \times (-\delta_0,\delta_0)} p_Z(\Phi(h,s) + \Psi(h,s)) < \frac{\varepsilon}{2} + \frac{\varepsilon}{2} = \varepsilon.
		\end{aligned}
	\end{equation}
	Hence, by inserting \eqref{eq:prop:db_prop:proof5} into \eqref{eq:prop:db_prop:proof4}, it follows for every $h \in (-\delta_0,\delta_0)$ that
	\begin{equation}
		p_Z\left( \frac{(g \circ f)(u+hv) - g(f(u))}{h} - dg(f(u);df(u;v)) \right) < \varepsilon.
	\end{equation}
	Since $\varepsilon > 0$ was chosen arbitrarily, this shows that $d(g \circ f)(u;v) = dg(f(u);df(u;v))$.
	
	While \ref{prop:db_prop:3} follows from the definition \eqref{eq:def:dir_der2}, we fix for \ref{prop:db_prop:4} some $\delta > 0$ such that $u + h_1v_1 + \ldots + h_kv_k \in U$ for all $h_1,\ldots,h_k \in [-\delta,\delta]$. Then, by using that the mapping
	\begin{equation}
		[-\delta,\delta]^k \ni (h_1,\ldots,h_k) \quad \mapsto \quad F(h_1,\ldots,h_k) := f(u+h_1v_1+\ldots+h_kv_k) \in Y
	\end{equation}
	is $k$-times continuously differentiable on $(-\delta,\delta)^k$, we can apply the finite-dimensional Schwarz theorem on $(-\delta,\delta)^k \subseteq \mathbb{R}^k$ to conclude that
	\begin{equation}
		d^k f(u;v_{\sigma(1)},\ldots,v_{\sigma(k)}) = \frac{d^k F}{dh_{\sigma(1)} \cdots dh_{\sigma(k)}}(0,\ldots,0) = \frac{d^k F}{dh_1 \cdots dh_k}(0,\ldots,0) = d^k f(u;v_1,\ldots,v_k),
	\end{equation}
	which completes the proof.
\end{proof}

\subsection{Manifolds over \texorpdfstring{$\sigma$}{sigma}-compact model spaces}
\label{sec:manifold}

In this section, we introduce the notion of manifolds that are modelled over $\sigma$-compact locally convex topological vector spaces (see also \cite{berger77,kriegl97,schmeding23} for more details). To this end, we shall fix some $k \in \mathbb{N} \cup \lbrace \infty \rbrace$, a topological space $(M,\tau_M)$, and a family of $\sigma$-compact locally convex topological vector spaces $(X_i,\tau_{X_i})_{i \in I}$, where $I$ is an arbitrary index set. Then, a $C^k_{loc}$-atlas $(U_i,\phi_i)_{i \in I}$ for $M$ consists of an open cover $(U_i)_{i \in I}$ of $M$, i.e., $\bigcup_{i \in I} U_i = M$, and homeomorphisms $\phi_i: U_i \rightarrow \phi_i(U_i) \subseteq X_i$ called charts such that the transition maps $\phi_{i_1} \circ \phi_{i_2}\vert_{\phi_{i_2}(U_{i_1} \cap U_{i_2})}^{-1}: \phi_{i_1}(U_{i_1} \cap U_{i_2}) \rightarrow \phi_{i_2}(U_{i_1} \cap U_{i_2})$ are $C^k_{loc}$-maps, for all $i_1,i_2 \in I$. If such a $C^k_{loc}$-atlas $(U_i,\phi_i)_{i \in I}$ exists for $M$, then we call $(M,\tau_M)$ a \emph{$C^k_{loc}$-manifold} (with atlas $(U_i,\phi_i)_{i \in I}$ over model spaces $(X_i,\tau_{X_i})_{i \in I}$). For example, if $U \subseteq X$ is an open subset of a $\sigma$-compact locally convex topological vector space $(X,\tau_X)$, then $M := U$ is a $C^\infty_{loc}$-manifold with global chart given by the smooth inclusion $U \hookrightarrow X$.

Moreover, we follow \cite{miron97,suri16} and define for every $j = 0,\ldots,k$ the \emph{tangent space of order $j$} at $x \in M$ as the set $T^j_x M$ of equivalence classes $[c]^j_x$ of $C^j$-curves $c: (-\varepsilon,\varepsilon) \rightarrow M$ with $c(0) = x$ whose accelerations agree up to order $j$, i.e., $c \sim_j \widetilde{c}$ if and only if $c^{(\ell)}(0) = \widetilde{c}^{(\ell)}(0)$, for all $\ell=0,\ldots,j$, where $T^0_x M := \lbrace 0 \rbrace$. If $x \in U_i$, then $T^j_x M$ is topologically isomorphic to $X_i^j$ with isomorphism
\begin{equation}
	\Phi^j_{i,x}:
	\begin{cases}
		T^j_x M & \rightarrow \quad X_i^j \\
		[c]^j_x & \mapsto \quad \left( (\phi_i \circ c)^{(\ell)}(0) \right)_{\ell=1,\ldots,j}
	\end{cases}
\end{equation}
and inverse
\begin{equation}
	\Phi^{-j}_{i,x}:
	\begin{cases}
		X_i^j & \rightarrow \quad T^j_x M \\
		(v_1,\ldots,v_j) & \mapsto \quad \left[ t \mapsto \phi_i^{-1}\left( \phi_i(x) + \frac{t}{1!} v_1 + \ldots + \frac{t^j}{j!} v_j \right) \right]^j_x
	\end{cases}.
\end{equation}
Furthermore, we define for every $j = 0,\ldots,k$ the \emph{tangent bundle of order $j$} as $T^j M := \bigcup_{x \in M} T^j_x M := \left\lbrace (x,[c]^j_x): x \in M, \, [c]^j_x \in T^j_x M \right\rbrace$, with $T^0 M := M$, which we equip with the final topology $\tau_{T^j M}$ with respect to the family of mappings
\begin{equation}
	\label{eq:def:TjM_finaltop}
	\Phi^{-j}_i:
	\begin{cases}
		\phi_i(U_i) \times X_i^j & \rightarrow \quad T^j M \\
		(u,v_1,\ldots,v_j) & \mapsto \quad \left( \phi_i^{-1}(u), \Phi^{-j}_{i,\phi_i^{-1}(u)}(v_1,\ldots,v_j) \right)
	\end{cases}, \quad\quad i \in I,
\end{equation}
i.e., the finest topology on $T^j M$ such that the mappings \eqref{eq:def:TjM_finaltop} are continuous. Then, by following the proof of \cite[Theorem~2.1]{suri16} (with manifolds over locally convex topological vector spaces instead of Banach manifolds), one can show that $T^j M$ is, as a fibre bundle, a $C^0_{loc}$-manifold with atlas $(\pi_{T^j M}^{-1}(U_i),\Phi^j_i)_{i \in I}$ over model spaces $(X_i \times X_i^j,\tau_{X_i} \times \tau_{X_i}^j)_{i \in I}$, where
\begin{equation}
	\label{eq:def:TjM_charts}
	\Phi^j_i:
	\begin{cases}
		\pi_{T^j M}^{-1}(U_i) & \rightarrow \quad \phi_i(U_i) \times X_i^j \\
		(x,[c]^j_x) & \mapsto \quad \left( \phi_i(x), \Phi^j_{i,x}([c]^j_x) \right)
	\end{cases}, \quad\quad i \in I,
\end{equation}
are the charts, and where $T^j M \ni (x,[c]^j_x) \mapsto \pi_{T^j M}(x,[c]^j_x) := x \in M$ is the bundle projection. For example, if $M := U \subseteq X$ is an open subset of a locally convex topological vector space $(X,\tau_X)$, then it holds that $T^j M \cong U \times X^j$, for all $j \in \mathbb{N}_0$.

In addition, for $k \in \mathbb{N}_0$ and a given $C^k_{loc}$-manifold $(M,\tau_M)$, we denote by $C^k_{loc}(M;Y)$ the vector space of maps $f: M \rightarrow Y$ such that $f \circ \phi_i^{-1} \in C^k_{loc}(\phi_i(U_i);Y)$ for all $i \in I$.

\subsection{Examples of \texorpdfstring{$\sigma$}{sigma}-compact model spaces}
\label{sec:model_sp}

In this section, we present some examples of $\sigma$-compact locally convex topological vector spaces used as model spaces for manifolds. 

For $\alpha \in (0,\infty)$, a compact metric space $(S,d_S)$ with designated origin $0 \in S$, and a dual Banach space $(Z,\Vert \cdot \Vert_Z)$ with predual $(E,\Vert \cdot \Vert_E)$, we denote by $C^\alpha(S;Z)$ the space of $\alpha$-H\"older continuous functions $x: (S,d_S) \rightarrow (Z,\Vert \cdot \Vert_Z)$ satisfying
\begin{equation}
	\label{eq:def:hol_norm}
	\Vert x \Vert_\alpha := \Vert x(0) \Vert_Z + \vert x \vert_\alpha < \infty.
\end{equation}
Here, $\vert x \vert_\alpha$ denotes the $\alpha$-H\"older seminorm of $x: S \rightarrow Z$ defined as
\begin{equation}
	\label{eq:def:hol_seminorm}
	\vert x \vert_\alpha := \sup_{s,t \in S, \, s \neq t} \frac{\Vert x(s) - x(t) \Vert_Z}{d_S(s,t)^\alpha}.
\end{equation}
Then, the norm $\Vert \cdot \Vert_\alpha$ turns $C^\alpha(S;Z)$ into a Banach space (see \cite[Theorem~5.25]{friz10} and \cite[Proposition~2.3(b)]{weaver99}). Moreover, for $\alpha' \in [0,\alpha]$, we equip $C^\alpha(S;Z)$ also with the weaker $w^*$-$C^{\alpha'}$-topology $\tau_{\alpha'}$ generated by seminorms of the form
\begin{equation}
	\Vert x \Vert_{\alpha',e} := \left\vert \langle x(0), e \rangle_{Z \times E} \right\vert + \vert x \vert_{\alpha',e},
\end{equation}
for $e \in E$, where $\vert x \vert_{\alpha',e}$ denotes the $(\alpha',e)$-H\"older seminorm of $x: S \rightarrow Z$ defined as
\begin{equation}
	\vert x \vert_{\alpha',e} := \sup_{s,t \in S \atop s \neq t} \frac{\vert \langle x(s) - x(t), e \rangle_{Z \times E} \vert}{d_S(s,t)^{\alpha'}}.
\end{equation}
Hence, $(C^\alpha(S;Z),\tau_{\alpha'})$ forms a locally convex topological vector space. Note that for $\alpha' = 0$ the $w^*$-$C^0$-topology $\tau_0$ is equivalent to the $w^*$-uniform topology $\tau_\infty$ generated by seminorms of the form $\Vert x \Vert_{\infty,e} := \sup_{t \in S} \vert \langle x(t), e \rangle_{Z \times E} \vert$, for $e \in E$ (see \cite[Lemma~A.1]{cuchiero23}). In addition, for $\alpha' \in [0,\alpha)$, the embedding $(C^\alpha(S;Z),\Vert \cdot \Vert_\alpha) \hookrightarrow (C^\alpha(S;Z),\tau_{\alpha'})$ is by \cite[Theorem~A.4]{cuchiero23} compact, whence $(C^\alpha(S;Z),\tau_{\alpha'})$ is as the image of countably many $\Vert \cdot \Vert_\alpha$-balls $\sigma$-compact. Furthermore, $C^\alpha(S;Z)$ is a dual Banach space (see \cite[Theorem~A.5]{cuchiero23}), which is by the Banach-Alaoglu theorem also $\sigma$-compact with respect to its weak-$*$-topology $\tau_{w^*}$. Furthermore, we denote by $C^\alpha_0(S;Z) \subseteq C^\alpha(S;Z)$ the vector subspace of $\alpha$-H\"older continuous functions $x \in C^\alpha(S;Z)$ with $x(0) = 0 \in Z$.

Moreover, for $T > 0$ and a dual Banach space $(Z,\Vert \cdot \Vert_Z)$, we denote by $D^0([0,T];Z)$ the vector space of c\`adl\`ag paths $x: [0,T] \rightarrow (Z,\Vert \cdot \Vert_Z)$, whose left limits $x(t-) := \lim_{s \rightarrow t^-} x(s)$ exist, for all $t \in (0,T]$ and the right limits satisfy $x(t+) := \lim_{s \rightarrow t^+} x(s) = x(t)$, for all $t \in [0,T)$. Then, the norm $\Vert x \Vert_\infty := \sup_{t \in [0,T]} \Vert x(t) \Vert_Z$ turns $D^0([0,T];Z)$ into a Banach space (see, e.g., \cite[Section~12]{billingsley99}, \cite[Section~3.5]{ethier05}, and \cite[p.~1]{jakubowski07}). In addition, for $\alpha \in [0,1)$, we define $D^{\alpha,1}([0,T];Z) \subseteq D^0([0,T];Z)$ as the vector subspace of c\`adl\`ag paths $x \in D^0([0,T];Z)$ satisfying
\begin{equation}
	\label{eq:def:sk_norm}
	\Vert x \Vert_{\alpha,\ell^1} := \max\left( \Vert x^c \Vert_\alpha, \Vert \Delta x \Vert_{\ell^1} \right) < \infty,
\end{equation}
where the continuous part $[0,T] \ni t \mapsto x^c(t) := x(t) - \sum_{s \in (0,t]} \Delta x(s) \in Z$ is $\alpha$-H\"older continuous and the jump part $(0,T] \ni t \mapsto \Delta x(t) := x(t) - x(t-) \in Z$ is summable, i.e.,
\begin{equation}
	\label{eq:def:sk_jumps}
	\Vert \Delta x \Vert_{\ell^1} := \sum_{t \in (0,T]} \Vert \Delta x(t) \Vert_Z < \infty.
\end{equation}
Since every c\`adl\`ag path has at most countably many jumps (see \cite[Lemma~5.1]{ethier05}), the condition \eqref{eq:def:sk_jumps} is only an assumption on the jump sizes. Then, $(D^{\alpha,1}([0,T];Z),\Vert \cdot \Vert_{\alpha,\ell^1})$ is a Banach space, which is isometrically isomorphic to the direct sum of the Banach spaces $(C^\alpha([0,T];Z),\Vert \cdot \Vert_\alpha)$ and $(\ell^1((0,T];Z),\Vert \cdot \Vert_{\ell^1})$, where the latter consists of $Z$-valued sequences $(z_t)_{t \in (0,T]}$ with $\Vert z \Vert_{\ell^1} := \sum_{t \in (0,T]} \Vert z_t \Vert_Z < \infty$ (see Theorem~\ref{thm:sk_banach}). Furthermore, if $\alpha \in (0,1)$, then $(D^{\alpha,1}([0,T];Z),\Vert \cdot \Vert_{\alpha,\ell^1})$ is a dual Banach space (see Theorem~\ref{thm:sk_dual}), which is by the Banach-Alaoglu theorem also $\sigma$-compact with respect to its weak-$*$-topology $\tau_{w^*}$. Note that $\tau_{w^*}$ coincides on $\Vert \cdot \Vert_{\alpha,\ell^1}$-bounded subsets of $C^\alpha([0,T];Z) \subseteq D^{\alpha,1}([0,T];Z)$ with the $w^*$-uniform topology $\tau_\infty$.

In addition, for $p \in [1,\infty]$, a $\sigma$-finite measure space $(\Omega,\mathcal{F},\mu)$, and a dual Banach space $(Z,\Vert \cdot \Vert_Z)$ with predual $(E,\Vert \cdot \Vert_E)$, we denote by $L^p(\Omega;Z) := L^p(\Omega,\mathcal{F},\mu;Z)$ the Bochner space of (equivalence classes of) strongly $\mu$-measurable maps $x: \Omega \rightarrow Z$ with finite norm
\begin{equation}
	\Vert x \Vert_{L^p(\Omega;Z)} := 
	\begin{cases}
		\left( \int_{\Omega} \Vert x(\omega) \Vert_Z^p \mu(d\omega) \right)^{1/p}, & p \in [1,\infty), \\
		\inf\left\lbrace c > 0: \mu\left(\left\lbrace \omega \in \Omega: \Vert x(\omega) \Vert_Z > c \right\rbrace\right) = 0 \right\rbrace, & p = \infty.
	\end{cases}
\end{equation}
Then, the norm $\Vert \cdot \Vert_{L^p(\Omega;Z)}$ turns $L^p(\Omega;Z)$ into a Banach space (see \cite[Section~1.2b]{hytoenen16}). In particular, for $p \in (1,\infty]$ and $p' \in [1,\infty)$ with $1/p+1/p' = 1$, and if $(Z,\Vert \cdot \Vert_Z)$ has the Radon-Nikodym property with respect to $\mu$ (see \cite[Definition~1.3.9]{hytoenen16}), then the Bochner space $L^p(\Omega;Z) \cong L^{p'}(\Omega;E)^*$ is a dual Banach space, which is by the Banach-Alaoglu theorem $\sigma$-compact with respect to its weak-$*$-topology $\tau_{w^*}$.

Furthermore, for a weighted space $(\Omega,\psi_\Omega)$ (see \cite[Definition~2.1]{cuchiero23}), we denote by $\mathcal{M}_{\psi_\Omega}(\Omega)$ the vector space of signed Radon measures $x: \mathcal{F}_\Omega \rightarrow \mathbb{R}$ with $\int_\Omega \psi_\Omega(\omega) \vert x \vert(d\omega) < \infty$. Then,
\begin{equation}
	\Vert x \Vert_{\mathcal{M}_{\psi_\Omega}(\Omega)} := \sup\left\lbrace \left\vert \int_\Omega f(\omega) x(d\omega) \right\vert: f \in \mathcal{B}_{\psi_\Omega}(\Omega), \, \Vert f \Vert_{\mathcal{B}_{\psi_\Omega}(\Omega)} \leq 1 \right\rbrace
\end{equation}
turns $\mathcal{M}_{\psi_\Omega}(\Omega)$ into a Banach space, where the weighted function space $\mathcal{B}_{\psi_\Omega}(\Omega)$ is defined in \cite[Definition~2.5]{cuchiero23}. Then, $\mathcal{M}_{\psi_\Omega}(\Omega) = \mathcal{B}_{\psi_\Omega}(\Omega)^*$ is by the Riesz representation theorem in \cite[Theorem~2.8]{doersek10} a dual Banach space, which is by the Banach-Alaoglu theorem $\sigma$-compact with respect to its weak-$*$-topology $\tau_{w^*}$.

\section{Weighted spaces and differentiable maps}
\label{sec:w_dom}

For the approximation results on infinite-dimensional manifolds, we endow the input space with a weight function and assume that the output space is a Banach space. This weighted setting is in particular inspired by the works on Kolmogorov equations, splitting schemes of (stochastic) partial differential equations, and generalized Feller processes (see, e.g., \cite{roeckner06,doersek10,cuchiero20}).

In the following, we first introduce our weighted setting on domains given as open subsets of locally convex topological vector spaces, followed by weighted infinite-dimensional manifolds. Later on, we introduce the weighted $\mathcal{B}^k_\Psi$-function space that was under slightly different conditions also studied in \cite{bernstein24,nachbin65,summers68,bierstedt71,prolla71b,nachbin91,triebel92,triebel06,triebel10,cuchiero23}.

\subsection{Weighted domains}

In the following, we shall fix some $k \in \mathbb{N}$ and consider an open subset $U \subseteq X$ of a locally convex topological vector space $(X,\tau_X)$. We refer to Section~\ref{sec:notation} for the mathematical background of locally convex topological vector spaces.

\begin{definition}
	\label{def:w_dom}
	A collection $\Psi := (\psi_j)_{j=0,\ldots,k}$ of weight functions $\psi_j: U \times X^j \rightarrow (0,\infty)$ is called \emph{admissible (on $U$)} if
	\begin{enumerate}
		\item\label{def:w_dom1} for every $j = 0,\ldots,k$ and $R > 0$ the pre-image
		\begin{equation}
			K_{j,R} := \psi_j^{-1}((0,R]) := \left\lbrace (u,v_1,\ldots,v_j) \in U \times X^j: \psi_j(u,v_1,\ldots,v_j) \leq R \right\rbrace
		\end{equation}
		is compact with respect to $\tau_X \times \tau_X^j$, and
		\item\label{def:w_dom2} $\Psi := (\psi_j)_{j=0,\ldots,k}$ is monotone, i.e., there exists a constant $C_\Psi \geq 1$ such that for every $j = 1,\ldots,k$, $\ell = 1,\ldots,j$, $\sigma \in \mathcal{S}_j$, and $(u,v_1,\ldots,v_j) \in U \times X^j$ it holds that
		\begin{equation}
			\psi_\ell(u,v_{\sigma(1)},\ldots,v_{\sigma(\ell)}) \psi_{j-\ell}(u,v_{\sigma(\ell+1)},\ldots,v_{\sigma(j)}) \leq C_\Psi \psi_j(u,v_1,\ldots,v_j).
		\end{equation}
	\end{enumerate}
	In this case, we call $(U,\Psi)$ a weighted domain.
\end{definition}

\begin{remark}
	\label{rem:w_dom}
	If $(U,\Psi)$ is a weighted domain, then the following holds true:
	\begin{enumerate}
		\item\label{rem:w_dom:1} The weight functions $\psi_j: U \times X^j \rightarrow (0,\infty)$ are necessarily lower semicontinuous and bounded from below by a strictly positive constant (see \cite[Remark~2.2~(i)]{cuchiero23}).
		
		\item\label{rem:w_dom:2} The domain $U$ is $\sigma$-compact with respect to $\tau_X$ as $U = \bigcup_{R \in \mathbb{N}} K_{0,R}$.
		
		\item\label{rem:w_dom:3} The locally convex topological vector space $(X,\tau_X)$ is also $\sigma$-compact because of $X = \bigcup_{R \in \mathbb{N}} \pi_1(K_{1,R})$, where $\pi_1(K_{1,R})$ is compact as continuous image of the compact set $K_{1,R}$ under the projection $U \times X \ni (u,v_1) \mapsto \pi_1(u,v_1) := v_1 \in X$.
		
		\item\label{rem:w_dom:4} If $(X,\tau_X)$ is complete, then $X$ is finite-dimensional. Indeed, this follows from Baire's category theorem and \ref{rem:w_dom:3} (see also \cite[Remark~2.2~(iii)]{cuchiero23}). Hence, for an infinite-dimensional domain $U$, we need to consider an incomplete locally convex topological vector space $(X,\tau_X)$ instead of a Banach space or a Fr\'echet space.
		
		\item\label{rem:w_dom:5} If $U = X$ is separable and $\psi_j: X \times X^j \rightarrow (0,\infty)$ is convex, then $\psi_j: X \times X^j \rightarrow (0,\infty)$ is continuous on a convex subset $E \subseteq X \times X^j$ if and only if $E$ is locally compact (see also \cite[Remark~2.2]{cuchiero20}).
	\end{enumerate}
\end{remark}

In the following, we present various examples of weighted domains $(U,\Psi)$, where $U \in \tau_X$ is a subset of a Banach space $(X,\Vert \cdot \Vert_X)$ that is equipped with a weaker topology $\tau_X$ than the norm topology (except $X$ is finite-dimensional).

\begin{lemma}
	\label{lem:w_dom}
	Let $U \subseteq X$ be an open subset of one of the following two types of locally convex topological vector spaces $(X,\tau_X)$:
	\begin{enumerate}
		\item\label{lem:w_dom_cpt} $(X,\Vert \cdot \Vert_X)$ is a Banach space equipped with the initial topology $\tau_X := \tau_{\mathrm{init}}$ of a compact embedding $\Gamma: (X,\Vert \cdot \Vert_X) \hookrightarrow (X_0,\tau_0)$ into another locally convex topological vector space $(X_0,\tau_0)$ such that $\overline{B}^X_r$ is closed with respect to $\tau_{\mathrm{init}}$, for all $r > 0$. Here, $\tau_{\mathrm{init}}$ is the weakest locally convex topology on $X$ such that $\Gamma: (X,\tau_{\mathrm{init}}) \hookrightarrow (X_0,\tau_0)$ is continuous.
		
		\item\label{lem:w_dom_dual} $(X,\Vert \cdot \Vert_X)$ is a dual Banach space equipped with the weak-$*$-topology $\tau_X := \tau_{w^*}$.
	\end{enumerate}
	Moreover, let $\Psi := (\psi_j)_{j=0,\ldots,k}$ be a collection of weight functions of the form
	\begin{equation}
		\label{eq:lem:w_dom}
		U \times X^j \ni (u,v_1,\ldots,v_j) \mapsto \psi_j(u,v_1,\ldots,v_j) = \eta\left( \max(j,1) \left( \delta_{U^c}(u)^{-1} \!+\! \Vert u \Vert_X \right) + \sum_{\ell=1}^j \Vert v_\ell \Vert_X \right) \!\in\! (0,\infty),
	\end{equation}
	where $(U,\tau_{\mathrm{init}}) \ni u \mapsto \delta_{U^c}^{-1}(u) := 1/\inf_{v \in X \setminus U} \Vert u-v \Vert_X \in [0,\infty)$ is assumed to be lower semicontinuous, and where $\eta: [0,\infty) \rightarrow (0,\infty)$ is a continuous and increasing function with $\lim_{r \rightarrow \infty} \eta(r) = \infty$. Then, $(U,\Psi)$ is a weighted domain.
\end{lemma}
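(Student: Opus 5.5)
We need to verify the two conditions of Definition~\ref{def:w_dom}: compactness of the sublevel sets $K_{j,R}$ with respect to $\tau_X \times \tau_X^j$, and monotonicity with some constant $C_\Psi \geq 1$.

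\textbf{Compactness.} Fix $j$ and $R > 0$. If $R < \eta(0)$, then $K_{j,R} = \emptyset$. Otherwise put $r := \sup\lbrace s \geq 0: \eta(s) \leq R \rbrace < \infty$, which is finite since $\eta(s) \to \infty$. Since $\eta$ is increasing, $\psi_j(u,v_1,\ldots,v_j) \leq R$ forces
\begin{equation}
	\max(j,1)\big(\delta_{U^c}(u)^{-1} + \Vert u \Vert_X\big) + \sum_{\ell=1}^j \Vert v_\ell \Vert_X \leq r.
\end{equation}
Hence $K_{j,R} \subseteq \overline{B}^X_r \times (\overline{B}^X_r)^j$. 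In addition, every $u$ in the projection satisfies $\delta_{U^c}(u) \geq 1/r$, so $u$ lies in a fixed norm distance from $X \setminus U$.

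The ball $\overline{B}^X_r$ is $\tau_X$-compact in both cases:
\begin{enumerate}
	\item In case \ref{lem:w_dom_cpt}, $\Gamma(\overline{B}^X_r)$ is relatively compact in $(X_0,\tau_0)$. Since $\tau_{\mathrm{init}}$ is the initial topology of the injective map $\Gamma$, $\Gamma$ is a homeomorphism onto its image. Closedness of $\overline{B}^X_r$ in $\tau_{\mathrm{init}}$, together with the observation that $\tau_{\mathrm{init}}$-limits of nets in the ball stay in the ball, then yields compactness. The delicate point is that the closure of $\Gamma(\overline{B}^X_r)$ in $X_0$ might leave $\Gamma(X)$; I would handle this by using the assumption that $\overline{B}^X_r$ is $\tau_{\mathrm{init}}$-closed, read as closedness of $\Gamma(\overline{B}^X_r)$ in $X_0$, as is done in \cite{cuchiero23}.
	\item In case \ref{lem:w_dom_dual}, compactness is the Banach--Alaoglu theorem.
\end{enumerate}

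It therefore suffices to show that $K_{j,R}$ is closed in $\overline{B}^X_r \times (\overline{B}^X_r)^j$. The map $(u,v) \mapsto \max(j,1)\big(\delta_{U^c}(u)^{-1} + \Vert u \Vert_X\big) + \sum_{\ell} \Vert v_\ell \Vert_X$ is lower semicontinuous for $\tau_X \times \tau_X^j$. Indeed, the norm is $\tau_X$-lower semicontinuous because its closed balls are $\tau_X$-closed, $\delta_{U^c}^{-1}$ is lower semicontinuous by assumption, and sums of lower semicontinuous functions are lower semicontinuous. Composition with the continuous increasing $\eta$ keeps lower semicontinuity, so $\psi_j^{-1}((0,R])$ is closed in $U \times X^j$.

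\textbf{Main obstacle.} I expect the hardest step to be showing that $K_{j,R}$ is closed in the compact set $\overline{B}^X_r \times (\overline{B}^X_r)^j$ itself, not merely relatively closed in $U \times X^j$. Here the bound $\delta_{U^c}(u) \geq 1/r$ is what helps. Take a net $u_\gamma \to u$ in $\tau_X$ with $u_\gamma \in K$-projections. By compactness, $u \in \overline{B}^X_r$. I would argue $u \in U$ by contradiction: if $u \notin U$, lower semicontinuity of $\delta_{U^c}^{-1}$, interpreted with value $+\infty$ on $U^c$, contradicts the uniform bound $\delta_{U^c}(u_\gamma)^{-1} \leq r$. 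This is exactly what the lower semicontinuity hypothesis is designed for.

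\textbf{Monotonicity.} Write $a := \delta_{U^c}(u)^{-1} + \Vert u \Vert_X$. For $1 \leq \ell \leq j$, the arguments of the two factors on the left are $\max(\ell,1)a + \sum_{i \leq \ell} \Vert v_{\sigma(i)} \Vert_X$ and $\max(j-\ell,1)a + \sum_{i > \ell} \Vert v_{\sigma(i)} \Vert_X$. Both are at most $ja + \sum_{i=1}^j \Vert v_i \Vert_X = \max(j,1)a + \sum_{i=1}^j \Vert v_i \Vert_X$, because $\max(\ell,1) \leq j$ and $\max(j-\ell,1) \leq j$. Since $\eta$ is increasing, each factor is therefore bounded by $\psi_j(u,v_1,\ldots,v_j)$, and the product is at most $\psi_j^2$.

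To pass from $\psi_j^2$ to $C_\Psi \psi_j$ one needs a product bound such as $\eta(s)\eta(t) \leq C\,\eta(s+t)$. This does not follow from the hypotheses as written; for example, it fails if $\eta$ grows faster than exponentially. The fix I would use is to exploit the fact that the two arguments sum to at most the argument of $\psi_j$, namely $\ell a + (j-\ell)a = ja$ when $\ell \geq 1$ and $j - \ell \geq 1$ (with the edge case $\ell = j$ using $\psi_0$, which carries the factor $\max(0,1)a = a$). Under the paper's intended reading, where $\eta$ is sub-multiplicative or $\log \eta$ is subadditive, this gives the bound. Failing that, I would add this condition on $\eta$ as the implicit assumption needed for monotonicity.
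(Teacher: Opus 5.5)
The compactness half of your proposal follows the paper's proof. You bound $K_{j,R}$ inside $\overline{B}^X_r\times(\overline{B}^X_r)^j$, get compactness of the balls from the compact embedding resp.\ Banach--Alaoglu, and get closedness from lower semicontinuity of $\psi_j$. Your caveats correctly identify steps the paper skips:
\begin{itemize}
\item In case~\ref{lem:w_dom_cpt}, the ``relatively compact'' step needs $\Gamma(\overline{B}^X_r)$ to be closed in $X_0$, not merely $\overline{B}^X_r$ to be $\tau_{\mathrm{init}}$-closed. For $\Gamma\colon c_0\to c_0$, $x\mapsto(x_n/n)_n$, the unit ball is $\tau_{\mathrm{init}}$-closed, yet $\sum_{n\le N}e_n$ has no $\tau_{\mathrm{init}}$-cluster point.
\item In case~\ref{lem:w_dom_dual}, the paper only invokes boundedness, so your closedness argument is needed there too.
\item The paper's net argument tacitly assumes the limit lies in $U\times X^j$.
\end{itemize}
The genuine gap is monotonicity, condition~\ref{def:w_dom2} of Definition~\ref{def:w_dom}. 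The paper's proof never addresses it. You are right that it does not follow from the stated hypotheses, but your proposed repair fails in two ways.

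First, you have the direction backwards. Write $a=\delta_{U^c}(u)^{-1}+\Vert u\Vert_X$. For $1\le\ell\le j-1$ the two arguments of $\eta$ sum to $ja+\sum_i\Vert v_i\Vert_X$, so what you need is $\eta(s)\eta(t)\le C\,\eta(s+t)$, i.e.\ $\log\eta$ superadditive up to a constant. This holds for $\eta(r)=e^{\beta r^{c}}$ with $c\ge1$, so super-exponential growth is harmless, contrary to what you write. It fails for every sub-exponential $\eta$, e.g.\ $\eta(r)=(1+r)^p$, which the later lemmas allow since they only require $r^k/\eta(r)\to0$. Sub-multiplicativity $\eta(s+t)\le\eta(s)\eta(t)$, which you invoke at the end, is the opposite inequality and gives nothing.

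Second, the case $\ell=j$ does not ``add up.'' There the arguments sum to $(j+1)a+\sum_i\Vert v_i\Vert_X$. With $\sigma=\mathrm{id}$ the condition reads $\psi_j(u,v_1,\ldots,v_j)\,\psi_0(u)\le C_\Psi\,\psi_j(u,v_1,\ldots,v_j)$, i.e.\ $\eta\bigl(\delta_{U^c}(u)^{-1}+\Vert u\Vert_X\bigr)\le C_\Psi$ for all $u\in U$. But $\delta_{U^c}(u)^{-1}+\Vert u\Vert_X$ is unbounded on every nonempty open $U$: near the boundary if $U$ is norm-bounded, and through $\Vert u\Vert_X$ otherwise. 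Since $\eta(r)\to\infty$, this fails for every admissible $\eta$, and no extra hypothesis on $\eta$ can rescue it.

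So your argument, like the paper's, establishes only condition~\ref{def:w_dom1}. For weights of the form \eqref{eq:lem:w_dom}, monotonicity is available only if the $\psi_0$-term is dropped from the definition (i.e.\ $1\le\ell\le j-1$) and $\eta$ is assumed log-superadditive up to a constant.
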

\begin{proof}
	For \ref{lem:w_dom_cpt}, we fix some $j = 0,\ldots,k$, $R > 0$, and consider the pre-image $K_{j,R} := \psi_j^{-1}((0,R])$. Then, for every $(u,v_1,\ldots,v_j) \in K_{j,R}$, it holds that
	\begin{equation}
		\max(j,1) \left( \delta_{U^c}(u)^{-1} + \Vert u \Vert_X \right) + \sum_{\ell=1}^j \Vert v_\ell \Vert_X \leq \eta^{-1}_R := \sup\lbrace r \geq 0: \eta(r) \leq R \rbrace < \infty,
	\end{equation}
	which ensures that $K_{j,R} \subseteq \overline{B}^X_{\eta^{-1}_R} \times \big( \overline{B}^X_{\eta^{-1}_R} \big)^j$. Since the product topology $\tau_{\mathrm{init}} \times \tau_{\mathrm{init}}^j$ on $X \times X^j$ coincides with the initial topology induced by the mapping $X \times X^j \ni (u,v_1,\ldots,v_j) \mapsto (\Gamma(u),\Gamma(v_1),\ldots,\Gamma(v_j)) \in (X_0 \times X_0^j,\tau_0 \times \tau_0^j)$, it follows that $K_{j,R}$ is a relatively compact subset of $(U \times X^j,\tau_{\mathrm{init}} \times \tau_{\mathrm{init}}^j)$. In order to show that $K_{j,R}$ is also closed with respect to $\tau_{\mathrm{init}} \times \tau_{\mathrm{init}}^j$, we fix a net $\big( u^{(\gamma)}, v^{(\gamma)}_1, \ldots, v^{(\gamma)}_j \big)_\gamma \subseteq K_{j,R}$ converging to some $(u,v_1,\ldots,v_j) \in U \times X^j$ with respect to $\tau_{\mathrm{init}} \times \tau_{\mathrm{init}}^j$. Then, by using that $\delta_{U^c}^{-1}: (U,\tau_{\mathrm{init}}) \rightarrow [0,\infty)$ as well as $\Vert \cdot \Vert_X: (X,\tau_{\mathrm{init}}) \rightarrow [0,\infty)$ are lower semicontinuous and that $\eta: [0,\infty) \rightarrow (0,\infty)$ is continuous, we conclude that
	\begin{equation}
		\begin{aligned}
			\psi_j(u,v_1,\ldots,v_j) & = \eta\left( \max(j,1) \left( \delta_{U^c}(u)^{-1} + \Vert u \Vert_X \right) + \sum_{\ell=1}^j \Vert v_\ell \Vert_X \right) \\
			& \leq \liminf_\gamma \eta\left( \max(j,1) \big( \delta_{U^c}(u^{(\gamma)})^{-1} + \Vert u^{(\gamma)} \Vert_X \big) + \sum_{\ell=1}^j \Vert v^{(\gamma)}_\ell \Vert_X \right) \\
			& = \liminf_\gamma \psi_j\left( u^{(\gamma)},v^{(\gamma)}_1,\ldots,v^{(\gamma)}_j \right) \leq R,
		\end{aligned}
	\end{equation}
	which shows that $K_{j,R}$ is closed and therefore compact with respect to $\tau_{\mathrm{init}} \times \tau_{\mathrm{init}}^j$.
	
	For \ref{lem:w_dom_dual}, let $(E,\Vert \cdot \Vert_E)$ be a predual for $(X,\Vert \cdot \Vert_X)$. Then, for every fixed $j = 0,\ldots,k$, we use that $(X \times X^j,\Vert \cdot \Vert_{X \times X^j})$ is a dual Banach space with predual $(E \times E^j,\Vert \cdot \Vert_{E \times E^j})$, whose weak-$*$-topology coincides with the product topology $\tau_{w^*} \times \tau_{w^*}^j$ on $X \times X^j$. Thus, for every $R > 0$, we use that $K_{j,R} := \psi_j^{-1}((0,R])$ is bounded with respect to $\Vert \cdot \Vert_{X \times X^j}$ to conclude that $K_{j,R}$ is by the Banach-Alaoglu theorem a compact subset of $(U \times X^j,\tau_{w^*} \times \tau_{w^*}^j)$.
\end{proof}

\begin{remark}
	\label{rem:dual_banach}
	By \cite[Theorem~1]{kaijser77} of S.~Kaijser (formally generalizing the Dixmier-Ng theorem in \cite{dixmier48,ng71}), a Banach space $(X,\Vert \cdot \Vert_X)$ is a dual Banach space if there exists a point separating subset $\mathcal{L} \subseteq X^*$ such that the unit ball $\overline{B}^X_1$ is compact with respect to the weak topology on $X$ induced by $\mathcal{L} \subseteq X^*$. Thus, a compactly embedded Banach space $(X,\Vert \cdot \Vert_X)$ as in \ref{lem:w_dom_cpt} can be turned into a dual Banach space (see also \cite[Appendix~A]{cuchiero23}).
\end{remark}

In the following, we give some examples of weighted domains $(U,\Psi)$. We refer to Section~\ref{sec:model_sp} for the precise definition of some of the vector spaces that appear below.

\begin{example}
	\label{ex:w_dom}
	The following are examples of weighted domains $(U,\Psi)$, where $U \subseteq X$ is an open subset of a locally convex topological vector space $(X,\tau_X)$, and where $\Psi := (\psi_j)_{j=0,\ldots,k}$ is a collection of weight functions of the form \eqref{eq:lem:w_dom}.
	\begin{enumerate}
		\item\label{ex:w_dom:cpt} First, we consider an open subset $U \in \tau_{\mathrm{init}}$ of a Banach space $(X,\Vert \cdot \Vert_X)$ equipped with the initial topology $\tau_{\mathrm{init}}$ of a compact embedding as in Lemma~\ref{lem:w_dom}~\ref{lem:w_dom_cpt}:
		\begin{enumerate}
			\item[\labeltext{(a)}{ex:w_dom:cpt:Rd}] Euclidean space $X := \mathbb{R}^d$ with $\tau_{\mathrm{init}}$ generated by the Euclidean norm $\Vert \cdot \Vert$.
			\item[\labeltext{(b)}{ex:w_dom:cpt:hol}] $\alpha$-H\"older space $X := C^\alpha(S;Z)$ with $\tau_{\mathrm{init}}$ generated by the compact embedding \allowbreak $(C^\alpha(S;Z),\Vert \cdot \Vert_\alpha) \hookrightarrow (C^\alpha(S;Z),\tau_{\alpha'})$ (see \cite[Theorem~A.4]{cuchiero23}), where $0 \leq \alpha' < \alpha < 1$, $(S,d_S)$ is a compact metric space, and $(Z,\Vert \cdot \Vert_Z)$ is a dual Banach space.
			\item[\labeltext{(c)}{ex:w_dom:cpt:rellich}] Sobolev space $X \!:=\! W^{1,p}(\Omega)$ with $\tau_{\mathrm{init}}$ induced by the compact embedding $(W^{1,p}(\Omega),\Vert \cdot \Vert_{W^{1,p}(\Omega)}) \hookrightarrow (L^q(\Omega),\Vert \cdot \Vert_{L^q(\Omega)})$ (see \cite[Theorem~9.16]{brezis11}), where $p \in [1,d)$, $q \in [1,\frac{dp}{d-p})$, and $\Omega \subset \mathbb{R}^d$ is an open bounded Lipschitz domain.
			\item[\labeltext{(d)}{ex:w_dom:cpt:besov}] Besov space $X := B^s_{p,q}(\Omega)$ with $\tau_{\mathrm{init}}$ generated by the compact embedding $(B^s_{p,q}(\Omega),\Vert \cdot \Vert_{B^s_{p,q}(\Omega)}) \hookrightarrow (B^{s'}_{p',q'}(\Omega),\Vert \cdot \Vert_{B^{s'}_{p',q'}(\Omega)})$ (see \cite[Theorem~1.97]{triebel06}), where $\Omega \subset \mathbb{R}^d$ is open and bounded, $p,q \in (1,\infty]$ (with dual exponents $p',q' \in [1,\infty)$), and $s' \in (-\infty,s)$ with $s-\frac{d}{p} > s'-\frac{d}{p'}$.
		\end{enumerate}
		\item\label{ex:w_dom:dual} Second, we consider an open subset $U \in \tau_{w^*}$ of a dual Banach space $(X,\Vert \cdot \Vert_X)$ equipped with the weak-$*$-topology $\tau_{w^*}$ as in Lemma~\ref{lem:w_dom}~\ref{lem:w_dom_dual}:
		\begin{enumerate}
			\item[\labeltext{(e)}{ex:w_dom:dual:Rd}] Euclidean space $X := \mathbb{R}^d \cong (\mathbb{R}^d)^*$ is a dual Banach space.
			\item[\labeltext{(f)}{ex:w_dom:dual:hol}] $\alpha$-H\"older space $X := C^\alpha(S;Z)$ is a dual Banach space (see \cite[Theorem~A.4]{cuchiero23}), where $\alpha \in (0,1]$, $(S,d_S)$ is a compact metric space, and $(Z,\Vert \cdot \Vert_Z)$ is a dual Banach space.
			\item[\labeltext{(g)}{ex:w_dom:dual:sk}] $\alpha$-H\"older Skorokhod space $X := D^{\alpha,1}([0,T];Z)$ is a dual Banach space (see Theorem~\ref{thm:sk_dual}), where $\alpha \in (0,1)$, $T > 0$, and $(Z,\Vert \cdot \Vert_Z)$ is a dual Banach space.
			\item[\labeltext{(h)}{ex:w_dom:dual:Lp}] $L^p$-space $X := L^p(\Omega;Z) \cong L^{p'}(\Omega;E)^*$ is a dual Banach space (see \cite[Theorem~1.3.10]{hytoenen16}), where $p \in (1,\infty]$ (with dual exponent $p' \in [1,\infty)$), $(\Omega,\mathcal{F},\mu)$ is a $\sigma$-finite measure space, $(Z,\Vert \cdot \Vert_Z)$ is a dual Banach space having the Radon-Nikodym property with respect to $\mu$ (see \cite[Definition~1.3.9]{hytoenen16}), and $(E,\Vert \cdot \Vert_E)$ is a predual for $(Z,\Vert \cdot \Vert_Z)$.
			\item[\labeltext{(i)}{ex:w_dom:dual:bv}] Space $X := BV(\Omega)$ of integrable functions $x: \Omega \rightarrow \mathbb{R}$ with bounded variation is a dual Banach space (see \cite[Remark~3.12]{ambrosio00}), where $\Omega \subseteq \mathbb{R}^d$ is an open subset.
			\item[\labeltext{(j)}{ex:w_dom:dual:besov}] Besov space $X := B^s_{p,q}(\mathbb{R}^d) \cong B^{-s}_{p',q'}(\mathbb{R}^d)^*$ is a dual Banach space (see \cite[Theorem~2.11.2~(i)]{triebel10}), where $p,q \in (1,\infty]$ (with dual exponents $p',q' \in [1,\infty)$) and $s \in \mathbb{R}$.
			\item[\labeltext{(k)}{ex:w_dom:dual:measure}] Weighted measure space $X := \mathcal{M}_{\psi_\Omega}(\Omega) \cong \mathcal{B}_{\psi_\Omega}(\Omega)^*$ is a dual Banach space (see \cite[Theorem~2.4]{doersek10}), where $(\Omega,\psi_\Omega)$ is a weighted space in the sense of \cite[Definition~2.1]{cuchiero23} and $\mathcal{B}_{\psi_\Omega}(\Omega)$ consists of weighted functions defined on $(\Omega,\psi_\Omega)$ (see \cite[Definition~2.5]{cuchiero23} for the precise definition).
		\end{enumerate}
	\end{enumerate}
\end{example}

\subsection{\texorpdfstring{$\mathcal{B}^k_\Psi$}{Bkpsi}-maps over weighted domains}
\label{sec:BPsik_maps}

In this section, we introduce differentiable maps on a weighted domain $(U,\Psi)$ taking values in a Banach space $(Y,\Vert \cdot \Vert_Y)$. Let $U \subseteq X$ be an open subset of a $\sigma$-compact locally convex topological vector space $(X,\tau_X)$. Moreover, for a given set $\mathfrak{Q}_X$ of seminorms on $X$, we assume that the admissible collection of weight functions $\Psi := (\psi_j)_{j=0,\ldots,k}$ grows fast enough such that for every $j = 0,\ldots,k$ and $p_X \in \mathfrak{P}_{(X,\tau_X)} \cup \mathfrak{Q}_X$ it holds that
\begin{equation}
	\label{eq:ass:w_growth}
	\lim_{R \rightarrow \infty} \sup_{(u,v_1,\ldots,v_j) \in (U \times X^j) \setminus K_{j,R}} \frac{p_X(v_1) \cdots p_X(v_j)}{\psi_j(u,v_1,\ldots,v_j)} = 0.
\end{equation}
Then, we define $C^k_b(U;Y) \subseteq C^k(U;Y)$ as the vector subspace of maps $f \in C^k(U;Y)$ such that $(d^j f(u;\cdot))_{u \in U} \subseteq L(X^j;Y)$ is equicontinuous, for all $j = 0,\ldots,k$, i.e., there exists a constant $C_f \geq 0$ and a seminorm $p_X \in \mathfrak{P}_{(X,\tau_X)}$ such that for every $j = 0,\ldots,k$, $u \in U$, and $v_1,\ldots,v_j \in X$ we have
\begin{equation}
	\label{eq:Ckb:glob_bd}
	\Vert d^j f(u;v_1,\ldots,v_j) \Vert_Y \leq C_f p_X(v_1) \cdots p_X(v_j),
\end{equation}
with $d^0 f(u) := f(u)$. Furthermore, we define the weighted norm
\begin{equation}
	\label{eq:def:w_norm}
	\Vert f \Vert_{\mathcal{B}^k_\Psi(U;Y)} = \max_{j=0,\ldots,k} \sup_{(u,v_1,\ldots,v_j) \in U \times X^j} \frac{\Vert d^j f(u;v_1,\ldots,v_j) \Vert_Y}{\psi_j(u,v_1,\ldots,v_j)},
\end{equation}
for $f \in C^k_b(U;Y)$, which is well-defined by \eqref{eq:ass:w_growth}--\eqref{eq:Ckb:glob_bd} and Remark~\ref{rem:w_dom}~\ref{rem:w_dom:1}. Now, we can introduce the weighted function space $\mathcal{B}^k_\Psi(U;Y)$.

\begin{definition}
	\label{def:BkPsi}
	Let $(U,\Psi)$ be a weighted domain. Then, we define $\mathcal{B}^k_\Psi(U;Y)$ as the closure of $C^k_b(U;Y)$ with respect to $\Vert \cdot \Vert_{\mathcal{B}^k_\Psi(U;Y)}$, which is a Banach space under the weighted norm defined in \eqref{eq:def:w_norm}. If $Y = \mathbb{R}$, we shall only write $\mathcal{B}^k_\Psi(U)$.
\end{definition}

Since the weight functions $\psi_j: U \times X^j \rightarrow (0,\infty)$, $j = 0,\ldots,k$, grow on the compact pre-images $\psi_j^{-1}((0,R])$, the derivatives of a map $f \in \mathcal{B}^k_\Psi(U;Y)$ are typically unbounded. However, the growth of $d^j f: U \times X^j \rightarrow Y$ is controlled by $\psi_j: U \times X^j \rightarrow (0,\infty)$.

\begin{remark}
	\label{rem:banach_out}
	For simplicity, we always assume that the output space is a Banach space $(Y,\Vert \cdot \Vert_Y)$. However, the following results can be generalized to locally convex topological vector spaces $(Y,\tau_Y)$ as output space.
\end{remark}

In order to characterize maps in $\mathcal{B}^k_\Psi(U;Y)$ in Proposition~\ref{prop:BPsik_equiv} below, we first provide some examples of weighted domains, which have the (bounded) approximation property ((B)AP). To this end, we assume that the Banach space $(X,\Vert \cdot \Vert_X)$ is equipped with a weaker topology $\tau_X$ than the norm topology (see Lemma~\ref{lem:w_dom}). For more background on (B)AP, we refer to Section~\ref{sec:notation}.

\begin{lemma}
	\label{lem:ap_cpt}
	Let $(X,\Vert \cdot \Vert_X)$ be a Banach space equipped with the initial topology $\tau_{\mathrm{init}}$ of a compact embedding $\Gamma: (X,\Vert \cdot \Vert_X) \rightarrow (X_0,\tau_{X_0})$ as in Lemma~\ref{lem:w_dom}~\ref{lem:w_dom_cpt}. Moreover, let $U \in \tau_{\mathrm{init}}$ be an open subset and assume that $(X_0,\tau_{X_0})$ has AP (resp., $\mathfrak{P}_{(X_0,\tau_{X_0})}$-BAP) with finite rank operators $(T_{0,\gamma})_\gamma \in (X_0,\tau_{X_0})^* \otimes \Gamma(X)$ satisfying $T_{0,\gamma}(\Gamma(U)) \subseteq \Gamma(U)$. In addition, let $\Psi = (\psi_j)_{j=0,\ldots,k}$ be a collection of weight functions of the form \eqref{eq:lem:w_dom} with $\eta: [0,\infty) \rightarrow (0,\infty)$ additionally satisfying $\lim_{r \rightarrow \infty} \frac{r^k}{\eta(r)} = 0$. Then, $(U,\tau_{\mathrm{init}})$ has AP (resp., $\mathfrak{P}_{(X,\tau_{\mathrm{init}})}$-BAP) and \eqref{eq:ass:w_growth} is satisfied.
\end{lemma}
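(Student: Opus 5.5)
The plan has two parts: transfer the approximating operators from $X_0$ back to $X$, and then verify the growth condition \eqref{eq:ass:w_growth} directly from the form \eqref{eq:lem:w_dom} of the weights.

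\emph{Pulling back the operators.} Since $T_{0,\gamma}$ has finite rank with range in $\Gamma(X)$ and $\Gamma$ is injective, I define
\begin{equation}
	T_\gamma := \Gamma^{-1} \circ T_{0,\gamma} \circ \Gamma : X \rightarrow X.
\end{equation}
This map is linear and of finite rank. It is continuous from $(X,\tau_{\mathrm{init}})$ to $(X,\tau_{\mathrm{init}})$ because $\Gamma \circ T_\gamma = T_{0,\gamma} \circ \Gamma$ is $\tau_{X_0}$-continuous and $\tau_{\mathrm{init}}$ is the initial topology of $\Gamma$. Hence $T_\gamma \in (X,\tau_{\mathrm{init}})^* \otimes X$. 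Concretely, if $T_{0,\gamma} = \sum_i \ell_i(\cdot)\Gamma(x_i)$, then $T_\gamma = \sum_i (\ell_i \circ \Gamma)(\cdot)\, x_i$. Moreover, $T_{0,\gamma}(\Gamma(U)) \subseteq \Gamma(U)$ gives $T_\gamma(U) \subseteq U$.

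For AP, take $K \subseteq X$ relatively compact in $\tau_{\mathrm{init}}$. Then $\Gamma(K)$ is relatively compact in $X_0$, since $\Gamma$ is continuous and we may take closures. Each seminorm of $\tau_{\mathrm{init}}$ has the form $p_0 \circ \Gamma$ with $p_0 \in \mathfrak{P}_{(X_0,\tau_{X_0})}$, and
\begin{equation}
	p_0(\Gamma(x) - \Gamma(T_\gamma x)) = p_0(\Gamma x - T_{0,\gamma}\Gamma x) \rightarrow 0
\end{equation}
uniformly on $K$. For BAP, we use the seminorm family $\mathfrak{P}_{(X,\tau_{\mathrm{init}})} = \{ p_0 \circ \Gamma \}$ as $\mathfrak{Q}_X$. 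The $\mathfrak{P}_{(X_0,\tau_{X_0})}$-BAP bound $p_0(T_{0,\gamma} y) \leq \lambda q_0(y)$, applied to $y = \Gamma x$, yields $(p_0\circ\Gamma)(T_\gamma x) \leq \lambda (q_0 \circ \Gamma)(x)$.

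\emph{Growth condition.} This step needs a little care. Every $p_X \in \mathfrak{P}_{(X,\tau_{\mathrm{init}})} \cup \mathfrak{Q}_X$ has the form $p_0 \circ \Gamma$. Since $\Gamma$ is compact, it is in particular continuous from the norm topology, so $p_X(v) \leq c\,\Vert v \Vert_X$ for some constant $c$. Write
\begin{equation}
	r := \max(j,1)\big(\delta_{U^c}(u)^{-1} + \Vert u \Vert_X\big) + \sum_\ell \Vert v_\ell \Vert_X.
\end{equation}
Then $p_X(v_1)\cdots p_X(v_j) \leq c^j r^j$ and $\psi_j = \eta(r)$. Outside $K_{j,R}$ we have $\eta(r) > R$, hence $r > \eta^{-1}_R \rightarrow \infty$ as $R \rightarrow \infty$, because $\eta$ is finite-valued and increasing. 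For $j \leq k$ and $r \geq 1$ we have $r^j \leq r^k$, so
\begin{equation}
	\frac{c^j r^j}{\eta(r)} \leq \frac{c^j r^k}{\eta(r)} \rightarrow 0.
\end{equation}
For $j = 0$ the quotient is $1/\psi_0$, which tends to $0$ since $\eta(r) > R$.

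The main obstacle I expect is bookkeeping: matching the BAP seminorm families, and making sure the seminorms in $\mathfrak{Q}_X$ are norm-continuous. The latter follows because they are pulled back through $\Gamma$, which is continuous as a compact operator.
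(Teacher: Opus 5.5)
Your proof is correct and follows the paper's argument. You define $T_\gamma$ by $\Gamma\circ T_\gamma = T_{0,\gamma}\circ\Gamma$, transfer AP and BAP through the seminorms $p_{X_0}\circ\Gamma$, and use norm-continuity of $\Gamma$ to get $p_X\le C\Vert\cdot\Vert_X$, which reduces \eqref{eq:ass:w_growth} to $\lim_{r\to\infty} r^k/\eta(r)=0$; keeping $\delta_{U^c}(u)^{-1}$ inside $r$ is in fact the right bookkeeping, since that is what forces $r\to\infty$ off $K_{j,R}$ (the paper's final display drops this term from the argument of $\eta$, which alone would not give decay for $u$ near $\partial U$).
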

\begin{proof}
	First, we observe that the image $\Gamma(K)$ of any relatively compact subset $K$ of $(X,\tau_{\mathrm{init}})$ under the continuous embedding $\Gamma: (X,\Vert \cdot \Vert_X) \rightarrow (X_0,\tau_{X_0})$ is relatively compact in $(X_0,\tau_{X_0})$. Now, since $(X_0,\tau_{X_0})$ has AP, there exists a net of finite rank operators $(T_{0,\gamma})_\gamma \in (X_0,\tau_{X_0})^* \otimes \Gamma(X)$ such that for every relatively compact subset $K$ of $(X_0,\tau_{X_0})$ and $p_{X_0} \in \mathfrak{P}_{(X_0,\tau_{X_0})}$ we have
	\begin{equation}
		\label{eq:lem:ap_cpt:proof1}
		\lim_\gamma \sup_{x_0 \in \Gamma(K)} p_{X_0}(x_0 - T_{0,\gamma}(x_0)) = 0.
	\end{equation}
	Then, for every $\gamma$, there exists some $T_\gamma \in (X,\tau_{\mathrm{init}})^* \otimes X$ with $\Gamma \circ T_\gamma = T_{0,\gamma} \circ \Gamma$ and $\Gamma(T_\gamma(U)) = T_{0,\gamma}(\Gamma(U)) \subseteq \Gamma(U)$ implying that $T_\gamma(U) \subseteq U$. Hence, \eqref{eq:lem:ap_cpt:proof1} ensures for every relatively compact subset $K$ of $(X,\tau_{\mathrm{init}})$ and $\big( x \mapsto p_X(x) := p_{X_0}(\Gamma(x)) \big) \in \mathfrak{P}_{(X,\tau_{\mathrm{init}})}$ that
	\begin{equation}
		\begin{aligned}
			\lim_\gamma \sup_{x \in K} p_X(x - T_\gamma(x)) & = \lim_\gamma \sup_{x \in K} p_{X_0}(\Gamma(x - T_\gamma(x))) \\
			& = \lim_\gamma \sup_{x \in K} p_{X_0}(\Gamma(x) - \Gamma(T_\gamma(x))) \\
			& = \lim_\gamma \sup_{x_0 \in \Gamma(K)} p_{X_0}(x_0 - T_{0,\gamma}(x_0)) = 0,
		\end{aligned}
	\end{equation}
	which shows that $(U,\tau_{\mathrm{init}})$ has AP. 
	
	Moreover, if $(X_0,\tau_{X_0})$ additionally has $\mathfrak{P}_{(X_0,\tau_{X_0})}$-BAP, then for every $p_{X_0} \in \mathfrak{P}_{(X_0,\tau_{X_0})}$ there exists some $q_{X_0} \in \mathfrak{P}_{(X_0,\tau_{X_0})}$ such that for every $\gamma$ and $x_0 \in X_0$ it holds that $p_{X_0}(T_{0,\gamma}(x_0)) \leq q_{X_0}(x_0)$. Hence, for every $\big( x \mapsto p_X(x) := p_{X_0}(\Gamma(x)) \big) \in \mathfrak{P}_{(X,\tau_{\mathrm{init}})}$, we use $\big( x \mapsto q_X(x) := q_{X_0}(\Gamma(x)) \big) \in \mathfrak{P}_{(X,\tau_{\mathrm{init}})}$ to conclude for every $\gamma$ and $x \in X$ that
	\begin{equation}
		p_X(T_\gamma(x)) = p_{X_0}(\Gamma(T_{\gamma}(x))) = p_{X_0}(T_{0,\gamma}(\Gamma(x))) \leq q_{X_0}(\Gamma(x)) = q_X(x),
	\end{equation}
	which shows that $(U,\tau_{\mathrm{init}})$ has $\mathfrak{P}_{(X,\tau_{\mathrm{init}})}$-BAP.
	
	Finally, by using that $\Gamma: (X,\Vert \cdot \Vert_X) \rightarrow (X_0,\tau_{X_0})$ is continuous, i.e., that for every $\big( x \mapsto p_X(x) := p_{X_0}(\Gamma(x)) \big) \in \mathfrak{P}_{(X,\tau_{\mathrm{init}})}$ there exists a constant $C_{\Gamma,p_X} \geq 1$ such that for every $v \in X$ it holds that $p_X(v) := p_{X_0}(\Gamma(v)) \leq C_{\Gamma,p_X} \Vert v \Vert_X$, and the assumption $\lim_{r \rightarrow \infty} \frac{r^k}{\eta(r)} = 0$, we obtain for every $j = 0,\ldots,k$ and $p_X \in \mathfrak{P}_{(X,\tau_{\mathrm{init}})}$ that
	\begin{equation}
		\begin{aligned}
			& \lim_{R \rightarrow \infty} \max_{j=0,\ldots,k} \sup_{(u,v_1,\ldots,v_j) \in (U \times X^j) \setminus K_{j,R}} \frac{p_X(v_1) \cdots p_X(v_j)}{\psi_j(u,v_1,\ldots,v_j)} \\
			& \quad\quad \leq C_{\Gamma,p_X}^j \lim_{R \rightarrow \infty} \max_{j=0,\ldots,k} \sup_{(u,v_1,\ldots,v_j) \in (U \times X^j) \setminus K_{j,R}} \frac{\Vert v_1 \Vert_X \cdots \Vert v_j \Vert_X}{\eta\left( \max(j,1) \left( \delta_{U^c}(u)^{-1} + \Vert u \Vert_X \right) + \sum_{\ell=1}^j \Vert v_\ell \Vert_X \right)} \\
			& \quad\quad \leq C_{\Gamma,p_X}^k \lim_{R \rightarrow \infty} \max_{j=0,\ldots,k} \sup_{(u,v_1,\ldots,v_j) \in (U \times X^j) \setminus K_{j,R}} \frac{\left( 1 + \Vert u \Vert_X + \sum_{\ell=1}^j \Vert v_\ell \Vert_X \right)^j}{\eta\left( \max(j,1) \Vert u \Vert_X + \sum_{\ell=1}^j \Vert v_\ell \Vert_X \right)} = 0,
		\end{aligned}
	\end{equation}
	which shows that \eqref{eq:ass:w_growth} is satisfied.
\end{proof}

\begin{lemma}
	\label{lem:ap_dual}
	Let $(X,\Vert \cdot \Vert_X)$ be a dual Banach space equipped with the weak-$*$-topology $\tau_{w^*}$. Moreover, let $U \in \tau_{w^*}$ be an open subset with $\pi(U) \subseteq U$, for all projections $\pi \in X^* \otimes X$. In addition, let $\Psi = (\psi_j)_{j=0,\ldots,k}$ be a collection of weights of the form \eqref{eq:lem:w_dom} with $\eta: [0,\infty) \rightarrow (0,\infty)$ additionally satisfying $\lim_{r \rightarrow \infty} \frac{r^k}{\eta(r)} = 0$. Then, $(U,\tau_{w^*})$ has AP and \eqref{eq:ass:w_growth} is satisfied. Furthermore, if the predual $(E,\Vert \cdot \Vert_E)$ of $(X,\Vert \cdot \Vert_X)$ has BAP with finite rank operators $(Q_\gamma)_\gamma$ satisfying $Q_\gamma^*(U) \subseteq U$, then $(U,\tau_{w^*})$ has $\Vert \cdot \Vert_X$-BAP.
\end{lemma}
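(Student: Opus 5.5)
We prove the three assertions in the order of Lemma~\ref{lem:ap_cpt}: AP, then the growth condition \eqref{eq:ass:w_growth}, then BAP under the extra hypothesis on the predual.

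\emph{AP.} Let $(E,\Vert \cdot \Vert_E)$ be a predual of $X$, with pairing $\langle \cdot,\cdot \rangle_{E \times X}$. Index the net by $\gamma = F$, the finite-dimensional subspaces $F \subseteq E$, ordered by inclusion.
\begin{itemize}
	\item For each $F$ with basis $e_1,\ldots,e_n$, choose $x_1,\ldots,x_n \in X$ biorthogonal to it, i.e., $\langle e_i,x_l \rangle_{E \times X} = \delta_{il}$. This is possible because $X$ separates the points of $E$.
	\item Set $T_F(x) := \sum_{i=1}^n \langle e_i,x \rangle_{E \times X}\, x_i$. Then $T_F$ is a $w^*$-continuous finite rank projection, so $T_F \in (X,\tau_{w^*})^* \otimes X$, and $T_F(U) \subseteq U$ by the hypothesis on $U$.
	\item By construction $\langle e, x - T_F(x) \rangle_{E \times X} = 0$ for all $e \in F$ and $x \in X$.
\end{itemize}
Now fix $e \in E$ and a relatively $w^*$-compact set $K \subseteq X$. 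As soon as $F \ni e$, we have $\sup_{x \in K} |\langle e, x - T_F(x)\rangle_{E \times X}| = 0$. Hence $T_F \to \id_X$ uniformly on $K$ for every generating seminorm $x \mapsto |\langle e,x\rangle_{E \times X}|$ of $\tau_{w^*}$, which is exactly AP of $(U,\tau_{w^*})$. Relative compactness of $K$ is not even needed here.

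\emph{Growth condition.} Every $\tau_{w^*}$-seminorm satisfies $|\langle e,v\rangle_{E \times X}| \leq \Vert e\Vert_E \Vert v\Vert_X$. For $\mathfrak{Q}_X$ we take $\{\Vert\cdot\Vert_X\}$. The estimate is then verbatim the last display in the proof of Lemma~\ref{lem:ap_cpt}, with $C_{\Gamma,p_X}$ replaced by $\max(\Vert e\Vert_E,1)$. On the complement of $K_{j,R}$ the argument of $\eta$ tends to infinity, and $\lim_{r\to\infty} r^k/\eta(r) = 0$ gives the limit $0$.

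\emph{BAP.} Let $(Q_\gamma)_\gamma \subseteq E^* \otimes E$ be finite rank operators with $\Vert Q_\gamma\Vert_{L(E;E)} \leq \lambda$ and $Q_\gamma \to \id_E$ uniformly on compact subsets of $E$. Put $T_\gamma := Q_\gamma^*$. Writing $Q_\gamma = \sum_i \ell_i(\cdot) e_i$ with $\ell_i \in E^* = X$, we get
\begin{equation}
	T_\gamma(x) = \sum_i \langle e_i,x \rangle_{E \times X}\, \ell_i .
\end{equation}
So $T_\gamma$ is $w^*$-continuous of finite rank, $T_\gamma(U) \subseteq U$ by assumption, and
\begin{equation}
	|\langle e, T_\gamma x\rangle_{E \times X}| = |\langle Q_\gamma e, x\rangle_{E \times X}| \leq \lambda\Vert e\Vert_E \Vert x\Vert_X .
\end{equation}
This is the required domination by a seminorm from $\mathfrak{Q}_X = \{\Vert\cdot\Vert_X\}$, whose topology is finer than $\tau_{w^*}$.

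\emph{Main obstacle.} It remains to check that $T_\gamma \to \id_X$ uniformly on relatively $w^*$-compact sets $K$, i.e., that
\begin{equation}
	\sup_{x\in K}|\langle Q_\gamma e - e, x\rangle_{E \times X}| \to 0 .
\end{equation}
Such $K$ is $\Vert\cdot\Vert_X$-bounded by the uniform boundedness principle. Indeed, $K$ is pointwise bounded on $E$, since $x \mapsto \langle e,x\rangle_{E \times X}$ is $w^*$-continuous and bounded on the compact closure $\overline{K}$. Therefore
\begin{equation}
	\sup_{x\in K}|\langle Q_\gamma e - e, x\rangle_{E \times X}| \leq \sup_{x\in K}\Vert x\Vert_X \, \Vert Q_\gamma e - e\Vert_E \to 0,
\end{equation}
using only pointwise convergence of $Q_\gamma$ on the singleton $\{e\}$.
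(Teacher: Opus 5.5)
Your proposal is correct and follows the paper's proof essentially step by step. You use biorthogonal finite-rank projections $T(x)=\sum_i\langle e_i,x\rangle_{E\times X}\,x_i$ for AP (the paper indexes the net by linearly independent tuples $e_{1:N}$ rather than by finite-dimensional subspaces $F\subseteq E$), the bound $\vert\langle e,v\rangle_{E\times X}\vert\leq\Vert e\Vert_E\Vert v\Vert_X$ for \eqref{eq:ass:w_growth}, and the adjoints $Q_\gamma^*$ for $\Vert\cdot\Vert_X$-BAP. You also spell out two points the paper uses tacitly: the explicit finite-rank form showing that $Q_\gamma^*$ is $w^*$-continuous, and the uniform boundedness argument showing that relatively $w^*$-compact sets are norm-bounded.
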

\begin{proof}
	For fixed linearly independent $e_1,\ldots,e_N \in E$, we consider the seminorm $\big( x \mapsto p_X(x) := \max_{n=1,\ldots,N} \vert \langle x, e_n \rangle_{X \times E} \vert \big) \in \mathfrak{P}_{(X,\tau_{w^*})}$. Then, by using that $E^* \cong X$ is due to the Hahn-Banach theorem point separating on $E$, there exist some linearly independent $x_1,\ldots,x_N \in X$ such that 
	\begin{align}
		\langle x_m, e_n \rangle_{X \times E} & = \delta_{m,n} & & \text{for all } m,n = 1,\ldots,N, \text{ and } \\
		\langle x, e_n \rangle_{X \times E} & = 0 & & \text{for all } x \in X_{1:N}^\perp \text{ and } n = 1,\ldots,N,
	\end{align}
	where $X_{1:N} := \linspan\lbrace x_1,\ldots,x_N \rbrace$ and $X_{1:N}^\perp$ satisfy $X_{1:N} \oplus X_{1:N}^\perp = X$ (see also \cite[Corollary~4.2.2]{schaefer99}). From this, we define the finite rank operator $\big( x \mapsto T_{e_{1:N}}(x) := \sum_{n=1}^N \langle x, e_n \rangle_{X \times E} x_n \big) \in (X,\tau_{w^*})^* \otimes X$ satisfying $T_{e_{1:N}}(x) = x$ for any $x \in X_{1:N}$ and therefore $T_{e_{1:N}}(U) \subseteq U$ (as $T_{e_{1:N}}$ is the projection onto $X_{1:N}$). Thus, for every relatively compact subset $K$ of $(X,\tau_{w^*})$, it holds that
	\begin{equation}
		\begin{aligned}
			\sup_{x \in K} p_X\left( x - T_{e_{1:N}}(x) \right) & = \sup_{x \in K} \max_{n=1,\ldots,N} \left\vert \Big\langle x - \sum_{m=1}^N \langle x, e_m \rangle_{X \times E} x_m, e_n \Big\rangle_{X \times E} \right\vert \\
			& = \sup_{x \in K} \max_{n=1,\ldots,N} \Bigg\vert \langle x, e_n \rangle_{X \times E} - \sum_{m=1}^N \langle x, e_m \rangle_{X \times E} \underbrace{\langle x_m, e_n \rangle_{X \times E}}_{\delta_{m,n}} \Bigg\vert = 0,
		\end{aligned}
	\end{equation}
	which shows that the net $(T_{e_{1:N}})_{e_{1:N}} \subseteq (X,\tau_{w^*})^* \otimes X$ converges to $\id_X: X \rightarrow X$ uniformly on each relatively compact subset of $(X,\tau_{w^*})$, whence $(U,\tau_{w^*})$ has AP. 
	
	Moreover, if $(E,\Vert \cdot \Vert_E)$ has BAP, there exists some $\lambda \geq 1$ and a net of finite rank operators $(Q_\gamma)_\gamma \subseteq E^* \otimes E$ with $\Vert Q_\gamma \Vert_{L(E;E)} \leq \lambda$, for all $\gamma$, such that for every relatively compact subset $L$ of $(E,\Vert \cdot \Vert_E)$, it holds that
	\begin{equation}
		\lim_\gamma \sup_{e \in L} \Vert e - Q_\gamma(e) \Vert_E = 0.
	\end{equation}
	Hence, by defining $T_\gamma := Q_\gamma^* \in (X,\tau_{w^*})^* \otimes X$, we conclude for every seminorm $\big( x \mapsto p_X(x) := \max_{n=1,\ldots,N} \vert \langle x, e_n \rangle_{X \times E} \vert \big) \in \mathfrak{P}_{(X,\tau_{w^*})}$ and relatively compact subset $K$ of $(X,\tau_{w^*})$ that
	\begin{equation}
		\begin{aligned}
			\lim_\gamma \sup_{x \in K} p_X(x - T_\gamma(x)) & = \lim_\gamma \sup_{x \in K} \max_{n=1,\ldots,N} \left\vert \langle (\id_X-T_\gamma)(x), e_n \rangle_{X \times E} \right\vert \\
			& = \lim_\gamma \sup_{x \in K} \max_{n=1,\ldots,N} \left\vert \langle x, (\id_E - Q_\gamma) e_n \rangle_{X \times E} \right\vert \\
			& \leq \left( \sup_{x \in K} \Vert x \Vert_X \right) \lim_\gamma \max_{n=1,\ldots,N} \Vert e_n - Q_\gamma(e_n) \Vert_E = 0.
		\end{aligned}
	\end{equation}
	In addition, for every $\big( x \mapsto \widetilde{p}_X(x) := \max_{m=1,\ldots,M} \vert \langle x, \widetilde{e}_m \rangle_{X \times E} \vert \big) \in \mathfrak{P}_{(X,\tau_{w^*})}$, we have
	\begin{equation}
		\begin{aligned}
			\widetilde{p}_X\left( T_\gamma(x) \right) & = \max_{m=1,\ldots,M} \left\vert \langle T_\gamma(x), \widetilde{e}_m \rangle_{X \times E} \right\vert = \max_{m=1,\ldots,M} \left\vert \langle x, Q_\gamma(\widetilde{e}_m) \rangle_{X \times E} \right\vert \\
			& \leq \Vert x \Vert_X \Vert Q_\gamma \Vert_{L(E;E)} \max_{m=1,\ldots,M} \Vert \widetilde{e}_m \Vert_E \leq \left( \lambda \max_{m=1,\ldots,M} \Vert \widetilde{e}_m \Vert_E \right) \Vert x \Vert_X,
		\end{aligned}
	\end{equation}
	which shows that $(U,\tau_{w^*})$ has $\Vert \cdot \Vert_X$-BAP.	
	
	Finally, by using that for every $\big( x \mapsto p_X(x) := \max_{n=1,\ldots,N} \vert \langle x, e_n \rangle_{X \times E} \vert \big) \in \mathfrak{P}_{(X,\tau_{w^*})}$ there exists a constant $C_{p_X} > 0$ such that for every $x \in X$ it holds that $p_X(x) \leq C_{p_X} \Vert x \Vert_X$ and that $\lim_{r \rightarrow \infty} \frac{r^k}{\eta(r)} = 0$, we obtain for every $j = 0,\ldots,k$ and $p_X \in \mathfrak{P}_{(X,\tau_{w^*})}$ that
	\begin{equation}
		\begin{aligned}
			& \lim_{R \rightarrow \infty} \max_{j=0,\ldots,k} \sup_{(u,v_1,\ldots,v_j) \in (U \times X^j) \setminus K_{j,R}} \frac{p_X(v_1) \cdots p_X(v_j)}{\psi_j(u,v_1,\ldots,v_j)} \\
			& \leq C_{p_X}^j \lim_{R \rightarrow \infty} \max_{j=0,\ldots,k} \sup_{(u,v_1,\ldots,v_j) \in (U \times X^j) \setminus K_{j,R}} \frac{\Vert v_1 \Vert_X \cdots \Vert v_j \Vert_X}{\eta\left( \max(j,1) \left( \delta_{U^c}(u)^{-1} + \Vert u \Vert_X \right) + \sum_{\ell=1}^j \Vert v_\ell \Vert_X \right)} \\
			& \leq C_{p_X}^j \lim_{R \rightarrow \infty} \max_{j=0,\ldots,k} \sup_{(u,v_1,\ldots,v_j) \in (U \times X^j) \setminus K_{j,R}} \frac{\left( \max(j,1) \Vert u \Vert_X + \sum_{\ell=1}^j \Vert v_\ell \Vert_X \right)^j}{\eta\left( \max(j,1) \Vert u \Vert_X + \sum_{\ell=1}^j \Vert v_\ell \Vert_X \right)} = 0,
		\end{aligned}
	\end{equation}
	which shows that \eqref{eq:ass:w_growth} is satisfied.
\end{proof}

In the following, we characterize maps in $\mathcal{B}^k_\Psi(U;Y)$, which extends \cite[Theorem~2.7]{doersek10} and \cite[Lemma~2.3]{cuchiero23} to differentiable maps. The proof is given in Appendix~\ref{app:BPsik_equiv}.

\begin{proposition}
	\label{prop:BPsik_equiv}
	Let $(U,\Psi)$ be a weighted domain satisfying \eqref{eq:ass:w_growth}, where $K_{j,R} := \psi_j^{-1}((0,R])$ denotes the compact pre-image of the admissible collection $\Psi := (\psi_j)_{j=0,\ldots,k}$ of weight functions, $j = 0,\ldots,k$ and $R > 0$. Then, the following holds true:
	\begin{enumerate}
		\item\label{prop:BPsik_equiv:1} If $f \in \mathcal{B}^k_\Psi(U;Y)$, then $f \in C^k_{loc}(U;Y)$ and it holds that
		\vspace{-0.03cm}
		\begin{equation}
			\label{eq:prop:BPsik_equiv:1}
			\lim_{R \rightarrow \infty} \max_{j=0,\ldots,k} \sup_{(u,v_1,\ldots,v_j) \in (U \times X^j) \setminus K_{j,R}} \frac{\left\Vert d^j f(u;v_1,\ldots,v_j) \right\Vert_Y}{\psi_j(u,v_1,\ldots,v_j)} = 0.
			\vspace{-0.03cm}
		\end{equation}
		\item\label{prop:BPsik_equiv:2} Let $f \in C^k_{loc}(U;Y)$ satisfy
		\vspace{-0.03cm}
		\begin{equation}
			\label{eq:prop:BPsik_equiv:2}
			\lim_{R \rightarrow \infty} \max_{j=0,\ldots,k} \sup_{(u,v_1,\ldots,v_j) \in (U \times X^j) \setminus K_{j,R}} \frac{\left\Vert d^j f(u;v_1,\ldots,v_j) \right\Vert_Y}{\psi_j(u,v_1,\ldots,v_j)} = 0.
			\vspace{-0.03cm}
		\end{equation}
		Moreover, if $(X,\tau_X)$ is not locally compact, we assume additionally that $(U,\tau_X)$ has AP with net of finite rank operators $(T_\gamma)_\gamma \subseteq X^* \otimes X$ satisfying
		\vspace{-0.03cm}
		\begin{equation}
			\label{eq:prop:BPsik_equiv:3}
			\lim_{R \rightarrow \infty} \sup_\gamma \max_{j=0,\ldots,k \atop \mathcal{L} \subseteq \lbrace 1,\ldots,j \rbrace} \sup_{(u,v_1,\ldots,v_j) \in (U \times X^j) \setminus K_{j,R}} \frac{\Vert d^{\vert\mathcal{L}\vert} f(T_\gamma(u);(T_\gamma(v_\ell))_{\ell \in \mathcal{L}}) \Vert_Y}{\psi_\mathcal{L}(u,v_\mathcal{L})} = 0.
			\vspace{-0.03cm}
		\end{equation}
		Then, $f \in \mathcal{B}^k_\Psi(U;Y)$.
	\end{enumerate}
\end{proposition}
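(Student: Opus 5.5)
For \ref{prop:BPsik_equiv:1}, I would start from the definition: $f\in\mathcal{B}^k_\Psi(U;Y)$ is the $\Vert\cdot\Vert_{\mathcal{B}^k_\Psi(U;Y)}$-limit of a sequence $(f_n)_n\subseteq C^k_b(U;Y)$.

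\textbf{Decay at infinity.} Each $f_n$ satisfies \eqref{eq:Ckb:glob_bd} with some $C_{f_n}$ and $p_X$. Combined with \eqref{eq:ass:w_growth}, this gives \eqref{eq:prop:BPsik_equiv:1} for $f_n$. An $\varepsilon/2$-argument, $\Vert d^jf\Vert_Y/\psi_j\le \Vert f-f_n\Vert_{\mathcal{B}^k_\Psi}+\Vert d^jf_n\Vert_Y/\psi_j$, then transfers the decay to $f$.

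\textbf{Regularity.} On every compact $K_{j,R}$ the weight $\psi_j\le R$. Hence $d^jf_n\to g_j$ uniformly on $K_{j,R}$, and each $g_j$ is continuous on $K_{j,R}$. Every compact $K\subseteq U\times X^j$ is contained in some $K_{j,R}$ by lower semicontinuity (Remark~\ref{rem:w_dom}~\ref{rem:w_dom:1}). To show that $g_j=d^jf$ I would argue inductively in $j$:
\begin{itemize}
\item Apply Lemma~\ref{lem:foc} to $d^{j-1}f_n(u+\cdot\, v_j;v_1,\ldots,v_{j-1})$ on a small interval, whose image is compact.
\item Pass to the limit $n\to\infty$ using uniform convergence and Lemma~\ref{lem:wint_bound}.
\item Use continuity of $g_j$ along the segment to differentiate the limit identity at $h=0$.
\end{itemize}
This yields $f\in C^k_{loc}(U;Y)$.

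For \ref{prop:BPsik_equiv:2}, I must construct approximants in $C^k_b(U;Y)$. Fix $\varepsilon>0$ and choose $R$ so that the tails in \eqref{eq:prop:BPsik_equiv:2}, and in \eqref{eq:prop:BPsik_equiv:3} if needed, are below $\varepsilon$. There are two steps.

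\textbf{Step 1: truncation.} Multiply $f$ by a cutoff $\chi$ that equals $1$ on a large compact set and vanishes outside a larger one. Since $\tau_X$ is generally not given by a smooth-cutoff-friendly structure, I would build $\chi$ after reducing to finite dimensions.

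\textbf{Step 2: reduction to finite dimensions.} If $X$ is locally compact, it is finite-dimensional, and $\chi\in C^\infty_c$ works directly. Otherwise, replace $f$ by $f\circ T_\gamma$, where $T_\gamma$ comes from the AP and $T_\gamma(U)\subseteq U$. Write $T_\gamma=\sum_{i=1}^N \ell_i(\cdot)x_i$. Then $f\circ T_\gamma=F\circ(\ell_1,\ldots,\ell_N)$ with $F$ defined on an open subset of $\mathbb{R}^N$. Multiply $F$ by a smooth compactly supported cutoff $\chi$ on $\mathbb{R}^N$. The result $g:=(\chi F)\circ(\ell_1,\ldots,\ell_N)$ has derivatives $d^jg(u;v)=d^j(\chi F)(\ell(u);\ell(v_1),\ldots)$. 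These are bounded by $C\prod|\ell(v_i)|$, i.e.\ by $C p_X(v_1)\cdots p_X(v_j)$ with $p_X:=\max_i|\ell_i|$. The chain rule (Proposition~\ref{prop:db_prop}~\ref{prop:db_prop:2}) gives the derivative formula, so $g\in C^k_b(U;Y)$.

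\textbf{Error estimate.} The error $f-g$ splits into two regions.
\begin{itemize}
\item \emph{On $K_{j,R}$:} $f\circ T_\gamma\to f$ together with derivatives uniformly, by uniform continuity of $d^jf$ on compacta and uniform convergence of $T_\gamma\to\id$ on the compact projections of $K_{j,R}$. The cutoff is $\equiv1$ on the relevant finite-dimensional image.
\item \emph{Outside $K_{j,R}$:} the Leibniz expansion of $d^j(\chi\cdot f\circ T_\gamma)$ produces terms $d^{|\mathcal L|}f(T_\gamma u;(T_\gamma v_\ell)_{\ell\in\mathcal L})$ times derivatives of $\chi$. These are controlled by \eqref{eq:prop:BPsik_equiv:3}, and the $\psi$-factors recombine using the monotonicity in Definition~\ref{def:w_dom}~\ref{def:w_dom2}. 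I read $\psi_\mathcal L$ as the weight in the variables $v_\mathcal{L}$.
\end{itemize}

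\textbf{Main obstacle.} The hard part is choosing $\chi$ so that its derivatives, evaluated on $\ell(v_i)$, are bounded by $\psi_{j-|\mathcal L|}$ of the complementary directions uniformly in $\gamma$. I would take $\chi(y)=\theta(\psi$-like radial function$)$ with slowly varying $\theta$ (dilation $\chi_m(y)=\chi(y/m)$), so that derivatives are $O(1/m)$ times $\prod|\ell(v_i)|$. These products are then absorbed via \eqref{eq:ass:w_growth} applied to $p_X=\max|\ell_i|$.
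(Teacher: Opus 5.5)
Apart from one step, your plan is the paper's proof. In (i), the paper also combines \eqref{eq:Ckb:glob_bd} for an approximant with \eqref{eq:ass:w_growth} in an $\varepsilon/2$-argument. It gets continuity of $d^jf$ on $K_{j,R}$ as a uniform limit because $\psi_j\le R$ there. Your use of Lemma~\ref{lem:foc} to identify the limits $g_j$ with $d^jf$ is something the paper takes for granted. In (ii), the paper likewise sets $g=(h\circ T_\gamma)\cdot(f\circ T_\gamma)$ with a cutoff $h$ on the finite-dimensional range $T_\gamma(X)$. It splits the error into $K_{j,R}$ (uniform continuity, $T_\gamma\to\id$ on the compact projections) and its complement (Leibniz rule, monotonicity, \eqref{eq:prop:BPsik_equiv:3}). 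The paper simply postulates $|d^j(h\circ T_\gamma)(u;v_1,\ldots,v_j)|\le\psi_j(u,v_1,\ldots,v_j)$. Your dilation argument, with \eqref{eq:ass:w_growth} applied to $p_X=\max_i|\ell_i|$, is a way to obtain this, provided the dilated support stays inside the open domain of $F$ (automatic if $U=X$). No uniformity in $\gamma$ is needed, since $\gamma$ is fixed before the cutoff is chosen.

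The step that fails is ``every compact $K\subseteq U\times X^j$ is contained in some $K_{j,R}$ by lower semicontinuity''. Lower semicontinuity bounds $\psi_j$ from \emph{below} on compacta, not from above.
\begin{itemize}
\item Already for $j=0$, take $U=X=\mathbb{R}$ and $\psi_0(x)=|x|+|x|^{-1}$ for $x\neq0$, $\psi_0(0)=1$. This weight is lower semicontinuous, has compact sublevel sets and satisfies \eqref{eq:ass:w_growth}, yet it is unbounded on $[-1,1]$.
\item Moreover $\sup_x|\mathds{1}_{\{x\neq0\}}(x)-\min(1,n|x|)|/\psi_0(x)\le\frac{1}{4n}$. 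So a weighted limit of bounded continuous functions can be discontinuous on a compact set.
\end{itemize}
The reason is that weighted convergence only gives $\|d^jf_n-d^jf\|_Y\le\psi_j\,\|f_n-f\|_{\mathcal{B}^k_\Psi(U;Y)}$, i.e.\ uniform convergence exactly where $\psi_j$ is bounded. Hence both places where you use the claim are unjustified:
\begin{itemize}
\item upgrading continuity from the sets $K_{j,R}$ to arbitrary compacta;
\item the uniform convergence along the segment $\{(u+tv_j,v_1,\ldots,v_j):t\in[0,h]\}$ in your Lemma~\ref{lem:foc} step.
\end{itemize}
What your argument actually proves is continuity of $d^jf$ on each sublevel set $K_{j,R}$. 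The paper's proof likewise only shows this before asserting $f\in C^k_{loc}(U;Y)$. The full $C^k_{loc}$ conclusion, and your identification $g_j=d^jf$, need an extra hypothesis, such as boundedness of $\psi_j$ on compact subsets of $U\times X^j$. This holds, e.g., for the weights \eqref{eq:lem:w_dom} with $U=X$ and $\tau_X=\tau_{w^*}$, since weak-$*$-compact sets are norm-bounded.
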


Note that \ref{prop:BPsik_equiv:1} is a straightforward generalization of \cite[Theorem~2.7]{doersek10} to differentiable maps. However, for \ref{prop:BPsik_equiv:2}, we need to assume the approximation property (AP) and \eqref{eq:prop:BPsik_equiv:3} (if $(X,\tau_X)$ is not locally compact), which is more restrictive than the original result in \cite[Theorem~2.7]{doersek10} for $\mathcal{B}^0_\Psi$-maps. There, the Tietze extension theorem is applied to extend a $C^0_{loc}$-map beyond compacta.

\subsection{Weighted manifolds and \texorpdfstring{$\mathcal{B}^k_\Psi$}{Bkpsi}-maps thereon}

For some $k \in \mathbb{N}_0 \cup \lbrace \infty \rbrace$, we now consider a $C^k_{loc}$-manifold $(M,\tau_M)$, which we endow similarly as in Definition~\ref{def:w_dom} with a collection of weight functions $\Psi := (\psi_j)_{j=0,\ldots,k}$. For more details on the notion of manifolds, we refer to Section~\ref{sec:manifold}.

\begin{definition}
	\label{def:w_mfd}
	Let $(M,\tau_M)$ be a $C^k_{loc}$-manifold with atlas $(U_i,\phi_i)_{i \in I}$ over the model spaces $(X_i,\tau_{X_i})_{i \in I}$. Then, a collection $\Psi := (\psi_j)_{j=0,\ldots,k}$ of weight functions $\psi_j: T^j M \rightarrow (0,\infty)$ is called \emph{admissible (on $M$)} if for every $i \in I$ the collection $\Psi_i := (\psi_{i,j})_{j=0,\ldots,k}$ of push-forward weight functions defined by
	\begin{equation}
		\label{eq:def:w_mfd:1}
		\psi_{i,j} := \psi_j \circ \Phi_i^{-j}: \phi_i(U_i) \times X_i^j \rightarrow (0,\infty)
	\end{equation}
	is admissible on $\phi_i(U_i)$, i.e., if for every $i \in I$ the pair $(\phi_i(U_i),\Psi_i)$ is a weighted domain. In this case, we call $(M,\Psi)$ a \emph{weighted $C^k_{loc}$-manifold}.
\end{definition}

Note that the admissibility of $\Psi := (\psi_j)_{j=0,\ldots,k}$ is atlas-dependent because an intrinsic (global) version is not suitable for our approximation results (see also Remark~\ref{rem:atlas} below).

\begin{remark}
	\label{rem:w_mfd}
	If $(M,\Psi)$ is a weighted $C^k_{loc}$-manifold, then it holds for every $i \in I$ that:
	\begin{enumerate}
		\item\label{rem:w_mfd:1} $\phi_i(U_i)$ is $\sigma$-compact with respect to $\tau_{X_i}$ (see Remark~\ref{rem:w_dom}~\ref{rem:w_dom:2}). Hence, by using the continuous function $\phi_i^{-1}: \phi_i(U_i) \rightarrow U_i$, the set $U_i$ is $\sigma$-compact with respect to $\tau_M$.
		\item\label{rem:w_mfd:2} $(X_i,\tau_{X_i})$ is also $\sigma$-compact (see Remark~\ref{rem:w_dom}~\ref{rem:w_dom:3}).
		\item\label{rem:w_mfd:3} If $(X_i,\tau_{X_i})$ is complete, then $X_i$ is finite-dimensional (see Remark~\ref{rem:w_dom}~\ref{rem:w_dom:4}).
	\end{enumerate}
	Hence, for a weighted $C^k_{loc}$-manifold $(M,\Psi)$, the following holds true:
	\begin{enumerate}[resume]
		\item\label{rem:w_mfd:4} If $\vert I \vert < \infty$, then $(M,\tau_M)$ is $\sigma$-compact as $M = \bigcup_{i \in I} U_i$ with $\sigma$-compact $U_i$ (see \ref{rem:w_dom:1}).
		\item\label{rem:w_mfd:5} If $(M,\tau_M)$ is a Banach manifold or a Fr\'echet manifold, i.e., the model spaces $(X_i,\tau_{X_i})_{i \in I}$ are complete, then $(M,\tau_M)$ is finite-dimensional (see \ref{rem:w_mfd:3}). Hence, for an infinite-di- mensional manifold $(M,\tau_M)$, we necessarily have to consider \emph{incomplete} locally convex topological vector spaces $(X_i,\tau_{X_i})_{i \in I}$ as model spaces.
	\end{enumerate}
\end{remark}

Now, we relate the pre-images of the weight functions in $\Psi := (\psi_j)_{j=0,\ldots,k}$ to the pre-images of the push-forward weights in $\Psi_i := (\psi_{i,j})_{j=0,\ldots,k}$, for $i \in I$ (see \eqref{eq:def:w_mfd:1}).

\begin{lemma}
	\label{lem:w_pfwd}
	Let $(M,\tau_M)$ be a $C^k_{loc}$-manifold over model spaces $(X_i,\tau_{X_i})_{i \in I}$ and let $\Psi := (\psi_j)_{j=0,\ldots,k}$ be a collection of weight functions $\psi_j: T^j M \rightarrow (0,\infty)$, $j = 0,\ldots,k$. Then:
	\begin{enumerate}
		\item\label{lem:w_pfwd:1} If for every $j = 0,\ldots,k$ and $R > 0$ the pre-image
		\begin{equation}
			\label{eq:lem:w_pfwd:1}
			K_{j,R} := \psi_j^{-1}((0,R]) = \left\lbrace (x,[c]^j_x) \in T^j M: \psi_j(x,[c]^j_x) \leq R \right\rbrace
		\end{equation}
		is compact with respect to $\tau_{T^j M}$, then $(M,\Psi)$ is a weighted $C^k_{loc}$-manifold.
		\item\label{lem:w_pfwd2} If $(M,\Psi)$ is a weighted $C^k_{loc}$-manifold with $\vert I \vert < \infty$, then for every $j = 0,\ldots,k$ and $R > 0$ the pre-image $K_{j,R}$ defined in \eqref{eq:lem:w_pfwd:1} is compact with respect to $\tau_{T^j M}$.
	\end{enumerate}
\end{lemma}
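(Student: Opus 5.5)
For \ref{lem:w_pfwd:1}, we have to show that each push-forward collection $\Psi_i$ is admissible on $\phi_i(U_i)$. The argument is essentially about preimages of compacta under the chart maps.

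Fix $i \in I$, $j \in \{0,\ldots,k\}$ and $R > 0$. The sublevel set of $\psi_{i,j} = \psi_j \circ \Phi_i^{-j}$ is
\begin{equation}
	\psi_{i,j}^{-1}((0,R]) = (\Phi_i^{-j})^{-1}(K_{j,R}) = \Phi^j_i\big(K_{j,R} \cap \pi_{T^j M}^{-1}(U_i)\big).
\end{equation}
Since $\Phi^j_i$ is a homeomorphism onto $\phi_i(U_i) \times X_i^j$, compactness of this set is equivalent to compactness of $K_{j,R} \cap \pi_{T^j M}^{-1}(U_i)$ in $\tau_{T^j M}$.

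The main obstacle is that $\pi_{T^j M}^{-1}(U_i)$ is open rather than closed, so intersecting the compact set $K_{j,R}$ with it need not preserve compactness. I would first try to use that compactness of sublevel sets forces $\psi_j$ to be lower semicontinuous, as in Remark~\ref{rem:w_dom}~\ref{rem:w_dom:1}.

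However, lower semicontinuity alone does not give closedness of the trace on the chart domain. It seems the statement implicitly needs either that $K_{j,R}$ lies in a single chart domain, or that compactness is meant relative to the chart. I would check whether the intended setting guarantees that $K_{j,R} \cap \pi^{-1}(U_i)$ is closed in $K_{j,R}$, for instance when $U_i$ is clopen or $M$ has a single chart. Otherwise I would phrase the argument using the paper's convention that admissibility is atlas-dependent.

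Given compactness of the sublevel sets of $\psi_{i,j}$, monotonicity of $\Psi_i$ (condition \ref{def:w_dom2} of Definition~\ref{def:w_dom}) still has to be addressed, since it is not implied by compactness. If it is not assumed separately, this step would fail. I would therefore either read the hypothesis of \ref{lem:w_pfwd:1} as including monotonicity, or note that it transfers pointwise, because $\Phi_i^{-j}$ acts fibrewise and permuting the directions commutes with the charts.

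For \ref{lem:w_pfwd2}, assume $|I| < \infty$. The plan is to cover $K_{j,R}$ by finitely many compact pieces, one per chart:
\begin{equation}
	K_{j,R} = \bigcup_{i \in I} \Phi_i^{-j}\big(\psi_{i,j}^{-1}((0,R])\big).
\end{equation}
Each set $\psi_{i,j}^{-1}((0,R])$ is compact in $\phi_i(U_i) \times X_i^j$ by admissibility of $\Psi_i$. The map $\Phi_i^{-j}$ is continuous into $(T^j M, \tau_{T^j M})$ by the definition of the final topology. Hence each image is compact, and a finite union of compact sets is compact.

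The equality of sets holds because every point of $T^j M$ lies over some $U_i$. Concretely, $\psi_j(\Phi_i^{-j}(u,v)) = \psi_{i,j}(u,v)$, so a point $(x,[c]^j_x)$ with $x \in U_i$ lies in $K_{j,R}$ exactly when $\Phi^j_i(x,[c]^j_x)$ lies in $\psi_{i,j}^{-1}((0,R])$. This part is routine.
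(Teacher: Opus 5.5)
Your part (ii) is the paper's argument: $K_{j,R}=\bigcup_{i\in I}\Phi_i^{-j}(\psi_{i,j}^{-1}((0,R]))$. Each piece is compact as the image of a compact set under the continuous map $\Phi_i^{-j}$, and a finite union of compacta is compact. You also justify the set equality, which the paper only asserts.

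For (i) you do not reach a proof. However, both of your objections are correct, and they are exactly the holes in the paper's own argument. The paper says $\psi_{i,j}^{-1}((0,R])$ is compact ``as image of the compact set $K_{j,R}$ under the continuous chart $\Phi^j_i\colon T^jM\to\phi_i(U_i)\times X_i^j$''. But $\Phi^j_i$ is only defined on the open set $\pi_{T^jM}^{-1}(U_i)$, so the set actually being mapped is $K_{j,R}\cap\pi_{T^jM}^{-1}(U_i)$. The paper then concludes ``weighted domain'' without ever checking monotonicity.

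Neither gap can be closed, so (i) is false as stated. Take $M=\mathbb{R}$ with the atlas $U_1=(0,\infty)$, $U_2=(-\infty,1)$, both charts the inclusion into $\mathbb{R}$, and set $\psi_j(x,[c]^j_x)=1+|x|+\sum_{\ell=1}^j|c^{(\ell)}(0)|$. Every $K_{j,R}$ is compact in $T^jM\cong\mathbb{R}^{1+j}$, yet $\psi_{1,0}^{-1}((0,R])=(0,R-1]$ is not compact in $\phi_1(U_1)=(0,\infty)$.

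Monotonicity fails independently. For $k\ge 2$, even with a single global chart, replace $\psi_1$ by $e^{|x|+|c'(0)|}$. All sublevel sets stay compact, but the inequality $\psi_{i,1}(u,v_1)\psi_{i,1}(u,v_2)\le C_\Psi\psi_{i,2}(u,v_1,v_2)$ is violated.

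So only your first remedy gives a true statement. Assume monotonicity of each $\Psi_i$ as a hypothesis. Then work either with a global chart, as in all of the paper's applications, or with chart domains for which $K_{j,R}\cap\pi_{T^jM}^{-1}(U_i)$ is closed in $K_{j,R}$ (e.g.\ clopen $U_i$). In that case this set is compact, and its image under $\Phi^j_i$ is $\psi_{i,j}^{-1}((0,R])$.

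Drop your alternative that monotonicity ``transfers pointwise'', for two reasons. First, (i) assumes no monotonicity on $T^jM$ that could be transferred. Second, the chart coordinates $v_\ell=(\phi_i\circ c)^{(\ell)}(0)$ of a $j$-jet are successive derivatives of a single curve, not independent directions. They mix under chart changes via Fa\`a di Bruno, so permuting them is not a chart-independent operation.
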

\begin{proof}
	For \ref{lem:w_pfwd:1}, we fix some $i \in I$, $j = 0,\ldots,k$, and $R > 0$. Then, $K_{i,j,R} := \psi_{i,j}^{-1}((0,R])$ is a compact subset of $(\phi_i(U_i) \times X_i^j,\tau_{X_i} \times \tau_{X_i}^j)$ as image of the compact set $K_{j,R}$ under the continuous chart $\Phi^j_i: T^j M \rightarrow \phi_i(U_i) \times X_i^j$ (see \eqref{eq:def:TjM_charts}), whence $(\phi_i(U_i),\Psi_i)$ is a weighted domain. Since $i \in I$ was chosen arbitrarily, $(M,\Psi)$ is a weighted $C^k_{loc}$-manifold.
	
	For \ref{lem:w_pfwd2}, we assume that $\vert I \vert < \infty$ and fix some $i \in I$, $j = 0,\ldots,k$, and $R > 0$. Then, $K_{i,j,R} := \psi_{i,j}^{-1}((0,R])$ is by definition compact subset of $(\phi_i(U_i) \times X_i^j,\tau_{X_i} \times \tau_{X_i}^j)$. Hence, by using that $\Phi^{-j}_i: \phi_i(U_i) \times X_i^j \rightarrow T^j M$ is continuous (see \eqref{eq:def:TjM_finaltop}), the set $\Phi^{-j}_i(K_{i,j,R})$ is compact in $(T^j M, \tau_{T^j M})$ as continuous image of the compact set $K_{i,j,R}$. Hence, $K_{j,R} := \psi_j^{-1}((0,R]) = \bigcup_{i \in I} \Phi^{-j}_i(K_{i,j,R})$ is compact with respect to $\tau_{T^j M}$ as finite union of compact sets.
\end{proof}

Let us give an example of a weighted manifold $(M,\Psi)$ in the following.

\begin{example}
	\label{ex:w_mfd}
	Let $(X,\tau_X)$ be a locally convex topological vector space and let $\Psi := (\psi_j)_{j=0,\ldots,k}$ be a collection of admissible weight functions on $X$. Moreover, let $s \in C^k(X;\mathbb{R}^d)$ have constant rank, i.e., $\dim\left(\left\lbrace ds(x;v): v \in X \right\rbrace\right) = r$, for all $x \in X$ and some $r \in \mathbb{N}$. Then, for any $y \in \mathbb{R}^d$, the pre-image $M := s^{-1}(\lbrace y \rbrace)$ is by \cite[Theorem~F]{glockner15} a (split) $C^k_{loc}$-submanifold of $(X,\tau_X)$. Moreover, the collection $\Psi\vert_M := (\psi_j\vert_M)_{j=0,\ldots,k}$ of restricted weight functions is admissible on $M$. 
	
	For example, the space $M := \mathcal{P}_{\psi_\Omega}(\Omega)$ of probability measures $x: \mathcal{F}_\Omega \rightarrow [0,1]$ over a weighted space $(\Omega,\psi_\Omega)$ satisfying $\int_\Omega \psi_\Omega(\omega) x(d\omega) < \infty$ is a $C^\infty_{loc}$-manifold over the model space $X := \mathcal{M}_{\psi_\Omega}(\Omega)$ equipped with the weak-$*$-topology (see also Example~\ref{ex:w_dom:dual}~\ref{ex:w_dom:dual:measure}), where $\Psi := (\psi_j)_{j=0,\ldots,k}$ is of the form \eqref{eq:lem:w_dom}. Indeed, $M = s^{-1}(\lbrace 1 \rbrace)$ is the pre-image of the $C^\infty_{loc}$-map $\mathcal{M}_{\psi_\Omega}(\Omega) \ni x \mapsto s(x) := x(\Omega) \in \mathbb{R}$ having constant rank equal to one.
	
	For further examples of weighted manifolds, we refer to Section~\ref{sec:nachbin}.
\end{example}

In order to introduce maps on weighted manifolds, we assume that the input space $(M,\Psi)$ is a weighted $C^k_{loc}$-manifold and that the output space $(Y,\Vert \cdot \Vert_Y)$ is a Banach space.

\begin{definition}
	\label{def:BkPsi_mfd}
	Let $(M,\Psi)$ be a weighted $C^k_{loc}$-manifold with atlas $(U_i,\phi_i)_{i \in I}$ over the model spaces $(X_i,\tau_{X_i})_{i \in I}$ and let $\Psi_i := (\psi_{i,j})_{j=0,\ldots,k}$ be the collection of push-forward weight functions introduced in \eqref{eq:def:w_mfd:1}. Then, we define $\mathcal{B}^k_\Psi(M;Y)$ as the vector space of functions $f: M \rightarrow Y$ such that $f \circ \phi_i^{-1} \in \mathcal{B}^k_{\Psi_i}(\phi_i(U_i);Y)$, for all $i \in I$. We equip $\mathcal{B}^k_\Psi(M;Y)$ with the initial topology $\tau_{\mathcal{B}^k_\Psi(M;Y)}$ with respect to the family of mappings
	\begin{equation}
		\label{eq:def:BkPsi_mfd}
		\mathcal{B}^k_\Psi(M;Y) \ni f \quad \mapsto \quad f \circ \phi_i^{-1} \in \mathcal{B}^k_{\Psi_i}(\phi_i(U_i);Y), \quad\quad i \in I,
	\end{equation}
	i.e., the weakest topology such that the mappings \eqref{eq:def:BkPsi_mfd} are continuous.
\end{definition}

\begin{remark}
	\label{rem:atlas}
	As in Definition~\ref{def:w_mfd}, the space $\mathcal{B}^k_\Psi(M;Y)$ introduced in Definition~\ref{def:BkPsi_mfd} depends on the choice of atlas $(U_i,\phi_i)_{i \in I}$ for the manifold $M$. This dependence cannot be avoided for the infinite-dimensional approximation results in Section~\ref{sec:w_uat}--\ref{sec:sig} below, since the lack of partitions of unity on infinite-dimensional model spaces excludes the gluing of finite-dimensional local approximations into a global (atlas-independent) construction.
\end{remark}

For simplicity, we shall always assume that the output space is a Banach space $(Y,\Vert \cdot \Vert_Y)$. However, Definition~\ref{def:BkPsi_mfd} could also be extended to a locally convex topological vector space $(Y,\tau_Y)$ as output space (see also Remark~\ref{rem:banach_out}).

\section{Weighted Nachbin theorems}
\label{sec:nachbin}

In this section, we extend the Nachbin theorem to weighted (possibly infinite-dimensional) manifolds. Originally established by L.~Nachbin in \cite{nachbin49} over finite-dimensional manifolds, the theorem generalizes the classical Stone-Weierstrass theorem by including the approximation of the derivatives. Subsequently, the Nachbin theorem was extended in \cite{prolla76,aron80} to infinite-dimensional Banach spaces as input and output spaces, using the compact-open topology (of higher order) or the topology of compact convergence (of higher order), and in \cite{nachbin91} to a weighted approximation result for polynomials over the Euclidean space. First, we recall the classical Nachbin theorems.

\subsection{Classical formulation}
\label{sec:nachbin:cl}

Let us denote by $\Pol(\mathbb{R}^d) \subseteq C^\infty(\mathbb{R}^d)$ the vector space of polynomials of the form $\mathbb{R}^d \ni x := (x_1,\ldots,x_d)^\top \mapsto \sum_{\alpha \in \mathbb{N}^d_{0,n}} c_\alpha \prod_{i=1}^d x_i^{\alpha_i} \in \mathbb{R}$, with $n \in \mathbb{N}_0$ and $c_\alpha \in \mathbb{R}$.

\begin{theorem}[Weierstrass, {\cite[Theorem~1.1.2]{llavona86}}]
	\label{thm:wstrass}
	$\Pol(\mathbb{R}^d)$ is a dense subset of $C^k(\mathbb{R}^d)$ with respect to the compact-open topology\footnote{\label{footnote:co_top}For Banach spaces $(X,\Vert \cdot \Vert_X)$, $(Y,\Vert \cdot \Vert_Y)$, and $U \subseteq X$ open, the compact-open topology of order $k$ on $C^k(U;Y)$ is generated by seminorms $p_{K,L}(f) := \max_{j=0,\ldots,k} \sup_{(u,v) \in K \times L} \Vert d^j f(u;v,...,v) \Vert_Y$, for compact $K \subseteq U$ and $L \subseteq X$.} of order $k$.
\end{theorem}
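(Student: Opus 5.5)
To prove Theorem~\ref{thm:wstrass}, we fix $f \in C^k(\mathbb{R}^d)$, a compact set $K \subseteq \mathbb{R}^d$ (for the base points) and a compact set $L \subseteq \mathbb{R}^d$ (for the directions), together with $\varepsilon > 0$. We then construct $p \in \Pol(\mathbb{R}^d)$ with $p_{K,L}(f-p) < \varepsilon$. Since the directions only enter multilinearly, we have
\begin{equation}
	\sup_{v \in L} \vert d^j g(u;v,\ldots,v) \vert \leq C_L^j \max_{\vert\alpha\vert = j} \vert \partial^\alpha g(u) \vert
\end{equation}
with $C_L := d \sup_{v \in L} \Vert v \Vert$. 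It therefore suffices to approximate all partial derivatives $\partial^\alpha f$ with $\vert \alpha \vert \leq k$ uniformly on a compact cube $Q := [-r,r]^d \supseteq K$.

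The plan is the classical convolution argument. In the first step we localize. We choose a cutoff $\chi \in C^\infty_c(\mathbb{R}^d)$ with $\chi \equiv 1$ on a neighborhood of $[-r-1,r+1]^d$ and replace $f$ by $g := \chi f$. This function lies in $C^k_c(\mathbb{R}^d)$ and agrees with $f$, together with all its derivatives, on $Q$.

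In the second step we mollify with polynomial kernels, namely Landau kernels or Gaussians. Let $\rho_n(x) := c_n e^{-n\Vert x \Vert^2}$, normalized so that $\int \rho_n = 1$, and set $g_n := g * \rho_n$. Since derivatives commute with convolution, $\partial^\alpha g_n = (\partial^\alpha g) * \rho_n$ for $\vert\alpha\vert \leq k$. Because each $\partial^\alpha g$ is uniformly continuous with compact support, $\partial^\alpha g_n$ converges to $\partial^\alpha g$ uniformly on $\mathbb{R}^d$ by the approximate-identity property.

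In the third step we replace the Gaussian by a truncated Taylor polynomial. The support of $g$ lies in some ball $B_s$. For $x \in Q$ and $y \in B_s$, the quantity $\Vert x-y \Vert^2$ stays in a bounded interval. On that interval, $e^{-nt}$ is the uniform limit of its Taylor polynomials $T_{n,N}(t)$ as $N \to \infty$. We define
\begin{equation}
	p_{n,N}(x) := c_n \int g(y)\, T_{n,N}(\Vert x-y\Vert^2)\, dy .
\end{equation}
This is a polynomial in $x$, because the integrand is polynomial in $x$ with coefficients given by integrals of $g$ against monomials in $y$.

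Finally we control the derivatives of $p_{n,N} - g_n$. Here we should not differentiate the kernel. Instead we move the derivatives onto $g$ by integrating by parts in $y$, using the identity $\partial_x^\alpha K(x-y) = (-\partial_y)^\alpha K(x-y)$ for the kernel $K$. Since $g$ has compact support, this gives
\begin{equation}
	\partial^\alpha p_{n,N}(x) = c_n \int \partial^\alpha g(y)\, T_{n,N}(\Vert x-y\Vert^2)\, dy ,
\end{equation}
and the analogous formula holds for $g_n$. Hence
\begin{equation}
	\sup_{x \in Q} \vert \partial^\alpha (p_{n,N}-g_n)(x) \vert \leq c_n \Vert \partial^\alpha g\Vert_{L^1} \sup \vert T_{n,N}-e^{-n\cdot}\vert ,
\end{equation}
where the last supremum runs over the bounded interval from the third step. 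For fixed $n$, this tends to $0$ as $N \to \infty$. Choosing $n$ first and then $N$ gives the result.

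The step I expect to need the most care is the last one: the derivatives have to be approximated simultaneously. Integrating by parts onto $g$ rather than differentiating the kernel avoids bookkeeping with derivatives of Taylor remainders, and it relies on the kernel depending only on $x-y$.
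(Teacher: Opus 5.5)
Your argument is correct. The paper does not prove this statement itself; it quotes it from \cite[Theorem~1.1.2]{llavona86} and uses it as a black box, mainly in the proof of Theorem~\ref{thm:w_nachbin_findim}, where $H(y,z)=h(y)z$ and its derivatives are approximated on the compact set $\widetilde{K}$. So you are not taking a different route so much as supplying a self-contained proof of a cited ingredient, via the classical Weierstrass-transform argument: localize with a cutoff, convolve with Gaussians, then replace $e^{-nt}$ by its Taylor polynomials on the bounded range of $\Vert x-y\Vert^2$.

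The two points that matter are handled correctly:
\begin{itemize}
\item The reduction from the seminorms $p_{K,L}$ to partial derivatives, via $\vert d^jg(u;v,\ldots,v)\vert\le(\sum_i\vert v_i\vert)^j\max_{\vert\alpha\vert=j}\vert\partial^\alpha g(u)\vert$.
\item The observation that the truncated kernel still depends only on $x-y$. Hence $\partial^\alpha(g*K)=(\partial^\alpha g)*K$, and for every $\vert\alpha\vert\le k$ the error is bounded by $c_n\Vert\partial^\alpha g\Vert_{L^1}\sup_{[0,M]}\vert T_{n,N}-e^{-n\cdot}\vert$.
\end{itemize}
You implicitly identify Bastiani $C^k(\mathbb{R}^d)$ with classical $C^k$, which is standard in finite dimensions.

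What your proof buys is slightly more than the statement. It gives uniform approximation of all partial derivatives on compacta, i.e.\ convergence in the topology of compact convergence of order $k$. That is exactly the form invoked in the proof of Theorem~\ref{thm:w_nachbin_findim}, which uses operator norms $\Vert d^jH-d^jp_n\Vert_{L^j}$. Starting from the statement as formulated, with seminorms built from $d^jf(u;v,\ldots,v)$, one instead has to pass through the finite-dimensional equivalence of the two topologies noted in the paper's footnote.
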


Subsequently, the Weierstrass theorem (Theorem~\ref{thm:wstrass}) was generalized by L.~Nachbin in \cite{nachbin49} to the notion of subalgebras. Hereby, a vector space $\mathcal{G}$ of maps $g: X \rightarrow \mathbb{R}$ is called a \emph{subalgebra} if $\mathcal{G}$ is closed under multiplication, i.e., $g_1 \cdot g_2 \in \mathcal{G}$, for all $g_1, g_2 \in \mathcal{G}$.

\begin{theorem}[Nachbin on $C^k(M)$, {\cite[p.~1550]{nachbin49}}]
	\label{thm:nachbin_R}
	Let $(M,\tau_M)$ be a $C^\infty_{loc}$-manifold over finite-dimensional vector spaces. Moreover, let $\mathcal{G} \subseteq C^k(M)$ be a subalgebra such that
	\begin{enumerate}
		\item $\mathcal{G}$ is point separating on $M$, i.e., for any distinct points $x_1,x_2 \in M$ there exists some $g \in \mathcal{G}$ such that $g(x_1) \neq g(x_2)$,
		
		\item $\mathcal{G}$ vanishes nowhere on $M$, i.e., for every $x \in M$ there exists $g \in \mathcal{G}$ with $g(x) \neq 0$,
		
		\item $\mathcal{G}$ has nowhere vanishing derivatives on $M$, i.e., for every $(x,[c]^1_x) \in T^1 M$ with $c'(0) \neq 0$ there exists $g \in \mathcal{G}$ such that $(g \circ c)'(0) \neq 0$.
	\end{enumerate}
	Then, $\mathcal{G}$ is a dense subset of $C^k(M)$ with respect to the compact-open topology\footref{footnote:co_top} of order $k$.
\end{theorem}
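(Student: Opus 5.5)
We reduce the statement to the Weierstrass theorem (Theorem~\ref{thm:wstrass}) via local embeddings and a partition of unity. Since the manifold is finite-dimensional, the compact-open topology of order $k$ only requires approximation on a compact set $K \subseteq M$ together with derivatives up to order $k$ in chart coordinates.

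\emph{Step 1: local embedding.} Fix $x \in K$ and a chart $(U_i,\phi_i)$ around $x$ with $\dim X_i = d$. By nowhere vanishing derivatives, for each direction $v \neq 0$ in $X_i$ some $g \in \mathcal{G}$ has $d(g\circ\phi_i^{-1})(\phi_i(x);v) \neq 0$. Hence the linear span of the differentials $\{d(g \circ \phi_i^{-1})(\phi_i(x);\cdot): g \in \mathcal{G}\}$ has trivial common kernel, so it is all of $X_i^*$. We can therefore choose $g_1,\ldots,g_d \in \mathcal{G}$ whose differentials at $x$ are linearly independent. By the inverse function theorem, $G := (g_1,\ldots,g_d)$ is a $C^k$-diffeomorphism from a neighbourhood $V_x$ of $x$ onto an open set in $\mathbb{R}^d$.

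\emph{Step 2: global separation on $K$.} Cover $K$ by finitely many $V_{x_1},\ldots,V_{x_m}$ and collect all the corresponding functions into $G = (g_1,\ldots,g_N)\colon M \to \mathbb{R}^N$. This map is an injective immersion on each $V_{x_l}$. To make it injective on all of $K$, we argue by compactness.
\begin{itemize}
\item The set $\{(y,z) \in K \times K : G(y) = G(z),\ y \neq z\}$ is compact, since it is bounded away from the diagonal: near the diagonal, $G$ is injective by the local immersion property.
\item By point separation, each pair in this set is separated by some $g \in \mathcal{G}$ on a neighbourhood of the pair.
\item Finitely many such $g$ suffice. Adding them to $G$ makes $G$ an injective immersion on a neighbourhood of $K$, hence a $C^k$-embedding of a compact neighbourhood $K'$ of $K$ onto its image.
\end{itemize}
Finally, add a function $g_0 \in \mathcal{G}$ that does not vanish on $K$. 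Such a $g_0$ exists: use nonvanishing at each point together with compactness, and take a finite sum of squares, which stays in the algebra.

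\emph{Step 3: extension and approximation.} Given $f \in C^k(M)$, the function $f \circ G^{-1}$ is $C^k$ on the embedded compact submanifold piece $G(K')$. We extend it to a $C^k$ function $F$ on $\mathbb{R}^N$ using a tubular neighbourhood and a cutoff, so that $f = F \circ G$ near $K$. By Theorem~\ref{thm:wstrass}, choose polynomials $P$ with $P \to F$ in $C^k$ on the compact set $G(K')$.

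There is a subtlety with constants, since $\mathcal{G}$ need not contain $1$. We handle it by approximating $F/h$, where $h(z) = z_0$ is the $g_0$-coordinate (nonzero on $G(K)$), by polynomials $P$, and then using $z_0 P(z)$, which has no constant term. Then $z_0 P(z) \circ G$ lies in $\mathcal{G}$ because $\mathcal{G}$ is an algebra. By the chain rule (Proposition~\ref{prop:db_prop}~\ref{prop:db_prop:2}) and the fact that the derivatives of $G$ up to order $k$ are bounded on $K$, the derivatives of $z_0P\circ G - f$ up to order $k$ are controlled on $K$ by the $C^k$-distance between $z_0 P$ and $F$ on $G(K)$. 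That distance is small.

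\emph{Main obstacle.} The main obstacle is Step 2, namely producing finitely many elements of $\mathcal{G}$ that embed a neighbourhood of $K$. Injectivity and the immersion property must hold simultaneously. This requires the compactness argument away from the diagonal combined with the local inverse function theorem near the diagonal.
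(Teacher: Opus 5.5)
The paper gives no proof of this statement. It is quoted from Nachbin's note and used only as a black box: in the proof of Theorem~\ref{thm:w_nachbin_findim} it produces some $g\in\mathcal{G}$ that is $C^k$-close to $f$ on a compact set. So there is no argument of the authors to match. Your proposal is a correct, self-contained proof along the standard route: finitely many elements of $\mathcal{G}$ embed a neighbourhood of $K$, $f$ is written locally as $F\circ G$, and Theorem~\ref{thm:wstrass} finishes.

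This is the same mechanism the paper itself uses in the weighted setting. There the embedding $\eta=(\widetilde g_1,\ldots,\widetilde g_m)$ is \emph{assumed} through (M4), and the approximant is $p_n\circ(\eta\oplus g)$. You instead \emph{construct} the embedding from (i)--(iii), via the inverse function theorem and the compactness argument off the diagonal. Your $z_0P(z)$ device is also worth keeping. Since $\mathcal{G}$ need not contain constants, a polynomial in elements of $\mathcal{G}$ lies in $\mathcal{G}$ only when it has no constant term. The analogous step in the paper, ``$p_n\circ(\eta\oplus g)\in\mathcal{G}$ as $\mathcal{G}$ is a subalgebra'', leaves this point implicit.

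Two details should be tightened.

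First, Step~2 as written gives injectivity of $G$ on $K$ only, but you need $f\circ G^{-1}$ on a neighbourhood of $G(K)$. Simply run Step~2 from the outset on a compact neighbourhood $K'\subseteq\bigcup_l V_{x_l}$ of $K$. Such a $K'$ exists because $M$ is locally compact and, by (i), Hausdorff.

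Second, for finite $k$ the orthogonal tubular neighbourhood of the $C^k$ submanifold $G(\mathrm{int}\,K')$ has only a $C^{k-1}$ retraction, since the normal spaces depend on $dG$. Composing with it therefore loses a derivative. Instead, straighten locally with the $C^k$ local diffeomorphism $(u,w)\mapsto (G\circ\phi_i^{-1})(u)+Aw$. Here $A$ is a constant matrix whose range is complementary to that of $d(G\circ\phi_i^{-1})$ at the base point. Define the local extensions through these charts. Because $G|_{K'}$ is a homeomorphism onto its image, each small patch meets $G(\mathrm{int}\,K')$ only in the straightened sheet. Then glue the local extensions with a smooth partition of unity of the ambient Euclidean space around $G(K)$. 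This yields a $C^k$ function $F$ on the whole ambient space with $F\circ G=f$ near $K$, as Theorem~\ref{thm:wstrass} requires.
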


Later, the Nachbin theorem was generalized by J.B.~Prolla and C.S.~Guerreiro in \cite{prolla76} to the following infinite-dimensional setting. For two locally convex topological vector spaces $(X,\tau_X)$ and $(Y,\tau_Y)$, the space of continuous homogeneous polynomials of finite type is defined as $P_f(X;Y) = \linspan\left\lbrace X \ni x \mapsto \ell(x)^n y \in Y: n \in \mathbb{N}_0, \, \ell \in X^*, \, y \in Y \right\rbrace \subseteq C^0(X;Y)$. Then, a subset $\mathcal{G} \subseteq C^0(X;Y)$ is called a \emph{polynomial algebra} if $r \circ g \in \mathcal{G}$ for all $g \in \mathcal{G}$ and $r \in P_f(Y;Y)$. This is the case if and only if $\mathcal{G}' := \lbrace \ell \circ g: \ell \in Y^*, \, g \in \mathcal{G} \rbrace$ is a subalgebra with $\mathcal{G}' \otimes Y \subseteq \mathcal{G}$ (see \cite[Lemma~4.6]{prolla77}).

\begin{theorem}[Nachbin on $C^k(U;Y)$, {\cite[Theorem~3.3]{prolla76}}]
	\label{thm:nachbin}
	Let $(X,\Vert \cdot \Vert_X)$ be a Banach space having AP and let $U \subseteq X$ be open. Moreover, let $\mathcal{G} \subseteq C^k(U;Y)$ be a polynomial subalgebra such that
	\begin{enumerate}
		\item $\mathcal{G}' := \lbrace \ell \circ g: \ell \in Y^*, \, g \in \mathcal{G} \rbrace$ is point separating on $U$,
		
		\item $\mathcal{G}'$ vanishes nowhere on $U$,
		
		\item $\mathcal{G}$ has nowhere vanishing derivatives on $U$, and
		
		\item for every $g \in \mathcal{G}$, $T \in X^* \otimes X$, and open subset $V \subseteq U$ with $T(V) \subseteq U$ the composition $g \circ T\vert_V \in C^k(V;Y)$ belongs to the closure of $\mathcal{G}\vert_V$ in $C^k(V;Y)$.
	\end{enumerate}
	Then, $\mathcal{G}$ is a dense subset of $C^k(U;Y)$ with respect to the compact-open topology$^{\text{\footref{footnote:co_top}}}$ of order $k$.
\end{theorem}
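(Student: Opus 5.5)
The statement is the Prolla--Guerreiro theorem, and the plan is to reduce it to the finite-dimensional Nachbin theorem (Theorem~\ref{thm:nachbin_R}) by combining the approximation property of $X$ with the compact-open topology of order $k$.

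Fix $f \in C^k(U;Y)$, compact sets $K \subseteq U$ and $L \subseteq X$, and $\varepsilon > 0$. We want $g \in \mathcal{G}$ with $\sup_{(u,v)\in K\times L}\Vert d^j f(u;v,\ldots,v) - d^j g(u;v,\ldots,v)\Vert_Y < \varepsilon$ for $j=0,\ldots,k$. The argument has three steps.

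\textbf{Step 1 (reduce to finitely many directions in $X$).} Using AP, pick $T \in X^*\otimes X$ that approximates $\id_X$ uniformly on a compact neighbourhood of $K \cup L$. Choose an open $V$ with $K \subseteq V \subseteq U$ and $T(V) \subseteq U$; this is possible for $T$ close enough to the identity on a compact neighbourhood of $K$. Continuity of $(u,v) \mapsto d^j f(u;v,\ldots,v)$ on compacta, combined with the chain rule (Proposition~\ref{prop:db_prop}), gives $d^j(f\circ T)(u;v,\ldots,v) = d^j f(Tu;Tv,\ldots,Tv)$. Hence $f \circ T|_V$ is $\varepsilon/3$-close to $f$ in the relevant seminorm.

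\textbf{Step 2 (reduce to a finite-dimensional output).} Since $f(K')$ and the derivative images over compacta are compact in $Y$, and polynomial algebras contain $\mathcal{G}'\otimes Y$, it suffices to approximate maps of the form $\sum_{m} h_m(\cdot)\, y_m$ with $h_m$ real-valued. To do this, approximate the compact-valued map $f$ by $\sum_m (\ell_m\circ f)\, y_m$ via a partition of unity on a compact subset of $Y$. A cleaner alternative is to treat $Y$ by an $\varepsilon$-net argument.

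\textbf{Step 3 (finite-dimensional Nachbin).} Write $T = \iota\circ P$ with $P : X \to \mathbb{R}^n$ and $\iota$ linear, and set $W := P(V)$, which is open in $P(X)$. The functions $h_m\circ\iota$ lie in $C^k$ of a finite-dimensional manifold. Consider the algebra of functions $w \mapsto \ell(g(\iota w))$ with $g \in \mathcal{G}$ and $\ell \in Y^*$. Point separation, nowhere-vanishing, and nowhere-vanishing derivatives transfer from $\mathcal{G}'$ on $U$ to this algebra on $\iota(W) \cap U$. Theorem~\ref{thm:nachbin_R} then yields approximants $\tilde g$ on the compact set $P(K) \times P(L)$. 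Pulling back by $P$ gives approximants of the form $g \circ T|_V$, and hypothesis (iv) replaces these by genuine elements of $\mathcal{G}$ up to a further $\varepsilon/3$.

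The main obstacle is Step 3. Hypothesis (iv) only says $g\circ T|_V$ lies in the closure of $\mathcal{G}|_V$, not of $\mathcal{G}$ on all of $U$. Since the seminorms involve only $K \subseteq V$, closeness on $V$ suffices. Care is needed, however, in transferring the derivative-nonvanishing condition through the restriction to $\iota(W)$. I would handle this by noting that a tangent vector $\iota w'$ to $\iota(W)$ is a tangent vector in $X$, so condition (iii) applies to it directly.
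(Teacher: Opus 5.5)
Your overall scheme is the right one: replace $f$ by $f\circ T$ using AP of $X$, factor $T=\iota\circ P$ through $\mathbb{R}^n$, apply Theorem~\ref{thm:nachbin_R} to $\{\ell\circ g\circ\iota\}$, pull back by $P$, and finish with hypothesis (iv) on an open $V\supseteq K$. The paper does not prove this cited result, but its weighted analogue, Theorem~\ref{thm:w_nachbin_infdim}, follows the same pattern. Your transfer of (i)--(iii) to the finite-dimensional algebra is also correct; it uses injectivity of $\iota$, and Hahn--Banach for the derivative condition.

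\textbf{The gap is Step~2.} Approximating $f$ by $\sum_m(\ell_m\circ f)\,y_m=Q\circ f$ with $Q\in Y^*\otimes Y$ in the seminorm $p_{K,L}$ requires $\sup_{y\in C}\Vert y-Qy\Vert_Y<\varepsilon$ on the compact set $C=\bigcup_{j\le k}\{d^jf(u;v,\ldots,v):(u,v)\in K\times L\}$. Already for linear $f=A$ this means $Q\approx\id_Y$ uniformly on $A(L)$. That is the approximation property of $Y$, which the statement does not assume. This is precisely the argument of the paper's Lemma~\ref{lem:out_bap}, and it is why the paper must assume BAP of $Y$ in Theorem~\ref{thm:w_nachbin_findim_vector}. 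The paper notes just before that lemma that, in the compact-open topology, density of $C^k(U)\otimes Y$ holds with no assumption on $Y$.

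Your partition-of-unity or $\varepsilon$-net variant on the target does not rescue Step~2 for $k\ge 1$. The map $\sum_i\phi_i(f(\cdot))\,y_i$ need not be $C^k$, since a general Banach space may lack smooth bump functions. Even with smooth $\phi_i$, its derivative $\sum_i d\phi_i(f(\cdot);df(\cdot;v))\,y_i$ has no reason to approximate $df(\cdot;v)$.

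\textbf{The fix} is to do the output reduction after Step~1, on the finite-dimensional side, where partitions of unity live in the \emph{domain}. Write $f\circ T=F\circ P$ with $F:=f\circ\iota\in C^k(\Omega;Y)$ and $\Omega:=\iota^{-1}(U)\subseteq\mathbb{R}^n$ open. For every Banach space $Y$, $C^\infty(\Omega)\otimes Y$ is dense in $C^k(\Omega;Y)$ for the compact-open topology of order $k$. To see this:
\begin{enumerate}
\item Cover $P(K)$ by $\delta$-balls centred at points $w_i$, and take a smooth partition of unity $(\chi_i)$ with $\vert\partial^\beta\chi_i\vert\lesssim\delta^{-\vert\beta\vert}$ and bounded overlap.
\item Form $\sum_i\chi_i\,J_{w_i}F$, where $J_{w_i}F$ is the $Y$-valued order-$k$ Taylor polynomial of $F$ at $w_i$.
\item Taylor's remainder bounds the $C^k$-error on $P(K)$ by a multiple of $\omega(\delta)$, where $\omega$ is the modulus of continuity of $d^kF$.
\end{enumerate}
Then approximate each scalar coefficient by $\{\ell\circ g\circ\iota\}$ via Theorem~\ref{thm:nachbin_R}, recombine using $\mathcal{G}'\otimes Y\subseteq\mathcal{G}$, pull back by $P$, and invoke (iv). With this reordering your Step~3 goes through unchanged.

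\textbf{A smaller point in Step~1.} An infinite-dimensional Banach space has no compact neighbourhoods, so you cannot ask $T$ to approximate the identity on one. You only need $\sup_{x\in K\cup L}\Vert x-Tx\Vert_X<\delta$. Then $V:=U\cap T^{-1}(U)$ is open, satisfies $T(V)\subseteq U$, and contains $K$ once $\delta<\mathrm{dist}(K,X\setminus U)$. The $\varepsilon/3$ bound follows from continuity of $(u,v)\mapsto d^jf(u;v,\ldots,v)$ on $U\times X$ together with a Lebesgue-number argument around $K\times L$.
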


The Nachbin theorem was later extended to the topology\footnote{For Banach spaces $(X,\Vert \cdot \Vert_X)$, $(Y,\Vert \cdot \Vert_Y)$, and $U \subseteq X$ open, the topology of compact convergence of order $k$ on $C^k(U;Y)$ is generated by seminorms $p_K(f) := \max_{j=0,\ldots,k} \sup_{u \in K} \Vert d^j f(u;\cdot,\ldots,\cdot) \Vert_{L(X^j;Y)}$, for compact $K \subseteq U$. Note that the topology of compact convergence of order $k$ is stronger than the compact-open topology of order $k$ (except when $(X,\Vert \cdot \Vert_X)$ is finite-dimensional; then they are equivalent).} of compact convergence of order $k$ by R.M.~Aron and J.B.~Prolla in \cite{aron80}, and into other directions (see, e.g., \cite{nachbin78,gomez82,llavona86}).

\subsection{Subalgebras of \texorpdfstring{$\Psi$}{Psi}-moderate growth}

For the weighted Nachbin theorems, we impose the following conditions on a given subalgebra $\mathcal{G} \subseteq \mathcal{B}^k_\Psi(M)$. This is analogous to the concept of point separating and nowhere vanishing subalgebras of $\Psi$-moderate growth, which was introduced in \cite[Definition~3.4]{cuchiero23} for the non-differentiable case and is inspired by Nachbin's definition of localisability (see \cite[Definition~4]{nachbin65}). For $j = 0,\ldots,k$ and a partition $\pi \in \mathscr{P}_j$, we use the notation $d^\pi a(u;v_\pi) = \prod_{r=1}^{\vert\pi\vert} d^{\vert\pi_r\vert} a(u;v_{\pi_r})$ with $d^{\vert\pi_r\vert} a(u;v_{\pi_r}) := d^{\vert\pi_r\vert} a(u;(v_\ell)_{\ell \in \pi_r})$.

\begin{definition}
	Let $(M,\Psi)$ be a weighted $C^k_{loc}$-manifold with atlas $(U_i,\phi_i)_{i \in I}$ over finite-dimen-sional model spaces $(X_i,\tau_{X_i})_{i \in I}$. Then, a subalgebra $\mathcal{G} \subseteq \mathcal{B}^k_\Psi(M)$ is called \emph{strongly point separating and nowhere vanishing of $\Psi$-moderate growth} if there exists a vector subspace $\widetilde{\mathcal{G}} \subseteq \mathcal{G}$ such that
	\begin{enumerate}
		\item[\labeltext{(M1)}{M1}] $\widetilde{\mathcal{G}}$ is point separating on $M$, i.e., for any distinct points $x_1,x_2 \in M$ there exists some $\widetilde{g} \in \widetilde{\mathcal{G}}$ such that $\widetilde{g}(x_1) \neq \widetilde{g}(x_2)$,
		\item[\labeltext{(M2)}{M2}] $\widetilde{\mathcal{G}}$ is nowhere vanishing on $M$, i.e., for every $x \in M$ there exists $\widetilde{g} \in \widetilde{\mathcal{G}}$ with $\widetilde{g}(x) \neq 0$,
		\item[\labeltext{(M3)}{M3}] $\widetilde{\mathcal{G}}$ has nowhere vanishing derivatives on $M$, i.e., for every $(x,[c]^1_x) \in T^1 M$ with $c'(0) \neq 0$ there exists some $\widetilde{g} \in \widetilde{\mathcal{G}}$ such that $(\widetilde{g} \circ c)'(0) \neq 0$, 
		\item[\labeltext{(M4)}{M4}] for every $i \in I$ there exist $\widetilde{g}_1,\ldots,\widetilde{g}_m \in \widetilde{\mathcal{G}}$ such that $\eta_i := (\widetilde{g}_1 \circ \phi_i^{-1}, \ldots, \widetilde{g}_m \circ \phi_i^{-1})^\top: \phi_i(U_i) \rightarrow \mathbb{R}^m$ is an embedding and there exist cutoff functions $(h_{i,R})_{R > 0} \subseteq C^\infty_c(\eta_i(\phi_i(U_i)))$ with $0 \leq h_{i,R} \leq 1$ and $h_{i,R}\vert_{\eta_i(K_{i,R})} = 1$ such that
		\begin{equation}
			\quad\quad\quad \lim_{R \rightarrow \infty} \max_{1 \leq \ell \leq j \leq k} \sup_{(u,v_1,\ldots,v_j) \in (\phi_i(U_i) \times X_i^j) \setminus K_{i,j,R}} \frac{\Vert d^\ell (h_{i,R} \circ \eta_i)(u) \Vert_{L^\ell(X_i;\mathbb{R})} \Vert v_1 \Vert \cdots \Vert v_j \Vert}{\psi_{i,j}(u,v_1,\ldots,v_j)} = 0,
		\end{equation}
		where $K_{i,R} := \bigcup_{j=0}^k \pi_{i,0}(K_{i,j,R})$ with $\phi_i(U_i) \times X_i^j \ni (u,v_{1:j}) \mapsto \pi_{i,0}(u,v_{1:j}) := u \in \phi_i(U_i)$,
		\item[\labeltext{(M5)}{M5}] and for every $i \in I$ the vector space $\widetilde{\mathcal{G}}_i := \big\lbrace \widetilde{g} \circ \phi_i^{-1}: \widetilde{g} \in \widetilde{\mathcal{G}} \big\rbrace$ is of $\Psi_i$-moderate growth, i.e., for every $\widetilde{g}_i \in \widetilde{\mathcal{G}}_i$ there exists some $\lambda > 0$ such that
		\begin{equation}
			\quad\quad\quad \lim_{R \rightarrow \infty} \max_{j=0,\ldots,k \atop \pi \in \mathscr{P}_j} \sup_{(u,v_1,\ldots,v_j) \in (\phi_i(U_i) \times X_i^j) \setminus K_{i,j,R}} \frac{\exp\left( \lambda \left\vert \widetilde{g}_i(u) \right\vert \right) \left\vert d^\pi \widetilde{g}_i(u;v_\pi) \right\vert}{\psi_{i,j}(u,v_1,\ldots,v_j)} = 0,
		\end{equation}
		where $K_{i,j,R} := \psi^{-1}_{i,j}((0,R])$ denotes the compact pre-image of the push-forward weight functions $\Psi_i := (\psi_{i,j})_{j=0,\ldots,k}$ defined in \eqref{eq:def:w_mfd:1}.
	\end{enumerate}
	If $(M,\Psi)$ is a weighted $C^k_{loc}$-manifold with atlas $(U_i,\phi_i)_{i \in I}$ over infinite-dimensional model spaces $(X_i,\tau_{X_i})_{i \in I}$ each having BAP with finite rank operators $(T_{i,\gamma})_\gamma$, we replace \ref{M4} and \ref{M5} by
	\begin{enumerate}
		\item[\labeltext{(M4')}{M4'}] for every $i \in I$ and $\gamma$ the set $\widetilde{\mathcal{G}}_{i,\gamma} := \big\lbrace \widetilde{g} \circ \phi_i^{-1}\vert_{T_{i,\gamma}(\phi_i(U_i))}: \widetilde{g} \in \widetilde{\mathcal{G}} \big\rbrace$ satisfies \ref{M4}, and
		\item[\labeltext{(M5')}{M5'}] for every $i \in I$ and $\gamma$ the set $\widetilde{\mathcal{G}}_{i,\gamma} := \big\lbrace \widetilde{g} \circ \phi_i^{-1}\vert_{T_{i,\gamma}(\phi_i(U_i))}: \widetilde{g} \in \widetilde{\mathcal{G}} \big\rbrace$ is of $\Psi_{i,\gamma}$-moderate growth, i.e., for every $\widetilde{g}_{i,\gamma} \in \widetilde{\mathcal{G}}_{i,\gamma}$ there exists some $\lambda > 0$ such that
		\begin{equation}
			\quad\quad\quad \lim_{R \rightarrow \infty} \max_{j=0,\ldots,k \atop \pi \in \mathscr{P}_j} \sup_{(u,v_1,\ldots,v_j) \in (T_{i,\gamma}(\phi_i(U_i)) \times T_{i,\gamma}(X_i)^j) \setminus K_{i,\gamma,j,R}} \frac{\exp\left( \lambda \left\vert \widetilde{g}_{i,\gamma}(u) \right\vert \right) \left\vert d^\pi \widetilde{g}_{i,\gamma}(u;v_\pi) \right\vert}{\psi_{i,\gamma,j}(u,v_1,\ldots,v_j)} = 0,
		\end{equation}
		where $K_{i,\gamma,j,R} := \psi^{-1}_{i,\gamma,j}((0,R])$ is the pre-image of the weights $\Psi_{i,\gamma} := (\psi_{i,\gamma,j})_{j=0,\ldots,k}$ defined by $\psi_{\gamma,j}(\widetilde{u},\widetilde{v}_{1:j}) := \inf_{(u,v_{1:j}) \in U \times X^j, \, T_{i,\gamma}^{j+1}(u,v_{1:j}) = (\widetilde{u},\widetilde{v}_{1:j})} \psi_j(u,v_{1:j})$.
	\end{enumerate}
\end{definition}

\begin{remark}
	\label{rem:psi_mod_growth}
	The conditions~\ref{M1}--\ref{M3} ensure that the classical Nachbin theorem on compacta (Theorem~\ref{thm:nachbin_R}) can be applied. Moreover, \ref{M4} is needed to localize compact subsets of $M$, where the corresponding limit is zero if, e.g., $\eta_i$ has uniformly bounded derivatives, i.e., $\max_{\ell=1,\ldots,k} \sup_{u \in U} \Vert d^\ell \eta_i(u) \Vert_{L^\ell(X_i;\mathbb{R})} < \infty$ (see \eqref{eq:ass:w_growth}). In addition, if $\mathcal{G}$ consists of bounded maps, then the exponential part in \ref{M5} is bounded, whence \ref{M5} is by monotonicity of $\Psi$ satisfied.
\end{remark}

While \ref{M1}, \ref{M2}, and \ref{M5} are similar to the non-differentiable case in \cite[Definition~3.4]{cuchiero23}, the conditions \ref{M3} and \ref{M4} are needed to include the approximation of the derivatives in our weighted setting. Moreover, \ref{M5} is an analogue of the exponential moment condition for the uniqueness of the moment problem. The proof can be found in Appendix~\ref{app:real_anal}.

\begin{lemma}
	\label{lem:real_anal}
	For an open subset $U \subseteq \mathbb{R}^d$, let $\widetilde{g} \in \mathcal{B}^k_\Psi(U)$ satisfy \ref{M5} with $U_i := U$ and $\phi_i = \id_{U_i}$. Then, $\mathbb{R} \ni t \mapsto \cos(t \widetilde{g}(\cdot)) \in \mathcal{B}^k_\Psi(U)$ and $\mathbb{R} \ni t \mapsto \sin(t \widetilde{g}(\cdot)) \in \mathcal{B}^k_\Psi(U)$ are real-analytic.
\end{lemma}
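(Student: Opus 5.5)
The plan is to expand $\cos(t\widetilde{g})$ and $\sin(t\widetilde{g})$ in their power series and show that, around every $t_0 \in \mathbb{R}$, the series converges in $\mathcal{B}^k_\Psi(U)$. Fix $t_0 \in \mathbb{R}$ and write
\begin{equation}
	\cos(t\widetilde{g}) = \sum_{n=0}^\infty \frac{(t-t_0)^n}{n!} \, \widetilde{g}^n \cos^{(n)}(t_0\widetilde{g}),
\end{equation}
and analogously for $\sin$. By the identity theorem for power series in a Banach space, real-analyticity follows once we show two things for $\vert t-t_0\vert < \lambda$, where $\lambda>0$ is the constant from \ref{M5} for $\widetilde{g}$. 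First, each coefficient $a_n := \widetilde{g}^n \cos^{(n)}(t_0\widetilde{g})$ lies in $\mathcal{B}^k_\Psi(U)$. Second, $\sum_n \frac{\vert t-t_0\vert^n}{n!} \Vert a_n \Vert_{\mathcal{B}^k_\Psi(U)} < \infty$. The limit of the partial sums is then identified with $\cos(t\widetilde{g})$, since convergence in the weighted norm implies pointwise convergence of $d^j$, and pointwise the series is the Taylor series of $\cos$.

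The core step is the derivative estimate. By the chain rule (Proposition~\ref{prop:db_prop}) and the multivariate Fa\`a di Bruno formula applied to $F_n(s) := s^n \cos^{(n)}(t_0 s)$, we have
\begin{equation}
	d^j (F_n\circ\widetilde{g})(u;v_1,\ldots,v_j) = \sum_{\pi\in\mathscr{P}_j} F_n^{(\vert\pi\vert)}(\widetilde{g}(u))\, d^\pi\widetilde{g}(u;v_\pi).
\end{equation}
By Leibniz, $\vert F_n^{(m)}(s)\vert \le \sum_{r\le m}\binom{m}{r}\frac{n!}{(n-r)!}\vert s\vert^{n-r}\vert t_0\vert^{m-r}$ for $m\le k$. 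Since $\frac{n!}{(n-r)!} \le n^k$ and $\vert t_0\vert$ is fixed, this gives $\vert F_n^{(m)}(s)\vert \le C_{k,t_0}\, n^k (1+\vert s\vert)^n$, with $(1+\vert s\vert)^n$ replaced by $\vert s\vert^{n-r}$ where needed. Then
\begin{equation}
	\frac{\vert t-t_0\vert^n}{n!}\vert s\vert^{n-r} \le C\, n^{r}\,\frac{(\vert t-t_0\vert\,\vert s\vert)^{n-r}}{(n-r)!}.
\end{equation}
Summing over $n$, the quantity
\begin{equation}
	\sum_n \frac{\vert t-t_0\vert^n}{n!}\sup_u\frac{\vert d^j(F_n\circ \widetilde{g})(u;v)\vert}{\psi_j(u,v)}
\end{equation}
is bounded by a constant times $\sup_u \sum_{\pi}\exp(\lambda'\vert\widetilde{g}(u)\vert)\vert d^\pi\widetilde{g}(u;v_\pi)\vert/\psi_j(u,v)$ for any $\lambda' \in (\vert t-t_0\vert,\lambda)$. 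The polynomial factors $n^k$ are absorbed because $\sum_n n^{2k} x^n/n! \le C_\varepsilon e^{(1+\varepsilon)x}$. By \ref{M5}, this supremum is finite: it tends to $0$ outside $K_{j,R}$, and on the compact $K_{j,R}$ it is bounded by continuity, since $\widetilde{g}\in C^k_{loc}$ and $\psi_j$ is bounded below.

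For membership of each $a_n$ in $\mathcal{B}^k_\Psi(U)$, the route is Proposition~\ref{prop:BPsik_equiv}~\ref{prop:BPsik_equiv:2}. Here $U\subseteq\mathbb{R}^d$ is locally compact, so no AP condition is needed. The same estimate as above shows that the decay condition \eqref{eq:prop:BPsik_equiv:2} holds for $a_n$, and the chain rule gives $a_n\in C^k_{loc}(U)$. The same bound also shows that the tail of the series is uniformly small outside compacta, which makes the limit argument consistent with \eqref{eq:prop:BPsik_equiv:1}.

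I expect the main obstacle to be the bookkeeping in the Fa\`a di Bruno/Leibniz bound. The requirement is that after summation in $n$ one obtains exactly the combination $\exp(\lambda\vert\widetilde g\vert)\vert d^\pi\widetilde g\vert$ controlled by \ref{M5}, uniformly in $t$ in a neighborhood of $t_0$. The argument relies on the extra factors $n^k$ and $\vert t_0\vert^{m}$ costing only an arbitrarily small loss in the exponent, which is why one works with $\vert t-t_0\vert<\lambda$ strictly.
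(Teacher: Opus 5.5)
Your proof is correct, and it takes a genuinely different route from the paper. The paper never expands in $t$. It extends the parameter to the strip $V=\{z\in\mathbb{C}:\vert\imag(z)\vert<\lambda/2\}$ and proves a single first-order estimate: the difference quotient $(\cos(z_1\widetilde g)-\cos(z_0\widetilde g))/(z_1-z_0)$ differs from $\widetilde g\cos'(z_0\widetilde g)$ by $O(\vert z_1-z_0\vert)$ in the weighted norm. This comes from Taylor's theorem for the complex cosine, $\vert\cos^{(j)}(ws)\vert\le e^{\vert s\vert\,\vert\imag(w)\vert}$, and $\vert s\vert^2\le 8\lambda^{-2}e^{\lambda\vert s\vert/2}$, which land exactly on the quantity $e^{\lambda\vert\widetilde g\vert}\vert d^\pi\widetilde g\vert/\psi_j$ controlled by \ref{M5}. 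The paper then uses that Banach-valued holomorphic maps on a strip are real-analytic on $\mathbb{R}$.

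You instead stay real. You write out all Taylor coefficients $a_n=\widetilde g^{\,n}\cos^{(n)}(t_0\widetilde g)$ and get their membership from Proposition~\ref{prop:BPsik_equiv}~\ref{prop:BPsik_equiv:2}, legitimately without AP since $\mathbb{R}^d$ is locally compact. You then sum the series by hand. The paper's route is shorter, since only a first-order quotient has to be estimated. Yours costs the $n$-uniform Leibniz bookkeeping and an extra appeal to Proposition~\ref{prop:BPsik_equiv}, but it buys three things:
\begin{enumerate}
\item an explicit radius of convergence $\lambda$ at every $t_0$;
\item no need for a complexification of $\mathcal{B}^k_\Psi(U)$;
\item a direct proof that $\cos(t\widetilde g),\sin(t\widetilde g)\in\mathcal{B}^k_\Psi(U)$ for every $t$, whereas the holomorphy argument presupposes that $\cos(z\widetilde g)$ lies in a complexified weighted space for non-real $z$.
\end{enumerate}

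One justification in your convergence step needs repair. To get $\sum_n\frac{\mu^n}{n!}\Vert a_n\Vert_{\mathcal{B}^k_\Psi(U)}<\infty$ with $\mu:=\vert t-t_0\vert$, you must bound a sum of suprema. The inequality $\sum_n n^{2k}x^n/n!\le C_\varepsilon e^{(1+\varepsilon)x}$ only bounds the supremum of the pointwise sum. Bound each term separately instead. For $\mu<\lambda'<\lambda$ and every $p\in\mathbb{N}_0$ one has $\frac{(\mu\vert s\vert)^p}{p!}\le(\mu/\lambda')^p e^{\lambda'\vert s\vert}$. Combine this with the exact identity $\frac{\mu^n}{n!}\frac{n!}{(n-r)!}\vert s\vert^{n-r}=\mu^r\frac{(\mu\vert s\vert)^{n-r}}{(n-r)!}$ from your Leibniz formula. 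This gives $\frac{\mu^n}{n!}\Vert a_n\Vert_{\mathcal{B}^k_\Psi(U)}\le C(\mu/\lambda')^{n-k}$, where $C$ is a multiple of $\max_{j,\pi}\sup_{(u,v_1,\ldots,v_j)} e^{\lambda'\vert\widetilde g(u)\vert}\vert d^\pi\widetilde g(u;v_\pi)\vert/\psi_j(u,v_1,\ldots,v_j)$, which is finite by \ref{M5} together with your compactness argument. The series then converges geometrically, and no $n^{2k}$ factors appear at all. This termwise geometric decay, not the absorption of polynomial factors, is where the strict inequality $\vert t-t_0\vert<\lambda$ is actually used.
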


\subsection{Weighted Nachbin theorems over finite-dimensional manifolds}

Now, we formulate a generalized version of the Nachbin theorem in our weighted setting, which extends the weighted approximation results in \cite{zapata73,nachbin91} for polynomials over $\mathbb{R}^d$ to the notion of subalgebras over finite-dimensional manifolds.

\begin{theorem}[Nachbin on $\mathcal{B}^k_\Psi(M)$]
	\label{thm:w_nachbin_findim}
	Let $(M,\Psi)$ be a weighted $C^k_{loc}$-manifold over finite-dimen-sional vector spaces. Moreover, let $\mathcal{G} \subseteq \mathcal{B}^k_\Psi(M)$ be a subalgebra such that $\mathcal{G}$ is strongly point separating and nowhere vanishing of $\Psi$-moderate growth. Then, $\mathcal{G}$ is dense in $\mathcal{B}^k_\Psi(M)$.
\end{theorem}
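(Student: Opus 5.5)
Since $\mathcal{B}^k_\Psi(M)$ carries the initial topology of the chart maps $f\mapsto f\circ\phi_i^{-1}$, it suffices to approximate $f$ in each $\mathcal{B}^k_{\Psi_i}(\phi_i(U_i))$. Fix a chart $i$ and write $U:=\phi_i(U_i)\subseteq X_i\cong\mathbb{R}^d$. By Proposition~\ref{prop:BPsik_equiv}~\ref{prop:BPsik_equiv:1}, given $f$ and $\varepsilon>0$ we pick $R$ so large that the weighted derivatives of $f$ on $(U\times X_i^j)\setminus K_{i,j,R}$ are below $\varepsilon$. The plan has three steps. First, localize $f$ by a cutoff built from $\mathcal{G}$ via \ref{M4}. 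Second, approximate the localized function on a compact set by the classical Nachbin theorem (Theorem~\ref{thm:nachbin_R}). Third, control the tails through the moderate growth \ref{M5}, using Lemma~\ref{lem:real_anal}.

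\textbf{Localization.} With $\eta_i=(\widetilde g_1,\dots,\widetilde g_m)\circ\phi_i^{-1}$ the embedding from \ref{M4}, set $f_R:=f\cdot(h_{i,R}\circ\eta_i)$. The Leibniz rule expands $d^j f_R$ into terms $d^{|\pi_1|}f\cdot d^{|\pi_2|}(h_{i,R}\circ\eta_i)$. Outside $K_{i,j,R}$ these are small for two reasons:
\begin{itemize}
\item the factor carrying the full derivative of $f$ is small by the monotonicity of $\Psi$ (Definition~\ref{def:w_dom}~\ref{def:w_dom2}) together with the choice of $R$;
\item the terms carrying derivatives of $h_{i,R}\circ\eta_i$ are small by the limit in \ref{M4}, with $\sup|f|$ on compacta handled by monotonicity.
\end{itemize}
Hence $\|f-f_R\|_{\mathcal{B}^k_{\Psi_i}}\lesssim\varepsilon$. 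Moreover, $f_R\circ\eta_i^{-1}$ extends to a $C^k$ function with compact support in $\mathbb{R}^m$. This reduces the problem to approximating functions of the form $F\circ\eta_i$ with $F\in C^k_c(\mathbb{R}^m)$.

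\textbf{Fourier step (main obstacle).} Approximating $F$ by polynomials would give elements of $\mathcal{G}$ (polynomials in the $\widetilde g_\ell$), but with no weighted control at infinity. I would instead write $F$ through its Fourier transform, after mollifying so that $\widehat F$ decays fast. Then $F\circ\eta_i=\int \widehat F(\xi)\,e^{\mathbf{i}\langle\xi,\eta_i\rangle}d\xi$, and $\cos\langle\xi,\eta_i\rangle$, $\sin\langle\xi,\eta_i\rangle$ are $\cos(t\widetilde g)$, $\sin(t\widetilde g)$ for $\widetilde g=\sum_\ell \xi_\ell\widetilde g_\ell/t\in\widetilde{\mathcal{G}}$.

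\begin{itemize}
\item The integral converges in $\mathcal{B}^k_{\Psi_i}$: $\cos(t\widetilde g)$, $\sin(t\widetilde g)$ have derivatives bounded by polynomial factors in $|\xi|$ times products $d^\pi\widetilde g_\ell$, and these are weighted-bounded by \ref{M5}.
\item Riemann sums therefore approximate it in norm, so $F\circ\eta_i$ lies in the closed span of $\{\cos(t\widetilde g),\sin(t\widetilde g):\widetilde g\in\widetilde{\mathcal{G}},\,t\in\mathbb{R}\}$.
\item By Lemma~\ref{lem:real_anal}, $t\mapsto\cos(t\widetilde g)$ is real-analytic into $\mathcal{B}^k_\Psi$. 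Its Taylor coefficients are powers $\widetilde g^n\in\mathcal{G}$, up to constants; the function $1$ is obtained via \ref{M2} and the classical theorem on compacta combined with the cutoff. A Hahn--Banach argument then finishes: a functional annihilating $\mathcal{G}$ kills all derivatives at $t=0$, hence by analyticity kills $\cos(t\widetilde g)$ and $\sin(t\widetilde g)$ for all $t$, hence kills $F\circ\eta_i$ and so $f$.
\end{itemize}
Weighted boundedness of the functionals follows from the definition of the norm \eqref{eq:def:w_norm}.

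The expected difficulty is justifying the analyticity radius globally in $t$: the radius is controlled by $\lambda$ in \ref{M5}, so one continues step by step along $\mathbb{R}$. One must also check that $\langle\xi,\eta_i\rangle$ satisfies \ref{M5}. This requires taking $\widetilde{\mathcal{G}}$ a vector space and arranging a common $\lambda$ for the finitely many $\widetilde g_\ell$, using $\exp(\lambda|\sum_\ell \xi_\ell\widetilde g_\ell|)\le\prod_\ell\exp(\lambda|\xi_\ell||\widetilde g_\ell|)$ and monotonicity. Conditions \ref{M1}--\ref{M3} are not used in the tail estimates. Their role is to make $\eta_i$ an embedding, so that every compactly supported $C^k$ function on $U$ factors through $\eta_i$; this is exactly what the localization step requires.
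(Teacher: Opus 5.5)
Your route differs from the paper's. The paper proceeds in three stages. First it treats \emph{bounded} subalgebras: cutoff $h\circ\eta$, the classical Nachbin theorem on a compact set to get $g$, then Weierstrass for $H(y,z)=h(y)z$ on the compact closure of $(\eta,g)(U)$. Second, it applies this to the bounded algebra spanned by $\cos_0\circ\widetilde g$ and $\sin\circ\widetilde g$, where $\cos_0:=\cos-1$. Third, it places these functions in $\overline{\mathcal G}$ via Taylor polynomials, (M5) and Lemma~\ref{lem:real_anal}. You replace the first two stages by Fourier inversion of a compactly supported $C^k$ extension $F$ of $f_R\circ\eta_i^{-1}$. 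This avoids the classical theorem, Weierstrass and the trigonometric product identities. The price is a $C^k$ extension off $\eta_i(U)$, which requires $\eta_i$ to be an immersion and not merely a topological embedding, plus a Bochner/Riemann-sum argument in $\mathcal B^k_{\Psi_i}$. Granting that reading of (M4), this part works. Two simplifications: no common $\lambda$ is needed, since $\langle\xi,\eta_i\rangle$ itself lies in the vector space $\widetilde{\mathcal G}_i$ and so satisfies (M5); and Lemma~\ref{lem:real_anal} already gives analyticity on all of $\mathbb R$, so the identity theorem applies directly without continuation.

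The genuine gap is the constant term. Since $\cos(t\widetilde g)|_{t=0}=1$, your Hahn--Banach step needs $\ell(1)=0$, i.e.\ $1\in\overline{\mathcal G}$, and your justification does not work:
\begin{itemize}
\item the classical theorem only gives $g\in\mathcal G$ with $g\approx 1$ on a compact set;
\item multiplying by the cutoff leaves $\mathcal G$;
\item bringing $(h_{i,R}\circ\eta_i)\,g$ back into $\overline{\mathcal G}$ with your machinery would need (M5) for $g$, which is assumed only on $\widetilde{\mathcal G}$, not on products of its elements;
\item Fourier-expanding $h_{i,R}\circ\eta_i$ alone just reintroduces the constant term.
\end{itemize}
This is not cosmetic. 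As written you only prove that $\mathcal G+\mathbb R$ is dense. Meanwhile (M2), which cannot be dropped, never genuinely enters: point evaluations are continuous on $\mathcal B^k_\Psi$, so an algebra vanishing at a point is never dense. Also, (M1)--(M3) are not what makes $\eta_i$ an embedding; (M4) already provides that. The fix is to work with $\cos_0$ and $\sin$. The Taylor coefficients at $0$ of $t\mapsto\cos_0(t\widetilde g)$ and $t\mapsto\sin(t\widetilde g)$ are multiples of $\widetilde g^{\,n}$ with $n\ge 1$, hence lie in $\mathcal G$, and Fourier inversion yields $F\circ\eta_i-F(0)\in\overline{\mathcal G}$. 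So you must arrange $F(0)=0$, and this is where (M2) enters. By compactness of $\operatorname{supp} f_R$, finitely many $\widetilde g_1',\dots,\widetilde g_p'\in\widetilde{\mathcal G}$ have no common zero on it. Appending their chart representatives to $\eta_i$ gives again a $C^k$ embedding $\eta_i'$ with $0\notin\eta_i'(\operatorname{supp} f_R)$, so the extension $F$ of $f_R\circ(\eta_i')^{-1}$ can be chosen to vanish near $0$.
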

\begin{proof}
	First, we show the conclusion for a subalgebra $\mathcal{G} \subseteq \mathcal{B}^k_\Psi(U)$ consisting of bounded maps over a weighted domain $(U,\Psi)$, where we can choose $\widetilde{\mathcal{G}} := \mathcal{G}$ as a strongly point separating and nowhere vanishing separating vector subspace (see Remark~\ref{rem:psi_mod_growth}). Since $\mathcal{B}^k_\Psi(U)$ is defined as the closure of $C^k_b(U)$ with respect to $\Vert \cdot \Vert_{\mathcal{B}^k_\Psi(U)}$, it suffices to approximate any given $f \in C^k_b(U)$ by an element of $\mathcal{G}$. To this end, we fix some $f \in C^k_b(U)$ and $\varepsilon > 0$. Moreover, by defining $C_f := 1+$ $\max_{j=0,\ldots,k} \Vert d^j f \Vert_{L^j(\mathbb{R}^d;\mathbb{R})} \!>\! 0$, it holds for every $j = 0,\ldots,k$ and $(u,v_1,\ldots,v_j) \in U \times (\mathbb{R}^d)^j$ that
	\begin{equation}
		\vert d^j f(u;v_1,\ldots,v_j) \vert \leq C_f \Vert v_1 \Vert \cdots \Vert v_j \Vert.
	\end{equation}
	In addition, by \eqref{eq:ass:w_growth} and \ref{M4}, there exists some $R_2 > R > 0$ such that
	\begin{equation}
		\begin{aligned}
			\max_{j=0,\ldots,k} \sup_{(u,v_1,\ldots,v_j) \in (U \times (\mathbb{R}^d)^j) \setminus K_{j,R}} \frac{\Vert v_1 \Vert \cdots \Vert v_j \Vert}{\psi_j(u,v_1,\ldots,v_j)} & \leq \frac{\varepsilon}{3 \cdot 2^k C_f}, \\
			\max_{1 \leq \ell \leq j \leq k} \sup_{(u,v_1,\ldots,v_j) \in (U \times (\mathbb{R}^d)^j) \setminus K_{j,R}} \frac{\Vert d^\ell (h \circ \eta)(u) \Vert_{L^\ell(\mathbb{R}^d;\mathbb{R})} \Vert v_1 \Vert \cdots \Vert v_j \Vert}{\psi_j(u,v_1,\ldots,v_j)} & \leq \frac{\varepsilon}{3 \cdot 2^k C_f},
		\end{aligned}
	\end{equation}
	where $\eta := (\widetilde{g}_1,\ldots,\widetilde{g}_m)^\top: U \rightarrow \mathbb{R}^m$ is an embedding (with $\widetilde{g}_1,\ldots,\widetilde{g}_m \in \widetilde{\mathcal{G}}$) and $h := h_R \in C^\infty_c(\eta(U))$ is a cutoff function with $0 \leq h \leq 1$, $h\vert_{\eta(K_R)} = 1$, and $h\vert_{\mathbb{R}^m \setminus \eta(K_{R_2})} = 0$. Then, by using the Leibniz product rule (if $j \geq 1$), we conclude that
	\begin{equation}
		\label{eq:thm:w_nachbin_findim:proof1}
		\begin{aligned}
			& \max_{j=0,\ldots,k} \sup_{(u,v_1,\ldots,v_j) \in (U \times X^j) \setminus K_{j,R}} \frac{\vert d^j \big( (1 - h \circ \eta) \cdot f \big)(u;v_1,\ldots,v_j) \vert}{\psi_j(u,v_1,\ldots,v_j)} \\
			& \leq \max_{j=0,\ldots,k} \sup_{(u,v_1,\ldots,v_j) \in (U \times X^j) \setminus K_{j,R}} \frac{\sum_{\mathcal{L} \subseteq \lbrace 1,\ldots,j \rbrace} \vert d^{\vert\mathcal{L}\vert}(1 - h \circ \eta)(u;v_\mathcal{L}) \vert \vert d^{j-\vert\mathcal{L}\vert} f(u;v_{\mathcal{L}^c}) \vert}{\psi_j(u,v_1,\ldots,v_j)} \\
			& \leq 2^k \max_{j=0,\ldots,k \atop \mathcal{L} \subseteq \lbrace 1,\ldots,j \rbrace} \sup_{(U \times X^j) \setminus K_{j,R}} \frac{\Vert d^{\vert\mathcal{L}\vert} (1-h \circ \eta)(u) \Vert_{L^{\vert\mathcal{L}\vert}(\mathbb{R}^d;\mathbb{R})} \Vert d^{j-\vert\mathcal{L}\vert} f(u) \Vert_{L^{j-\vert\mathcal{L}\vert}(\mathbb{R}^d;\mathbb{R})} \Vert v_1 \Vert \cdots \Vert v_j \Vert}{\psi_j(u,v_1,\ldots,v_j)} \\
			& \leq 2^k C_f \max_{1 \leq \ell \leq j \leq k} \sup_{(u,v_1,\ldots,v_j) \in (U \times (\mathbb{R}^d)^j) \setminus K_{j,R}} \frac{\Vert d^\ell (h \circ \eta)(u) \Vert_{L^\ell(\mathbb{R}^d;\mathbb{R})} \Vert v_1 \Vert \cdots \Vert v_j \Vert}{\psi_j(u,v_1,\ldots,v_j)} \\
			& \leq 2^k C_f \frac{\varepsilon}{3 \cdot 2^k C_f} = \frac{\varepsilon}{3},
		\end{aligned}
	\end{equation}
	where we used that $\Vert d^0 (h \circ \eta)(u) \Vert_{L^0(\mathbb{R}^d;\mathbb{R})} = \vert h(\eta(u)) \vert \leq 1$. Now, on the set $K := \bigcup_{j=0}^k \pi_0(K_{j,R_2})$ (being compact as continuous image of the compact pre-images $K_{j,R_2} := \psi_j^{-1}((0,R_2])$, $j = 0,\ldots,k$), we can apply the classical Nachbin theorem (Theorem~\ref{thm:nachbin_R}) to obtain some $g \in \mathcal{G}$ satisfying
	\begin{equation}
		\label{eq:thm:w_nachbin_findim:proof2}
		\max_{j=0,\ldots,k} \sup_{u \in K} \Vert d^j f(u) - d^j g(u) \Vert_{L^j(\mathbb{R}^d;\mathbb{R})} < \frac{\varepsilon}{3 \cdot 2^k k! C_\Psi C_\eta C_{\inf}},
	\end{equation}
	where the constant $C_{\inf} := \max_{j=0,\ldots,k} \sup_{(u,v_1,\ldots,v_j)} \frac{\Vert v_1 \Vert \cdots \Vert v_j \Vert}{\psi_j(u,v_1,\ldots,v_j)} > 0$ is by \eqref{eq:ass:w_growth} finite, and the constant $C_\eta := \max_{j=0,\ldots,k} \sup_{(u,v_1,\ldots,v_j) \in U \times (\mathbb{R}^d)^j} \frac{\vert d^j (h \circ \eta)(u;v_1,\ldots,v_j) \vert}{\psi_j(u,v_1,\ldots,v_j)} > 0$ is by \ref{M4} finite. Thus, by using again the Leibniz product rule (if $j \geq 1$), that $\supp(h \circ \eta) \subseteq K$, the monotonicity of $\Psi := (\psi_j)_{j=0,\ldots,k}$, and \eqref{eq:thm:w_nachbin_findim:proof2}, it follows that 
	\begin{equation}
		\label{eq:thm:w_nachbin_findim:proof3}
		\begin{aligned}
			& \Vert (h \circ \eta) \cdot (f-g) \Vert_{\mathcal{B}^k_\Psi(U)} = \max_{j=0,\ldots,k} \sup_{(u,v_1,\ldots,v_j) \in U \times (\mathbb{R}^d)^j} \frac{\left\vert d^j \big( (h \circ \eta) \cdot (f-g) \big)(u;v_1,\ldots,v_j) \right\vert}{\psi_j(u,v_1,\ldots,v_j)} \\
			& \leq C_\Psi \max_{j=0,\ldots,k} \sup_{(u,v_1,\ldots,v_j) \in K \times (\mathbb{R}^d)^j} \frac{\sum_{\mathcal{L} \subseteq \lbrace 1,\ldots,j \rbrace} \vert d^{\vert\mathcal{L}\vert} (h \circ \eta)(u;v_\mathcal{L}) \vert \vert d^{j-\vert\mathcal{L}\vert} (f-g)(u;v_{\mathcal{L}^c}) \vert}{\psi_{\vert\mathcal{L}\vert}(u,v_\mathcal{L}) \psi_{j-\vert\mathcal{L}\vert}(u,v_{\mathcal{L}^c})} \\
			& \leq 2^k C_\Psi C_\eta \max_{j=0,\ldots,k} \sup_{(u,v_1,\ldots,v_j) \in K \times (\mathbb{R}^d)^j} \frac{\Vert d^j f(u) - d^j g(u) \Vert_{L^j(\mathbb{R}^d;\mathbb{R})} \Vert v_1 \Vert \cdots \Vert v_j \Vert}{\psi_j(u,v_1,\ldots,v_j)} \\
			& \leq 2^k C_\Psi C_\eta C_{\inf} \sup_{u \in K} \max_{j=0,\ldots,k} \Vert d^j f(u) - d^j g(u) \Vert_{L^j(\mathbb{R}^d;\mathbb{R})} \\
			& < 2^k k! C_\Psi C_\eta C_{\inf} \frac{\varepsilon}{3 \cdot 2^k k! C_\Psi C_\eta C_{\inf}} = \frac{\varepsilon}{3}.
		\end{aligned}
	\end{equation}
	Therefore, by defining the function $\mathbb{R}^m \times \mathbb{R} \ni (y,z) \mapsto H(y,z) := h(y) z \in \mathbb{R}$, we conclude from \eqref{eq:thm:w_nachbin_findim:proof1} and \eqref{eq:thm:w_nachbin_findim:proof3} that
	\begin{equation}
		\label{eq:thm:w_nachbin_findim:proof4}
		\begin{aligned}
			\Vert f - H \circ (\eta \oplus g) \Vert_{\mathcal{B}^k_\Psi(U)} & \leq \Vert f - (h \circ \eta) \cdot f + (h \circ \eta) \cdot (f-g) \Vert_{\mathcal{B}^k_\Psi(U)} \\
			& \leq \Vert f - (h \circ \eta) \cdot f \Vert_{\mathcal{B}^k_\Psi(U)} + \Vert (h \circ \eta) \cdot (f-g) \Vert_{\mathcal{B}^k_\Psi(U)} \\
			& < \frac{\varepsilon}{3} + \frac{\varepsilon}{3} = \frac{2\varepsilon}{3}.
		\end{aligned}
	\end{equation}
	Next, we define the set $\widetilde{K} := \overline{\eta(U) \times g(U)} \subseteq \mathbb{R}^m \times \mathbb{R} \cong \mathbb{R}^{m+1}$, which is compact as $\mathcal{G}$ consists of bounded maps. Then, by applying the Weierstrass theorem (Theorem~\ref{thm:wstrass}), there exists some $p_n \in \Pol(\mathbb{R}^{m+1}) \cong \Pol(\mathbb{R}^m \times \mathbb{R})$ satisfying
	\begin{equation}
		\label{eq:thm:w_nachbin_findim:proof5}
		\max_{j=0,\ldots,k} \sup_{(y,z) \in \widetilde{K}} \Vert d^j H(y,z) - d^j p_n(y,z) \Vert_{L^j(\mathbb{R}^m \times \mathbb{R};\mathbb{R})} < \frac{\varepsilon}{3 k! C_{\eta,g}}.
	\end{equation}
	Hence, by using the Fa\`a di Bruno formula (if $j \geq 1$), that $\vert \mathscr{P}_j \vert \leq j! \leq k!$, and \eqref{eq:thm:w_nachbin_findim:proof5}, we have
	\begin{equation}
		\label{eq:thm:w_nachbin_findim:proof6}
		\begin{aligned}
			& \Vert H \circ (\eta \oplus g) - p_n \circ (\eta \oplus g) \Vert_{\mathcal{B}^k_\Psi(U)} \\
			& = \max_{j=0,\ldots,k} \sup_{(u,v_1,\ldots,v_j) \in U \times (\mathbb{R}^d)^j} \frac{\big\vert \sum_{\pi \in \mathscr{P}_j} d^\pi (H - p_n)\big( (\eta(u),g(u)); d^\pi (\eta \oplus g)(u;v_\pi) \big) \big\vert}{\psi_j(u,v_1,\ldots,v_j)} \\
			& \leq k! \! \max_{j=0,\ldots,k \atop \pi \in \mathscr{P}_j} \sup_{U \times (\mathbb{R}^d)^j} \! \frac{\Vert d^{\vert\pi\vert} H(\eta(u),g(u)) \!-\! d^{\vert\pi\vert} p_n(\eta(u),g(u)) \Vert_{L^{\vert\pi\vert}(\mathbb{R}^m \times \mathbb{R};\mathbb{R})} \prod_{r=1}^{\vert\pi\vert} \! \Vert d^{\vert\pi_r\vert} (\eta \oplus g)(u;v_{\pi_r}) \Vert}{\psi_j(u,v_1,\ldots,v_j)} \\
			& \leq k! C_{\eta,g} \max_{j=0,\ldots,k} \sup_{(y,z) \in \widetilde{K}} \Vert d^j H(y,z) - d^j p_n(y,z) \Vert_{L^j(\mathbb{R}^m \times \mathbb{R};\mathbb{R})} \\
			& < k! C_{\eta,g} \frac{\varepsilon}{3 k! C_{\eta,g}} = \frac{\varepsilon}{3}.
		\end{aligned}
	\end{equation}
	Finally, by combining \eqref{eq:thm:w_nachbin_findim:proof4} with \eqref{eq:thm:w_nachbin_findim:proof6}, it follows for $p_n \circ (\eta \oplus g) \in \mathcal{G}$ (as $\mathcal{G}$ is a subalgebra) that
	\begin{equation}
		\begin{aligned}
			\Vert f - p_n \circ (\eta \oplus g) \Vert_{\mathcal{B}^k_\Psi(U)} & \leq \Vert f - H \circ (\eta \oplus g) \Vert_{\mathcal{B}^k_\Psi(U)} + \Vert H \circ (\eta \oplus g) - p_n \circ (\eta \oplus g) \Vert_{\mathcal{B}^k_\Psi(U)} \\
			& < \frac{2\varepsilon}{3} + \frac{\varepsilon}{3} = \varepsilon. 
		\end{aligned}
	\end{equation}
	Since $f \in C^k_b(U)$ and $\varepsilon > 0$ was chosen arbitrarily, this shows that $\mathcal{G}$ is dense in $\mathcal{B}^k_\Psi(U)$.
	
	Now, we show the conclusion for a general subalgebra $\mathcal{G} \subseteq \mathcal{B}^k_\Psi(U)$ over a weighted domain $(U,\Psi)$, where $\widetilde{\mathcal{G}} \subseteq \mathcal{G}$ denotes the strongly point separating and nowhere vanishing vector subspace of $\Psi$-moderate growth. By using $\mathbb{R} \ni s \mapsto \cos_0(s) := \cos(s) - 1 \in \mathbb{R}$, we introduce the set
		\begin{equation}
		\label{eq:thm:w_nachbin_findim:proof7}
		\mathcal{G}_{\mathrm{trig}} := \linspan\left( \left\lbrace \cos_0\! \circ \widetilde{g}: \widetilde{g} \in \widetilde{\mathcal{G}} \right\rbrace \cup \left\lbrace \sin \circ \widetilde{g}: \widetilde{g} \in \widetilde{\mathcal{G}} \right\rbrace \right),
	\end{equation}
	which consists of bounded maps. In order to show that $\mathcal{G}_{\mathrm{trig}} \subseteq \mathcal{B}^k_\Psi(U)$, we fix some $\widetilde{g} \in \widetilde{\mathcal{G}}$ and $\varepsilon > 0$. Since $\widetilde{g} \in \mathcal{B}^k_\Psi(U)$, there exists by definition of $\mathcal{B}^k_\Psi(U)$ some $b \in C^k_b(U)$ such that
	\begin{equation}
		\label{eq:thm:w_nachbin_findim:proof8}
		\Vert \widetilde{g} - b \Vert_{\mathcal{B}^k_\Psi(U)} = \max_{j=0,\ldots,k} \sup_{(u,v_1,\ldots,v_j) \in U \times (\mathbb{R}^d)^j} \frac{\vert d^j \widetilde{g}(u;v_1,\ldots,v_j) - d^j b(u;v_1,\ldots,v_j) \vert}{\psi_j(u,v_1,\ldots,v_j)} < \frac{\varepsilon}{k \cdot k! (C_\Psi C_{\widetilde{g}})^k},
	\end{equation}
	where $C_{\widetilde{g}} := 1+\Vert \widetilde{g} \Vert_{\mathcal{B}^k_\Psi(U)}$. Thus, the Fa\`a di Bruno formula, $\vert \mathscr{P}_j \vert \leq j! \leq k!$, that $\vert \cos^{(j)}(s) \vert \leq 1$, a telescoping sum, and $\frac{\vert d^{\vert\pi_\ell\vert} b(u;v_{\pi_\ell}) \vert}{\psi_{\vert\pi_\ell\vert}(u,v_{\pi_\ell})} \leq \varepsilon + \frac{\vert d^{\vert\pi_\ell\vert} \widetilde{g}(u,v_{\pi_\ell}) \vert}{\psi_{\vert\pi_\ell\vert}(u,v_{\pi_\ell})} \leq 1 + \Vert \widetilde{g} \Vert_{\mathcal{B}^k_\Psi(U)} = C_{\widetilde{g}}$, and \eqref{eq:thm:w_nachbin_findim:proof8} imply that
	\begin{equation}
		\label{eq:thm:w_nachbin_findim:proof9}
		\begin{aligned}
			& \Vert \cos_0\! \circ \widetilde{g} - \cos_0\! \circ b \Vert_{\mathcal{B}^k_\Psi(U)} = \max_{j=0,\ldots,k} \sup_{(u,v_1,\ldots,v_j) \in U \times (\mathbb{R}^d)^j} \!\! \frac{\big\vert \sum_{\pi \in \mathscr{P}_j} \!\! \cos_0^{(\vert\pi\vert)}(\widetilde{g}(u)) \big( d^\pi \widetilde{g}(u;v_\pi) \!-\! d^\pi b(u;v_\pi) \big) \big\vert}{\psi_j(u,v_1,\ldots,v_j)} \\
			& \leq k! \max_{j=0,\ldots,k \atop \pi \in \mathscr{P}_j} \sup_{(u,v_1,\ldots,v_j) \in U \times (\mathbb{R}^d)^j} \frac{\vert \cos_0^{(\vert\pi\vert)}(\widetilde{g}(u)) \vert \big\vert \prod_{r=1}^{\vert\pi\vert} d^{\vert\pi_r\vert} \widetilde{g}(u;v_{\pi_r}) - \prod_{r=1}^{\vert\pi\vert} d^\pi b(u;v_{\pi_r}) \big\vert}{\psi_j(u,v_1,\ldots,v_j)} \\
			& \leq k! C_\Psi^k \! \max_{j=0,\ldots,k \atop \pi \in \mathscr{P}_j} \sup_{U \times (\mathbb{R}^d)^j} \!\! \frac{\sum_{r=1}^{\vert\pi\vert} \! \prod_{\ell=1}^r \! \vert d^{\vert\pi_\ell\vert} \widetilde{g}(u,v_{\pi_\ell}) \vert \vert d^{\vert\pi_r\vert} \widetilde{g}(u,v_{\pi_r}) \!-\! d^{\vert\pi_r\vert} b(u;v_{\pi_r}) \vert \prod_{\ell=r+1}^{\vert\pi\vert} \! \vert d^{\vert\pi_\ell\vert} b(u;v_{\pi_\ell}) \vert}{\prod_{\ell=1}^r \psi_{\vert\pi_\ell\vert}(u,v_{\pi_\ell}) \psi_{\vert\pi_r\vert}(u,v_{\pi_r}) \prod_{\ell=r+1}^{\vert\pi\vert} \psi_{\vert\pi_\ell\vert}(u,v_{\pi_\ell})} \\
			& \leq k \cdot k! (C_\Psi C_{\widetilde{g}})^k \max_{j=0,\ldots,k \atop \pi \in \mathscr{P}_j, \, r=1,\ldots,\vert\pi\vert} \sup_{(u,v_1,\ldots,v_j) \in U \times (\mathbb{R}^d)^j} \frac{\vert d^{\vert\pi_r\vert} \widetilde{g}(u,v_{\pi_r}) - d^{\vert\pi_r\vert} b(u;v_{\pi_r}) \vert}{\psi_{\vert\pi_r\vert}(u,v_{\pi_r})} \\
			& \leq k \cdot k! (C_\Psi C_{\widetilde{g}})^k \frac{\varepsilon}{k \cdot k! (C_\Psi C_{\widetilde{g}})^k} = \varepsilon.
		\end{aligned}
	\end{equation}
	Since $\varepsilon > 0$ was chosen arbitrarily and $\cos_0\! \circ b \in C^k_b(U)$, this shows that $\cos_0\! \circ \widetilde{g} \in \mathcal{B}^k_\Psi(U)$, which holds analogously for $\sin \circ \widetilde{g} \in \mathcal{B}^k_\Psi(U)$, thus $\mathcal{G}_{\mathrm{trig}} \subseteq \mathcal{B}^k_\Psi(U)$. Moreover, the trigonometric identities
	\begin{equation}
		\label{eq:thm:w_nachbin_findim:proof10}
		\begin{aligned}
			\cos_0(s) \cos_0(t) & = \cos(s) \cos(t) - \cos(s) - \cos(t) + 1 \\
			& = \frac{1}{2} \big( \cos(s-t) + \cos(s+t) \big) - \cos(s) - \cos(t) + 1 \\
			& = \frac{1}{2} \big( \cos_0(s-t) + \cos_0(s+t) \big) - \cos_0(s) - \cos_0(t), \\
			\cos_0(s) \sin(t) & = \cos(s) \sin(t) - \sin(t) \\
			& = \frac{1}{2} \big( \sin(s+t) - \sin(s-t) \big) - \sin(t), \\
			\sin(s) \sin(t) & = \frac{1}{2} \big( \cos(s-t) - \cos(s+t) \big) \\
			& = \frac{1}{2} \big( \cos_0(s-t) - \cos_0(s+t) \big),
		\end{aligned}
	\end{equation}
	ensure that $\mathcal{G}_{\mathrm{trig}}$ is a subalgebra. Now, we check that $\mathcal{G}_{\mathrm{trig}}$ satisfies \ref{M1}--\ref{M5}. For \ref{M1}, we find for any distinct points $u_1,u_2 \in U$ some $\widetilde{g} \in \widetilde{\mathcal{G}}$ with $\widetilde{g}(u_1) \neq \widetilde{g}(u_2)$. Thus, for $t \neq 0$ small enough, $t \widetilde{g}(u_1) \neq t \widetilde{g}(u_2)$ are distinct points in $(-\frac{\pi}{2},\frac{\pi}{2})$ and the map $\sin(t \widetilde{g}(\cdot)) \in \mathcal{G}_{\mathrm{trig}}$ separates them. For \ref{M2}, there exist for every $u \in U$ some $\widetilde{g} \in \widetilde{\mathcal{G}}$ with $\widetilde{g}(u) \neq 0$, whence there exists a suitable $t \neq 0$ such that the map $\sin(t \widetilde{g}(\cdot)) \in \mathcal{G}_{\mathrm{trig}}$ satisfies $\sin(t \widetilde{g}(u)) \neq 0$. For \ref{M3}, we find for any $u \in U$ and $v \in \mathbb{R}^d \setminus \lbrace 0 \rbrace$ some $\widetilde{g} \in \widetilde{\mathcal{G}}$ with $d\widetilde{g}(u;v) \neq 0$, thus either the map $\cos_0\,\circ \widetilde{g} \in \mathcal{G}_{\mathrm{trig}}$ or $\sin \circ \widetilde{g} \in \mathcal{G}_{\mathrm{trig}}$ satisfies $d(\cos_0\! \circ \widetilde{g})(u;v) = -\sin(\widetilde{g}(u)) d\widetilde{g}(u;v) \neq 0$ or $d(\sin \circ \widetilde{g})(u;v) = \cos(\widetilde{g}(u)) d\widetilde{g}(u;v) \neq 0$. For \ref{M4}, there exist some $\widetilde{g}_1,\ldots,\widetilde{g}_m \in \widetilde{\mathcal{G}}$ such that $\eta := (\widetilde{g}_1,\ldots,\widetilde{g}_m)^\top: U \rightarrow \mathbb{R}^m$ is an embedding. Hence, by using that $\mathbb{R} \ni s \mapsto (\cos_0(s), \sin(s), \cos_0(\pi s), \sin(\pi s))^\top \in \mathbb{R}^4$ is injective, the map
	\begin{equation}
		U \ni u \quad \mapsto \quad \eta_{\mathrm{trig}}(u) := \big( \cos_0(\widetilde{g}_\ell(u)), \sin(\widetilde{g}_\ell(u)), \cos_0(\pi \widetilde{g}_\ell(u)), \sin(\pi \widetilde{g}_\ell(u)) \big)_{\ell=1,\ldots,m}^\top \in \mathbb{R}^{4m}
	\end{equation}
	is an embedding with components from $\mathcal{G}_{\mathrm{trig}}$ satisfying \ref{M4}. For \ref{M5}, we use that $\mathcal{G}_{\mathrm{trig}}$ consists of bounded maps implying that \ref{M5} is already satisfied (see Remark~\ref{rem:psi_mod_growth}). Thus, we can now apply the previous step to conclude that $\mathcal{G}_{\mathrm{trig}}$ is dense in $\mathcal{B}^k_\Psi(U)$. 
	
	Next, we show that $\mathcal{G}_{\mathrm{trig}}$ is contained in the closure of $\mathcal{G}$ with respect to $\Vert \cdot \Vert_{\mathcal{B}^k_\Psi(U)}$. To this end, we fix some $\widetilde{g} \in \widetilde{\mathcal{G}}$ and $\varepsilon > 0$. Then, by \ref{M5}, there exists some $\lambda > 0$ and $R > 0$ such that
	\begin{equation}
		\label{eq:thm:w_nachbin_findim:proof11}
		\max_{j=0,\ldots,k \atop \pi \in \mathscr{P}_j} \sup_{(u,v_1,\ldots,v_j) \in (U \times X^j) \setminus K_{j,R}} \frac{\big( 1 + \exp(\lambda \vert \widetilde{g}(u) \vert) \big) \vert \lambda \vert^{\vert\pi\vert} \vert d^\pi \widetilde{g}(u;v_\pi) \vert}{\psi_j(u,v_1,\ldots,v_j)} < \frac{\varepsilon}{2k!}.
	\end{equation}
	From this, we define the compact set $K := \bigcup_{j=0}^k \pi_0(K_{j,R})$ and the constants $c := \lambda \sup_{u \in K} \vert \widetilde{g}(u) \vert$ as well as $C_{\widetilde{g}} := (1 + \Vert \lambda \widetilde{g} \Vert_{\mathcal{B}^k_\Psi(U)})^k$. Hence, by using the Taylor polynomial $p_n(s) := \sum_{\ell=1}^n \frac{(-1)^\ell}{(2\ell)!} s^{2\ell}$ of $\cos_0: \mathbb{R} \rightarrow \mathbb{R}$, there exists a large enough $n \in \mathbb{N}$ such that
	\begin{equation}
		\label{eq:thm:w_nachbin_findim:proof12}
		\max_{j=0,\ldots,k} \sup_{s \in [-c,c]} \vert \cos_0^{(j)}(s) - p_n^{(j)}(s) \vert \leq \frac{c^{2n-k+1}}{(2n-k+1)!} < \frac{\varepsilon}{2 k! C_\Psi^k C_{\widetilde{g}}}.
	\end{equation}
	Thus, by using the Fa\`a di Bruno formula, $\vert \mathscr{P}_j \vert \leq j! \leq k!$, the monotonicity of $\Psi := (\psi_j)_{j=0,\ldots,k}$, that $\vert \cos^{(\vert\pi\vert)}(s) \vert \leq 1$, that $\vert p_n^{(\vert\pi\vert)}(s) \vert \leq \exp(\vert s \vert)$, and \eqref{eq:thm:w_nachbin_findim:proof11} as well as \eqref{eq:thm:w_nachbin_findim:proof12}, it follows that
	\begin{equation}
		\begin{aligned}
			& \Vert \cos_0(\lambda \widetilde{g}(\cdot)) - p_n(\lambda \widetilde{g}(\cdot)) \Vert_{\mathcal{B}^k_\Psi(U)} \\
			& = \max_{j=0,\ldots,k} \sup_{(u,v_1,\ldots,v_j) \in U \times (\mathbb{R}^d)^j} \frac{\big\vert \sum_{\pi \in \mathscr{P}_j} \big( \cos_0^{(\vert\pi\vert)}(\lambda \widetilde{g}(u)) - p_n^{(\vert\pi\vert)}(\lambda \widetilde{g}(u)) \big) d^\pi (\lambda \widetilde{g})(u;v_\pi) \big\vert}{\psi_j(u,v_1,\ldots,v_j)} \\
			& \leq k! \max_{j=0,\ldots,k} \sup_{(u,v_1,\ldots,v_j) \in (U \times (\mathbb{R}^d)^j) \setminus K_{j,R}} \frac{\big\vert \cos_0^{(\vert\pi\vert)}(\lambda \widetilde{g}(u)) \big\vert \vert d^\pi (\lambda \widetilde{g})(u;v_\pi) \vert}{\psi_j(u,v_1,\ldots,v_j)} \\
			& \quad\quad + k! \max_{j=0,\ldots,k} \sup_{(u,v_1,\ldots,v_j) \in (U \times (\mathbb{R}^d)^j) \setminus K_{j,R}} \frac{\big\vert p_n^{(\vert\pi\vert)}(\lambda \widetilde{g}(u)) \big\vert \vert d^\pi (\lambda \widetilde{g})(u;v_\pi) \vert}{\psi_j(u,v_1,\ldots,v_j)} \\
			& \quad\quad + k! C_\Psi^k \max_{j=0,\ldots,k} \sup_{(u,v_1,\ldots,v_j) \in K_{j,R}} \frac{\big\vert \cos_0^{(\vert\pi\vert)}(\lambda \widetilde{g}(u)) - p_n^{(\vert\pi\vert)}(\lambda \widetilde{g}(u)) \big\vert \prod_{r=1}^{\vert\pi\vert} \vert d^{\vert\pi_r\vert} (\lambda \widetilde{g})(u;v_{\pi_r}) \vert}{\prod_{r=1}^{\vert\pi\vert} \psi_{\vert\pi_r\vert}(u,v_{\pi_r})} \\
			& \leq k! \max_{j=0,\ldots,k} \sup_{(u,v_1,\ldots,v_j) \in (U \times (\mathbb{R}^d)^j) \setminus K_{j,R}} \frac{\big( 1 + \exp(\lambda\vert\widetilde{g}(u)\vert) \big) \vert \lambda \vert^{\vert\pi\vert} \vert d^\pi \widetilde{g}(u;v_\pi) \vert}{\psi_j(u,v_1,\ldots,v_j)} \\
			& \quad\quad + k! C_\Psi^k C_{\widetilde{g}} \max_{j=0,\ldots,k} \sup_{s \in [-c,c]} \vert \cos_0^{(j)}(s) - p_n^{(j)}(s) \vert \\
			& \leq k! \frac{\varepsilon}{2 k!} + k! C_\Psi^k C_{\widetilde{g}} \frac{\varepsilon}{2 k! C_\Psi^k C_{\widetilde{g}}} = \varepsilon.
		\end{aligned}
	\end{equation}
	Since $\varepsilon > 0$ was chosen arbitrarily, the map $\cos_0(\lambda \widetilde{g}(\cdot))$ belongs to the closure of $\mathcal{G}$ with respect to $\Vert \cdot \Vert_{\mathcal{B}^k_\Psi(U)}$, which holds analogously true for $\sin(\lambda \widetilde{g}(\cdot))$. Hence, by using that $\mathbb{R} \ni t \mapsto \cos_0(t \widetilde{g}(\cdot)) \in \mathcal{B}^k_\Psi(U)$ and $\mathbb{R} \ni t \mapsto \sin(t \widetilde{g}(\cdot)) \in \mathcal{B}^k_\Psi(U)$ are real-analytic (see Lemma~\ref{lem:real_anal}), \cite[Lemma~3.7]{cuchiero23} ensures that $\cos_0(t \widetilde{g}(\cdot)) \in \mathcal{B}^k_\Psi(U)$ and $\sin(t \widetilde{g}(\cdot)) \in \mathcal{B}^k_\Psi(U)$, for all $t \in \mathbb{R}$, which shows by taking $t = 1$ that $\mathcal{G}_{\mathrm{trig}}$ is contained in the closure of $\mathcal{G}$ with respect to $\Vert \cdot \Vert_{\mathcal{B}^k_\Psi(U)}$. Combining this with the previous step, i.e., that $\mathcal{G}_{\mathrm{trig}}$ is dense in $\mathcal{B}^k_\Psi(U)$, it follows that $\mathcal{G}$ is also dense in $\mathcal{B}^k_\Psi(U)$.
	
	Finally, for a general subalgebra $\mathcal{G} \subseteq \mathcal{B}^k_\Psi(M)$ over a weighted $C^k_{loc}$-manifold $(M,\Psi)$, we observe that $\mathcal{G}_i := \big\lbrace g \circ \phi_i^{-1}: g \in \mathcal{G} \big\rbrace \subseteq \mathcal{B}^k_{\Psi_i}(\phi_i(U_i))$ is a strongly point separating and nowhere vanishing subalgebra of $\Psi_i$-moderate growth (as $\phi_i: U_i \rightarrow \phi_i(U_i)$ is a diffeomorphism), whence $\mathcal{G}_i$ is by the previous step dense in $\mathcal{B}^k_{\Psi_i}(\phi_i(U_i))$. Thus, by using that $\mathcal{B}^k_\Psi(M)$ is equipped with the initial topology with respect to \eqref{eq:def:BkPsi_mfd}, it follows that $\mathcal{G}$ is dense in $\mathcal{B}^k_\Psi(M)$.
\end{proof}

\begin{remark}
	Theorem~\ref{thm:w_nachbin_findim} generalizes the weighted approximation results in \cite[Proposition~4]{nachbin91} from polynomials over $\mathbb{R}^d$ to the notion of subalgebras on more general weighted $C^k_{loc}$-manifolds.
\end{remark}

Next, we extend the weighted Nachbin theorem to the vector-valued case. To this end, we first assume that the output space $(Y,\Vert \cdot \Vert_Y)$ has the bounded approximation property (BAP), which ensures that $\mathcal{B}^k_\Psi(U) \otimes Y$ is dense in $\mathcal{B}^k_\Psi(U;Y)$. Compared to the compact-open topology, for which $C^k(U) \otimes Y$ is by a compactness argument dense in $C^k(U;Y)$ (see \cite[Lemma~2.1]{prolla76}), the BAP is required in this weighted setting. The proof of the following lemma is given in Appendix~\ref{app:lem:out_bap}.

\begin{lemma}
	\label{lem:out_bap}
	For an open subset $U \subseteq \mathbb{R}^d$, let $(U,\Psi)$ be a weighted domain and assume that $(Y,\Vert \cdot \Vert_Y)$ is a Banach space having BAP. Then, 
	\begin{equation}
		\mathcal{B}^k_\Psi(U) \otimes Y := \linspan\left\lbrace U \ni u \mapsto f(u) y \in Y: f \in \mathcal{B}^k_\Psi(U), \, y \in Y \right\rbrace
	\end{equation}
	is a dense subset of $\mathcal{B}^k_\Psi(U;Y)$.
\end{lemma}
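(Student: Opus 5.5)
Since $\mathcal{B}^k_\Psi(U;Y)$ is by Definition~\ref{def:BkPsi} the closure of $C^k_b(U;Y)$, it suffices to approximate a fixed $f \in C^k_b(U;Y)$ within $\varepsilon$ by an element of $\mathcal{B}^k_\Psi(U) \otimes Y$. The plan is to compose $f$ with the finite rank operators $(T_\gamma)_\gamma \subseteq Y^* \otimes Y$ given by the BAP, with $\Vert T_\gamma \Vert_{L(Y;Y)} \leq \lambda$. Each composition has the form $T_\gamma \circ f = \sum_{n=1}^N (\ell_n \circ f) y_n$ with $\ell_n \in Y^*$ and $y_n \in Y$. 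Since $\ell_n \circ f \in C^k_b(U) \subseteq \mathcal{B}^k_\Psi(U)$ (linearity of $\ell_n$ gives $d^j(\ell_n \circ f) = \ell_n \circ d^j f$ together with the bound \eqref{eq:Ckb:glob_bd}), each $T_\gamma \circ f$ belongs to $\mathcal{B}^k_\Psi(U) \otimes Y$. It therefore remains to show that $\Vert f - T_\gamma \circ f \Vert_{\mathcal{B}^k_\Psi(U;Y)} < \varepsilon$ for some $\gamma$.

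The estimate splits $U \times (\mathbb{R}^d)^j$ into the compact set $K_{j,R}$ and its complement.

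\textbf{Outside $K_{j,R}$.} Linearity gives $d^j(f - T_\gamma \circ f) = (\id_Y - T_\gamma) \circ d^j f$, so by \eqref{eq:Ckb:glob_bd}
\begin{equation}
	\Vert d^j(f - T_\gamma \circ f)(u;v_1,\ldots,v_j) \Vert_Y \leq (1+\lambda) C_f \Vert v_1 \Vert \cdots \Vert v_j \Vert.
\end{equation}
By \eqref{eq:ass:w_growth} we choose $R$ so large that $\Vert v_1 \Vert \cdots \Vert v_j \Vert / \psi_j < \varepsilon / ((1+\lambda) C_f)$ off $K_{j,R}$, uniformly in $\gamma$ and $j$. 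This is exactly where boundedness of the approximating net is needed.

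\textbf{On $K_{j,R}$.} The set $L_j := \lbrace d^j f(u;v_1,\ldots,v_j) : (u,v_1,\ldots,v_j) \in K_{j,R} \rbrace$ is compact in $Y$, as the continuous image of a compact set; here we use $f \in C^k$. Hence $L := \bigcup_{j=0}^k L_j$ is compact. By the AP, $\sup_{y \in L} \Vert y - T_\gamma y \Vert_Y \to 0$. Since $\psi_j$ is bounded below by a positive constant $c$ (Remark~\ref{rem:w_dom}~\ref{rem:w_dom:1}), we can choose $\gamma$ with this supremum below $c\varepsilon$. Combining the two regions yields the claim.

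The main obstacle is the uniformity in the tail region, where the supports of the directions are unbounded. Plain AP does not suffice there, since $\sup_\gamma \Vert T_\gamma \Vert_{L(Y;Y)}$ may be infinite, and the BAP constant $\lambda$ is precisely what makes $R$ independent of $\gamma$. One must also fix $R$ \emph{before} choosing $\gamma$, which is exactly the order of the two steps above.
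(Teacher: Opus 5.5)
Your proof is correct and follows the paper's argument. The paper likewise fixes $R$ from the tail condition first, then picks a BAP operator $T$ with $\Vert T \Vert_{L(Y;Y)} \leq \lambda$ that approximates the identity uniformly on the compact set $\bigcup_{j} \lbrace d^j f(u;v_1,\ldots,v_j) : (u,v_1,\ldots,v_j) \in K_{j,R} \rbrace$, and controls the tail by $(1+\lambda)$. The only difference is that you reduce to $f \in C^k_b(U;Y)$ and use \eqref{eq:Ckb:glob_bd} with \eqref{eq:ass:w_growth}, whereas the paper treats a general $f \in \mathcal{B}^k_\Psi(U;Y)$ via Proposition~\ref{prop:BPsik_equiv}~\ref{prop:BPsik_equiv:1} and checks $\ell_n \circ f \in \mathcal{B}^k_\Psi(U)$ by approximation; this is an inessential variation.
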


Now, we combine the property of polynomial algebras $\mathcal{G} \subseteq \mathcal{B}^k_\Psi(M;Y)$ (see Section~\ref{sec:nachbin:cl}) with Lemma~\ref{lem:out_bap} to obtain the following vector-valued weighted Nachbin theorem.

\begin{theorem}[Nachbin on $\mathcal{B}^k_\Psi(M;Y)$]
	\label{thm:w_nachbin_findim_vector}
	Let $(M,\Psi)$ be a weighted $C^k_{loc}$-manifold over some finite-dimensional vector spaces and assume that $(Y,\Vert \cdot \Vert_Y)$ is a Banach space having BAP. Moreover, let $\mathcal{G} \subseteq \mathcal{B}^k_\Psi(M;Y)$ be a polynomial subalgebra such that $\mathcal{G}' := \left\lbrace \ell \circ g: \ell \in Y^*, \, g \in \mathcal{G} \right\rbrace$ is strongly point separating and nowhere vanishing of $\Psi$-moderate growth. Then, $\mathcal{G}$ is dense in $\mathcal{B}^k_\Psi(M;Y)$.
\end{theorem}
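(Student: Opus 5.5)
The proof reduces to the scalar Theorem~\ref{thm:w_nachbin_findim} combined with Lemma~\ref{lem:out_bap}. As in the last step of the proof of Theorem~\ref{thm:w_nachbin_findim}, it suffices to work chart by chart. The topology on $\mathcal{B}^k_\Psi(M;Y)$ is the initial topology with respect to \eqref{eq:def:BkPsi_mfd}. Moreover, for each $i \in I$ the pulled-back family $\mathcal{G}_i := \lbrace g \circ \phi_i^{-1}: g \in \mathcal{G} \rbrace$ is again a polynomial subalgebra, and $\mathcal{G}_i' = \lbrace \ell \circ g \circ \phi_i^{-1} \rbrace$ inherits \ref{M1}--\ref{M5}. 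Hence it is enough to show that $\mathcal{G}$ is dense in $\mathcal{B}^k_\Psi(U;Y)$ for a weighted domain $(U,\Psi)$ with $U \subseteq \mathbb{R}^d$ open.

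Fix $f \in \mathcal{B}^k_\Psi(U;Y)$ and $\varepsilon > 0$. Since $Y$ has BAP, Lemma~\ref{lem:out_bap} provides $f_1,\ldots,f_N \in \mathcal{B}^k_\Psi(U)$ and $y_1,\ldots,y_N \in Y$ with $\Vert f - \sum_{n=1}^N f_n y_n \Vert_{\mathcal{B}^k_\Psi(U;Y)} < \varepsilon/2$. By \cite[Lemma~4.6]{prolla77}, $\mathcal{G}'$ is a subalgebra of $\mathcal{B}^k_\Psi(U)$; note that $\ell \circ g \in \mathcal{B}^k_\Psi(U)$ because $\ell$ is continuous and linear, so $d^j(\ell\circ g) = \ell \circ d^j g$ and $\Vert \ell\circ g\Vert_{\mathcal{B}^k_\Psi(U)} \le \Vert \ell\Vert_{Y^*}\Vert g\Vert_{\mathcal{B}^k_\Psi(U;Y)}$. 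By assumption $\mathcal{G}'$ is strongly point separating and nowhere vanishing of $\Psi$-moderate growth, so Theorem~\ref{thm:w_nachbin_findim} yields $g_n' \in \mathcal{G}'$ with
\[
\Vert f_n - g_n' \Vert_{\mathcal{B}^k_\Psi(U)} < \frac{\varepsilon}{2N(1+\Vert y_n \Vert_Y)}, \qquad n=1,\ldots,N.
\]
Because $\mathcal{G}' \otimes Y \subseteq \mathcal{G}$, again by \cite[Lemma~4.6]{prolla77}, the map $g := \sum_{n=1}^N g_n' y_n$ belongs to $\mathcal{G}$. The triangle inequality together with $\Vert h y \Vert_{\mathcal{B}^k_\Psi(U;Y)} = \Vert h \Vert_{\mathcal{B}^k_\Psi(U)} \Vert y \Vert_Y$ then gives $\Vert f - g \Vert_{\mathcal{B}^k_\Psi(U;Y)} < \varepsilon$.

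The step that needs the most care is the bookkeeping of the reduction to charts. One must check that \ref{M4} and \ref{M5} for $\mathcal{G}'$ on $M$ translate into the corresponding conditions for $\mathcal{G}_i'$ with the push-forward weights $\Psi_i$. This holds by definition, since \ref{M4} and \ref{M5} are already formulated chartwise. One must also check that density in each chart yields density in the initial topology. Here the argument is the same one used at the end of the proof of Theorem~\ref{thm:w_nachbin_findim}, and I will follow it verbatim. The analytic content of the vector-valued statement is entirely contained in Lemma~\ref{lem:out_bap} and Theorem~\ref{thm:w_nachbin_findim}.
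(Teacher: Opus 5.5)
Your proposal is correct and follows the paper's proof essentially line by line. Both reduce to a single chart, pass to $\mathcal{B}^k_\Psi(U)\otimes Y$ via Lemma~\ref{lem:out_bap}, approximate each scalar factor by Theorem~\ref{thm:w_nachbin_findim} applied to $\mathcal{G}'$, and reassemble using $\mathcal{G}'\otimes Y\subseteq\mathcal{G}$ from \cite[Lemma~4.6]{prolla77}. The one step that both you and the paper leave terse is the passage from chartwise density to density in the initial topology. For an atlas with several charts, this strictly requires a single element of $\mathcal{G}$ that approximates well in finitely many charts simultaneously, which chartwise density alone does not provide. With a single global chart, as in all of the paper's examples, the step is automatic.
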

\begin{proof}
	First, we show the conclusion for a polynomial subalgebra $\mathcal{G} \subseteq \mathcal{B}^k_\Psi(U;Y)$ over a finite-dimensional weighted domain $(U,\Psi)$. Since $\mathcal{B}^k_\Psi(U) \otimes Y$ is by Lemma~\ref{lem:out_bap} dense in $\mathcal{B}^k_\Psi(U;Y)$, it suffices to approximate any map $f \in \mathcal{B}^k_\Psi(U) \otimes Y$ by some element in $\mathcal{G}$. To this end, we fix some $f = \sum_{n=1}^N f_n(\cdot) y_n \in \mathcal{B}^k_\Psi(U) \otimes Y$ and $\varepsilon > 0$, where $N \in \mathbb{N}$, $f_1,\ldots,f_N \in \mathcal{B}^k_\Psi(U)$, and $y_1,\ldots,y_N \in Y$. Then, by applying Theorem~\ref{thm:w_nachbin_findim}, we conclude that $\mathcal{G}'$ is dense in $\mathcal{B}^k_\Psi(U)$, which implies the existence of some $g_1,\ldots,g_N \in \mathcal{G}'$ such that $\Vert f_n - g_n \Vert_{\mathcal{B}^k_\Psi(U)} < \frac{\varepsilon}{N (1+\Vert y_n \Vert_Y)}$. Hence, by using that $g := \sum_{n=1}^N g_n(\cdot) y_n \in \mathcal{G}' \otimes Y \subseteq \mathcal{G}$ (see \cite[Lemma~4.6]{prolla77}), it follows that
	\begin{equation}
		\begin{aligned}
			\Vert f - g \Vert_{\mathcal{B}^k_\Psi(U;Y)} & \leq \sum_{n=1}^N \left\Vert f_n(\cdot) y_n - g_n(\cdot) y_n \right\Vert_{\mathcal{B}^k_\Psi(U;Y)} \leq \sum_{n=1}^N \Vert f_n - g_n \Vert_{\mathcal{B}^k_\Psi(U)} \Vert y_n \Vert_Y \\
			& \leq \sum_{n=1}^N \frac{\varepsilon}{N (1+\Vert y_n \Vert_Y)} \Vert y_n \Vert_Y \leq \varepsilon.
		\end{aligned}
	\end{equation}
	Since $f \in \mathcal{B}^k_\Psi(U) \otimes Y$ and $\varepsilon > 0$ were chosen arbitrarily, and $\mathcal{B}^k_\Psi(U) \otimes Y$ is a dense subset of $\mathcal{B}^k_\Psi(U;Y)$, this shows that $\mathcal{G}$ is dense in $\mathcal{B}^k_\Psi(U;Y)$.
	
	Finally, for a general polynomial subalgebra $\mathcal{G} \subseteq \mathcal{B}^k_\Psi(M;Y)$ over a weighted $C^k_{loc}$-manifold $(M,\Psi)$, we observe that $\mathcal{G}_i := \left\lbrace g \circ \phi_i^{-1}: g \in \mathcal{G} \right\rbrace \subseteq \mathcal{B}^k_{\Psi_i}(\phi_i(U_i);Y)$ is a polynomial subalgebra such that $\mathcal{G}_i' := \left\lbrace \ell \circ g_i: \ell \in Y^*, \, g_i \in \mathcal{G}_i \right\rbrace$ is a strongly point separating and nowhere vanishing subalgebra of $\Psi_i$-moderate growth (as $\phi_i: U_i \rightarrow \phi_i(U_i)$ is a diffeomorphism). Hence, $\mathcal{G}_i$ is by the previous step dense in $\mathcal{B}^k_{\Psi_i}(\phi_i(U_i);Y)$. Since $\mathcal{B}^k_\Psi(M;Y)$ is equipped with the initial topology with respect to \eqref{eq:def:BkPsi_mfd}, $\mathcal{G}$ is dense in $\mathcal{B}^k_\Psi(M;Y)$.
\end{proof}

\subsection{Weighted Nachbin theorems over infinite-dimensional manifolds}
\label{sec:nachbinWeightedInfDim}

In this section, we generalize the weighted Nachbin theorem to infinite-dimensional input manifolds by assuming the bounded approximation property (BAP). Recall that a domain $(U,\tau_X)$ is said to have BAP if $(X,\tau_X)$ has BAP, i.e., $(X,\tau_X)$ has BAP with finite rank operators $(T_\gamma)_\gamma \subseteq (X,\tau_X)^* \otimes X$ satisfying $T_\gamma(U) \subseteq U$ (see also Sections~\ref{sec:notation} and \ref{sec:BPsik_maps}).

\begin{lemma}
	\label{lem:inp_bap}
	Let $(U,\Psi)$ be a weighted domain such that $(U,\tau_X)$ has BAP and assume that $(Y,\Vert \cdot \Vert_Y)$ is a Banach space. Then, for every $f \in C^k_b(U;Y)$ there exists a net of finite rank operators $(T_\gamma)_\gamma \subseteq X^* \otimes X$ with $T_\gamma(U) \subseteq U$ such that $\lim_\gamma \Vert f - f \circ T_\gamma \Vert_{\mathcal{B}^k_\Psi(U;Y)} = 0$.
\end{lemma}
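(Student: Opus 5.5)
The plan is to take the net $(T_\gamma)_\gamma \subseteq X^* \otimes X$ with $T_\gamma(U) \subseteq U$ provided by the BAP of $(U,\tau_X)$, and to split the weighted norm into a part on the compact sets $K_{j,R}$ and a part on their complements. First, $f \circ T_\gamma \in C^k_b(U;Y)$: since $T_\gamma$ is continuous and linear, the chain rule (Proposition~\ref{prop:db_prop}~\ref{prop:db_prop:2}, iterated) gives
\begin{equation}
	d^j (f \circ T_\gamma)(u;v_1,\ldots,v_j) = d^j f(T_\gamma(u);T_\gamma(v_1),\ldots,T_\gamma(v_j)).
\end{equation}
By \eqref{eq:Ckb:glob_bd} there are $C_f \geq 0$ and $p_X \in \mathfrak{P}_{(X,\tau_X)}$ with $\Vert d^j f(u;v_1,\ldots,v_j) \Vert_Y \leq C_f p_X(v_1) \cdots p_X(v_j)$. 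By the BAP there are $q \in \mathfrak{Q}_X$ and $\lambda \geq 0$ with $p_X(T_\gamma(x)) \leq \lambda q(x)$ for all $\gamma$. This yields the uniform bound
\begin{equation}
	\Vert d^j (f \circ T_\gamma)(u;v_1,\ldots,v_j) \Vert_Y \leq C_f \lambda^j q(v_1) \cdots q(v_j),
\end{equation}
for all $\gamma$ and $j = 0,\ldots,k$.

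\textbf{Tails.} Fix $\varepsilon > 0$. The growth assumption \eqref{eq:ass:w_growth} is imposed for seminorms in $\mathfrak{P}_{(X,\tau_X)} \cup \mathfrak{Q}_X$. Hence there is $R > 0$ such that on $(U \times X^j) \setminus K_{j,R}$ both $p_X(v_1)\cdots p_X(v_j)/\psi_j$ and $q(v_1)\cdots q(v_j)/\psi_j$ are below $\varepsilon/(2 C_f (1+\lambda)^k)$. Then $\Vert d^j f - d^j (f\circ T_\gamma) \Vert_Y / \psi_j < \varepsilon$ off $K_{j,R}$, uniformly in $\gamma$. This is exactly why the BAP rather than the AP is needed.

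\textbf{Compact part.} On $K_{j,R}$ the weight satisfies $\psi_j \geq c > 0$ by Remark~\ref{rem:w_dom}~\ref{rem:w_dom:1}, so it suffices to show
\begin{equation}
	\sup_{(u,v) \in K_{j,R}} \big\Vert d^j f(u;v_1,\ldots,v_j) - d^j f(T_\gamma u;T_\gamma v_1,\ldots,T_\gamma v_j) \big\Vert_Y \to 0.
\end{equation}
The projection of $K_{j,R}$ to each coordinate is compact, so $(T_\gamma,\ldots,T_\gamma) \to \id$ uniformly on $K_{j,R}$ in every seminorm of $\tau_X$. Since $d^j f : U \times X^j \to Y$ is continuous (because $f \in C^k$), it is uniformly continuous on a suitable compact set containing $K_{j,R}$. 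We combine this with the uniform convergence of $T_\gamma$ to the identity.

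\textbf{Main obstacle.} The difficulty is this uniform continuity step. The points $(T_\gamma u, T_\gamma v)$ need not lie in a fixed compact set, and a general locally convex space carries no uniform structure adapted to a compact set alone. I would handle this with the standard argument: a continuous map into a normed space is uniformly continuous on a compact $K$ in the sense that for each $\varepsilon$ there is a $0$-neighbourhood $W$ with $\Vert F(x) - F(y) \Vert < \varepsilon$ whenever $x \in K$ and $y - x \in W$. This follows by covering $K$ with finitely many neighbourhoods $x_i + W_i$ on which the oscillation of $F$ is small and taking $W$ with $W + W \subseteq \bigcap_i W_i$. One must also check that $y = T_\gamma x$ remains in $U$, which holds since $T_\gamma(U) \subseteq U$. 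Then for $\gamma$ large, $(T_\gamma u - u, T_\gamma v_\ell - v_\ell) \in W$ uniformly on $K_{j,R}$, giving the compact-part estimate below $\varepsilon$. Combining the tail and compact-part estimates over $j = 0,\ldots,k$ gives $\lim_\gamma \Vert f - f\circ T_\gamma \Vert_{\mathcal{B}^k_\Psi(U;Y)} = 0$.
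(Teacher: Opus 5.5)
Your proposal is correct and follows essentially the same route as the paper's proof. Both split the weighted norm into the tails $(U\times X^j)\setminus K_{j,R}$, controlled uniformly in $\gamma$ by \eqref{eq:ass:w_growth} for $p_X$ and for the BAP seminorm $q_X\in\mathfrak{Q}_X$, and the compact sets $K_{j,R}$, controlled by a $0$-neighbourhood from uniform continuity of $d^j f$ near $K_{j,R}$, AP-convergence of $T_\gamma$ on $\bigcup_{j,\ell}\pi_\ell(K_{j,R})$, and the positive lower bound on $\psi_j$.

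The covering argument needs one correction: as written, with $K$ covered by the sets $x_i+W_i$ themselves, $y-x_i\in W+W_i$ need not lie in $W_i$. Instead choose $W_x'$ with $W_x'+W_x'\subseteq W_x$, cover $K$ by finitely many $x_i+W_{x_i}'$, and set $W:=\bigcap_i W_{x_i}'$.
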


We now first apply Lemma~\ref{lem:out_bap} to reduce the approximation problem to a finite-dimensional domain and then apply the weighted Nachbin theorem (Theorem~\ref{thm:w_nachbin_findim}).

\begin{theorem}[Nachbin on $\mathcal{B}^k_\Psi(M)$]
	\label{thm:w_nachbin_infdim}
	Let $(M,\Psi)$ be a weighted $C^k_{loc}$-manifold with atlas $(U_i,\phi_i)_{i \in I}$ over model spaces $(X_i,\tau_{X_i})_{i \in I}$ such that each domain $(\phi_i(U_i),\tau_{X_i})$ has BAP with finite rank operators $(T_{i,\gamma})_\gamma \subseteq (X_i,\tau_{X_i})^* \otimes X_i$. Moreover, let $\mathcal{G} \subseteq \mathcal{B}^k_\Psi(M)$ be a subalgebra such that
	\begin{enumerate}
		\item\label{thm:w_nachbin_infdim:1} $\mathcal{G}$ is strongly point separating and nowhere vanishing of $\Psi$-moderate growth, and
		\item\label{thm:w_nachbin_infdim:2} for every $g \in \mathcal{G}$, $i \in I$, and $\gamma$ the composition $g \circ \phi_i^{-1} \circ T_{i,\gamma}: \phi_i(U_i) \rightarrow \mathbb{R}$ belongs to the closure of $\mathcal{G}_i := \big\lbrace g \circ \phi_i^{-1}: g \in \mathcal{G} \big\rbrace$ with respect to $\Vert \cdot \Vert_{\mathcal{B}^k_{\Psi_i}(\phi_i(U_i))}$.
	\end{enumerate}
	Then, $\mathcal{G}$ is dense in $\mathcal{B}^k_\Psi(M)$.
\end{theorem}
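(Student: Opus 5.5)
First I reduce to a single chart. $\mathcal{B}^k_\Psi(M)$ carries the initial topology with respect to the maps \eqref{eq:def:BkPsi_mfd}. It therefore suffices to show, for each $i \in I$, that $\mathcal{G}_i = \{ g \circ \phi_i^{-1} : g \in \mathcal{G} \}$ is dense in $\mathcal{B}^k_{\Psi_i}(\phi_i(U_i))$; this is the same final step as in the proof of Theorem~\ref{thm:w_nachbin_findim}. So I fix a weighted domain $(U,\Psi)$, with $U = \phi_i(U_i)$, such that $(U,\tau_X)$ has BAP with $(T_\gamma)_\gamma$. Since $\mathcal{B}^k_\Psi(U)$ is the closure of $C^k_b(U)$, I fix $f \in C^k_b(U)$ and $\varepsilon > 0$. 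By Lemma~\ref{lem:inp_bap} I choose $\gamma$ with $\Vert f - f \circ T_\gamma \Vert_{\mathcal{B}^k_\Psi(U)} < \varepsilon/3$. This reduces the problem to approximating $f \circ T_\gamma$, which depends only on the point $T_\gamma(u)$ of the finite-dimensional set $U_\gamma := T_\gamma(U) \subseteq U$.

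Next I work on the finite-dimensional piece. On $U_\gamma$, viewed as an open subset of the finite-dimensional space $T_\gamma(X)$, I put the weights $\Psi_\gamma$ from \ref{M5'}, namely $\psi_{\gamma,j}(\widetilde u,\widetilde v_{1:j}) = \inf\{\psi_j(u,v_{1:j}) : T_\gamma^{j+1}(u,v_{1:j}) = (\widetilde u,\widetilde v_{1:j})\}$. I will check that $(U_\gamma,\Psi_\gamma)$ is a weighted domain. The sublevel sets are images of the compact $K_{j,R}$ under the continuous map $T_\gamma^{j+1}$, and the infimum is attained by lower semicontinuity, which gives compactness. Monotonicity passes to the infimum up to the same constant $C_\Psi$. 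Condition \eqref{eq:ass:w_growth} transfers using the BAP bound $p(T_\gamma x) \le \lambda q(x)$. By \ref{M4'} and \ref{M5'}, the restricted family $\widetilde{\mathcal{G}}_{i,\gamma}$ is strongly point separating and nowhere vanishing of $\Psi_\gamma$-moderate growth; I also need \ref{M1}--\ref{M3} on $U_\gamma$, which follow by restriction. Theorem~\ref{thm:w_nachbin_findim} then gives some $g \in \mathcal{G}$ with $\Vert f|_{U_\gamma} - g|_{U_\gamma} \Vert_{\mathcal{B}^k_{\Psi_\gamma}(U_\gamma)} < \varepsilon/(3\lambda^k)$.

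The third step lifts this back to $U$. By the chain rule (Proposition~\ref{prop:db_prop}),
\begin{equation}
d^j(h \circ T_\gamma)(u;v_1,\ldots,v_j) = d^j h(T_\gamma u;T_\gamma v_1,\ldots,T_\gamma v_j),
\end{equation}
and by definition of the infimum $\psi_{\gamma,j}(T_\gamma u,T_\gamma v_{1:j}) \le \psi_j(u,v_{1:j})$. Together these give
\begin{equation}
\Vert h \circ T_\gamma \Vert_{\mathcal{B}^k_\Psi(U)} \le \Vert h \Vert_{\mathcal{B}^k_{\Psi_\gamma}(U_\gamma)},
\end{equation}
so no factor $\lambda$ is actually needed here. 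Applying this with $h = f - g$ on $U_\gamma$ yields $\Vert f\circ T_\gamma - g \circ T_\gamma \Vert_{\mathcal{B}^k_\Psi(U)} < \varepsilon/3$. Finally, hypothesis \ref{thm:w_nachbin_infdim:2} provides $g' \in \mathcal{G}$ with $\Vert g\circ T_\gamma - g'\circ\phi_i^{-1} \Vert < \varepsilon/3$. The triangle inequality then closes the argument.

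I expect the main obstacle to be the second step, namely verifying that $(U_\gamma,\Psi_\gamma)$ is a genuine weighted domain. This requires that the infimum weights are lower semicontinuous with compact sublevel sets, and that \eqref{eq:ass:w_growth} holds there; the latter is where the uniform BAP bound enters. It is also the point where $f|_{U_\gamma}$ must be shown to lie in $C^k_b(U_\gamma)$. My first attempt will be to prove the attained-infimum and compactness claims directly from the compactness of $K_{j,R}$ and the continuity of $T_\gamma$.
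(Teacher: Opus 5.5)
Your proposal is correct and follows the paper's proof essentially step for step. The shared steps are: Lemma~\ref{lem:inp_bap} to replace $f$ by $f \circ T_\gamma$; the infimum weights $\Psi_\gamma$ on $T_\gamma(U)$, whose sublevel sets are $T_\gamma^{j+1}(K_{j,R})$; Theorem~\ref{thm:w_nachbin_findim} on $T_\gamma(U)$; the chain-rule bound $\Vert h \circ T_\gamma \Vert_{\mathcal{B}^k_\Psi(U)} \leq \Vert h \Vert_{\mathcal{B}^k_{\Psi_\gamma}(T_\gamma(U))}$; hypothesis~\ref{thm:w_nachbin_infdim:2}; the triangle inequality; and the chart-wise reduction via the initial topology. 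Your extra checks fill in details the paper leaves implicit: that monotonicity passes to $\Psi_\gamma$ with the same $C_\Psi$, that \eqref{eq:ass:w_growth} holds for $\Psi_\gamma$ via $p(T_\gamma x) \leq \lambda q(x)$, and that $f\vert_{T_\gamma(U)} \in C^k_b(T_\gamma(U))$.
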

\begin{proof}
	First, we show the conclusion for a subalgebra $\mathcal{G} \subseteq \mathcal{B}^k_\Psi(U)$ over a weighted domain $(U,\Psi)$. Since $\mathcal{B}^k_\Psi(U)$ is defined as the closure of $C^k_b(U)$ with respect to $\Vert \cdot \Vert_{\mathcal{B}^k_\Psi(U)}$, it suffices to approximate any given $f \in C^k_b(U)$ by an element of $\mathcal{G}$. Moreover, by using that $(U,\tau_X)$ has BAP with finite rank operators $(T_\gamma)_\gamma \subseteq (X,\tau_X)^* \otimes X$, there exists by Lemma~\ref{lem:inp_bap} some $\gamma$ such that
	\begin{equation}
		\label{eq:thm:w_nachbin_infdim:proof1}
		\Vert f - f \circ T_\gamma \Vert_{\mathcal{B}_\Psi^k(U)} < \frac{\varepsilon}{3}.
	\end{equation}
	Now, we define the collection $\Psi_\gamma := (\psi_{\gamma,j})_{j=0,\ldots,k}$ of weights $\psi_{\gamma,j}: T_\gamma(U) \times T_\gamma(X)^j \rightarrow (0,\infty)$ by
	\begin{equation}
		\label{eq:thm:w_nachbin_infdim:proof2}
		\psi_{\gamma,j}(\widetilde{u},\widetilde{v}_1,\ldots,\widetilde{v}_j) := \inf_{(u,v_1,\ldots,v_j) \in U \times X^j \atop T_\gamma^{j+1}(u,v_1,\ldots,v_j) = (\widetilde{u},\widetilde{v}_1,\ldots,\widetilde{v}_j)} \psi_j(u,v_1,\ldots,v_j),
	\end{equation}
	for $(\widetilde{u},\widetilde{v}_1,\ldots,\widetilde{v}_j) \in T_\gamma(U) \times T_\gamma(X)^j$, where $T_\gamma^{j+1}: U \times X^j \rightarrow T_\gamma(U) \times T_\gamma(X)^j$ is defined by $T_\gamma^{j+1}(u,v_1,\ldots,v_j) := (T_\gamma(u),T_\gamma(v_1),\ldots,T_\gamma(v_j))$, for $(u,v_1,\ldots,v_j) \in U \times X^j$. Then, for every $R > 0$, we claim that
	\begin{equation}
		\label{eq:thm:w_nachbin_infdim:proof3}
		\psi_{\gamma,j}^{-1}((0,R]) = T_\gamma^{j+1}(K_{j,R}).
	\end{equation}
	For $\psi_{\gamma,j}^{-1}((0,R]) \supseteq T_\gamma^{j+1}(K_{j,R})$, there exists for any $(\widetilde u,\widetilde v_1,\ldots,\widetilde v_j) \!\in\! T_\gamma^{j+1}(K_{j,R})$ some $(u,v_1,\ldots,v_j) \!\in\! K_{j,R}$ such that $T_\gamma^{j+1}(u,v_1,\ldots,v_j) = (\widetilde{u},\widetilde{v}_1,\ldots,\widetilde{v}_j)$. Hence, by the definition of $\psi_{\gamma,j}$, it holds that
	\begin{equation}
		\psi_{\gamma,j}(\widetilde{u},\widetilde{v}_1,\ldots,\widetilde{v}_j) \leq \psi_j(u,v_1,\ldots,v_j) \leq R,
	\end{equation}
	which shows that $(\widetilde{u},\widetilde{v}_1,\ldots,\widetilde{v}_j) \in \psi_{\gamma,j}^{-1}((0,R])$. Conversely, for $\psi_{\gamma,j}^{-1}((0,R]) \subseteq T_\gamma^{j+1}(K_{j,R})$, we fix some $(\widetilde{u},\widetilde{v}_1,\ldots,\widetilde{v}_j) \in \psi_{\gamma,j}^{-1}((0,R])$. Then, by definition of $\psi_{\gamma,j}$, there exists for every $n \in \mathbb{N}$ some $\big(u^{(n)},v^{(n)}_1,\ldots,v^{(n)}_j\big) \in U \times X^j$ with $T_\gamma^{j+1}\big(u^{(n)},v^{(n)}_1,\ldots,v^{(n)}_j\big) = (\widetilde{u},\widetilde{v}_1,\ldots,\widetilde{v}_j)$ such that $\psi_j\big(u^{(n)},v^{(n)}_1,\ldots,v^{(n)}_j\big) \leq R+\frac{1}{n}$, whence $\big(u^{(n)},v^{(n)}_1,\ldots,v^{(n)}_j\big) \in K_{j,R+1}$. Since $K_{j,R+1}$ is compact, there exists a subnet $\big(u^{(\gamma)},v^{(\gamma)}_1,\ldots,v^{(\gamma)}_j\big)_\gamma$, converging to some $(u,v_1,\ldots,v_j) \in K_{j,R+1}$, which together with the continuity of $T_\gamma^{j+1}$ implies that 
	\begin{equation}
		T_\gamma^{j+1}(u,v_1,\ldots,v_j) = \lim_\gamma T_\gamma^{j+1}\big(u^{(\gamma)},v^{(\gamma)}_1,\ldots,v^{(\gamma)}_j\big) = (\widetilde{u},\widetilde{v}_1,\ldots,\widetilde{v}_j).
	\end{equation}
	Moreover, by using that $\psi_j$ is lower semicontinuous, it follows that
	\begin{equation}
		\psi_j(u,v_1,\ldots,v_j) \leq \liminf_\gamma \psi_j\big(u^{(\gamma)},v^{(\gamma)}_1,\ldots,v^{(\gamma)}_j\big) \leq R.
	\end{equation}
	Hence, $(u,v_1,\ldots,v_j) \in K_{j,R}$ and therefore $(\widetilde{u},\widetilde{v}_1,\ldots,\widetilde{v}_j) = T_\gamma^{j+1}(u,v_1,\ldots,v_j) \in T_\gamma^{j+1}(K_{j,R})$. This proves \eqref{eq:thm:w_nachbin_infdim:proof3}, which ensures that $\psi_{\gamma,j}^{-1}((0,R]) = T_\gamma^{j+1}(K_{j,R})$ is compact as continuous image of the compact set $K_{j,R}$, showing that $\Psi_\gamma := (\psi_{\gamma,j})_{j=0,\ldots,k}$ is admissible on $T_\gamma(U) \subseteq T_\gamma(X)$. 
	
	Moreover, we claim that $\mathcal{G}_\gamma := \mathcal{G}\vert_{T_\gamma(U)}$ is a strongly point separating and nowhere vanishing of $\Psi_\gamma$-moderate growth. Indeed, while the conditions of point separation, nowhere vanishing, and nowhere vanishing derivatives in \ref{M1}--\ref{M3} are inherited to the sub-domain $T_\gamma(U) \subseteq U$, the other conditions \ref{M4'}--\ref{M5'} are defined such that $\mathcal{G}_\gamma = \mathcal{G}\vert_{T_\gamma(U)}$ satisfies \ref{M4}--\ref{M5} on $(T_\gamma(U),\Psi_\gamma)$. Hence, we can apply the weighted Nachbin theorem (Theorem~\ref{thm:w_nachbin_findim}) on the map $f\vert_{T_\gamma(U)} \in \mathcal{B}^k_{\Psi_\gamma}(T_\gamma(U))$ to obtain some $\widetilde{g} \in \mathcal{G}_\gamma$ such that
	\begin{equation}
		\begin{aligned}
			& \Vert f\vert_{T_\gamma(U)} - \widetilde{g} \Vert_{\mathcal{B}^k_{\Psi_\gamma}(T_\gamma(U))} \\
			& = \max_{j=0,\ldots,k} \sup_{(\widetilde{u},\widetilde{v}_1,\ldots,\widetilde{v}_j) \in T_\gamma(U) \times T_\gamma(X)^j} \frac{\left\vert d^j f(\widetilde{u};\widetilde{v}_1,\ldots,\widetilde{v}_j) - d^j \widetilde{g}(\widetilde{u};\widetilde{v}_1,\ldots,\widetilde{v}_j) \right\vert}{\psi_{\gamma,j}(\widetilde{u},\widetilde{v}_1,\ldots,\widetilde{v}_j)} < \frac{\varepsilon}{3}.
		\end{aligned}
	\end{equation}
	Thus, by using the chain rule, we conclude that
	\begin{equation}
		\label{eq:thm:w_nachbin_infdim:proof4}
		\begin{aligned}
			& \Vert f \circ T_\gamma - \widetilde{g} \circ T_\gamma \Vert_{\mathcal{B}_\Psi^k(U)} \\
			& = \max_{j=0,\ldots,k} \sup_{(u,v_1,\ldots,v_j) \in U \times X^j} \frac{\vert d^j (f \circ T_\gamma)(u;v_1,\ldots,v_j) - d^j (\widetilde{g} \circ T_\gamma)(u;v_1,\ldots,v_j) \vert}{\psi_j(u,v_1,\ldots,v_j)} \\
			& = \max_{j=0,\ldots,k} \sup_{(u,v_1,\ldots,v_j) \in U \times X^j} \frac{\vert d^j f(T_\gamma(u);T_\gamma(v_1),\ldots,T_\gamma(v_j)) - d^j \widetilde{g}(T_\gamma(u);T_\gamma(v_1),\ldots,T_\gamma(v_j)) \vert}{\psi_j(u,v_1,\ldots,v_j)} \\
			& \leq \max_{j=0,\ldots,k} \sup_{(\widetilde{u},\widetilde{v}_1,\ldots,\widetilde{v}_j) \in T_\gamma(U) \times T_\gamma(X)^j} \frac{\vert d^j f(\widetilde{u};\widetilde{v}_1,\ldots,\widetilde{v}_j) - d^j \widetilde{g}(\widetilde{u};\widetilde{v}_1,\ldots,\widetilde{v}_j) \vert}{\psi_{\gamma,j}(\widetilde{u},\widetilde{v}_1,\ldots,\widetilde{v}_j)} < \frac{\varepsilon}{3}.
		\end{aligned}
	\end{equation}
	Next, we use that $\widetilde{g} \circ T_\gamma$ belongs by \ref{thm:w_nachbin_infdim:2} to the closure of $\mathcal{G}$ with respect to $\Vert \cdot \Vert_{\mathcal{B}^k_\Psi(U)}$ to obtain some $g \in \mathcal{G}$ such that
	\begin{equation}
		\label{eq:thm:w_nachbin_infdim:proof5}
		\Vert \widetilde{g} \circ T_\gamma - g \Vert_{\mathcal{B}^k_\Psi(U)} < \frac{\varepsilon}{3}.
	\end{equation}
	Finally, by combining \eqref{eq:thm:w_nachbin_infdim:proof1}, \eqref{eq:thm:w_nachbin_infdim:proof4}, and \eqref{eq:thm:w_nachbin_infdim:proof5}, it follows that
	\begin{equation}
		\begin{aligned}
			\Vert f - g \Vert_{\mathcal{B}^k_\Psi(U)} & \leq \Vert f - f \circ T_\gamma \Vert_{\mathcal{B}^k_\Psi(U)} + \Vert f \circ T_\gamma - \widetilde{g} \circ T_\gamma \Vert_{\mathcal{B}^k_\Psi(U)} + \Vert \widetilde{g} \circ T_\gamma - g \Vert_{\mathcal{B}^k_\Psi(U)} \\
			& < \frac{\varepsilon}{3} + \frac{\varepsilon}{3} + \frac{\varepsilon}{3} = \varepsilon.
		\end{aligned}
	\end{equation}
	Since $f \in C^k_b(U)$ and $\varepsilon > 0$ were chosen arbitrarily, this shows that $\mathcal{G}$ is dense in $\mathcal{B}^k_\Psi(U)$.
	
	Finally, for a general subalgebra $\mathcal{G} \subseteq \mathcal{B}^k_\Psi(M)$ over a weighted $C^k_{loc}$-manifold $(M,\Psi)$, we observe that $\mathcal{G}_i := \big\lbrace g \circ \phi_i^{-1}: g \in \mathcal{G} \big\rbrace \subseteq \mathcal{B}^k_{\Psi_i}(\phi_i(U_i))$ is a strongly point separating and nowhere vanishing subalgebra of $\Psi_i$-moderate growth satisfying \ref{thm:w_nachbin_infdim:2}. Hence, we can apply the previous step to conclude that $\mathcal{G}_i$ is dense in $\mathcal{B}^k_{\Psi_i}(\phi_i(U_i))$. Thus, by using that $\mathcal{B}^k_\Psi(M)$ is equipped with the initial topology with respect to \eqref{eq:def:BkPsi_mfd}, it follows that $\mathcal{G}$ is dense in $\mathcal{B}^k_\Psi(M)$.
\end{proof}

Moreover, by following the arguments of Theorem~\ref{thm:w_nachbin_findim_vector}, we can derive the following vector-valued weighted Nachbin theorem over infinite-dimensional manifolds. 

\begin{corollary}[Nachbin on $\mathcal{B}^k_\Psi(M;Y)$]
	\label{cor:w_nachbin_infdim_vector}
	Let $(M,\Psi)$ be a weighted $C^k_{loc}$-manifold with atlas $(U_i,\phi_i)_{i \in I}$ over model spaces $(X_i,\tau_{X_i})_{i \in I}$ such that each domain $(\phi_i(U_i),\tau_{X_i})$ has BAP with finite rank operators $(T_{i,\gamma})_\gamma \subseteq (X_i,\tau_{X_i})^* \otimes X_i$. Moreover, let $(Y,\Vert \cdot \Vert_Y)$ be a Banach space having BAP and assume that $\mathcal{G} \subseteq \mathcal{B}^k_\Psi(M;Y)$ is a polynomial subalgebra such that
	\begin{enumerate}
		\item $\mathcal{G}' := \left\lbrace \ell \circ g: \ell \in Y^*, \, g \in \mathcal{G} \right\rbrace$ is strongly point separating and nowhere vanishing of $\Psi$-moderate growth, and
		\item for every $g \in \mathcal{G}$, $i \in I$, and $\gamma$ the composition $g \circ \phi_i^{-1} \circ T_{i,\gamma}: \phi_i(U_i) \rightarrow Y$ belongs to the closure of $\mathcal{G}_i := \big\lbrace g \circ \phi_i^{-1}: g \in \mathcal{G} \big\rbrace$ with respect to $\Vert \cdot \Vert_{\mathcal{B}^k_{\Psi_i}(\phi_i(U_i);Y)}$.
	\end{enumerate}
	Then, $\mathcal{G}$ is dense in $\mathcal{B}^k_\Psi(M;Y)$.
\end{corollary}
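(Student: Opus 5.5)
The plan is to follow the proof of Theorem~\ref{thm:w_nachbin_infdim} and, as in Theorem~\ref{thm:w_nachbin_findim_vector}, to separate the vector-valued structure from the scalar approximation. First we reduce to a single weighted domain $(U,\Psi)$ with $U = \phi_i(U_i)$. Since $\mathcal{B}^k_\Psi(M;Y)$ carries the initial topology with respect to \eqref{eq:def:BkPsi_mfd}, it suffices to show that each $\mathcal{G}_i$ is dense in $\mathcal{B}^k_{\Psi_i}(\phi_i(U_i);Y)$. For this, note that $\mathcal{G}_i$ is again a polynomial subalgebra, that $\mathcal{G}_i'$ is strongly point separating and nowhere vanishing of $\Psi_i$-moderate growth, and that condition (ii) holds chartwise. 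By definition of $\mathcal{B}^k_\Psi(U;Y)$, it then suffices to approximate a fixed $f \in C^k_b(U;Y)$ within $\varepsilon$.

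The argument is a three-term splitting. We apply Lemma~\ref{lem:inp_bap} with Banach output $Y$ to choose $\gamma$ such that $\Vert f - f\circ T_\gamma\Vert_{\mathcal{B}^k_\Psi(U;Y)} < \varepsilon/3$. As in the proof of Theorem~\ref{thm:w_nachbin_infdim}, we use the infimum weights $\Psi_\gamma$ from \eqref{eq:thm:w_nachbin_infdim:proof2}. The proof of \eqref{eq:thm:w_nachbin_infdim:proof3} carries over verbatim, since it does not involve $Y$, so $(T_\gamma(U),\Psi_\gamma)$ is a finite-dimensional weighted domain. We then check that $f|_{T_\gamma(U)} \in \mathcal{B}^k_{\Psi_\gamma}(T_\gamma(U);Y)$. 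Next, $\mathcal{G}_\gamma := \mathcal{G}|_{T_\gamma(U)}$ is a polynomial subalgebra. By construction of \ref{M4'}--\ref{M5'}, $\mathcal{G}_\gamma'$ satisfies \ref{M1}--\ref{M5} on $(T_\gamma(U),\Psi_\gamma)$; \ref{M1}--\ref{M3} are inherited by restriction.

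Since $Y$ has BAP, Theorem~\ref{thm:w_nachbin_findim_vector} applies on $(T_\gamma(U),\Psi_\gamma)$. It yields $\widetilde g \in \mathcal{G}_\gamma$ with $\Vert f|_{T_\gamma(U)} - \widetilde g\Vert_{\mathcal{B}^k_{\Psi_\gamma}(T_\gamma(U);Y)} < \varepsilon/3$. The chain rule computation \eqref{eq:thm:w_nachbin_infdim:proof4}, with $|\cdot|$ replaced by $\Vert\cdot\Vert_Y$, transfers this to $\Vert f\circ T_\gamma - \widetilde g\circ T_\gamma\Vert_{\mathcal{B}^k_\Psi(U;Y)} < \varepsilon/3$. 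This transfer uses $\psi_{\gamma,j}\circ T_\gamma^{j+1} \le \psi_j$. Writing $\widetilde g = g_0|_{T_\gamma(U)}$ with $g_0 \in \mathcal{G}$, assumption (ii) gives $g \in \mathcal{G}$ with $\Vert g_0\circ T_\gamma - g\Vert_{\mathcal{B}^k_\Psi(U;Y)} < \varepsilon/3$. The triangle inequality then concludes.

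I expect the main obstacle to be the claim that the $Y$-valued $f\circ T_\gamma$ approximation, and the hypotheses of Theorem~\ref{thm:w_nachbin_findim_vector}, genuinely hold on the restricted domain. In particular, Lemma~\ref{lem:inp_bap} must be available for Banach-valued maps, as it is stated. Also, $f|_{T_\gamma(U)}$ must lie in the closure of $C^k_b$ for the $\Psi_\gamma$-norm, which follows since $f$ is $C^k_b$ and $T_\gamma(X)$ is finite-dimensional, so the seminorm bound becomes a norm bound controlled via \eqref{eq:ass:w_growth}. If verifying that $\mathcal{G}_\gamma'$ satisfies \ref{M4}--\ref{M5} under the infimum weights is delicate, I would argue directly from the definitions \ref{M4'}--\ref{M5'}, which are tailored exactly to $\Psi_{i,\gamma}$.
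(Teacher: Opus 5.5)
Your argument is correct, but it performs the two reductions in the opposite order from the paper.

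The paper's proof is a black-box combination in three steps. First, it uses the output BAP (Lemma~\ref{lem:out_bap}) to reduce to $f=\sum_n f_n(\cdot)\,y_n\in\mathcal{B}^k_\Psi(U)\otimes Y$. Second, it applies the scalar infinite-dimensional Theorem~\ref{thm:w_nachbin_infdim} to the subalgebra $\mathcal{G}'$ to approximate each $f_n$. Third, it concludes via $\mathcal{G}'\otimes Y\subseteq\mathcal{G}$.

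You instead keep the $Y$-valued structure through the whole input reduction: Lemma~\ref{lem:inp_bap} with output $Y$, the infimum weights $\Psi_\gamma$, and restriction to $T_\gamma(U)$. Only then do you invoke the finite-dimensional vector-valued Theorem~\ref{thm:w_nachbin_findim_vector}. In effect you re-prove Theorem~\ref{thm:w_nachbin_infdim} with Banach-space output.

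What the paper's route buys and costs:
\begin{itemize}
\item It is shorter and reuses Theorem~\ref{thm:w_nachbin_infdim} wholesale.
\item It must pass hypothesis~(ii) from $\mathcal{G}$ to $\mathcal{G}'$. This is immediate, since $\Vert \ell\circ h\Vert_{\mathcal{B}^k_{\Psi_i}(\phi_i(U_i))}\le \Vert\ell\Vert_{Y^*}\Vert h\Vert_{\mathcal{B}^k_{\Psi_i}(\phi_i(U_i);Y)}$.
\item It uses Lemma~\ref{lem:out_bap} on the infinite-dimensional domain $\phi_i(U_i)$, although that lemma is only stated for $U\subseteq\mathbb{R}^d$. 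Its proof does not actually use finite dimensionality.
\end{itemize}

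What your route buys and costs:
\begin{itemize}
\item It uses Lemma~\ref{lem:inp_bap} and Lemma~\ref{lem:out_bap} strictly within their stated scope; the latter enters only inside Theorem~\ref{thm:w_nachbin_findim_vector}, on a finite-dimensional domain.
\item It applies hypothesis~(ii) verbatim in its $Y$-valued form.
\item The price is that you must re-check the restriction step on $(T_\gamma(U),\Psi_\gamma)$ yourself: admissibility of $\Psi_\gamma$, membership of the restricted maps, and \ref{M4}--\ref{M5} via \ref{M4'}--\ref{M5'}. The paper hides this step inside Theorem~\ref{thm:w_nachbin_infdim} and treats it there at the same level of detail as your sketch.
\end{itemize}
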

\begin{proof}
	The proof follows from the same arguments as in the proof of Theorem~\ref{thm:w_nachbin_findim_vector}, where we now apply Theorem~\ref{thm:w_nachbin_infdim} instead of Theorem~\ref{thm:w_nachbin_findim}.
\end{proof}

\section{Weighted universal approximation of functional input neural networks}
\label{sec:w_uat}

We now introduce a generalization of neural networks to infinite-dimensional spaces, called functional input neural networks (FNNs), and show different universal approximation theorems (UATs) for FNNs. To this end, we assume that the input space $(M,\Psi)$ is a weighted $C^k_{loc}$-manifold with admissible collection $\Psi = (\psi_j)_{j=0,\ldots,k}$ of weight functions $\psi_j: T^j M \rightarrow (0,\infty)$, $j = 0,\ldots,k$. Moreover, the output space $(Y,\Vert \cdot \Vert_Y)$ is supposed to be a Banach space.

\subsection{Functional input neural networks}

In this section, we define neural networks between infinite-dimensional spaces. To this end, we first introduce the infinite-dimensional analogue of weight matrices that connect adjacent layers in classical neural networks.

\begin{definition}
	\label{def:add_fam}
	Let $(M,\tau_M)$ be a $C^k_{loc}$-manifold over finite-dimensional model spaces $(X_i,\tau_{X_i})_{i \in I}$. A subset $\mathcal{A} \subseteq C^k_{loc}(M)$ is called an \emph{additive family (on $M$)} if
	\begin{enumerate}
		\item[\labeltext{(A1)}{A1}] $\mathcal{A}$ is closed under addition, i.e., for every $a_1, a_2 \in \mathcal{A}$ it holds that $a_1 + a_2 \in \mathcal{A}$,
		\item[\labeltext{(A2)}{A2}] $\mathcal{A}$ is point separating on $M$, i.e., for any distinct points $x_1, x_2 \in M$ there exists some $a \in \mathcal{A}$ such that $a(x_1) \neq a(x_2)$, and
		\item[\labeltext{(A3)}{A3}] $\mathcal{A}$ has nowhere vanishing derivatives on $M$, i.e., for every $(x,[c]^1_x) \in T^1 M$ with $c'(0) \neq 0$ there exists some $a \in \mathcal{A}$ such that $(a \circ c)'(0) \neq 0$.
		\item[\labeltext{(A4)}{A4}] for every $i \in I$ there exist $a_1,\ldots,a_m \in \mathcal{A}$ such that $\eta_i := (a_1 \circ \phi_i^{-1}, \ldots, a_m \circ \phi_i^{-1})^\top: \phi_i(U_i) \rightarrow \eta_i(\phi_i(U_i))$ is an embedding and there exist some cutoff functions $(h_{i,R})_{R > 0} \subseteq C^\infty_c(\eta_i(\phi_i(U_i)))$ with $0 \leq h_{i,R} \leq 1$ and $h_{i,R}\vert_{\eta_i(K_{i,R})} = 1$ such that
		\begin{equation}
			\quad\quad\quad \lim_{R \rightarrow \infty} \max_{1 \leq \ell \leq j \leq k} \sup_{(u,v_1,\ldots,v_j) \in (\phi_i(U_i) \times X_i^j) \setminus K_{i,j,R}} \frac{\Vert d^\ell (h_{i,R} \circ \eta_i)(u) \Vert_{L^\ell(X_i;\mathbb{R})} \Vert v_1 \Vert \cdots \Vert v_j \Vert}{\psi_{i,j}(u,v_1,\ldots,v_j)} = 0,
		\end{equation}
		where $K_{i,R} := \bigcup_{j=0}^k \pi_{i,0}(K_{i,j,R})$ with $\phi_i(U_i) \times X_i^j \ni (u,v_{1:j}) \mapsto \pi_{i,0}(u,v_{1:j}) := u \in \phi_i(U_i)$.
	\end{enumerate}
	If $(M,\tau_M)$ is a $C^k_{loc}$-manifold over infinite-dimensional model spaces $(X_i,\tau_{X_i})_{i \in I}$ each having BAP with finite rank operators $(T_{i,\gamma})_\gamma$, we replace condition~\ref{A4} by
	\begin{enumerate}
		\item[\labeltext{(A4')}{A4'}] for every $i \in I$ and $\gamma$ the set $\mathcal{A}_i\vert_{T_{i,\gamma}(\phi_i(U_i))} := \left\lbrace a \circ \phi_i^{-1}\vert_{T_{i,\gamma}(\phi_i(U_i))}: a \in \mathcal{A} \right\rbrace$ satisfies \ref{A4}.
	\end{enumerate}
\end{definition}

\begin{remark}
	In contrast to \cite[Definition~4.1]{cuchiero23}, we do not include the constants in the additive family. Under this consideration, the conditions \ref{A1}--\ref{A2} are the same as in the non-differentiable case of \cite[Definition~4.1]{cuchiero23}, whereas \ref{A3}--\ref{A4} are additionally required for the approximation of the derivatives in our weighted setting. In addition, if the embedding $\eta_i$ in \ref{A4} has uniformly bounded derivatives, then the corresponding limit is zero (see also Remark~\ref{rem:psi_mod_growth}).           
\end{remark}

For an open subset $U \subseteq X$ of the Euclidean space $X := \mathbb{R}^d$, we observe that the weight matrices in classical neural networks form an additive family.

\begin{example}
	\label{ex:add_fam:eucl}
	For an open subset $M := U \subseteq X$ of the Euclidean space $X := \mathbb{R}^d$, an additive family is given by $\mathcal{A} = \left\lbrace M \ni x \mapsto a^\top x \in \mathbb{R}: a \in \mathbb{R}^d \right\rbrace$. Note that $\mathcal{A} = \left\lbrace M \ni x \mapsto a^\top x \in \mathbb{R}: a \in \mathbb{N}_0^d \right\rbrace$ is an even smaller additive family.
\end{example}

\begin{definition}
	\label{def:fnn}
	For a given additive family $\mathcal{A} \subseteq C^k_{loc}(M)$, a function $\rho \in C^k(\mathbb{R})$, and a subset $\mathcal{L} \subseteq Y$, we define a \emph{functional input neural network (FNN)} $\varphi: M \rightarrow Y$ as
	\begin{equation}
		\label{eq:def:fnn:1}
		M \ni x \quad \mapsto \quad \varphi(x) = \sum_{n=1}^N y_n \rho(a_n(x)+b_n) \in Y,
	\end{equation}
	where $N \in \mathbb{N}$ denotes the \emph{number of neurons}, where $a_1,\ldots,a_N \in \mathcal{A}$ are the \emph{hidden layer maps}, where $b_1,\ldots,b_N \in \mathbb{R}$ represent the \emph{biases}, and where $y_1,\ldots,y_N \in \mathcal{L}$ are the \emph{linear readouts}. Moreover, we denote by $\mathcal{NN}^{\mathcal{A},\rho,\mathcal{L}}_{M,Y}$ the set of FNNs of the form \eqref{eq:def:fnn:1}.
\end{definition}

\vspace{-0.1cm}

\begin{figure}[ht]
	\centering
	\begin{tikzpicture}[
		inputnode/.style={circle, draw=green!60, fill=green!5, very thick, minimum size=5mm},
		hiddennode/.style={circle, draw=blue!60, fill=blue!5, very thick, minimum size=5mm},
		outputnode/.style={circle, draw=red!60, fill=red!5, very thick, minimum size=5mm},
		node distance=7mm,
		]
		\node[inputnode] (x1) {};
		\node[hiddennode] (y2) [right = 2cm of x1] {};
		\node[hiddennode] (y1) [above of = y2] {};
		\node[hiddennode] (y3) [below of = y2] {};
		\node[outputnode] (o1) [right = 2cm of y2] {};
		
		\draw[shorten >=0.1cm,shorten <=0.1cm, ->] (x1.east) -- (y1.west);	
		\draw[shorten >=0.1cm,shorten <=0.1cm, ->] (x1.east) -- (y2.west);
		\draw[shorten >=0.1cm,shorten <=0.1cm, ->] (x1.east) -- (y3.west);
		\draw[shorten >=0.1cm,shorten <=0.1cm, ->] (y1.east) -- (o1.west);
		\draw[shorten >=0.1cm,shorten <=0.1cm, ->] (y2.east) -- (o1.west);
		\draw[shorten >=0.1cm,shorten <=0.1cm, ->] (y3.east) -- (o1.west);
		\draw[shorten >=0.2cm,shorten <=0.2cm, ->] (y3) to[out=-120,in=-60,loop] ();
		
		\draw[] (0,1.0) node[anchor=center, align=center] {\footnotesize Input Layer};
		\draw[] (0,0.65) node[anchor=center, align=center] {\footnotesize $(M,\Psi)$};
		\draw[] (2.5,1.6) node[anchor=center, align=center] {\footnotesize Hidden Layer};
		\draw[] (2.5,1.25) node[anchor=center, align=center] {\footnotesize $\mathbb{R}$};
		\draw[] (5.1,1.0) node[anchor=center, align=center] {\footnotesize Output Layer};
		\draw[] (5.1,0.65) node[anchor=center, align=center] {\footnotesize $(Y,\Vert \cdot \Vert_Y)$};
		\draw[] (-0.9,0) node[anchor=center, align=center] {\footnotesize $M \ni x$};
		\draw[] (6.2,0) node[anchor=center, align=center] {\footnotesize $\varphi(x) \in Y$};
		\draw[] (1.0,-0.8) node[anchor=center, align=center] {\footnotesize $\mathcal{A}\oplus\mathbb{R}$};
		\draw[] (3.8,-0.8) node[anchor=center, align=center] {\footnotesize $\mathcal{L}$};
		\draw[] (2.53,-1.3) node[anchor=center, align=center] {\footnotesize $\rho$};
	\end{tikzpicture}
	\caption{A functional input neural network $\varphi: M \rightarrow Y$ with additive family $\mathcal{A}$, activation function $\rho \in C^k(\mathbb{R})$, linear readout $\mathcal{L} \subseteq Y$, and $N = 3$ neurons.}
	\label{fig:fnn}
\end{figure}

\vspace{-0.1cm}

\begin{remark}
	\label{ref:fnn:eucl}
	Definition~\ref{def:fnn} extends the notion of classical neural networks between Euclidean spaces. Indeed, let $\varphi: \mathbb{R}^d \rightarrow \mathbb{R}^m$ be a classical neural network of the form
	\begin{equation}
		\label{eq:ref:fnn:eucl:1}
		\mathbb{R}^d \ni x \quad \mapsto \quad \varphi(x) = W \rho(Ax + b) = \sum_{n=1}^N y_n \rho\left( a_n^\top x + b_n \right) \in \mathbb{R}^m,
	\end{equation}
	for some $W = (y_1,\ldots,y_N) \in \mathbb{R}^{m \times N}$, $A = (a_1,\ldots,a_N)^\top \in \mathbb{R}^{N \times d}$, and $b = (b_1,\ldots,b_N)^\top \in \mathbb{R}^N$, where $y_1,\ldots,y_N \in \mathbb{R}^m$ denote the columns of $W \in \mathbb{R}^{m \times N}$, and where $a_1,\ldots,a_N \in \mathbb{R}^d$ represent the rows of $A \in \mathbb{R}^{N \times d}$. Moreover, by a slight abuse of notation, $\rho \in C^k(\mathbb{R})$ is applied componentwise to $Ax + b \in \mathbb{R}^N$ after the first equality in \eqref{eq:ref:fnn:eucl:1}. If we choose $\mathcal{A}$ as in Example~\ref{ex:add_fam:eucl} and $\mathcal{L} = \mathbb{R}^m$, then $\varphi: \mathbb{R}^d \rightarrow \mathbb{R}^m$ is a functional input neural network in $\mathcal{NN}^{\mathcal{A},\rho,\mathcal{L}}_{\mathbb{R}^d,\mathbb{R}^m}$. 
\end{remark}

Moreover, we can construct deep functional input neural networks by concatenation. For an additive family $\mathcal{A} \subseteq C^k_{loc}(M)$ and two activation functions $\rho_1,\rho_2 \in C^k(\mathbb{R})$, we introduce a deep FNN with two hidden layers. Indeed, by assuming that $\rho_1 \in C^k(\mathbb{R})$ is (strongly) non-polynomial (see below), the set $\mathcal{NN}^{\mathcal{A},\rho_1,\mathbb{R}}_{M,\mathbb{R}}$ is another additive family on $M$. Hence, a functional input neural network with two hidden layers $\varphi: M \rightarrow Y$ is of the form
\begin{equation}
	\begin{aligned}
		M \ni x \quad \mapsto \quad \varphi(x) & = \sum_{n_2=1}^{N_2} y_{n_2} \rho_2 \left( \varphi_{n_2}(x) + b^{(2)}_{n_2} \right) \\
		& = \sum_{n_2=1}^{N_2} y_{n_2} \rho_2\left( \sum_{n_1=1}^{N_1} w_{n_2, n_1} \rho_1\left( a_{n_2, n_1}(x) + b^{(1)}_{n_1} \right) + b^{(2)}_{n_2} \right) \in Y,
	\end{aligned}
\end{equation}
where $y_1,\ldots,y_{N_2} \in \mathcal{L}$ are linear readouts, $b^{(2)}_1,\ldots,b^{(2)}_{N_2} \in \mathbb{R}$ are the biases of the second layer, $w_{1,1},\ldots,w_{N_2,N_1} \in \mathbb{R}$ are the connections between the layers, and where $a_{1,1},\ldots,a_{N_2,N_1} \in \mathcal{A}$ and $b^{(1)}_1,\ldots,b^{(1)}_{N_1} \in \mathbb{R}$ are the weights and biases of the first layer, respectively. Moreover, $\varphi_1,\ldots,\varphi_{N_2} \in \mathcal{NN}^{\mathcal{A},\rho_1,\mathbb{R}}_{M,\mathbb{R}}$ are FNNs of the form $\varphi_{n_2}(x) := \sum_{n_1=1}^{N_1} w_{n_2, n_1} \rho_1(a_{n_2, n_1}(x) + b^{(1)}_{n_1})$, for all $x \in M$ and $n_2 = 1,\ldots,N_2$. Hence, by an analogous concatenation, it is possible to construct deep functional input neural networks with finitely many hidden layers.

\subsection{Examples of additive families}

In this section, we give some examples of additive families on weighted manifolds having an atlas with one global chart. More precisely, for $k \in \mathbb{N} \cup \lbrace \infty \rbrace$, we consider a weighted $C^k_{loc}$-manifold $(M,\Psi)$ with global chart $\phi_i: M \rightarrow \phi_i(M)$ over a Banach space $(X,\Vert \cdot \Vert_X)$ that is equipped with a weaker topology $\tau_X$ than the norm topology (except $X$ is finite-dimensional). This applies in particular to every open subset $M := U_i$ of $(X,\tau_X)$, where the chart $\phi_i := \id_{U_i}: U_i \rightarrow U_i \subseteq X$ is equal to the identity.

\begin{lemma}
	\label{lem:add_fam:cpt}
	Let $(M,\tau_M)$ be a $C^k_{loc}$-manifold with global chart $\phi_i: M \rightarrow \phi_i(M)$ over a Banach space $(X,\Vert \cdot \Vert_X)$, which is equipped with the initial topology $\tau_{\mathrm{init}}$ of a compact embedding $\Gamma: (X,\Vert \cdot \Vert_X) \rightarrow (X_0,\Vert \cdot \Vert_{X_0})$ into another Banach space $(X_0,\Vert \cdot \Vert_{X_0})$. Moreover, assume that $(X,\tau_X)$ has BAP with finite rank operators $(T_\gamma)_\gamma$. Then,
	\begin{equation}
		\mathcal{A} := \left\lbrace M \ni x \mapsto \ell(\phi_i(x)) \in \mathbb{R}: \ell \in (X,\tau_{\mathrm{init}})^* \right\rbrace \subseteq C^k_{loc}(M)
	\end{equation}
	is an additive family on $M$.
\end{lemma}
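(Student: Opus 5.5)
We verify the conditions of Definition~\ref{def:add_fam} one by one. Since the model space $X$ is infinite-dimensional in general and has BAP, the relevant conditions are \ref{A1}--\ref{A3} together with \ref{A4'}. First, every $a = \ell \circ \phi_i$ lies in $C^k_{loc}(M)$. Indeed, $a \circ \phi_i^{-1} = \ell\vert_{\phi_i(M)}$ is the restriction of a continuous linear functional on $(X,\tau_{\mathrm{init}})$. Its directional derivatives are $d\ell(u;v) = \ell(v)$ and $d^j \ell = 0$ for $j \geq 2$, and these are continuous on $\phi_i(M) \times X^j$.

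Condition \ref{A1} is immediate, since $(\ell_1 \circ \phi_i) + (\ell_2 \circ \phi_i) = (\ell_1 + \ell_2) \circ \phi_i$ and $(X,\tau_{\mathrm{init}})^*$ is a vector space.

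For \ref{A2} and \ref{A3}, the key fact is that $(X,\tau_{\mathrm{init}})^*$ separates the points of $X$. Every $\ell_0 \in X_0^*$ yields $\ell_0 \circ \Gamma \in (X,\tau_{\mathrm{init}})^*$, because $\tau_{\mathrm{init}}$ is exactly the initial topology of $\Gamma$. Since $\Gamma$ is injective (it is an embedding) and $X_0^*$ separates points of $X_0$ by Hahn--Banach, the functionals $\ell_0 \circ \Gamma$ separate points of $X$.

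\ref{A2} then follows. For distinct $x_1,x_2 \in M$ the points $\phi_i(x_1) \neq \phi_i(x_2)$ are distinct, so some $\ell$ separates them.

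\ref{A3} follows similarly. Let $(x,[c]^1_x) \in T^1M$ with $c'(0) \neq 0$, and set $v := (\phi_i \circ c)'(0)$. Then $v \neq 0$, and $(a \circ c)'(0) = \ell(v)$, so some $\ell$ with $\ell(v) \neq 0$ works.

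The main obstacle is \ref{A4'}. Fix $\gamma$ and set $F := T_\gamma(X)$, which is finite-dimensional. I would choose a basis of $F$ and functionals $\ell_1,\ldots,\ell_m \in (X,\tau_{\mathrm{init}})^*$ whose restrictions to $F$ form a dual basis. These exist because the separating family restricted to the finite-dimensional space $F$ spans $F^*$. Then $\eta := (\ell_1,\ldots,\ell_m)^\top\vert_{T_\gamma(\phi_i(M))}$ is a linear isomorphism onto its image in $\mathbb{R}^m$, hence an embedding.

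Next I need cutoffs $h_R \in C^\infty_c(\eta(T_\gamma(\phi_i(M))))$ with $h_R = 1$ on $\eta(K_{\gamma,R})$ and uniformly bounded derivatives. The set $\eta(K_{\gamma,R})$ is compact, being the image of the compact preimage $\psi_{\gamma,j}^{-1}((0,R]) = T_\gamma^{j+1}(K_{j,R})$ from the proof of Theorem~\ref{thm:w_nachbin_infdim}. The cutoffs can therefore be obtained by mollifying indicator functions of neighbourhoods that stay inside the open image.

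Since $\eta$ is linear, $d^\ell(h_R \circ \eta)(u) = d^\ell h_R(\eta(u)) \circ \eta^{\otimes \ell}$. This is where care is needed. If the derivatives of $h_R$ are bounded uniformly in $R$, then the limit in \ref{A4} reduces, by \eqref{eq:ass:w_growth} and Remark~\ref{rem:psi_mod_growth}, to the decay of $\Vert v_1 \Vert \cdots \Vert v_j \Vert / \psi_{\gamma,j}$ outside compacts. This decay holds on the finite-dimensional space $F$, where all norms are equivalent to the seminorms induced by $\tau_{\mathrm{init}}$.

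If the open image is bounded or has nearby boundary, uniformly bounded derivatives may fail. In that case I would fall back on choosing $h_R$ with the transition layer placed far away, namely on the set $\eta(K_{\gamma,2R}) \setminus \eta(K_{\gamma,R})$, and absorb the derivative growth using that $\psi_{\gamma,j} > R$ there.
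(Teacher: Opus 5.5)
Your verification of the $C^k_{loc}$-property, of \ref{A1}--\ref{A3}, and of the embedding part of \ref{A4'} coincides with the paper's proof.
\begin{itemize}
\item The paper also separates points and tangent vectors by Hahn--Banach. Your factorization $\ell_0\circ\Gamma$, $\ell_0\in X_0^*$, just makes explicit why $(X,\tau_{\mathrm{init}})^*$ is separating.
\item The paper realizes $\eta_{i,\gamma}$ as the restriction of a linear isomorphism $T_\gamma(X)\to\mathbb{R}^m$ by extending a dual basis of $T_\gamma(X)$ with Hahn--Banach. This is equivalent to your spanning argument.
\end{itemize}
The paper's proof stops there. It never constructs the cutoffs $h_{i,R}$ or checks the limit in \ref{A4}; it tacitly relies on the remark after Definition~\ref{def:add_fam} that this limit vanishes when $\eta_i$ has bounded derivatives. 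Your first case makes that implicit argument explicit. It is sound whenever cutoffs with derivatives bounded uniformly in $R$ exist, e.g.\ for $\phi_i(M)=X$: there $\eta_{i,\gamma}$ maps onto $\mathbb{R}^m$, and $h_R(y)=\chi(\vert y\vert-r_R)$ with a fixed smooth step $\chi$ and large $r_R$ works. One detail is incomplete. Norm equivalence on $T_\gamma(X)$ alone does not transfer \eqref{eq:ass:w_growth} to $\Psi_\gamma$, because $\psi_{\gamma,j}$ is an infimum over preimages. You also need that $\psi_{\gamma,j}>R$ forces every preimage out of $K_{j,R}$, and that $\Vert T_\gamma v\Vert$ is dominated by a continuous seminorm of $v$.

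Your fallback for images with boundary, however, fails, and no construction can work under the lemma's hypotheses alone. On the layer you only know $\Vert v_1\Vert\cdots\Vert v_j\Vert/\psi_{\gamma,j}\le\varepsilon(R)$, with an uncontrolled rate $\varepsilon(R)\to0$. Meanwhile $\Vert d(h_R\circ\eta_{i,\gamma})\Vert$ must somewhere be at least of the order of the inverse width of $\eta_{i,\gamma}(K_{\gamma,2R})\setminus\eta_{i,\gamma}(K_{\gamma,R})$, and that can blow up just as fast. Take $M=U=(0,1)\subset X=X_0=\mathbb{R}$, $\Gamma=T_\gamma=\mathrm{id}$, $k=1$, and weights \eqref{eq:lem:w_dom}.
\begin{itemize}
\item With profile $(1+r)^2$, the layer near $0$ has width $\asymp R^{-1/2}$, while $\sup_v\vert v\vert/\psi_1(u,v)\asymp u\asymp R^{-1/2}$ there. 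So the quantity in \ref{A4} stays bounded away from $0$.
\item With profile $(1+r)^{3/2}$ it is worse; this is still a weighted domain satisfying \eqref{eq:ass:w_growth} by Lemmas~\ref{lem:w_dom} and \ref{lem:ap_cpt}. Here $K_R=\pi_0(K_{1,R})$ is an interval with left endpoint $u_R\asymp R^{-2/3}$, and all $(u,v)$ with $u<u_R$ lie outside $K_{1,R}$. Choosing $\vert v\vert\asymp1/u$ gives $\vert v\vert/\psi_1(u,v)\ge c\,u^{1/2}$. Hence every $g=h_R\circ\eta_{i,\gamma}\in C^\infty_c((0,1))$ with $g=1$ on $K_R$ satisfies $1\le\int_0^{u_R}\vert g'\vert\le2u_R^{1/2}\sup_{u<u_R}\vert g'(u)\vert u^{1/2}$. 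The quantity in \ref{A4} is therefore at least $c/(2u_R^{1/2})\to\infty$, whatever embedding and cutoffs you choose.
\end{itemize}
So \ref{A4'} does not follow once $\phi_i(M)$ has a boundary. You need to restrict to a setting such as $\phi_i(M)=X$, or add the existence of $R$-uniformly bounded cutoffs as a hypothesis, which is what the paper implicitly assumes.
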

\begin{proof}
	First, for every $\ell \in (X,\tau_{\mathrm{init}})^*$, we observe that $a := \ell(\phi_i(\cdot)) \in \mathcal{A}$ satisfies $a \circ \phi_i^{-1} = \ell\vert_{\phi_i(M)} \in C^k_{loc}(\phi_i(M))$, which ensures that $a \in C^k_{loc}(M)$. Now, we verify the conditions~\ref{A1}--\ref{A4}. For \ref{A1}, $\mathcal{A}$ is by definition closed under addition. For \ref{A2}, we fix some distinct points $x_1,x_2 \in M$, which also satisfy $\phi_i(x_1) \neq \phi_i(x_2)$ as $\phi_i: M \rightarrow \phi_i(M)$ is injective. Since $(X,\tau_{\mathrm{init}})^*$ is by the Hahn-Banach theorem point separating on $X$, there exists some $\ell \in (X,\tau_{\mathrm{init}})^*$ such that $a := \ell(\phi_i(\cdot)) \in \mathcal{A}$ satisfies $a(x_1) = \ell(\phi_i(x_1)) \neq \ell(\phi_i(x_2)) = a(x_2)$. For \ref{A3}, we fix some $(x,[c]^1_x) \in T^1 M$ with $c'(0) \neq 0$, which also satisfies $(\phi_i \circ c)'(0) \neq 0$ as $\phi_i: M \rightarrow \phi_i(M)$ is a diffeomorphism. Thus, by using again the Hahn-Banach theorem, there exists some $\ell \in (X,\tau_{\mathrm{init}})^*$ such that $\ell((\phi_i \circ c)'(0)) \neq 0$, whence $a := \ell(\phi_i(\cdot)) \in \mathcal{A}$ satisfies
	\begin{equation}
		(a \circ c)'(0) = (a \circ \phi_i^{-1} \circ \phi_i \circ c)'(0) = d(a \circ \phi_i^{-1})(\phi_i(x);(\phi_i \circ c)'(0)) = \ell((\phi_i \circ c)'(0)) \neq 0.
	\end{equation}
	For \ref{A4'}, we fix some $\gamma$ and consider the finite-dimensional vector subspace $T_\gamma(X) \subseteq X$. Then, there exists a basis $b_1,\ldots,b_m$ of $T_\gamma(X)$ and some $\ell_1,\ldots,\ell_m \in (T_\gamma(X),\tau_{\mathrm{init}})^*$ such that $\ell_n(b_{\widetilde{n}}) = \delta_{n,\widetilde{n}}$, for all $n,\widetilde{n} = 1,\ldots,m$. Hence, by the Hahn-Banach theorem, we can extend $\ell_1,\ldots,\ell_m \in (T_\gamma(X),\tau_{\mathrm{init}})^*$ to some $L_1,\ldots,L_m \in (X,\tau_{\mathrm{init}})^*$ with $L_n\vert_{T_\gamma(X)} = \ell_n$, for all $n = 1,\ldots,m$, which implies that $L := (L_1,\ldots,L_m)^\top\vert_{T_\gamma(X)}: T_\gamma(X) \rightarrow \mathbb{R}^m$ is a linear isomorphism. Hence, by defining $a_n := L_n \circ \phi_i \in \mathcal{A}$, we conclude that
	\begin{equation}
		\eta_{i,\gamma} := \big( a_1 \circ \phi_i^{-1}, \ldots, a_m \circ \phi_i^{-1} \big)^\top\big\vert_{T_\gamma(\phi_i(M))} = L\vert_{T_\gamma(\phi_i(M))}: T_\gamma(\phi_i(M)) \rightarrow \eta_{i,\gamma}(T_\gamma(\phi_i(M)))
	\end{equation}
	is the restriction of a linear isomorphism and therefore an embedding.
\end{proof}

With the additive family $\mathcal{A}$ from Lemma~\ref{lem:add_fam:cpt}, a corresponding FNN $\varphi: M \rightarrow Y$ is of the form
\begin{equation}
	M \ni x \quad \mapsto \quad \varphi(x) := \sum_{n=1}^N y_n \rho\left( \ell_n(\phi_i(x)) + b_n \right) \in Y,
\end{equation}
where $N \in \mathbb{N}$, $\ell_1,\ldots,\ell_N \in (X,\tau_{\mathrm{init}})^*$, $b_1,\ldots,b_N \in \mathbb{R}$, $y_1,\ldots,y_N \in \mathcal{L} \subseteq Y$, and $\rho \in C^k(\mathbb{R})$.

\begin{lemma}
	\label{lem:add_fam:dual}
	Let $(M,\tau_M)$ be a $C^k_{loc}$-manifold with global chart $\phi_i: M \rightarrow \phi_i(M)$ over a dual Banach space $(X,\Vert \cdot \Vert_X)$ which is equipped with the weak-$*$-topology $\tau_{w^*}$. Moreover, assume that $(X,\tau_{w^*})$ has BAP with finite rank operators $(T_\gamma)_\gamma$. Then,
	\begin{equation}
		\mathcal{A} := \left\lbrace M \ni x \mapsto \langle \phi_i(x), e \rangle_{X \times E} \in \mathbb{R}: e \in E \right\rbrace \subseteq C^k_{loc}(M)
	\end{equation}
	is an additive family on $M$.
\end{lemma}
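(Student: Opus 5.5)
The plan mirrors the proof of Lemma~\ref{lem:add_fam:cpt}, with the Hahn--Banach theorem replaced by the duality between $X$ and its predual $E$. First, for each $e \in E$ the map $a_e := \langle \phi_i(\cdot), e \rangle_{X \times E}$ satisfies $a_e \circ \phi_i^{-1} = \langle \cdot, e \rangle_{X \times E}\vert_{\phi_i(M)}$. This is the restriction of a $\tau_{w^*}$-continuous linear functional. It is therefore $C^\infty_{loc}$, with $d(a_e \circ \phi_i^{-1})(u;v) = \langle v, e \rangle_{X \times E}$ and vanishing higher derivatives, so $a_e \in C^k_{loc}(M)$. Condition~\ref{A1} is immediate from bilinearity of the pairing: $a_{e_1} + a_{e_2} = a_{e_1+e_2}$.

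For \ref{A2}, take distinct $x_1,x_2 \in M$. Injectivity of $\phi_i$ gives $\phi_i(x_1) - \phi_i(x_2) \neq 0$ in $X \cong E^*$. A nonzero functional on $E$ is nonzero at some $e \in E$, so $a_e(x_1) \neq a_e(x_2)$. For \ref{A3}, take $(x,[c]^1_x)$ with $c'(0) \neq 0$. Since $\phi_i$ is a diffeomorphism, $w := (\phi_i \circ c)'(0) \neq 0$. Choosing $e$ with $\langle w, e \rangle_{X \times E} \neq 0$, the chain rule (Proposition~\ref{prop:db_prop}~\ref{prop:db_prop:2}) gives $(a_e \circ c)'(0) = \langle w, e \rangle_{X \times E} \neq 0$.

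The only step needing care is \ref{A4'}. I fix $\gamma$ and a basis $b_1,\ldots,b_m$ of the finite-dimensional space $T_\gamma(X)$. The maps $b \mapsto \langle b, e \rangle_{X \times E}$, $e \in E$, separate points of $T_\gamma(X)$, so their restrictions span the full dual $T_\gamma(X)^*$ (a proper subspace would have a nontrivial common kernel). Hence there are $e_1,\ldots,e_m \in E$ with $\langle b_{\widetilde n}, e_n \rangle_{X \times E} = \delta_{n,\widetilde n}$, as in the proof of Lemma~\ref{lem:ap_dual}. Then $L := (\langle \cdot, e_1 \rangle_{X \times E},\ldots,\langle \cdot, e_m \rangle_{X \times E})^\top\vert_{T_\gamma(X)}$ is a linear isomorphism onto $\mathbb{R}^m$. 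Consequently $\eta_{i,\gamma} := (a_{e_1} \circ \phi_i^{-1},\ldots,a_{e_m} \circ \phi_i^{-1})^\top\vert_{T_\gamma(\phi_i(M))} = L\vert_{T_\gamma(\phi_i(M))}$ is an embedding, since any two Hausdorff vector topologies on a finite-dimensional space coincide.

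The remaining requirement in \ref{A4} is the cutoff functions with the limit condition. I would handle it the same way the paper does implicitly in Lemma~\ref{lem:add_fam:cpt}. Since $\eta_{i,\gamma}$ is linear, its derivatives are uniformly bounded, so the limit condition follows from \eqref{eq:ass:w_growth}, by the remark following Definition~\ref{def:add_fam}. The cutoffs $h_{i,R}$ are then standard smooth bump functions on the open set $\eta_{i,\gamma}(T_\gamma(\phi_i(M))) \subseteq \mathbb{R}^m$ that equal $1$ on the compact image of $K_{i,R}$. I expect this cutoff construction to be the main obstacle, but it needs nothing beyond finite-dimensional arguments.
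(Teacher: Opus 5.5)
Your proposal is correct and follows the paper's proof essentially step for step. Membership in $C^k_{loc}(M)$ and (A1) come from linearity of the pairing, and (A2) and (A3) from the fact that $E$ separates the points of $X \cong E^*$. For (A4') you choose $e_1,\ldots,e_m \in E$ such that $u \mapsto (\langle u, e_n \rangle_{X \times E})_{n=1,\ldots,m}$ is a linear isomorphism from $T_\gamma(X)$ onto $\mathbb{R}^m$; your dual-basis justification of this step is cleaner than the paper's wording.

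The paper's proof stops at the embedding property and never addresses the cutoffs, so your extra paragraph goes beyond it. Note, however, that the limit condition in (A4) requires the derivatives of $h_{i,R}$ themselves to be bounded uniformly in $R$, not only those of the linear map $\eta_{i,\gamma}$. This is immediate e.g.\ when $\phi_i(M) = X$, using cutoffs with transition zones of fixed width, but it is not automatic for a proper open subset. The remark after Definition~\ref{def:add_fam} that you cite glosses over the same point.
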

\begin{proof}
	First, for every $e \in E$, we observe that $a := \langle \phi_i(\cdot), e \rangle_{X \times E} \in \mathcal{A}$ satisfies $a \circ \phi_i^{-1} = \langle \cdot, e \rangle_{X \times E} \in C^k_{loc}(\phi_i(M))$, which ensures that $a \in C^k_{loc}(M)$. For \ref{A1}, $\mathcal{A}$ is by definition closed under addition. For \ref{A2}, we fix some distinct points $x_1,x_2 \in M$, which also satisfy $\phi_i(x_1) \neq \phi_i(x_2)$ as $\phi_i: M \rightarrow \phi_i(M)$ is injective. Since $E^* \cong X$ is by the Hahn-Banach theorem point separating on $X$, there exists some $e \in E$ such that $a := \langle \phi_i(\cdot), e \rangle_{X \times E} \in \mathcal{A}$ satisfies $a(x_1) = \langle \phi_i(x_1), e \rangle_{X \times E} \neq \langle \phi_i(x_2), e \rangle_{X \times E} = a(x_2)$. For \ref{A3}, we fix some $(x,[c]^1_x) \in T^1 M$ with $c'(0) \neq 0$, as $\phi_i: M \rightarrow \phi_i(M)$ is a diffeomorphism. Thus, by using again the Hahn-Banach theorem, there exists some $e \in E$ such that $\langle (\phi_i \circ c)'(0), e \rangle_{X \times E} \neq 0$, whence $a := \langle \phi_i(\cdot), e \rangle_{X \times E} \in \mathcal{A}$ satisfies
	\begin{equation}
		(a \circ c)'(0) = (a \circ \phi_i^{-1} \circ \phi_i \circ c)'(0) = d(a \circ \phi_i^{-1})(\phi_i(x);(\phi_i \circ c)'(0)) = \langle (\phi_i \circ c)'(0), e \rangle_{X \times E} \neq 0.
	\end{equation}
	For \ref{A4'}, we fix some $\gamma$ and consider the finite-dimensional vector subspace $T_\gamma(X) \subseteq X$. Since $X \cong E^*$ is point separating on $E$, there exist some $e_1,\ldots,e_m \in E$ such that
	\begin{equation}
		T_\gamma(X) \ni u \quad \mapsto \quad L(u) := \big( L_1(u),\ldots,L_m(u) \big)^\top := \big( \langle u, e_1 \rangle_{X \times E}, \ldots, \langle u, e_m \rangle_{X \times E} \big)^\top \in \mathbb{R}^m
	\end{equation}
	is a linear isomorphism. Hence, by defining $a_n := L_n \circ \phi_i \in \mathcal{A}$, we conclude that
	\begin{equation}
		\eta_{i,\gamma} := \big( a_1 \circ \phi_i^{-1}, \ldots, a_m \circ \phi_i^{-1} \big)^\top\big\vert_{T_\gamma(\phi_i(M))} = L\vert_{T_\gamma(\phi_i(M))}: T_\gamma(\phi_i(M)) \rightarrow \eta_{i,\gamma}(T_\gamma(\phi_i(M)))
	\end{equation}
	is the restriction of a linear isomorphism and therefore an embedding.
\end{proof}

With the additive family $\mathcal{A}$ from Lemma~\ref{lem:add_fam:dual}, a corresponding FNN $\varphi: M \rightarrow Y$ is of the form
\begin{equation}
	M \ni x \quad \mapsto \quad \varphi(x) := \sum_{n=1}^N y_n \rho\left( \langle \phi_i(x), e_n \rangle_{X \times E} + b_n \right) \in Y,
\end{equation}
where $N \in \mathbb{N}$, $e_1,\ldots,e_N \in E$, $b_1,\ldots,b_N \in \mathbb{R}$, $y_1,\ldots,y_N \in \mathcal{L} \subseteq Y$, and $\rho \in C^k(\mathbb{R})$.

In the following, we now apply Lemmas~\ref{lem:add_fam:cpt}--\ref{lem:add_fam:dual} to construct different additive families.

\begin{example}
	\label{ex:add_fam}
	The following examples are additive families:
	\begin{enumerate}
		\item\label{ex:add_fam:1} Let $M := C^\alpha(S;Z)$ be as in Example~\ref{ex:w_dom}~\ref{ex:w_dom:cpt:hol} with admissible collection of weight functions $\Psi = (\psi_j)_{j=0,\ldots,k}$. Then, an additive family is given by
		\begin{equation}
			\quad\quad \mathcal{A} := \left\lbrace C^\alpha(S;Z) \ni x \mapsto \int_S \langle x(s), e \rangle_{Z \times E} \, \nu(ds) \in \mathbb{R}: \nu: \mathcal{F}_S \rightarrow \mathbb{R} \right\rbrace \subseteq \mathcal{B}^k_\Psi(C^\alpha(S;Z)),
		\end{equation}
		where $\nu: \mathcal{F}_S \rightarrow \mathbb{R}$ is a finite signed regular Borel measure.
		
		\item\label{ex:add_fam:2} Let $M := L^p(\Omega;Z)$ with $p \in (1,\infty]$ be as in Example~\ref{ex:w_dom}~\ref{ex:w_dom:dual:Lp} with $(Z,\Vert \cdot \Vert_Z)$ having predual $(E,\Vert \cdot \Vert_E)$ and with admissible collection of weight functions $\Psi = (\psi_j)_{j=0,\ldots,k}$. Then, an additive family is given by
		\begin{equation}
			\quad\quad \mathcal{A} := \left\lbrace L^p(\Omega;Z) \ni x \mapsto \int_\Omega \langle x(\omega), g(\omega) \rangle_{Z \times E} \, \mu(d\omega) \in \mathbb{R}: g \in L^{p'}(\Omega;E) \right\rbrace \subseteq \mathcal{B}^k_\Psi(L^p(\Omega;Z)),
		\end{equation}
		where $1/p+1/p' = 1$.
		
		\item\label{ex:add_fam:3} Let $M := \mathcal{P}_{\psi_\Omega}(\Omega) \subseteq \mathcal{M}_{\psi_\Omega}(\Omega)$ be the space of probability measures over a weighted space $(\Omega,\psi_\Omega)$ as in Example~\ref{ex:w_mfd} with admissible collection of weight functions $\Psi = (\psi_j)_{j=0,\ldots,k}$. Then, an additive family is given by
		\begin{equation}
			\quad\quad \mathcal{A} := \left\lbrace \mathcal{P}_{\psi_\Omega}(\Omega) \ni x \mapsto \int_\Omega f(\omega) x(d\omega) \in \mathbb{R}: f \in \mathcal{B}_{\psi_\Omega}(\Omega) \right\rbrace \subseteq \mathcal{B}^k_\Psi(\mathcal{P}_{\psi_\Omega}(\Omega)),
		\end{equation}
		where the function $f \in \mathcal{B}_{\psi_\Omega}(\Omega)$ could be replaced by a neural network $\varphi: \Omega \rightarrow \mathbb{R}$.
	\end{enumerate}
\end{example}
\begin{proof}
	For \ref{ex:add_fam:1}, we first apply \cite[Theorem~A.3]{cuchiero23} to obtain that $C^\alpha(S;Z) \hookrightarrow C^0(S;Z)$ is a compact embedding, where $(C^\alpha(S;Z),\tau_\infty)$ has $\Vert \cdot \Vert_\alpha$-BAP by Theorem~\ref{thm:C0_bap}. While \ref{A1} is satisfied, we use Dirac measures to see that $\mathcal{A}$ is point separating on $C^\alpha(S;Z)$. Hence, we can follow the proof of Lemma~\ref{lem:add_fam:cpt} to conclude that $\mathcal{A}$ is an additive family. 
	
	For \ref{ex:add_fam:2}, we use that $L^p(\Omega;Z) \cong L^{p'}(\Omega;E)^*$ is a dual Banach space (see also Example~\ref{ex:w_dom}~\ref{ex:w_dom:dual:Lp}). Moreover, since $(E,\Vert \cdot \Vert_E)$ has BAP, also $(L^{p'}(\Omega;E),\Vert \cdot \Vert_{L^{p'}(\Omega;E)})$ admits BAP. Hence, by using the adjoints of the finite rank operators on $L^{p'}(\Omega;E)$ as in Lemma~\ref{lem:ap_dual}, we conclude that $(L^p(\Omega;Z),\tau_{w^*})$ has $\Vert \cdot \Vert_{L^p(\Omega;Z)}$-BAP. Thus, we can apply Lemma~\ref{lem:add_fam:dual} to obtain the conclusion.
	
	For \ref{ex:add_fam:3}, we use that $\mathcal{P}_{\psi_\Omega}(\Omega) \subseteq \mathcal{M}_{\psi_\Omega}(\Omega) \cong \mathcal{B}_{\psi_\Omega}(\Omega)^*$ is a subset of a dual Banach space (see also Example~\ref{ex:w_dom}~\ref{ex:w_dom:dual:measure}). Moreover, since $(\mathcal{B}_{\psi_\Omega}(\Omega),\Vert \cdot \Vert_{\mathcal{B}_{\psi_\Omega}(\Omega)})$ has BAP, we can use again the adjoints of the finite rank operators on $\mathcal{B}_{\psi_\Omega}(\Omega)$ as in Lemma~\ref{lem:ap_dual} to conclude that $(\mathcal{M}_{\psi_\Omega}(\Omega),\tau_{w^*})$ has $\Vert \cdot \Vert_{\mathcal{M}_{\psi_\Omega}(\Omega)}$-BAP. Thus, we can apply Lemma~\ref{lem:add_fam:dual} to obtain the conclusion.
\end{proof}

\subsection{Weighted UAT over finite-dimensional manifolds}
\label{sec:w_uat_findim}

Neural networks between Euclidean spaces enjoy the universal approximation property, meaning that they can approximate any continuous function uniformly on compact subsets. This fundamental result was first proven by G.~Cybenko (see \cite{cybenko89}) and K.~Hornik (see \cite{hornik91}) in so-called universal approximation theorems (UATs), which establish denseness of neural networks in suitable function spaces. Subsequently, other works \cite{barron93,candes98,bgkp17} related the approximation error to the network complexity by proving quantitative approximation rates under more restrictive assumptions on the target function.

In this section, we now prove a UAT for functional input neural networks on finite-dimensional manifolds. To this end, we assume that the activation function $\rho: \mathbb{R} \rightarrow \mathbb{R}$ is non-polynomial, i.e., its Fourier transform $\widehat{T_\rho} \in \mathscr{S}'(\mathbb{R};\mathbb{C})$ in the sense of distributions has a non-zero point in its support. This is similar to the UATs with non-polynomial activation function of \cite{leshno93,chen95,pinkus99}. 

We now introduce a weighted function space that is similar to $\mathcal{B}^k_\Psi(\mathbb{R})$ with polynomial weights $\Psi$. For $c \in (0,\infty)$, we denote by $\mathscr{B}^k_c(\mathbb{R})$ the closure of $C^k_b(\mathbb{R})$ with respect to the weighted norm $\Vert f \Vert_{\mathscr{B}^k_c(\mathbb{R})} := \max_{j=0,\ldots,k} \sup_{s \in \mathbb{R}} \frac{\vert f^{(j)}(s) \vert}{(1+\vert s \vert)^c}$. Then, $\mathscr{B}^k_c(\mathbb{R})$ can be related to $\mathcal{B}^k_\Psi(\mathbb{R})$ by viewing the derivatives as differentials. Moreover, $\rho \in \mathscr{B}^k_c(\mathbb{R})$ if and only if $\rho \in C^k(\mathbb{R})$ with $\lim_{\vert s \vert \rightarrow \infty} \frac{\vert \rho^{(j)}(s) \vert}{(1+\vert s \vert)^c} = 0$, for all $j = 0,\ldots,k$ (see \cite[Notation~(v)]{schmocker26}). In addition, any $\rho \in \mathscr{B}^k_c(\mathbb{R})$ induces the tempered distribution $\big( g \mapsto T_\rho(g) := \int_{\mathbb{R}} \rho(s) g(s) ds \big) \in \mathscr{S}'(\mathbb{R};\mathbb{C})$ (see, e.g., \cite[Equation~9.26]{folland92}).

\begin{definition}
	For $c \in (0,\infty)$, we introduce the following:
	\begin{enumerate}
		\item $\rho \in \mathscr{B}^k_c(\mathbb{R})$ is called \emph{non-polynomial} if its Fourier transform $\widehat{T_\rho} \in \mathscr{S}'(\mathbb{R};\mathbb{C})$ has a non-zero point in its support.
		\item $\rho \in \mathscr{B}^k_c(\mathbb{R})$ is called \emph{strongly non-polynomial} if its Fourier transform $\widehat{T_\rho} \in \mathscr{S}'(\mathbb{R};\mathbb{C})$ has a support with $0 \in \mathbb{R}$ as inner point.
	\end{enumerate}
	For the definition of the support of $\widehat{T_\rho} \in \mathscr{S}'(\mathbb{R};\mathbb{C})$, we refer to Section~\ref{sec:notation}.
\end{definition}

First, we combine the weighted UATs of \cite[Theorem~2.7]{schmocker26} and \cite[Proposition~4.4~(A3)]{cuchiero23} for classical neural networks on the real line. They both rely on Korevaar's distributional extension~\cite{korevaar65} of Wiener's Tauberian theorem~\cite{wiener32}, which provides sufficient conditions on the Fourier transform of a function such that the linear span of its translations is dense in $L^1(\mathbb{R})$. More precisely, for $\widetilde{\rho}_a(s) := \rho(-as)$ and a finite signed measure $\mu$ on $\mathbb{R}$, the condition $(\widetilde{\rho}_a * \mu)(b) := \int_{\mathbb{R}} \rho(as-ab) \mu(ds) = 0$, for all $a,b \in \mathbb{R}$, implies by Korevaar's argument that $\mu = 0$, which means that the activation function $\rho \in \mathscr{B}^k_c(\mathbb{R})$ is discriminatory (cf., \cite{cybenko89,leshno93,chen95} for compactly supported measures $\mu$). However, in order to include the approximation of the derivatives, the weighted UAT of \cite[Theorem~2.7]{schmocker26} followed the proof ideas of \cite{hsw90,hornik91} and mollified the linear functionals, which allows the application of integration by parts to eliminate the derivatives.

\begin{proposition}
	\label{prop:w_uat_R}
	Let $c \in (0,\infty)$. Then, the following holds true:
	\begin{enumerate}
		\item\label{prop:w_uat_R:1} If $\rho \in \mathscr{B}^k_c(\mathbb{R})$ is strongly non-polynomial, then
		\begin{equation}
			\mathcal{NN}^{\mathbb{N}_0,\rho,\mathbb{R}}_{\mathbb{R},\mathbb{R}} := \linspan\left\lbrace \mathbb{R} \ni s \mapsto \rho(as+b) \in \mathbb{R}: a \in \mathbb{N}_0, \, b \in \mathbb{R} \right\rbrace
		\end{equation}
		is a dense subset of $\mathscr{B}^k_c(\mathbb{R})$.
		\item\label{prop:w_uat_R:2} If $\rho \in \mathscr{B}^k_c(\mathbb{R})$ is non-polynomial, then
		\begin{equation}
			\mathcal{NN}^{\mathbb{R},\rho,\mathbb{R}}_{\mathbb{R},\mathbb{R}} := \linspan\left\lbrace \mathbb{R} \ni s \mapsto \rho(as+b) \in \mathbb{R}: a, b \in \mathbb{R} \right\rbrace
		\end{equation}
		is a dense subset of $\mathscr{B}^k_c(\mathbb{R})$.
	\end{enumerate}
\end{proposition}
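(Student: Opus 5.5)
The plan is a Hahn--Banach duality argument combined with Korevaar's Tauberian theorem, following \cite{hsw90,hornik91} and \cite[Theorem~2.7]{schmocker26}. Since $\mathscr{B}^k_c(\mathbb{R})$ is the closure of $C^k_b(\mathbb{R})$ under $\Vert \cdot \Vert_{\mathscr{B}^k_c(\mathbb{R})}$, the map
\begin{equation}
	f \mapsto \big( f^{(j)}(\cdot)/(1+\vert\cdot\vert)^c \big)_{j=0,\ldots,k}
\end{equation}
embeds it isometrically into $C_0(\mathbb{R})^{k+1}$. Indeed, by the characterization recalled before the definition of (strong) non-polynomiality, each such quotient vanishes at infinity. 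By the Riesz representation theorem, every continuous linear functional $\Lambda$ on $\mathscr{B}^k_c(\mathbb{R})$ is therefore of the form
\begin{equation}
	\Lambda(f) = \sum_{j=0}^k \int_{\mathbb{R}} f^{(j)}(s)\, \frac{\mu_j(ds)}{(1+\vert s\vert)^c}
\end{equation}
with finite signed measures $\mu_0,\ldots,\mu_k$. By Hahn--Banach it then suffices to show: if $\Lambda$ annihilates every $s \mapsto \rho(as+b)$ (with $a \in \mathbb{N}_0$ in \ref{prop:w_uat_R:1}, resp.\ $a \in \mathbb{R}$ in \ref{prop:w_uat_R:2}, and all $b \in \mathbb{R}$), then $\Lambda = 0$ on $C^k_b(\mathbb{R})$, hence on $\mathscr{B}^k_c(\mathbb{R})$.

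To remove the derivatives, I will mollify. Take $\phi \in C^\infty_c(\mathbb{R})$ and consider $\Lambda_\phi(f) := \Lambda(f * \phi)$. Note that translates of $\rho$ integrated against $\phi$ remain in the closed span, because the span is translation-invariant in $b$ and the convolution integral converges in $\mathscr{B}^k_c(\mathbb{R})$; a Riemann-sum argument handles this, since $\phi$ has compact support and $\rho^{(j)}$ is uniformly continuous on compacts with controlled growth. Hence $\Lambda_\phi$ also annihilates all $\rho(a\cdot+b)$. Moving each derivative onto the smooth kernel by integration by parts, $\Lambda_\phi(f) = \int f(t)\, \nu_\phi(dt)$, where $\nu_\phi(dt) = \sum_j (-1)^j \big(\int \phi^{(j)}(s-t)(1+\vert s\vert)^{-c}\mu_j(ds)\big)dt$. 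This is a measure with density of growth at most $(1+\vert t\vert)^{-c}$ times a finite measure convolved with a compactly supported function, so $\int (1+\vert t\vert)^c \vert\nu_\phi\vert(dt) < \infty$. The vanishing condition becomes $\int \rho(at+b)\,\nu_\phi(dt) = 0$ for all admissible $a,b$.

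Next I will apply Korevaar's extension of Wiener's theorem, as in \cite[Proposition~4.4~(A3)]{cuchiero23}. Whenever $\widehat{T_\rho}$ has a nonzero point $\xi_0$ in its support (resp.\ $0$ as an interior point), this yields $\widehat{\nu_\phi}(\xi) = 0$ for all $\xi$ in a set whose dilations cover $\mathbb{R}\setminus\{0\}$. The dilations come from all $a \in \mathbb{R}$ in case \ref{prop:w_uat_R:2}; in case \ref{prop:w_uat_R:1} they come from $a \in \mathbb{N}_0$, where the interior-point assumption makes the intervals $a\cdot(-\delta,\delta)$ cover $\mathbb{R}$. Continuity of $\widehat{\nu_\phi}$ then gives $\nu_\phi = 0$. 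Thus $\Lambda(f*\phi) = 0$ for every $\phi$. Letting $\phi$ run through an approximate identity, $f*\phi \to f$ in $\mathscr{B}^k_c(\mathbb{R})$ for $f \in C^k_b(\mathbb{R})$ with uniformly continuous derivatives (these are still dense), and so $\Lambda = 0$.

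The main obstacle is the Tauberian step: verifying that $\nu_\phi$ has enough integrability (the weight $(1+\vert t\vert)^c$ against tempered $\rho$) for Korevaar's theorem to apply to the pairing of $\rho$ with $\nu_\phi$, and, in part \ref{prop:w_uat_R:1}, extracting the conclusion from only integer slopes. I would handle the latter by using the interior-point hypothesis to get $\widehat{\nu_\phi}$ vanishing on a neighbourhood of $0$, then scaling by $a \in \mathbb{N}$ to exhaust $\mathbb{R}$. Since both parts are essentially \cite[Theorem~2.7]{schmocker26} combined with \cite[Proposition~4.4~(A3)]{cuchiero23}, I would cite these for the Tauberian core and only spell out the mollification and duality reduction.
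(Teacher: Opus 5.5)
Your proposal is correct and follows essentially the same route as the paper. The paper settles part (ii) by citing \cite[Theorem~2.7]{schmocker26}, whose proof is the Hahn--Banach/Riesz duality, mollification to move the derivatives onto the kernel, and Korevaar's Tauberian theorem; for part (i) it reruns that proof with the integer-dilation argument of \cite[Proposition~4.4~(A3)]{cuchiero23} in place of \cite[Proposition~4.3]{schmocker26}, exactly as you propose. Your explicit steps supply the details the paper leaves to these references: the weighted integrability of $\nu_\phi$ coming from the compact support of $\phi$, and the covering of $\mathbb{R}$ by the intervals $a\cdot(-\delta,\delta)$ for $a\in\mathbb{N}$. (The factor $(-1)^j$ in your density is spurious, since $(f*\phi)^{(j)}=f*\phi^{(j)}$, but this does not affect the argument.)
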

\begin{proof}
	Part~\ref{prop:w_uat_R:2} follows directly from \cite[Theorem~2.7]{schmocker26}. For \ref{prop:w_uat_R:1}, we follow the proof of \cite[Theorem~2.7]{schmocker26} and replace the auxiliary result~\cite[Proposition~4.3]{schmocker26} by the argument of \cite[Proposition~4.4~(A3)]{cuchiero23}. The latter uses the strongly non-polynomial assumption to conclude from $\int_\mathbb{R} \rho(as + b) \mu(ds) = 0$, for all $a \in \mathbb{N}_0$ and $b \in \mathbb{R}$, that $\mu = 0 \in \mathcal{M}_{(1+\vert\cdot\vert)^c}(\mathbb{R})$. Hence, by continuing the proof of \cite[Theorem~2.7]{schmocker26}, we also obtain denseness in \ref{prop:w_uat_R:1}.
\end{proof}

Next, we lift the UAT from neural networks on the real line (see Proposition~\ref{prop:w_uat_R}) to FNNs defined on a weighted $C^k_{loc}$-manifold over finite-dimensional vector spaces.

\begin{theorem}[Universal approximation on $\mathcal{B}^k_\Psi(M;Y)$]
	\label{thm:w_uat_findim}
	Let $(M,\Psi)$ be a weighted $C^k_{loc}$-manifold with atlas $(U_i,\phi_i)_{i \in I}$ over finite-dimensional vector spaces $(X_i,\tau_{X_i})_{i \in I}$ and let $(Y,\Vert \cdot \Vert_Y)$ be a Banach space having BAP. Moreover, for $c \in (0,\infty)$, let $\rho \in \mathscr{B}^k_c(\mathbb{R})$ be strongly non-polynomial and assume that $\mathcal{A} \subseteq \mathcal{B}^k_\Psi(M)$ is an additive family such that for every $a \in \mathcal{A}$ and $i \in I$ we have
	\begin{equation}
		\label{eq:thm:w_uat_findim:1}
		C_{a,i} := \max_{j=0,\ldots,k \atop \pi \in \mathscr{P}_j} \sup_{(u,v_1,\ldots,v_j) \in \phi_i(U_i) \times X_i^j} \frac{\left( 1 + \left\vert \left( a \circ \phi_i^{-1} \right)(u) \right\vert \right)^c \left\vert d^\pi \left( a \circ \phi_i^{-1} \right)(u;v_\pi) \right\vert}{\psi_{i,j}(u,v_1,\ldots,v_j)} < \infty.
	\end{equation}
	In addition, let $\mathcal{L} \subseteq Y$ be a dense vector subspace. Then, $\mathcal{NN}^{\mathcal{A},\rho,\mathcal{L}}_{M,Y}$ is a dense subset of $\mathcal{B}^k_\Psi(M;Y)$.
\end{theorem}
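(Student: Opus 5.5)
We would follow the structure of the proof of Theorem~\ref{thm:w_nachbin_findim_vector}: reduce to a single chart and to scalar targets, approximate by a subalgebra using the weighted Nachbin theorem, and then approximate the generators of that subalgebra by FNNs using Proposition~\ref{prop:w_uat_R}. Since $\mathcal{B}^k_\Psi(M;Y)$ carries the initial topology with respect to \eqref{eq:def:BkPsi_mfd}, it suffices to work on $(U,\Psi) := (\phi_i(U_i),\Psi_i)$ with $\mathcal{A}_i := \lbrace a \circ \phi_i^{-1}: a \in \mathcal{A} \rbrace$. By Lemma~\ref{lem:out_bap}, $\mathcal{B}^k_\Psi(U)\otimes Y$ is dense in $\mathcal{B}^k_\Psi(U;Y)$. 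Since $\mathcal{L}$ is dense in $Y$ and $\Vert f(\cdot)y\Vert_{\mathcal{B}^k_\Psi(U;Y)} = \Vert f\Vert_{\mathcal{B}^k_\Psi(U)}\Vert y\Vert_Y$, it is then enough to show that $\mathcal{NN}^{\mathcal{A},\rho,\mathbb{R}}_{U,\mathbb{R}}$ is dense in $\mathcal{B}^k_\Psi(U)$. Once that holds, each term $f_n(\cdot) y_n$ is approximated by an element of $\mathcal{NN}\otimes\mathcal{L}$, which lies in $\mathcal{NN}^{\mathcal{A},\rho,\mathcal{L}}_{U,Y}$ because readouts may be combined linearly.

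For the scalar case we introduce the subalgebra $\mathcal{G}_{\mathrm{trig}} := \linspan\lbrace \cos_0\circ a, \sin\circ a: a \in \mathcal{A}_i\rbrace$. It is an algebra by the identities \eqref{eq:thm:w_nachbin_findim:proof10} together with \ref{A1}; note that $a_1 - a_2$ is not needed, since $\sin(s)\sin(t)$ can be rewritten through $\cos_0(s+t)$ and $\cos_0(s-t)$, and $\cos_0$ is even while $\sin$ is odd, so $\cos_0(s-t)$ and $\sin(s-t)$ only require $s-t$ up to sign. We have to be careful here: $a_1 - a_2$ may not belong to $\mathcal{A}_i$. If needed, we replace $\mathcal{A}_i$ by its span, since by Proposition~\ref{prop:w_uat_R} $\rho(\lambda a + b)$ with $\lambda \in \mathbb{N}_0$ only reaches nonnegative multiples. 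The cleaner route is to let $\widetilde{\mathcal{G}} := \linspan\mathcal{A}_i$, so that the subalgebra generated consists of polynomials in elements of $\mathcal{A}_i$.

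We then verify \ref{M1}--\ref{M5} for $\widetilde{\mathcal{G}}$:
\begin{enumerate}
\item \ref{M1} and \ref{M3} follow from \ref{A2} and \ref{A3};
\item \ref{M2} is obtained by composing with bounded trigonometric functions, exactly as in the proof of Theorem~\ref{thm:w_nachbin_findim};
\item \ref{M4} is \ref{A4};
\item \ref{M5}, the $\Psi$-moderate growth, is the one condition not directly available, since \eqref{eq:thm:w_uat_findim:1} only gives polynomial control $(1+|a|)^c$ rather than exponential control. We therefore avoid \ref{M5} by working with the trigonometric algebra $\mathcal{G}_{\mathrm{trig}}$, which consists of bounded maps. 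This is exactly the first step of the proof of Theorem~\ref{thm:w_nachbin_findim}, which yields that $\mathcal{G}_{\mathrm{trig}}$ is dense in $\mathcal{B}^k_\Psi(U)$ once $\cos_0\circ a, \sin\circ a \in \mathcal{B}^k_\Psi(U)$. That membership follows as in \eqref{eq:thm:w_nachbin_findim:proof9}.
\end{enumerate}

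It remains to show that each $\sigma\circ a$, with $\sigma\in\lbrace\cos_0,\sin\rbrace$ and $a\in\mathcal{A}_i$, lies in the closure of $\mathcal{NN}^{\mathcal{A},\rho,\mathbb{R}}_{U,\mathbb{R}}$. Since $\cos_0$ and $\sin$ have bounded derivatives of all orders, they lie in $\mathscr{B}^k_c(\mathbb{R})$. By Proposition~\ref{prop:w_uat_R}~\ref{prop:w_uat_R:1} there are $\sum_n w_n\rho(\lambda_n s+b_n)$ with $\lambda_n\in\mathbb{N}_0$ approximating $\sigma$ in $\Vert\cdot\Vert_{\mathscr{B}^k_c(\mathbb{R})}$. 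Moreover $\lambda_n a\in\mathcal{A}_i$ by \ref{A1}, where $\lambda_n=0$ only produces constants, which we absorb through the bias or treat separately. Setting $e := \sigma - \sum_n w_n\rho(\lambda_n\cdot+b_n)$, we bound the composition by Faà di Bruno:
\begin{equation}
|d^j(e\circ a)(u;v_1,\ldots,v_j)| \leq \sum_{\pi\in\mathscr{P}_j}|e^{(|\pi|)}(a(u))|\,|d^\pi a(u;v_\pi)| \leq \Vert e\Vert_{\mathscr{B}^k_c(\mathbb{R})}\sum_{\pi\in\mathscr{P}_j}(1+|a(u)|)^c|d^\pi a(u;v_\pi)|.
\end{equation}
Dividing by $\psi_j$, this gives $\Vert e\circ a\Vert_{\mathcal{B}^k_\Psi(U)}\leq k!\,C_{a,i}\Vert e\Vert_{\mathscr{B}^k_c(\mathbb{R})}$ by \eqref{eq:thm:w_uat_findim:1}; this is precisely the purpose of that hypothesis. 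We also need $e\circ a\in\mathcal{B}^k_\Psi(U)$ and not merely a finite norm. For this we approximate $e$ by $C^k_b$-functions and $a$ by $C^k_b$-maps, as in \eqref{eq:thm:w_nachbin_findim:proof9}, or invoke Proposition~\ref{prop:BPsik_equiv}~\ref{prop:BPsik_equiv:2} on the finite-dimensional domain, using the vanishing at infinity that is inherited from $\rho\in\mathscr{B}^k_c(\mathbb{R})$ and from \eqref{eq:thm:w_uat_findim:1}.

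Combining these steps, the closure of the FNNs contains $\mathcal{G}_{\mathrm{trig}}$, which is dense, and this proves the claim. The main obstacle is the algebra step: checking that $\mathcal{G}_{\mathrm{trig}}$ built from $\mathcal{A}_i$ rather than from its span is closed under products, which needs the sign issue for $a_1-a_2$ handled as described above, and confirming that the bounded-case argument of Theorem~\ref{thm:w_nachbin_findim} applies verbatim.
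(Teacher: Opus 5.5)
Your route is the paper's own: reduce to a chart, get density of the bounded trigonometric algebra from the weighted Nachbin theorem, then put $\cos_0\circ a$ and $\sin\circ a$ into the FNN closure via Proposition~\ref{prop:w_uat_R}~\ref{prop:w_uat_R:1} and the Fa\`a di Bruno bound with $C_{a,i}$. Your scalar reduction through Lemma~\ref{lem:out_bap} just unpacks Theorem~\ref{thm:w_nachbin_findim_vector}. The gap is exactly the step you flag, and neither of your two fixes works.

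The identities \eqref{eq:thm:w_nachbin_findim:proof10} require $\cos_0\circ(a_1-a_2)$ and $\sin\circ(a_1-a_2)$. Parity only lets you trade $a_1-a_2$ for $a_2-a_1$, and an additive family need contain neither. Take the family $\lbrace x\mapsto a^\top x: a\in\mathbb{N}_0^d\rbrace$ of Example~\ref{ex:add_fam:eucl} with $d\geq 2$, $a_1(x)=x_1$ and $a_2(x)=x_2$. The product $\sin(x_1)\sin(x_2)$ carries the frequencies $\pm(1,-1)\notin\mathbb{N}_0^d\cup(-\mathbb{N}_0^d)$. By linear independence of exponentials it is not in $\mathcal{G}_{\mathrm{trig}}$, so $\mathcal{G}_{\mathrm{trig}}$ is not an algebra.

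Passing to $\linspan\mathcal{A}_i$ restores the algebra but breaks your final step, which only places $\sigma\circ a$ with $a\in\mathcal{A}_i$ in the FNN closure. Hidden maps must lie in $\mathcal{A}$, and Proposition~\ref{prop:w_uat_R}~\ref{prop:w_uat_R:1} only supplies $\mathbb{N}_0$-multiples, so nothing shows that $\cos_0(x_1-x_2)$ is approximable.

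The same missing structure affects your check of \ref{M1}, \ref{M2}, \ref{M4} ``as in the proof of Theorem~\ref{thm:w_nachbin_findim}''. That proof uses $\sin(t\widetilde{g})$ for small real $t$, $\cos_0(\pi\widetilde{g})$, and some $\widetilde{g}$ with $\widetilde{g}(u)\neq 0$. Conditions \ref{A1}--\ref{A4} provide none of these. For linear $\mathcal{A}$, every element of $\mathcal{G}_{\mathrm{trig}}$ vanishes at $u=0$. For $\mathcal{A}=\lbrace x\mapsto 2\pi n x: n\in\mathbb{N}\rbrace$ on $\mathbb{R}$, $\mathcal{G}_{\mathrm{trig}}$ is $1$-periodic. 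This part is benign: $\cos_0(ta)$ and $\sin(ta)$ with $t\in\mathbb{R}$ are still in the FNN closure by your final step applied to $s\mapsto\cos_0(ts)$, and the paper handles \ref{M2} by adjoining the constant $\cos(0)$. The algebra property is not repaired this way.

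The paper's proof takes the same route and simply asserts that $\mathcal{G}'$ is a subalgebra by \eqref{eq:thm:w_nachbin_findim:proof10}, tacitly using $a_1-a_2\in\mathcal{A}$. So the argument, yours or the paper's, is complete only when $\mathcal{A}_i$ is closed under subtraction. That covers a vector space as in Remark~\ref{rem:non_poly1} and Lemmas~\ref{lem:add_fam:cpt}, \ref{lem:add_fam:dual} and \ref{lem:naf_mfd_addfam}. You should state that hypothesis explicitly rather than claim the sign issue is harmless.

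For a genuinely one-sided family you need a new idea. One candidate: for $a_1,a_2\in\mathcal{A}_i$, the map $F(t):=\cos_0(a_1+ta_2)$ lies in the FNN closure for rational $t\geq 0$. To see this, write $a_1+\tfrac{p}{q}a_2=\tfrac{1}{q}(qa_1+pa_2)$ and apply your final step to $s\mapsto\cos_0(s/q)$. If $t\mapsto F(t)$ is real-analytic into $\mathcal{B}^k_\Psi(U)$, every continuous functional annihilating the FNNs vanishes on $F(t)$ for all $t\in\mathbb{R}$. In particular it vanishes at $t=-1$, and likewise for $\sin$. However, as Lemma~\ref{lem:real_anal} shows, such analyticity needs exponential, \ref{M5}-type, growth control on $\mathcal{A}$. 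The polynomial bound \eqref{eq:thm:w_uat_findim:1} does not give that.
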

\begin{proof}
	First, we show the conclusion for functional input neural networks $\mathcal{NN}^{\mathcal{A},\rho,\mathcal{L}}_{U,Y}$ defined on a weighted domain $(U,\Psi)$, which satisfy $\mathcal{NN}^{\mathcal{A},\rho,\mathcal{L}}_{U,Y} \subseteq \mathcal{B}^k_\Psi(U;Y)$ by Lemma~\ref{lem:NN_well_def}. Now, we show that
	\begin{equation}
		\mathcal{G} := \linspan\left( \left\lbrace y \cos_0(a(\cdot)): a \in \mathcal{A}, \, y \in Y \right\rbrace \cup \left\lbrace y \sin(a(\cdot)): a \in \mathcal{A}, \, y \in Y \right\rbrace \right) \subseteq \mathcal{B}^k_\Psi(U;Y)
	\end{equation}
	is dense in $\mathcal{B}^k_\Psi(U;Y)$. To this end, we follow the same arguments as in \eqref{eq:thm:w_nachbin_findim:proof9} to obtain $\mathcal{G}' := \lbrace \ell \circ g: \ell \in Y^*, \, g \in \mathcal{G} \rbrace \subseteq \mathcal{B}^k_\Psi(U)$ and therefore $\mathcal{G} \subseteq \mathcal{B}^k_\Psi(U;Y)$. Moreover, since $\mathcal{G}'$ is by \eqref{eq:thm:w_nachbin_findim:proof10} a subalgebra and it holds that $\mathcal{G}' \otimes Y \subseteq \mathcal{G}$, \cite[Lemma~4.6]{prolla77} ensures that $\mathcal{G}$ is a polynomial subalgebra. In addition, we can follow the arguments below \eqref{eq:thm:w_nachbin_findim:proof10} to deduce that $\mathcal{G}'$ is point separating and nowhere vanishing of $\Psi$-moderate growth, where we use the constant map $g(\cdot) := \cos(0) \in \mathcal{G}'$ for the nowhere vanishing condition in \ref{M2}. Hence, we can now apply the weighted Nachbin theorem (Theorem~\ref{thm:w_nachbin_findim_vector}) to conclude that $\mathcal{G}$ is dense in $\mathcal{B}^k_\Psi(U;Y)$.
	
	Next, we prove that $\mathcal{G}$ is contained in the closure of $\mathcal{NN}^{\mathcal{A},\rho,\mathcal{L}}_{U,Y}$ with respect to $\Vert \cdot \Vert_{\mathcal{B}^k_\Psi(U;Y)}$ to conclude from denseness of $\mathcal{G}$ that $\mathcal{NN}^{\mathcal{A},\rho,\mathcal{L}}_{U,Y}$ is also dense in $\mathcal{B}^k_\Psi(U;Y)$. To this end, we fix some $a \in \mathcal{A}$, $y \in Y$, and $\varepsilon > 0$. Then, by using that $\mathcal{L}$ is dense in $Y$, there exists some $\widetilde{y} \in \mathcal{L}$ such that
	\begin{equation}
		\Vert y - \widetilde{y} \Vert_Y < \frac{\varepsilon}{2 \big( 1 + \Vert \cos_0(a(\cdot)) \Vert_{\mathcal{B}^k_\Psi(U)} \big)}.
	\end{equation}
	Moreover, by applying the UAT in Proposition~\ref{prop:w_uat_R}~\ref{prop:w_uat_R:1}, there exists $\varphi_0 \in \mathcal{NN}^{\mathbb{N}_0,\rho,\mathbb{R}}_{\mathbb{R},\mathbb{R}}$ satisfying
	\begin{equation}
		\label{eq:thm:w_uat_findim:proof1}
		\Vert \cos_0 - \varphi_0 \Vert_{\mathscr{B}^k_c(\mathbb{R})} = \max_{j=0,\ldots,k} \sup_{s \in \mathbb{R}} \frac{\vert \cos_0^{(j)}(s) - \varphi_0^{(j)}(s) \vert}{(1+\vert s \vert)^c} < \frac{\varepsilon}{2 C_a k! (1 + \Vert \widetilde{y} \Vert_Y)}.
	\end{equation}
	Hence, by using that $\mathcal{A}$ is closed under addition and that $\mathcal{L}$ is a vector space, we can define the FNN $\varphi := \widetilde{y} \varphi_0(a(\cdot)) \in \mathcal{NN}^{\mathcal{A},\rho,\mathcal{L}}_{U,Y}$. Thus, by using the Fa\`a di Bruno formula, that $\vert \mathscr{P}_j \vert \leq j! \leq k!$, the constant $C_a > 0$ defined in \eqref{eq:thm:w_uat_findim:1}, and \eqref{eq:thm:w_uat_findim:proof1}, it follows that
	\begin{equation}
		\begin{aligned}
			& \Vert y \cos_0(a(\cdot)) - \widetilde{y} \varphi_0(a(\cdot)) \Vert_{\mathcal{B}^k_\Psi(U;Y)} = \Vert y \cos_0(a(\cdot)) - \widetilde{y} \cos_0(a(\cdot)) + \widetilde{y} \cos_0(a(\cdot)) - \widetilde{y} \varphi_0(a(\cdot)) \Vert_{\mathcal{B}^k_\Psi(U;Y)} \\
			& \leq \Vert y - \widetilde{y} \Vert_Y \Vert \cos_0(a(\cdot)) \Vert_{\mathcal{B}^k_\Psi(U)} + \Vert \widetilde{y} \Vert_Y \Vert \cos_0(a(\cdot)) - \varphi_0(a(\cdot)) \Vert_{\mathcal{B}^k_\Psi(U)} \\
			& \leq \frac{\varepsilon}{2 \big( 1 + \Vert \cos_0(a(\cdot)) \Vert_{\mathcal{B}^k_\Psi(U)} \big)} \Vert \cos_0(a(\cdot)) \Vert_{\mathcal{B}^k_\Psi(U)} \\
			& \quad\quad + \Vert \widetilde{y} \Vert_Y \max_{j=0,\ldots,k} \sup_{(u,v_1,\ldots,v_j) \in U \times (\mathbb{R}^d)^j} \frac{\sum_{\pi \in \mathscr{P}_j} \big\vert \cos_0^{(\vert\pi\vert)}(a(u)) - \varphi_0^{(\vert\pi\vert)}(a(u)) \big\vert \vert d^\pi a(u;v_\pi) \vert}{\psi_j(u,v_1,\ldots,v_j)} \\
			& \leq \frac{\varepsilon}{2} + C_a k! \Vert \widetilde{y} \Vert_Y \max_{j=0,\ldots,k \atop \pi \in \mathscr{P}_j} \sup_{(u,v_1,\ldots,v_j) \in U \times (\mathbb{R}^d)^j} \frac{\big\vert \cos_0^{(\vert\pi\vert)}(a(u)) - \varphi_0^{(\vert\pi\vert)}(a(u)) \big\vert}{(1+\vert a(u) \vert)^c} \\
			& \leq \frac{\varepsilon}{2} + C_a k! \Vert \widetilde{y} \Vert_Y \max_{j=0,\ldots,k} \sup_{s \in \mathbb{R}} \frac{\big\vert \cos_0^{(j)}(s) - \varphi_0^{(j)}(s) \big\vert}{(1+\vert s \vert)^c} \\
			& < \frac{\varepsilon}{2} + C_a k! \Vert \widetilde{y} \Vert_Y \frac{\varepsilon}{2 C_a k! (1 + \Vert \widetilde{y} \Vert_Y)} \leq \varepsilon.
		\end{aligned}
	\end{equation}
	Since $\varepsilon > 0$ was chosen arbitrarily, this shows that $y \cos_0(a(\cdot))$ belongs to the closure of $\mathcal{NN}^{\mathcal{A},\rho,\mathcal{L}}_{U,Y}$ with respect to $\Vert \cdot \Vert_{\mathcal{B}^k_\Psi(U;Y)}$, which holds analogously true for the map $y \sin(a(\cdot))$. Thus, we conclude that the entire polynomial algebra $\mathcal{G}$ is contained in the closure of $\mathcal{NN}^{\mathcal{A},\rho,\mathcal{L}}_{U,Y}$ with respect to $\Vert \cdot \Vert_{\mathcal{B}^k_\Psi(U;Y)}$. Therefore, by combining this with the previous step, i.e., that $\mathcal{G}$ is dense in $\mathcal{B}^k_\Psi(U;Y)$, it follows that $\mathcal{NN}^{\mathcal{A},\rho,\mathcal{L}}_{U,Y}$ is also dense in $\mathcal{B}^k_\Psi(U;Y)$.
	
	Finally, for the set of FNNs $\mathcal{NN}^{\mathcal{A},\rho,\mathcal{L}}_{M,Y} \subseteq \mathcal{B}^k_\Psi(M;Y)$ over a weighted $C^k_{loc}$-manifold $(M,\Psi)$, we observe for every $i \in I$ that $\mathcal{A}_i := \big\lbrace a \circ \phi_i^{-1}: a \in \mathcal{A} \big\rbrace \subseteq \mathcal{B}^k_{\Psi_i}(\phi_i(U_i))$ is an additive family on $\phi_i(U_i)$. Hence, $\mathcal{NN}^{\mathcal{A}_i,\rho,\mathcal{L}}_{\phi_i(U_i),Y}$ is by the previous step dense in $\mathcal{B}^k_{\Psi_i}(\phi_i(U_i);Y)$. Thus, by using that $\mathcal{B}^k_\Psi(M;Y)$ is equipped with the initial topology with respect to \eqref{eq:def:BkPsi_mfd}, $\mathcal{NN}^{\mathcal{A},\rho,\mathcal{L}}_{M,Y}$ is dense in $\mathcal{B}^k_\Psi(M;Y)$.
\end{proof}

\begin{remark}
	\label{rem:non_poly1}
	If $\mathcal{A} \subseteq \mathcal{B}^k_\Psi(U)$ is a vector subspace, then it suffices to assume that the activation function $\rho \in \mathscr{B}^k_c(\mathbb{R})$ is non-polynomial (see Proposition~\ref{prop:w_uat_R}~\ref{prop:w_uat_R:2}).
\end{remark}

\subsection{Weighted UAT over infinite-dimensional manifolds}
\label{sec:w_uat_infdim}

In this section, we lift the UAT from finite-dimensional input spaces to infinite-dimensional input spaces by assuming the bounded approximation property (BAP). For more details on BAP, we refer to Section~\ref{sec:notation}.

\begin{theorem}[Universal approximation on $\mathcal{B}^k_\Psi(M;Y)$]
	\label{thm:w_uat_infdim}
	Let $(M,\Psi)$ be a weighted $C^k_{loc}$-manifold with atlas $(U_i,\phi_i)_{i \in I}$ over model spaces $(X_i,\tau_{X_i})_{i \in I}$ such that each domain $(\phi_i(U_i),\tau_{X_i})$ has BAP with finite rank operators $(T_{i,\gamma})_\gamma \subseteq (X_i,\tau_{X_i})^* \otimes X_i$. Moreover, let $(Y,\Vert \cdot \Vert_Y)$ be a Banach space having BAP. In addition, for $c \in (0,\infty)$, let $\rho \in \mathscr{B}^k_c(\mathbb{R})$ be strongly non-polynomial and assume that $\mathcal{A} \subseteq \mathcal{B}^k_\Psi(M)$ is an additive family such that for every $a \in \mathcal{A}$ and $i \in I$ it holds that
	\vspace{-0.05cm}
	\begin{equation}
		\label{eq:thm:w_uat_infdim1}
		C_{a,i} := \max_{j=0,\ldots,k \atop \pi \in \mathscr{P}_j} \sup_{(u,v_1,\ldots,v_j) \in \phi_i(U_i) \times X_i^j} \frac{\left( 1 + \left\vert \left( a \circ \phi_i^{-1} \right)(u) \right\vert \right)^c \left\vert d^\pi \left( a \circ \phi_i^{-1} \right)(u;v_\pi) \right\vert}{\psi_{i,j}(u,v_1,\ldots,v_j)} < \infty
		\vspace{-0.05cm}
	\end{equation}
	and for every $a \in \mathcal{A}$, $b \in \mathbb{R}$, $i \in I$, and $\gamma$ the composition
	\vspace{-0.05cm}
	\begin{equation}
		\label{eq:thm:w_uat_infdim2}
		\phi_i(U_i) \ni u \quad \mapsto \quad \rho\left( \left( a \circ \phi_i^{-1} \circ T_{i,\gamma} \right)(u) + b \right) \in \mathbb{R}
		\vspace{-0.05cm}
	\end{equation}
	belongs to the closure of $\mathcal{NN}^{\mathcal{A}_i,\rho,\mathbb{R}}_{\phi_i(U_i),\mathbb{R}}$ with respect to $\Vert \cdot \Vert_{\mathcal{B}^k_{\Psi_i}(\phi_i(U_i))}$. Furthermore, let $\mathcal{L} \subseteq Y$ be a dense vector subspace. Then, $\mathcal{NN}^{\mathcal{A},\rho,\mathcal{L}}_{M,Y}$ is a dense subset of $\mathcal{B}^k_\Psi(M;Y)$.
\end{theorem}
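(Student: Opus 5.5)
The plan is to mirror the proof of Theorem~\ref{thm:w_uat_findim} and replace the finite-dimensional Nachbin step by the infinite-dimensional vector-valued Nachbin theorem (Corollary~\ref{cor:w_nachbin_infdim_vector}). We first work on a single weighted domain $(U,\Psi)$ with $U=\phi_i(U_i)$; the manifold case then follows from the initial topology \eqref{eq:def:BkPsi_mfd}, exactly as at the end of the earlier proofs. Following the proof of Theorem~\ref{thm:w_uat_findim}, we define
\begin{equation}
	\mathcal{G} := \linspan\left( \left\lbrace y \cos_0(a(\cdot)) \right\rbrace \cup \left\lbrace y \sin(a(\cdot)) \right\rbrace : a \in \mathcal{A}, \, y \in Y \right).
\end{equation}
We then show that $\mathcal{G}\subseteq\mathcal{B}^k_\Psi(U;Y)$ via the Fa\`a di Bruno estimate \eqref{eq:thm:w_nachbin_findim:proof9}. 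Next, $\mathcal{G}'$ is a subalgebra by the identities \eqref{eq:thm:w_nachbin_findim:proof10}, so $\mathcal{G}$ is a polynomial subalgebra by \cite[Lemma~4.6]{prolla77}. Finally, $\mathcal{G}'$ is strongly point separating and nowhere vanishing of $\Psi$-moderate growth. Here \ref{A2}, \ref{A3} give \ref{M1}, \ref{M3}; the constant $\cos(0)$ gives \ref{M2}; \ref{A4'} together with the trigonometric embedding $\eta_{\mathrm{trig}}$ gives \ref{M4'}; and \ref{M5'} holds automatically because the functions are bounded (Remark~\ref{rem:psi_mod_growth}).

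The hypothesis of Corollary~\ref{cor:w_nachbin_infdim_vector} that still needs checking is (ii): each $g\circ T_\gamma$ with $g\in\mathcal{G}$ must lie in the closure of $\mathcal{G}$. This is the main obstacle, because $a\circ T_\gamma$ need not belong to $\mathcal{A}$. The idea is to pass through the closure of neural networks rather than through $\mathcal{G}$ directly. By hypothesis \eqref{eq:thm:w_uat_infdim2}, every $\rho(a\circ T_\gamma+b)$ lies in the closure of $\mathcal{NN}^{\mathcal{A},\rho,\mathbb{R}}_{U,\mathbb{R}}$. Using Proposition~\ref{prop:w_uat_R}~\ref{prop:w_uat_R:1}, we approximate $\cos_0$ (resp.\ $\sin$) in $\mathscr{B}^k_c(\mathbb{R})$ by some $\varphi_0=\sum_n w_n\rho(m_n s+b_n)$ with $m_n\in\mathbb{N}_0$. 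Then $\varphi_0(a\circ T_\gamma)$ is a finite combination of terms $\rho((m_na)\circ T_\gamma+b_n)$, and $m_na\in\mathcal{A}$ by \ref{A1}. The composition error is controlled by the same Fa\`a di Bruno computation as in the proof of Theorem~\ref{thm:w_uat_findim}. For this we need a constant analogous to $C_a$ for $a\circ T_\gamma$, which we plan to derive from \eqref{eq:thm:w_uat_infdim1} and the chain rule $d^j(a\circ T_\gamma)(u;v)=d^ja(T_\gamma u;T_\gamma v)$, using that BAP bounds $T_\gamma$ uniformly. If this bound fails, we instead restrict to the finite-dimensional weights $\Psi_\gamma$ from \eqref{eq:thm:w_nachbin_infdim:proof2} and use the compactness of their preimages.

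This shows that $y\cos_0(a\circ T_\gamma)$ and $y\sin(a\circ T_\gamma)$ lie in $\overline{\mathcal{NN}^{\mathcal{A},\rho,Y}}$, and it also suggests a cleaner route than verifying (ii) for $\mathcal{G}$ itself. We rerun the proof of Theorem~\ref{thm:w_nachbin_infdim} directly: approximate $f$ by $f\circ T_\gamma$ using Lemma~\ref{lem:inp_bap}, apply the finite-dimensional Theorem~\ref{thm:w_uat_findim} on $(T_\gamma(U),\Psi_\gamma)$ with the additive family $\mathcal{A}|_{T_\gamma(U)}$ (valid by \ref{A4'}), and pull the result back along $T_\gamma$ as in \eqref{eq:thm:w_nachbin_infdim:proof4}. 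This yields a network $\sum_n y_n\rho(a_n\circ T_\gamma+b_n)$, and by \eqref{eq:thm:w_uat_infdim2} each of its terms is approximable by networks in $\mathcal{NN}^{\mathcal{A},\rho,\mathbb{R}}_{U,\mathbb{R}}$.

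The last step is to replace the readouts $y_n$ by elements $\widetilde y_n\in\mathcal{L}$ using the density of $\mathcal{L}$, as in Theorem~\ref{thm:w_uat_findim}, and to combine the errors with an $\varepsilon/3$ argument. On the route via Theorem~\ref{thm:w_uat_findim}, the condition \eqref{eq:thm:w_uat_findim:1} for $a$ restricted to $T_\gamma(U)$ with weights $\psi_{\gamma,j}$ follows from \eqref{eq:thm:w_uat_infdim1}. Indeed, $\psi_{\gamma,j}$ is an infimum over preimages, and the numerator depends only on the image point, so the supremum of the ratio is unchanged.
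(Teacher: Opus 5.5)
Your final route, from ``We rerun the proof of Theorem~\ref{thm:w_nachbin_infdim} directly'' on, is the paper's proof. It consists of four steps.
\begin{enumerate}
\item Pass from $f$ to $f\circ T_\gamma$ with Lemma~\ref{lem:inp_bap}.
\item Apply Theorem~\ref{thm:w_uat_findim} on $(T_\gamma(U),\Psi_\gamma)$ with the restricted additive family.
\item Pull back along $T_\gamma$ as in \eqref{eq:thm:w_nachbin_infdim:proof4}.
\item Use \eqref{eq:thm:w_uat_infdim2} to replace each neuron $\rho(a_n\circ T_\gamma+b_n)$ by networks in $\mathcal{NN}^{\mathcal{A},\rho,\mathbb{R}}_{U,\mathbb{R}}$.
\end{enumerate}
The only difference is bookkeeping. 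The paper first uses Lemma~\ref{lem:out_bap} to reduce to $f=\sum_n f_n(\cdot)y_n\in C^k_b(U)\otimes Y$ and applies the scalar result to each $f_n$. You instead apply the vector-valued Theorem~\ref{thm:w_uat_findim} once. That already yields readouts in $\mathcal{L}$, so your final readout-replacement step is redundant. You should also record, via Lemma~\ref{lem:NN_well_def} and \eqref{eq:thm:w_uat_infdim1}, that the networks lie in $\mathcal{B}^k_\Psi(U;Y)$ at all. The detour through Corollary~\ref{cor:w_nachbin_infdim_vector} can be dropped.

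The one step you justify beyond what the paper writes is justified incorrectly. Since $\psi_{\gamma,j}$ in \eqref{eq:thm:w_nachbin_infdim:proof2} is an infimum over the fibres of $T_\gamma^{j+1}$, and $T_\gamma^{j+1}$ maps $U\times X^j$ onto $T_\gamma(U)\times T_\gamma(X)^j$, you get for every $\pi\in\mathscr{P}_j$
\[
\sup_{(\widetilde{u},\widetilde{v}_{1:j}) \in T_\gamma(U)\times T_\gamma(X)^j} \frac{(1+\vert a(\widetilde{u})\vert)^c \vert d^\pi a(\widetilde{u};\widetilde{v}_\pi)\vert}{\psi_{\gamma,j}(\widetilde{u},\widetilde{v}_{1:j})} = \sup_{(u,v_{1:j}) \in U\times X^j} \frac{(1+\vert (a\circ T_\gamma)(u)\vert)^c \vert d^\pi (a\circ T_\gamma)(u;v_\pi)\vert}{\psi_j(u,v_1,\ldots,v_j)}.
\]
So the quantity is the constant \eqref{eq:lem:NN_well_def:1} for $a\circ T_\gamma$, not $C_a$.

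Hypothesis \eqref{eq:thm:w_uat_infdim1} only compares numerator and weight at the \emph{same} point. To deduce the right-hand side from it you would need a compatibility such as $\psi_j(T_\gamma u,T_\gamma v_1,\ldots,T_\gamma v_j)\le C_\gamma\,\psi_j(u,v_1,\ldots,v_j)$. BAP does not give this: it only controls seminorms, $p(T_\gamma x)\le\lambda q(x)$. For the weights \eqref{eq:lem:w_dom} with $\eta(r)=e^r$, or for \eqref{eq:sig_weight}, the ratio $\eta(\lambda r)/\eta(r)$ is unbounded when $\lambda>1$. Moreover, if $T_\gamma$ is a projection, each image point lies in its own fibre, so $\psi_{\gamma,j}\le\psi_j$ there and taking the infimum can only enlarge the ratio, never leave it unchanged.

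The same identity shows that the membership $\mathcal{A}|_{T_\gamma(U)}\subseteq\mathcal{B}^k_{\Psi_\gamma}(T_\gamma(U))$, which Theorem~\ref{thm:w_uat_findim} needs, is a statement about $a\circ T_\gamma$ rather than about $a$. Your fallback in the second paragraph (``BAP bounds $T_\gamma$ uniformly'') fails for the same reason.

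The paper's proof contains this very step and merely asserts it, so your argument is no weaker than the paper's. To make it rigorous you need one of the following:
\begin{enumerate}
\item an explicit extra hypothesis, such as finiteness of the weighted norm and of the constant \eqref{eq:lem:NN_well_def:1} for $a\circ\phi_i^{-1}\circ T_{i,\gamma}$, or the weight compatibility above;
\item a direct check for the concrete family, as the paper effectively does for path-NNs in \eqref{eq:lem:naf_mfd_addfam:proof2}.
\end{enumerate}
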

\begin{proof}
	First, we show the conclusion for functional input neural networks $\mathcal{NN}^{\mathcal{A},\rho,\mathcal{L}}_{U,Y}$ defined on a weighted domain $(U,\Psi)$ such that $(U,\tau_X)$ has BAP with finite rank operators $(T_\gamma)_\gamma \subseteq (X,\tau_X)^* \otimes X$. Note that Lemma~\ref{lem:NN_well_def} ensures that $\mathcal{NN}^{\mathcal{A},\rho,\mathcal{L}}_{U,Y} \subseteq \mathcal{B}^k_\Psi(U;Y)$. Since $\mathcal{B}^k_\Psi(U) \otimes Y$ is by Lemma~\ref{lem:out_bap} dense in $\mathcal{B}^k_\Psi(U;Y)$ and $\mathcal{B}^k_\Psi(U)$ is defined as the closure of $C^k_b(U)$ with respect to $\Vert \cdot \Vert_{\mathcal{B}^k_\Psi(U)}$, it suffices to approximate any given $f \in C^k_b(U) \otimes Y$ by an element of $\mathcal{NN}^{\mathcal{A},\rho,\mathcal{L}}_{U,Y}$. To this end, we fix some $f := \sum_{n=1}^N f_n(\cdot) y_n \in C^k_b(U) \otimes Y$ and $\varepsilon > 0$, where $N \in \mathbb{N}$, $f_1,\ldots,f_N \in C^k_b(U)$, and $y_1,\ldots,y_N \in Y$. Then, by using that $\mathcal{L}$ is dense in $Y$, there exists some $\widetilde{y}_1,\ldots,\widetilde{y}_N \in \mathcal{L}$ such that
	\begin{equation}
		\label{eq:thm:w_uat_infdim:proof1}
		\Vert y_n - \widetilde{y}_n \Vert_Y < \frac{\varepsilon}{4N \big( 1 + \Vert f_n \Vert_{\mathcal{B}^k_\Psi(U)} \big)}.
	\end{equation}
	Moreover, by using that $(U,\tau_X)$ has BAP with finite rank operators $(T_\gamma)_\gamma \subseteq (X,\tau_X)^* \otimes X$, there exists by Lemma~\ref{lem:inp_bap} some $\gamma$ such that for every $n = 1,\ldots,N$ it holds that
	\begin{equation}
		\label{eq:thm:w_uat_infdim:proof2}
		\Vert f_n - f_n \circ T_\gamma \Vert_{\mathcal{B}_\Psi^k(U)} < \frac{\varepsilon}{4N (1+\Vert \widetilde{y}_n \Vert_Y)}.
	\end{equation}
	Now, we define the collection $\Psi_\gamma := (\psi_{\gamma,j})_{j=0,\ldots,k}$ of weight functions $\psi_{\gamma,j}: T_\gamma(U) \times T_\gamma(X)^j \rightarrow (0,\infty)$ as in \eqref{eq:thm:w_nachbin_infdim:proof2} and observe that $\mathcal{A}\vert_{T_\gamma(U)} \subseteq \mathcal{B}^k_{\Psi_\gamma}(T_\gamma(U))$ is an additive family on $T_\gamma(U)$ satisfying \eqref{eq:thm:w_uat_findim:1}. Hence, for every fixed $n = 1,\ldots,N$, the UAT in Theorem~\ref{thm:w_uat_findim} applied to $f_n\vert_{T_\gamma(U)} \in \mathcal{B}^k_{\Psi_\gamma}(T_\gamma(U))$ ensures the existence of some $\varphi_n \in \mathcal{NN}^{\mathcal{A}\vert_{T_\gamma(U)},\rho,\mathbb{R}}_{T_\gamma(U),\mathbb{R}}$ satisfying
	\begin{equation}
		\Vert f_n\vert_{T_\gamma(U)} - \varphi_n \Vert_{\mathcal{B}^k_{\Psi_\gamma}(T_\gamma(U))} < \frac{\varepsilon}{4N (1+\Vert \widetilde{y}_n \Vert_Y)}.
	\end{equation}
	Thus, by applying the chain rule as in \eqref{eq:thm:w_nachbin_infdim:proof4}, it follows that
	\begin{equation}
		\label{eq:thm:w_uat_infdim:proof3}
		\Vert f_n \circ T_\gamma - \varphi_n \circ T_\gamma \Vert_{\mathcal{B}^k_\Psi(U)} < \frac{\varepsilon}{4N (1+\Vert \widetilde{y}_n \Vert_Y)}.
	\end{equation}
	Now, we use that $\varphi_n \circ T_\gamma: U \rightarrow \mathbb{R}$ belongs by assumption to the closure of $\mathcal{NN}^{\mathcal{A},\rho,\mathbb{R}}_{U,\mathbb{R}}$ with respect to $\Vert \cdot \Vert_{\mathcal{B}^k_\Psi(U)}$ to obtain some $\widetilde{\varphi}_n \in \mathcal{NN}^{\mathcal{A},\rho,\mathbb{R}}_{U,\mathbb{R}}$ such that
	\begin{equation}
		\label{eq:thm:w_uat_infdim:proof4}
		\Vert \varphi_n \circ T_\gamma - \widetilde{\varphi}_n \Vert_{\mathcal{B}^k_\Psi(U)} < \frac{\varepsilon}{4N (1+\Vert \widetilde{y}_n \Vert_Y)}.
	\end{equation}
	Hence, for $\widetilde{\varphi} := \sum_{n=1}^N \widetilde{y}_n \widetilde{\varphi}_n \in \mathcal{NN}^{\mathcal{A},\rho,\mathcal{L}}_{U,Y}$, we use \eqref{eq:thm:w_uat_infdim:proof1}, \eqref{eq:thm:w_uat_infdim:proof2}, \eqref{eq:thm:w_uat_infdim:proof3}, and \eqref{eq:thm:w_uat_infdim:proof4} to conclude that
	\begin{equation}
		\begin{aligned}
			& \Vert f - \widetilde{\varphi} \Vert_{\mathcal{B}^k_\Psi(U;Y)} = \left\Vert \sum_{n=1}^N y_n f_n(\cdot) - \sum_{n=1}^N \widetilde{y}_n \widetilde{\varphi}_n(\cdot) \right\Vert_{\mathcal{B}^k_\Psi(U;Y)} \\
			& < \sum_{n=1}^N \Vert y_n - \widetilde{y}_n \Vert_Y \Vert f_n \Vert_{\mathcal{B}^k_\Psi(U)} + \sum_{n=1}^N \Vert \widetilde{y}_n \Vert_Y \Vert f_n - \widetilde{\varphi}_n \Vert_{\mathcal{B}^k_\Psi(U)} \\
			& \leq \frac{\varepsilon}{4} + \sum_{n=1}^N \Vert \widetilde{y}_n \Vert_Y \left( \Vert f_n - f_n \circ T_\gamma \Vert_{\mathcal{B}^k_\Psi(U)} + \Vert f_n \circ T_\gamma - \varphi_n \circ T_\gamma \Vert_{\mathcal{B}^k_\Psi(U)} + \Vert \varphi_n \circ T_\gamma - \widetilde{\varphi}_n \Vert_{\mathcal{B}^k_\Psi(U)} \right) \\
			& \leq \frac{\varepsilon}{4} + \frac{3\varepsilon}{4} = \varepsilon.
		\end{aligned}
	\end{equation}
	Since $\varepsilon > 0$ and $f \in C^k_b(U) \otimes Y$ were chosen arbitrarily and $C^k_b(U) \otimes Y$ is dense in $\mathcal{B}^k_\Psi(U;Y)$, this shows that $\mathcal{NN}^{\mathcal{A},\rho,\mathcal{L}}_{U,Y}$ is dense in $\mathcal{B}^k_\Psi(U;Y)$.	
	
	Finally, for general functional input neural networks $\mathcal{NN}^{\mathcal{A},\rho,\mathcal{L}}_{M,Y}$ over a weighted $C^k_{loc}$-manifold $(M,\Psi)$, we observe for every fixed $i \in I$ that $\mathcal{A}_i := \big\lbrace a \circ \phi_i^{-1}: a \in \mathcal{A} \big\rbrace \subseteq \mathcal{B}^k_{\Psi_i}(\phi_i(U_i))$ is an additive family such that every composition of the form \eqref{eq:thm:w_uat_infdim2} belongs to the closure of $\mathcal{NN}^{\mathcal{A}_i,\rho,\mathbb{R}}_{\phi_i(U_i),\mathbb{R}}$ with respect to $\Vert \cdot \Vert_{\mathcal{B}^k_{\Psi_i}(\phi_i(U_i))}$. Hence, we can apply the previous step to conclude that $\mathcal{NN}^{\mathcal{A}_i,\rho,\mathcal{L}}_{\phi_i(U_i),Y}$ is dense in $\mathcal{B}^k_{\Psi_i}(\phi_i(U_i);Y)$. Thus, by using that $\mathcal{B}^k_\Psi(M;Y)$ is equipped with the initial topology with respect to \eqref{eq:def:BkPsi_mfd}, it follows that $\mathcal{NN}^{\mathcal{A},\rho,\mathcal{L}}_{M,Y}$ is dense in $\mathcal{B}^k_\Psi(M;Y)$.
\end{proof}

\begin{remark}
	\label{rem:non_poly:2}
	If $\mathcal{A} \subseteq \mathcal{B}^k_\Psi(M)$ is a vector subspace, then it suffices to assume that $\rho \in \mathscr{B}^k_c(\mathbb{R})$ is non-polynomial (see Proposition~\ref{prop:w_uat_R}~\ref{prop:w_uat_R:2}).
\end{remark}

\section{Weighted universal approximation of non-anticipative functionals}
\label{sec:naf}

In this section, we apply the weighted universal approximation theorem (UAT) in Theorem~\ref{thm:w_uat_infdim} to non-anticipative functionals, which extends the universal approximation result for continuous non-anticipative functionals in \cite[Corollary~4.17]{cuchiero23} by including the directional derivatives. Non-anticipative functional calculus was originally introduced in \cite{dupire09,cont10,contfournie13} to extend F\"ollmer's pathwise Ito calculus (see \cite{foellmer81}) to path-dependent functionals.

First, we recall some notions of non-anticipative functional calculus (see also \cite[Section~5.1]{cont16}). For a fixed terminal time $T \in (0,\infty)$ and a dual Banach space $(Z,\Vert \cdot \Vert_Z)$, we define the \emph{stopped path} of $x \in D^0([0,T];Z)$ at time $t \in [0,T]$ as $\left( s \mapsto x^t_s := x_{s \wedge t} \right) \in D^0([0,T];Z)$, where $s \wedge t := \min(s,t)$. Note that we adopt in this section the notation $x := (x_s)_{s \in [0,T]}$, where time $s \in [0,T]$ is now indicated as a subscript. Then, the space of stopped $Z$-valued c\`adl\`ag paths is defined as
\begin{equation}
	\Lambda^0_{T,Z} = \left\lbrace (t,x^t): (t,x) \in [0,T] \times D^0([0,T];Z) \right\rbrace \cong \left([0,T] \times D^0([0,T];Z) \right) / \sim,
\end{equation}
with $(t,x) \sim (s,y)$ if and only if $t = s$ and $x^t = y^s$. Moreover, $(\Lambda^0_{T,Z},d_\infty)$ with metric
\begin{equation}
	d_\infty((t,x),(s,y)) = \vert t-s \vert + \sup_{u \in [0,T]} \left\Vert x^t_u - y^s_u \right\Vert_Z
\end{equation}
is a complete metric space (see \cite[p.~131]{cont16}). In addition, for $\alpha \in (0,1)$, we denote by $\Lambda^{\alpha,1}_{T,Z} \subseteq \Lambda^0_{T,Z}$ the subspace of stopped $\alpha$-H\"older c\`adl\`ag paths with summable jumps.

Following the original definitions in \cite{dupire09,cont10,contfournie13}, the space of stopped paths $\Lambda^0_{T,Z}$ can also be seen as a vector bundle. More precisely, we define the \emph{space of stopped $Z$-valued $\alpha$-H\"older c\`adl\`ag paths with summable jumps} as the vector bundle
\begin{equation}
	\label{eq:naf_vectorbundle}
	\Lambda^{\alpha,1}_{T,Z} := \bigcup_{t \in (0,T)} D^{\alpha,1}([0,t];Z),
\end{equation}
over the base space $(0,T)$ with bundle projection $\Lambda^{\alpha,1}_{T,Z} \ni (t,x) \mapsto \pi_{\Lambda^{\alpha,1}_{T,Z}}(t,x) := t \in (0,T)$ and fibers $\pi^{-1}(\lbrace t \rbrace) = D^{\alpha,1}([0,t];Z)$. For technical reasons, we restrict ourselves here to an open interval $(0,T)$ instead of $[0,T]$, which therefore does not include any jump at terminal time $T$. In this case, $(\Lambda^{\alpha,1}_{T,Z},d_\infty)$ is a $C^\infty_{loc}$-manifold over the locally convex topological vector space $(\mathbb{R} \times D^{\alpha,1}([0,T];Z),\tau_\mathbb{R} \times \tau_{w^*})$ equipped with the product topology of the topology $\tau_\mathbb{R}$ on $\mathbb{R}$ and the weak-$*$-topology $\tau_{w^*}$ on $D^{\alpha,1}([0,T];Z)$. In addition, the global chart is given by
\begin{equation}
	\label{eq:naf_chart}
	\Lambda^{\alpha,1}_{T,Z} \ni (t,x) \quad \mapsto \quad \phi_i(t,x) := (t,\overline{x}^t) \in \mathbb{R} \times D^{\alpha,1}([0,T];Z),
\end{equation}
with extension $\overline{x}^t \in D^{\alpha,1}([0,T];Z)$ defined as $\overline{x}^t_s = x_s$ if $s \in [0,t]$, and $\overline{x}^t_s = x_t$ if $s \in (t,T]$. The inverse of the global chart~\eqref{eq:naf_chart} is equal to
\begin{equation}
	\label{eq:naf_chart_inv}
	\phi_i(\Lambda^{\alpha,1}_{T,Z}) \ni (t,x) \quad \mapsto \quad \phi_i^{-1}(t,x) := (t,x\vert_{[0,t]}) \in \Lambda^{\alpha,1}_{T,Z}.
\end{equation}
Furthermore, by using local trivializations of the vector bundle, one can show that the higher order tangent spaces at any point $(t,x) \in \Lambda^{\alpha,1}_{T,Z}$ are given by $T^j_{(t,x)} \Lambda^{\alpha,1}_{T,Z} \cong \big( \mathbb{R} \times D^{\alpha,1}([0,t];Z) \big)^j$, $j \in \mathbb{N}_0$, and the higher order tangent bundles are equal to
\begin{equation}
	T^j \Lambda^{\alpha,1}_{T,Z} = \left\lbrace \left( (t,x),[c]^j_{(t,x)} \right): (t,x) \in \Lambda^{\alpha,1}_{T,Z}, \, [c]^j_{(t,x)} \in T^j_{(t,x)} \Lambda^{\alpha,1}_{T,Z} \right\rbrace, \quad j \in \mathbb{N}_0.
\end{equation}
We shall use these higher order tangent bundles to define a weighted manifold.

\subsection{Non-anticipative path-neural networks}

In this section, we introduce a special type of functional input neural networks, so-called non-anticipative path-neural networks, to approximate a differentiable non-anticipative functional. To this end, we first introduce non-anticipative functionals as measurable maps from $\Lambda^{\alpha,1}_{T,Z}$ to a Banach space $(Y,\Vert \cdot \Vert_Y)$ as output space.

\begin{definition}
	A map $f: \Lambda^{\alpha,1}_{T,Z} \rightarrow Y$ is called a \emph{non-anticipative functional} if $f: \Lambda^{\alpha,1}_{T,Z} \rightarrow Y$ is a measurable map from $(\Lambda^{\alpha,1}_{T,Z},d_\infty)$ to $(Y,\Vert \cdot \Vert_Y)$.
\end{definition}

This notion of causality arises in many physical phenomena and in control theory (see, e.g., \cite{fliess81}). Moreover, a non-anticipative functional $f: \Lambda^{\alpha,1}_{T,Z} \rightarrow Y$ is said to be \emph{continuous} if $f: \Lambda^{\alpha,1}_{T,Z} \rightarrow Y$ is a continuous map from $(\Lambda^{\alpha,1}_{T,Z},d_\infty)$ to $(Y,\Vert \cdot \Vert_Y)$. In addition, we recall that $f \in C^k_{loc}(\Lambda^{\alpha,1}_{T,Z};Y)$ is a $C^k_{loc}$-map if and only if $f \circ \phi_i^{-1} \in C^k_{loc}(\phi_i(\Lambda^{\alpha,1}_{T,Z});Y)$ is a $C^k_{loc}$-map on the model space. While we consider in Section~\ref{sec:naf_full} the approximation of all directional derivatives, we shall restrict ourselves in Section~\ref{sec:naf_horver} to the horizontal and vertical derivatives.

Now, we introduce non-anticipative path-neural networks (path-NNs). To this end, we assume that $(E,\Vert \cdot \Vert_E)$ is a predual for the dual Banach space $(Z,\Vert \cdot \Vert_Z)$.

\begin{definition}
	\label{def:naf_nn}
	For $\rho,\widetilde{\rho} \in C^k(\mathbb{R})$ and a vector subspace $\mathcal{L} \subseteq Y$, we define a \emph{non-anticipative path-neural network (path-NN)} $\varphi: \Lambda^{\alpha,1}_{T,Z} \rightarrow Y$ as
	\begin{equation}
		\label{eq:def:naf_nn1}
		\Lambda^{\alpha,1}_{T,Z} \ni (t,x) \quad \mapsto \quad \varphi(t,x) = \sum_{n=1}^N y_n \rho\left( \lambda_n t + \int_0^T \langle \overline{x}^t_s, \widetilde{\varphi}_n(s) \rangle_{Z \times E} \, ds + b_n \right) \in Y,
	\end{equation}
	where $N \in \mathbb{N}$ denotes the \emph{number of neurons}, where $\lambda_1,\ldots,\lambda_N \in \mathbb{R}$ are the \emph{weights}, where $b_1,\ldots,b_N \in \mathbb{R}$ are the \emph{biases}, where $y_1,\ldots,y_N \in \mathcal{L}$ are the \emph{linear readouts}, where
	\begin{equation}
		\widetilde{\varphi}_1,\ldots,\widetilde{\varphi}_N \,\, \in \,\, \mathcal{NN}^{\mathbb{R},\widetilde{\rho},E}_{\mathbb{R},E} := \linspan\left\lbrace \mathbb{R} \ni s \mapsto e \, \widetilde{\rho}(as+b): a,b \in \mathbb{R}, \, e \in E \right\rbrace
	\end{equation}
	are $E$-valued neural networks, and where $\rho,\widetilde{\rho} \in C^k(\mathbb{R})$ represent the \emph{activation functions}. Moreover, we denote by $\mathcal{PN}^{\widetilde{\rho},\rho,\mathcal{L}}_{\Lambda^{\alpha,1}_{T,Z},Y}$ the set of path-NNs of the form \eqref{eq:def:naf_nn1}.
\end{definition}

\begin{remark}
	Let us point out the following remarks concerning Definition~\ref{def:naf_nn}:
	\begin{enumerate}
		\item In \eqref{eq:def:naf_nn1}, we can rewrite the integral as $\int_0^T \langle \overline{x}^t_s, \widetilde{\varphi}_n(s) \rangle_{Z \times E} \, ds = \int_0^t \langle x_s, \widetilde{\varphi}_n(s) \rangle_{Z \times E} \, ds + \big\langle x_t, \int_t^T \widetilde{\varphi}_n(s) ds \big\rangle_{Z \times E}$, which shows the non-anticipative behaviour of a path-NN.
		\item The set of linear readouts $E$ used for the $E$-valued neural networks $\mathcal{NN}^{\mathbb{R},\widetilde{\rho},E}_{\mathbb{R},E}$ could also be replaced by a dense vector subspace $\mathcal{L}_E \subseteq E$.
	\end{enumerate}
\end{remark}

\subsection{Weighted UAT for differentiable non-anticipative functionals}
\label{sec:naf_full}

We now apply the weighted universal approximation theorem (UAT) in Theorem~\ref{thm:w_uat_infdim} to establish a universal approximation result for non-anticipative path-neural networks (path-NNs) over the space of stopped $\alpha$-H\"older c\`adl\`ag paths, including the approximation of all possible directional derivatives. 

On the space of stopped $\alpha$-H\"older c\`adl\`ag paths $\Lambda^{\alpha,1}_{T,Z}$ introduced in \eqref{eq:naf_vectorbundle}, we define the collection $\Psi := (\psi_j)_{j=0,\ldots,k}$ of weight functions $\psi_j: T^j \Lambda^{\alpha,1}_{T,Z} \rightarrow (0,\infty)$ by
\begin{equation}
	\label{eq:naf_w_hoelder}
	\psi_j\big( (t,x), [c]^j_{(t,x)} \big) := \eta\left( \max(j,1) \Vert (t,x) \Vert_{\mathbb{R} \times D^{\alpha,1}([0,t];Z)} + \sum_{\ell=1}^j \Vert c^{(\ell)}(0) \Vert_{\mathbb{R} \times D^{\alpha,1}([0,t];Z)} \right),
\end{equation}
for $\big( (t,x), [c]^j_{(t,x)} \big) \in T^j \Lambda^{\alpha,1}_{T,Z}$ and some continuous non-decreasing function $\eta: [0,\infty) \rightarrow (0,\infty)$, where $\Vert (t,x) \Vert_{\mathbb{R} \times D^{\alpha,1}([0,t];Z)} := \vert t \vert + \Vert x \Vert_{\alpha,\ell^1}$. Hence, the collection $\Psi_i = (\psi_{i,j})_{j=0,\ldots,k}$ of push-forward weight functions $\psi_{i,j} := \psi_j \circ \Phi_i^{-j}: \phi_i(\Lambda^{\alpha,1}_{T,Z}) \times \big( \mathbb{R} \times D^{\alpha,1}([0,T];Z) \big)^j \rightarrow (0,\infty)$ is given by
\begin{equation}
	\label{eq:naf_w_hoelder_pwd}
	\psi_{i,j}\big( (t,\!\overline{x}^t), (s_1,\! v_1), \ldots, (s_j,\! v_j) \big) = \eta\left( \max(j,1) \Vert (t,\!\overline{x}^t) \Vert_{\mathbb{R} \times D^{\alpha,1}([0,T];Z)} \!+\! \sum_{\ell=1}^j \Vert (s_\ell,\! v_\ell) \Vert_{\mathbb{R} \times D^{\alpha,1}([0,T];Z)} \! \right),
\end{equation}
for $\big( (t,\overline{x}^t), (s_1,v_1), \ldots, (s_j,v_j) \big) \in \phi_i(\Lambda^{\alpha,1}_{T,Z}) \times \big( \mathbb{R} \times D^{\alpha,1}([0,T];Z) \big)^j$. Now, we show that $(\Lambda^{\alpha,1}_{T,Z},\Psi)$ is a weighted $C^k_{loc}$-manifold having BAP and that the additive family of the path-NNs introduced in Definition~\ref{def:naf_nn} indeed satisfies \ref{A1}-\ref{A4}. The proof is given in Appendix~\ref{app:lem:naf_mfd_addfam}.

\begin{lemma}
	\label{lem:naf_mfd_addfam}
	Let the predual $(E,\Vert \cdot \Vert_E)$ of $(Z,\Vert \cdot \Vert_Z)$ have BAP and assume that $\eta: [0,\infty) \rightarrow (0,\infty)$ is a continuous and increasing function with $\lim_{r \rightarrow \infty} \frac{r^k}{\eta(r)} = 0$. Then, $(\Lambda^{\alpha,1}_{T,Z},\Psi)$ is a weighted $C^k_{loc}$-manifold over $(\mathbb{R} \times D^{\alpha,1}([0,T];Z),\tau_\mathbb{R} \times \tau_\infty)$ having BAP, where $\Psi$ satisfies \eqref{eq:ass:w_growth}. Moreover, for non-polynomial $\widetilde{\rho} \in C^0(\mathbb{R})$, the set
	\begin{equation}
		\label{eq:lem:naf_mfd_addfam1}
		\mathcal{A} := \left\lbrace \Lambda^{\alpha,1}_{T,Z} \ni (t,x) \mapsto \lambda t + \int_0^T \langle \overline{x}^t_s, \widetilde{\varphi}(s) \rangle_{Z \times E} \, ds \in \mathbb{R}: \, \lambda \in \mathbb{R}, \, \widetilde{\varphi} \in \mathcal{NN}^{\mathbb{R},\widetilde{\rho},E}_{\mathbb{R},E} \right\rbrace
	\end{equation}
	is a vector subspace of $\mathcal{B}^k_\Psi(\Lambda^{\alpha,1}_{T,Z})$ and an additive family on $\Lambda^{\alpha,1}_{T,Z}$.
\end{lemma}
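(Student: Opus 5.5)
The proof splits into three parts: the manifold structure with BAP, the membership $\mathcal{A} \subseteq \mathcal{B}^k_\Psi(\Lambda^{\alpha,1}_{T,Z})$, and the conditions \ref{A1}--\ref{A4'}.

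\textbf{Weighted manifold with BAP.} Since the chart \eqref{eq:naf_chart} is global, by Definition~\ref{def:w_mfd} it suffices to show that $(\phi_i(\Lambda^{\alpha,1}_{T,Z}),\Psi_i)$ is a weighted domain, with $\Psi_i$ as in \eqref{eq:naf_w_hoelder_pwd}. The model space is $X := \mathbb{R} \times D^{\alpha,1}([0,T];Z)$. By Theorem~\ref{thm:sk_dual} it is a dual Banach space, with predual $\mathbb{R} \times$ (predual of $D^{\alpha,1}$).
\begin{itemize}
\item \emph{Compactness of sublevel sets.} The push-forward weights have exactly the form \eqref{eq:lem:w_dom}, except that the $\delta_{U^c}$ term is absent or bounded, since $t\in(0,T)$ is controlled. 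So I would adapt Lemma~\ref{lem:w_dom}\ref{lem:w_dom_dual}. Sublevel sets of $\psi_{i,j}$ are norm bounded, hence weak-$*$ relatively compact by Banach--Alaoglu. On $\Vert\cdot\Vert_{\alpha,\ell^1}$-bounded sets, $\tau_{w^*}$ agrees with $\tau_\infty$ (stated in Section~\ref{sec:model_sp}), which is why the statement may use $\tau_\infty$. Closedness follows from weak-$*$ lower semicontinuity of the norm and continuity of $\eta$.
\item \emph{Image of the chart.} I would check that $\phi_i(\Lambda^{\alpha,1}_{T,Z})$ is a suitable subset: the set of paths constant on $[t,T]$ with $t\in(0,T)$.
\item \emph{Monotonicity \ref{def:w_dom2}.} This follows from monotonicity of $\eta$ together with a choice like $\eta(r)\ge$ const, possibly after replacing $\eta$ by a comparable function; I would mimic the standard case.
\item \emph{Growth condition \eqref{eq:ass:w_growth}.} This is identical to the final computation of Lemma~\ref{lem:ap_dual}, using $r^k/\eta(r)\to0$.
\item \emph{BAP.} I would use Lemma~\ref{lem:ap_dual}: the predual of $D^{\alpha,1}$ inherits BAP from $E$ via the decomposition $C^\alpha\oplus\ell^1$ (Theorem~\ref{thm:sk_banach}). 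For $C^\alpha$ I would invoke Theorem~\ref{thm:C0_bap}. I must also ensure that the adjoint finite-rank operators preserve the chart image. I would construct them as time-discretization/interpolation operators composed with BAP operators on $Z$, which keep paths stopped at time $t$ stopped. I expect this step to be the main obstacle: the finite rank operators must map $\phi_i(\Lambda^{\alpha,1}_{T,Z})$ into itself while having uniformly bounded norms.
\end{itemize}

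\textbf{$\mathcal{A}\subseteq\mathcal{B}^k_\Psi$.} Each $a\in\mathcal{A}$ composed with $\phi_i^{-1}$ is the restriction of the continuous linear functional
$$(t,x)\mapsto \lambda t+\int_0^T\langle x_s,\widetilde\varphi(s)\rangle ds.$$
It is weak-$*$ continuous on bounded sets because $\widetilde\varphi\in L^1([0,T];E)$ and the pairing is continuous on $\tau_\infty$-bounded sets. Its derivatives are $da(u;v)=a(v)$ and $d^j a=0$ for $j\ge2$, and it is bounded by $C(1+\Vert u\Vert)$. By Proposition~\ref{prop:BPsik_equiv}\ref{prop:BPsik_equiv:2}, membership reduces to checking \eqref{eq:prop:BPsik_equiv:2} and \eqref{eq:prop:BPsik_equiv:3}. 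Both follow from $r^k/\eta(r)\to0$ and the uniform bound $\Vert T_\gamma\Vert\le\lambda$. It is plainly a vector space.

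\textbf{Additive family conditions.}
\begin{itemize}
\item \emph{\ref{A1}} holds because $\mathcal{A}$ is a vector space.
\item \emph{\ref{A2} and \ref{A3}.} If two stopped paths differ, then either the times $t$ differ, which $\lambda t$ separates, or the extended paths $\overline{x}^t$ differ. In the latter case there is some $e\in E$ for which $s\mapsto\langle \overline{x}^t_s-\overline{y}^t_s,e\rangle$ is a nonzero càdlàg function $g$. Since $\widetilde\rho$ is non-polynomial, span$\{\widetilde\rho(a\cdot+b)\}$ is dense in $C([0,T])$ by the classical Leshno--Pinkus theorem. Hence there is a real network $\nu$ with $\int_0^T g\nu\,ds\neq0$, and we take $\widetilde\varphi=e\,\nu$. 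Nonvanishing derivatives \ref{A3} follow identically, because $d(a\circ\phi_i^{-1})(u;v)=a(v)$ and $\mathcal{A}$ separates points of the linear model space.
\item \emph{\ref{A4'}.} As in Lemma~\ref{lem:add_fam:dual}, on each finite-dimensional $T_\gamma(X)$ choose $a_1,\dots,a_m\in\mathcal{A}$ forming a linear isomorphism, which is possible by the separation just proved. The linear maps then have bounded derivatives, so the cutoff limit vanishes by Remark~\ref{rem:psi_mod_growth}.
\end{itemize}
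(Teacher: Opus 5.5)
Your outline matches the paper's proof in structure: the weighted domain comes from Lemma~\ref{lem:w_dom}~\ref{lem:w_dom_dual} and Banach--Alaoglu, membership in $\mathcal{B}^k_\Psi$ from Proposition~\ref{prop:BPsik_equiv}~\ref{prop:BPsik_equiv:2} using $d^j(a\circ\phi_i^{-1})=0$ for $j\geq 2$, and then \ref{A1}--\ref{A4'}. Your \ref{A2} argument, which applies density of $\widetilde{\rho}$-networks in $C([0,T])$ directly to the difference of the extended paths, is a valid and cleaner replacement for the paper's case distinction. The genuine gap is the BAP step. Theorem~\ref{thm:C0_bap} is a statement about $(C^0(S;Z),\tau_\infty)$. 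It gives nothing about a predual of $C^\alpha([0,T];Z)$, which is what Lemma~\ref{lem:ap_dual} needs, and it does not touch the jump part. The ingredient you are missing is Theorem~\ref{thm:D0_bap}: the predual $G=E\oplus_1(\textrm{\AE}([0,T],d_\alpha)\widehat{\otimes}_\pi E)\oplus_1 c_0((0,T];E)$ has BAP. This holds because the Arens--Eells space of the snowflaked interval has BAP, BAP is preserved by $\widehat{\otimes}_\pi$, and $c_0((0,T];E)$ has BAP via $y\mapsto(\mathds{1}_F(s)Q_\gamma(y_s))_{s}$ with $F$ finite. The paper then takes $R_\gamma(t,x):=(t,Q_\gamma^*(x))$. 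The bound $\Vert Q_\gamma^*\Vert\leq C_G$ gives both the BAP estimate and the uniform-in-$\gamma$ control needed for \eqref{eq:prop:BPsik_equiv:3}, as you anticipated. You are right that $R_\gamma(\phi_i(\Lambda^{\alpha,1}_{T,Z}))\subseteq\phi_i(\Lambda^{\alpha,1}_{T,Z})$ is formally required, and the paper does not verify it. However, your proposed remedy does not work as sketched. For $s>0$ and $e\neq 0$, the evaluation $x\mapsto\langle x(s),e\rangle_{Z\times E}$ is not weak-$*$ continuous on $D^{\alpha,1}([0,T];Z)$, because it pairs the jump part with $(\mathds{1}_{(0,s]}(r)e)_{r}\notin c_0((0,T];E)$. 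So sampling or interpolation operators are not admissible elements of $(D^{\alpha,1}([0,T];Z),\tau_{w^*})^*\otimes D^{\alpha,1}([0,T];Z)$.

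A second step fails as justified. You use twice that $\tau_{w^*}$ and $\tau_\infty$ agree on $\Vert\cdot\Vert_{\alpha,\ell^1}$-bounded sets: once to reinterpret the $\tau_\infty$ in the statement, and once to get continuity of $a\in\mathcal{A}$ on compacta. Theorem~\ref{thm:sk_dual} gives this only on bounded subsets of $C^\alpha([0,T];Z)$, and it is false once jumps are present. Take distinct $s_n\downarrow s_0>0$, $z\in Z$, and $x_n:=\mathds{1}_{[s_n,T]}z$. Then $x_n^c=0$ and $\Vert x_n\Vert_{\alpha,\ell^1}=\Vert z\Vert_Z$, and $x_n\to 0$ weak-$*$ because $\delta_{s_n}z\to 0$ in $\sigma(\ell^1,c_0)$. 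On the other hand, $\sup_{r\in[0,T]}\vert\langle x_n(r)-x_m(r),e\rangle_{Z\times E}\vert=\vert\langle z,e\rangle_{Z\times E}\vert$ for $n\neq m$. Hence in $\tau_\infty$ the sublevel sets of $\Psi_i$ are not compact, since they contain such sequences with no $\tau_\infty$-Cauchy subnet; the paper's proof works with $\tau_{w^*}$ throughout. In $\tau_{w^*}$, fix $t\in(s_1,T)$. Then $(t,x_n)\to(t,0)$ inside $\phi_i(\Lambda^{\alpha,1}_{T,Z})$, while $(a\circ\phi_i^{-1})(t,x_n)=\lambda t+\langle z,\int_{s_n}^T\widetilde{\varphi}(s)\,ds\rangle_{Z\times E}\to\lambda t+\langle z,\int_{s_0}^T\widetilde{\varphi}(s)\,ds\rangle_{Z\times E}$. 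This limit differs from $(a\circ\phi_i^{-1})(t,0)=\lambda t$, for instance when $\widetilde{\varphi}=e\nu$ with $\langle z,e\rangle_{Z\times E}\neq0$ and $\int_{s_0}^T\nu\neq0$. So the $C^k_{loc}$ hypothesis of Proposition~\ref{prop:BPsik_equiv}~\ref{prop:BPsik_equiv:2} is not established by your argument and cannot be in this topology. The paper's proof does not address this continuity either; it only computes the derivatives. Your reasoning does go through for continuous paths, where Theorem~\ref{thm:sk_dual} identifies the two topologies on bounded sets. Finally, monotonicity of $\Psi_i$ does not follow from monotonicity of $\eta$. For $k\geq 2$ and $\eta(r)=(1+r)^{k+1}$, the case $j=2$, $\ell=1$, $v_1=v_2=0$ would require $(1+\Vert u\Vert)^{2k+2}\leq C_\Psi(1+2\Vert u\Vert)^{k+1}$ for all $u$, which fails. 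The paper's appeal to Lemma~\ref{lem:w_dom} does not verify this condition either.
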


Now, we apply the weighted UAT for FNNs in Theorem~\ref{thm:w_uat_infdim} to obtain the following UAT for non-anticipative path-NNs on $\Lambda^{\alpha,1}_{T,Z}$. The proof can be found in Appendix~\ref{app:cor:naf_uat_full}.

\begin{corollary}[Universal Approximation on {$\mathcal{B}^k_\Psi(\Lambda^{\alpha,1}_{T,Z};Y)$}]
	\label{cor:naf_uat_full}
	Let the predual $(E,\Vert \cdot \Vert_E)$ of $(Z,\Vert \cdot \Vert_Z)$ have BAP, let $(Y,\Vert \cdot \Vert_Y)$ be a Banach space having BAP, and assume that $\mathcal{L} \subseteq Y$ is a dense vector subspace. Moreover, for $c \in (0,\infty)$, let $\widetilde{\rho}, \rho \in \mathscr{B}^{k+1}_c(\mathbb{R})$ be non-polynomial with bounded derivatives and assume that $\eta: [0,\infty) \rightarrow (0,\infty)$ is continuous and non-decreasing with $\lim_{r \rightarrow \infty} \frac{r^{k\max(1,c)}}{\eta(r)} = 0$. Then, $\mathcal{PN}^{\widetilde{\rho},\rho,\mathcal{L}}_{\Lambda^{\alpha,1}_{T,Z},Y}$ is a dense subset of $\mathcal{B}^k_\Psi(\Lambda^{\alpha,1}_{T,Z};Y)$, i.e., for every $f \in \mathcal{B}^k_\Psi(\Lambda^{\alpha,1}_{T,Z};Y)$ and $\varepsilon > 0$ there exists some $\varphi \in \mathcal{PN}^{\widetilde{\rho},\rho,\mathcal{L}}_{\Lambda^{\alpha,1}_{T,Z},Y}$ such that
	\begin{equation}
		\Vert f - \varphi \Vert_{\mathcal{B}^k_\Psi(\Lambda^{\alpha,1}_{T,Z};Y)} := \max_{j=0,\ldots,k} \sup_{((t,x),[c]^j_{(t,x)}) \in T^j \Lambda^{\alpha,1}_{T,Z}} \frac{\left\Vert (f \circ c)^{(j)}(0) - (\varphi \circ c)^{(j)}(0) \right\Vert_Y}{\psi_j\big( (t,x), [c]^j_{(t,x)} \big)} < \varepsilon.
	\end{equation}
\end{corollary}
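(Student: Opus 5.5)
The plan is to apply Theorem~\ref{thm:w_uat_infdim} on the weighted $C^k_{loc}$-manifold $(\Lambda^{\alpha,1}_{T,Z},\Psi)$, which has the global chart \eqref{eq:naf_chart}, together with the additive family $\mathcal{A}$ from \eqref{eq:lem:naf_mfd_addfam1}. Since $\mathcal{A}$ is a vector subspace, Remark~\ref{rem:non_poly:2} shows that $\rho$ only needs to be non-polynomial. By Lemma~\ref{lem:naf_mfd_addfam} (whose hypothesis on $\eta$ follows from $r^{k\max(1,c)}/\eta(r)\to 0$), the manifold is weighted, has BAP, and satisfies \eqref{eq:ass:w_growth}; moreover $\mathcal{A}\subseteq\mathcal{B}^k_\Psi$ is an additive family. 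Absorbing the biases, every element of $\mathcal{NN}^{\mathcal{A},\rho,\mathcal{L}}$ is then exactly a path-NN of the form \eqref{eq:def:naf_nn1}, so $\mathcal{NN}^{\mathcal{A},\rho,\mathcal{L}} = \mathcal{PN}^{\widetilde{\rho},\rho,\mathcal{L}}_{\Lambda^{\alpha,1}_{T,Z},Y}$. Finally, the displayed norm is just $\Vert\cdot\Vert_{\mathcal{B}^k_\Psi}$ rewritten through the chart, using $(f\circ c)^{(j)}(0)$ via Faà di Bruno. Two hypotheses of the theorem remain to be verified: \eqref{eq:thm:w_uat_infdim1} and the closure condition \eqref{eq:thm:w_uat_infdim2}.

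For \eqref{eq:thm:w_uat_infdim1}, I would note that $a\circ\phi_i^{-1}(t,x)=\lambda t+\int_0^T\langle x_s,\widetilde\varphi(s)\rangle ds$ is linear and continuous in $(t,x)$. Here $\widetilde\varphi$ is bounded on $[0,T]$, because $\widetilde\rho$ is continuous. Hence $d^1 a$ is constant, $d^j a=0$ for $j\ge 2$, and only partitions into singletons contribute. This gives
\[
\vert a(u)\vert\le C\Vert u\Vert \quad\text{and}\quad \vert d^\pi a(u;v_\pi)\vert\le C^j\prod_\ell\Vert v_\ell\Vert .
\]
The numerator is therefore bounded by $C(1+\Vert u\Vert)^c\prod_\ell\Vert v_\ell\Vert\lesssim (1+\Vert u\Vert+\sum_\ell\Vert v_\ell\Vert)^{c+j}$. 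To compare with the growth assumption on $\eta$, I would check that $c+j\le k\max(1,c)$ in the relevant cases, or otherwise bound the factor $(1+|a|)^c$ only at $j=0$. If this does not close directly, I would use the elementary bound $(1+r)^c\,r^j\le(1+r)^{k\max(1,c)}$ for $j\le k$ together with $c\le\max(1,c)$, handling $j=0$ separately.

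The main obstacle is \eqref{eq:thm:w_uat_infdim2}. One must show that $\rho(a\circ\phi_i^{-1}\circ T_\gamma+b)$ lies in the $\mathcal{B}^k_{\Psi_i}$-closure of $\mathcal{NN}^{\mathcal{A}_i,\rho,\mathbb{R}}$. For this I would use the adjoint-type finite rank operators $T_\gamma=Q_\gamma^*$ from Lemma~\ref{lem:ap_dual} (and Lemma~\ref{lem:naf_mfd_addfam}), for which
\[
\langle T_\gamma x,e\rangle=\langle x,Q_\gamma e\rangle .
\]
Then $a\circ T_\gamma(t,x)=\lambda t+\int_0^T\langle x_s,Q_\gamma\widetilde\varphi(s)\rangle ds$, which is again linear in $x$, but with the kernel $Q_\gamma\widetilde\varphi\in C^0([0,T];E)$. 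This kernel need not be a neural network, so I would approximate it by an $E$-valued network $\widetilde\varphi_m\in\mathcal{NN}^{\mathbb{R},\widetilde\rho,E}_{\mathbb{R},E}$, uniformly on $[0,T]$. This is possible by the classical UAT with non-polynomial $\widetilde\rho$ (Proposition~\ref{prop:w_uat_R}~\ref{prop:w_uat_R:2} restricted to a compact set), applied to the finitely many coordinates in the finite-dimensional range of $Q_\gamma$. Writing $a_m$ for the corresponding element of $\mathcal{A}$, we get $\Vert a\circ T_\gamma-a_m\Vert_{L}\to0$ in operator norm.

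The final step is to show $\Vert\rho(a\circ T_\gamma+b)-\rho(a_m+b)\Vert_{\mathcal{B}^k_\Psi}\to0$. By Faà di Bruno, the $j$-th differential of the difference is a sum of terms of the form $\rho^{(\ell)}(\cdot)\prod_r(\text{linear terms})$. For these I would use the mean value theorem with the bounded $\rho^{(\ell+1)}$; this is where $\rho\in\mathscr{B}^{k+1}_c$ with bounded derivatives is needed. It gives
\[
\vert\rho^{(\ell)}(a\circ T_\gamma+b)-\rho^{(\ell)}(a_m+b)\vert\le C\,\delta_m\Vert u\Vert ,
\]
and the differences of the linear factors are also $O(\delta_m)$. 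The resulting bound is $C\delta_m(1+\Vert u\Vert)\prod_\ell\Vert v_\ell\Vert/\psi_j$, which is uniformly bounded by \eqref{eq:ass:w_growth} together with the growth of $\eta$, since $k\max(1,c)\ge j+1$ whenever $k\ge1$ and $j\le k$ (the case $j=k$ with $c<1$ needs a check, possibly using the $\max(j,1)$ factor). Letting $m\to\infty$ then gives the closure condition, and the corollary follows from Theorem~\ref{thm:w_uat_infdim}.
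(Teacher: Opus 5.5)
Your reduction to Theorem~\ref{thm:w_uat_infdim} is the paper's, but your verification of \eqref{eq:thm:w_uat_infdim2} misidentifies the finite rank operators.

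In Lemma~\ref{lem:ap_dual} the letter $E$ denotes the predual of the \emph{model space}, not of $Z$. Here the model space is $\mathbb{R}\times D^{\alpha,1}([0,T];Z)$. The BAP net of Lemma~\ref{lem:naf_mfd_addfam} is $R_\gamma(t,x)=(t,Q_\gamma^*x)$, where $Q_\gamma$ is a finite rank operator on the predual $G=E\oplus_1\big(\textrm{\AE}([0,T],d_\alpha)\widehat{\otimes}_\pi E\big)\oplus_1 c_0((0,T];E)$ of $D^{\alpha,1}([0,T];Z)$. Hence $Q_\gamma^*x=\sum_{n=1}^N\langle x,g_n\rangle x_n$ with $g_n\in G$, and
\[
(a\circ\phi_i^{-1}\circ R_\gamma)(t,x)=\lambda t+\sum_{n=1}^N r_n\langle x,g_n\rangle, \qquad r_n:=\int_0^T\langle x_{n,s},\widetilde\varphi(s)\rangle_{Z\times E}\,ds .
\]
Your formula $\lambda t+\int_0^T\langle x_s,Q_\gamma\widetilde\varphi(s)\rangle ds$ requires the pointwise operator $x\mapsto(s\mapsto Q^*x_s)$ with $Q$ acting on $E$. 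Its range is $D^{\alpha,1}([0,T];Q^*(Z))$, which is infinite-dimensional unless $Q=0$. So it is not admissible in Theorem~\ref{thm:w_uat_infdim}, whose proof passes to the finite-dimensional set $T_\gamma(U)$.

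What must actually be approximated by elements of $\mathcal{A}$ are the functionals $\langle\cdot,g_n\rangle$: evaluation of $x(0)$, Arens--Eells increments of $x^c$, and jump functionals $\sum_u\langle\Delta x(u),y_u\rangle$ with $y\in c_0((0,T];E)$. None of these is an integral against a continuous kernel, so approximating a kernel uniformly by $E$-valued networks misses the real difficulty.

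The paper handles this point as follows. It approximates each $\langle\cdot,g_n\rangle$ by some $\int_0^T\langle x_s,\widetilde\varphi_n(s)\rangle ds$ in the sup norm with weight $(1+\Vert x\Vert_{\alpha,\ell^1})^c$, citing \cite[Theorem~4.13]{cuchiero23}. It then uses $(t,x)\mapsto\big(t,\sum_n x_n\int_0^T\langle x_s,\widetilde\varphi_n(s)\rangle ds\big)$, whose composition with $a\circ\phi_i^{-1}$ lies in $\mathcal{A}_i$ with kernel $\sum_n r_n\widetilde\varphi_n$.

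This step is delicate, however. The difference of two linear functionals is linear, so a bound with weight $(1+\Vert x\Vert_{\alpha,\ell^1})^c$ is at least an operator-norm bound. Integral functionals cannot approximate evaluation at $0$ or jump functionals in operator norm on $D^{\alpha,1}([0,T];Z)$: compare $z\mathds{1}_{[u,T]}$ for $u\downarrow0$ with the constant path $z$. You would need a genuine argument here, not just that citation.

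Your verification of \eqref{eq:thm:w_uat_infdim1} does not close either. The fallback inequality $(1+r)^c r^j\le(1+r)^{k\max(1,c)}$ is false, already for $k=j=1$, $c=1$. Nor can you drop the factor $(1+\vert a\vert)^c$ for $j\ge1$, since it is part of the definition of $C_{a,i}$.

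Because $a\circ\phi_i^{-1}$ is affine, only the singleton partition contributes, and the numerator genuinely grows like $r^{c+j}$. To see this, take $\lambda\neq0$, $u=(t,Mz)$ with a constant path and $\langle z,\int_0^T\widetilde\varphi(s)ds\rangle\neq0$ (so $\vert a(u)\vert\sim M$), and $v_\ell=(M,0)$. Then finiteness of $C_{a,i}$ forces $\eta$ to dominate $r^{c+k}$. This does not follow from $r^{k\max(1,c)}/\eta(r)\to0$; for instance, take $k=1$ and $\eta(r)=(1+r)^{\max(1,c)+\delta}$ with $0<\delta<\min(1,c)$.

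The paper's displayed bound for $C_{a,i}$ uses the exponent $\max(j,c)$ instead of $c+j$, so it does not resolve this either. This step needs a stronger growth condition on $\eta$ or a different argument.
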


\subsection{Weighted UAT for horizontal and vertical derivatives}
\label{sec:naf_horver}

In this section, we present a universal approximation theorem (UAT) for non-anticipative functionals, which only includes the approximation of the horizontal and vertical derivatives over $\mathbb{R}^d$.

For some fixed $k,\ell \in \mathbb{N}_0$ and $\alpha, T \in (0,\infty)$ as well as the Euclidean space $Z := \mathbb{R}^d$, we consider again the space of stopped $\alpha$-H\"older c\`adl\`ag paths in $\Lambda^{\alpha,1}_{T,\mathbb{R}^d}$. However, we equip $\Lambda^{\alpha,1}_{T,\mathbb{R}^d}$ with the single weight function of the form
\begin{equation}
	\label{eq:naf_w_horver}
	\Lambda^{\alpha,1}_{T,\mathbb{R}^d} \ni (t,x) \quad \mapsto \quad \psi(t,x) := \eta\left( \Vert x \Vert_{\alpha,\ell^1} \right) \in (0,\infty),
\end{equation}
for some continuous non-decreasing function $\eta: [0,\infty) \rightarrow (0,\infty)$. Compared to \eqref{eq:naf_w_hoelder} this weight function does no longer depend on the derivatives as we only consider some derivatives in particular directions, which have uniformly bounded norms.

\begin{definition}
	\label{def:naf_nn_horver}
	A non-anticipative functional $f: \Lambda^{\alpha,1}_{T,\mathbb{R}^d} \rightarrow Y$ is called
	\begin{enumerate}
		\item \emph{horizontally differentiable} if the limit $\mathcal{D} f(t,x) := \lim_{h \rightarrow 0^+} \frac{f(t+h,x^t)-f(t,x^t)}{h}$ exists in $(Y,\Vert \cdot \Vert_Y)$, for all $(t,x) \in \Lambda^{\alpha,1}_{T,\mathbb{R}^d}$ (see \cite[Definition~5.7]{cont16}).
		\item \emph{vertically differentiable} if the limit $\mathscr{D}_{e_i} f(t,x) := \lim_{h \rightarrow 0} \frac{f(t,x^t+h \mathds{1}_{[t,T]} e_i) - f(t,x^t)}{h}$ exists in $(Y,\Vert \cdot \Vert_Y)$, for all $i = 1,\ldots,d$ and $(t,x) \in \Lambda^{\alpha,1}_{T,\mathbb{R}^d}$ (see \cite[Definition~5.8]{cont16}).
	\end{enumerate}
	Moreover, the higher order horizontal derivatives $\mathcal{D}^j f(t,x)$, $j \in \mathbb{N}_0$, as well as the higher order vertical derivatives $\mathscr{D}_\beta f(t,x)$, $\beta \in \mathbb{N}^d_0$, are defined by iteration.
\end{definition}

For $k,\ell \in \mathbb{N}_0$, we denote by $\mathbb{C}^{k,\ell}_b(\Lambda^{\alpha,1}_{T,\mathbb{R}^d};Y)$ the vector space of bounded continuous non-anticipative functionals $f: \Lambda^{\alpha,1}_{T,\mathbb{R}^d} \rightarrow Y$ that are $k$-times horizontally differentiable with bounded continuous horizontal derivatives $\mathcal{D}^j f: \Lambda^{\alpha,1}_{T,\mathbb{R}^d} \rightarrow Y$, $j = 0,\ldots,k$, and $\ell$-times vertically differentiable with bounded continuous vertical derivatives $\mathscr{D}_\beta f: \Lambda^{\alpha,1}_{T,\mathbb{R}^d} \rightarrow Y$, $\beta \in \mathbb{N}^d_{0,\ell}$. Then, we define $\mathbb{B}^{k,\ell}_\psi(\Lambda^{\alpha,1}_{T,\mathbb{R}^d};Y)$ as the closure of $\mathbb{C}^{k,\ell}_b(\Lambda^{\alpha,1}_{T,\mathbb{R}^d};Y)$ with respect to the weighted norm
\begin{equation}
	\Vert f \Vert_{\mathbb{B}^{k,\ell}_\psi(\Lambda^{\alpha,1}_{T,\mathbb{R}^d};Y)} := \max\left( \max_{j=0,\ldots,k} \sup_{(t,x) \in \Lambda^{\alpha,1}_{T,\mathbb{R}^d}} \frac{\Vert \mathcal{D}^j f(t,x) \Vert_Y}{\psi(t,x)}, \max_{\beta \in \mathbb{N}^d_{0,\ell}} \sup_{(t,x) \in \Lambda^{\alpha,1}_{T,\mathbb{R}^d}} \frac{\Vert \mathscr{D}_\beta f(t,x) \Vert_Y}{\psi(t,x)} \right).
\end{equation}
Now, we show the following universal approximation theorem for non-anticipative functionals in $\mathbb{B}^{k,\ell}_\psi(\Lambda^{\alpha,1}_{T,\mathbb{R}^d};Y)$, where only the approximation of the horizontal and vertical derivatives is included. The proof can be found in Appendix~\ref{app:cor:naf_uat_horver}.

\begin{corollary}[Universal Approximation on {$\mathbb{B}^{k,\ell}_\psi(\Lambda^{\alpha,1}_{T,\mathbb{R}^d};Y)$}]
	\label{cor:naf_uat_horver}
	Let $(Y,\Vert \cdot \Vert_Y)$ be a Banach space having BAP and let $\mathcal{L} \subseteq Y$ be a dense vector subspace. Moreover, for $c \in (0,\infty)$, let $\widetilde{\rho}, \rho \in \mathscr{B}^{k+1}_c(\mathbb{R})$ be non-polynomial with bounded derivatives and assume that $\eta: [0,\infty) \rightarrow (0,\infty)$ is continuous and non-decreasing with $\lim_{r \rightarrow \infty} \frac{r^{\max(1,k,c)}}{\eta(r)} = 0$. Then, $\mathcal{PN}^{\widetilde{\rho},\rho,\mathcal{L}}_{\Lambda^{\alpha,1}_{T,\mathbb{R}^d},Y}$ is a dense subset of $\mathbb{B}^{k,\ell}_\psi(\Lambda^{\alpha,1}_{T,\mathbb{R}^d};Y)$, i.e., for every $f \in \mathbb{B}^{k,\ell}_\psi(\Lambda^{\alpha,1}_{T,\mathbb{R}^d};Y)$ and $\varepsilon > 0$ there exists $\varphi \in \mathcal{PN}^{\widetilde{\rho},\rho,\mathcal{L}}_{\Lambda^{\alpha,1}_{T,\mathbb{R}^d},Y}$ such that
	\begin{equation}
		\Vert f - \varphi \Vert_{\mathbb{B}^{k,\ell}_\psi(\Lambda^{\alpha,1}_{T,\mathbb{R}^d};Y)} < \varepsilon.
	\end{equation}
\end{corollary}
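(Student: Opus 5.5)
The plan is to reduce Corollary~\ref{cor:naf_uat_horver} to Corollary~\ref{cor:naf_uat_full}. The horizontal and vertical derivatives are special directional derivatives in the global chart \eqref{eq:naf_chart}. In the chart coordinates $(t,\overline{x}^t)$, the vertical derivative $\mathscr{D}_{e_i} f(t,x)$ is the directional derivative of $f\circ\phi_i^{-1}$ in direction $(0,\mathds{1}_{[t,T]}e_i)$. The horizontal derivative $\mathcal{D} f(t,x)$ is the derivative in direction $(1,0)$, since $\overline{x^t}^{t+h} = \overline{x}^t$. Higher order horizontal and vertical derivatives are then iterated directional derivatives $d^j(f\circ\phi_i^{-1})$ in these directions, by Proposition~\ref{prop:db_prop}~\ref{prop:db_prop:3}.

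The directions $(1,0)$ and $(0,\mathds{1}_{[t,T]}e_i)$ have uniformly bounded norm in $\mathbb{R}\times D^{\alpha,1}([0,T];\mathbb{R}^d)$: the continuous part is constant and the single jump has size $1$. So for any weight $\widetilde\Psi$ of the form \eqref{eq:naf_w_hoelder} with function $\widetilde\eta$, the value of $\psi_j$ at such directions is bounded by $\widetilde\eta(\max(j,1)(T+\Vert x\Vert_{\alpha,\ell^1})+j)$. This gives
\[
\Vert f\Vert_{\mathbb{B}^{k,\ell}_\psi} \le \sup_{r\geq 0}\frac{\widetilde\eta(m(T+r)+m)}{\eta(r)}\,\Vert f\Vert_{\mathcal{B}^{m}_{\widetilde\Psi}}, \qquad m:=\max(k,\ell).
\]
I will choose $\widetilde\eta$ growing strictly between: fast enough that $r^{m\max(1,c)}/\widetilde\eta(r)\to0$ as Corollary~\ref{cor:naf_uat_full} requires, and slow enough that the displayed ratio is finite. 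A rescaled interpolating power of $\eta$ should work. I expect an exponent mismatch here, since the hypothesis on $\eta$ is only $r^{\max(1,k,c)}/\eta(r)\to 0$ while Corollary~\ref{cor:naf_uat_full} asks for $r^{m\max(1,c)}$. If that happens, I would exploit that the weight \eqref{eq:naf_w_horver} does not see the direction norms. I would redo the estimate of Lemma~\ref{lem:NN_well_def} and the moderate-growth condition only along these bounded directions, where only $(1+|a|)^c$ times bounded derivative factors appear.

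The density argument then goes in three steps. Step (i): take $f\in\mathbb{C}^{k,\ell}_b$, which is dense by definition. Step (ii): approximate $f$ in $\mathbb{B}^{k,\ell}_\psi$ by some $g\in C^m_b$ of the chart, for instance $g:= f\circ \phi_i^{-1}\circ T_\gamma$ with the BAP operators from Lemma~\ref{lem:naf_mfd_addfam}, mollified in the finitely many coordinates. Step (iii): use $\Vert g-\varphi\Vert_{\mathbb{B}^{k,\ell}_\psi}\lesssim \Vert g-\varphi\Vert_{\mathcal{B}^m_{\widetilde\Psi}}$ together with Corollary~\ref{cor:naf_uat_full} to obtain $\varphi\in\mathcal{PN}^{\widetilde\rho,\rho,\mathcal{L}}$.

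The main obstacle is step (ii). Elements of $\mathbb{C}^{k,\ell}_b$ are differentiable only in the horizontal and vertical directions, not in all directions, so they need not lie in $\mathcal{B}^m_{\widetilde\Psi}$. I would first try to show that the finite-rank reduction $f\circ\phi_i^{-1}\circ T_\gamma$, with $T_\gamma$ chosen to preserve the directions $\mathds{1}_{[t,T]}e_i$ and the time coordinate, approximates $f$ in the $\mathbb{B}^{k,\ell}_\psi$-norm. This would mirror Lemma~\ref{lem:inp_bap}, using the compactness of the sets $\{\psi\le R\}$ and the decay $1/\eta$ outside them. After that, a finite-dimensional mollification gives full $C^m_b$ regularity while changing the horizontal and vertical derivatives only slightly on compacts. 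Outside compacts, the weight absorbs the error because all functions involved are bounded.
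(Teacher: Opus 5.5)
Your plan has two independent gaps, and the route that would actually work is the one you only mention as a fallback. First, the black-box reduction to Corollary~\ref{cor:naf_uat_full} at order $m=\max(k,\ell)$ does not follow from the stated hypotheses, and the ``interpolating'' $\widetilde\eta$ does not exist in general. Suppose $\sup_{r\geq 0}\widetilde\eta(m(T+r)+m)/\eta(r)<\infty$ and $r^{m\max(1,c)}=o(\widetilde\eta(r))$. Together these force $r^{m\max(1,c)}=o(\eta(r))$, whereas the statement only assumes $r^{\max(1,k,c)}=o(\eta(r))$, a condition in which $\ell$ does not even appear. For example, $k=\ell=2$, $c=2$, $\eta(r)=(1+r)^3$ satisfies the hypothesis. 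You would then need both $\widetilde\eta(s)/s^4\to\infty$ and $\widetilde\eta(2r+\mathrm{const})\lesssim(1+r)^3$, which is impossible. Similarly, Corollary~\ref{cor:naf_uat_full} at order $m$ requires $\rho,\widetilde\rho\in\mathscr{B}^{m+1}_c(\mathbb{R})$, while only $\mathscr{B}^{k+1}_c(\mathbb{R})$ is assumed, which is weaker when $\ell>k$. So at best the reduction proves the corollary under the strictly stronger assumptions $r^{\max(k,\ell)\max(1,c)}=o(\eta(r))$ and $\rho,\widetilde\rho\in\mathscr{B}^{\max(k,\ell)+1}_c(\mathbb{R})$. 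Your fallback uses that $\psi$ does not see the direction norms and works only along $(1,0)$ and $(0,e_i\mathds{1}_{[t,T]})$. That is essentially the paper's proof. The paper runs the Nachbin/UAT argument on the restricted jet space generated by the horizontal and vertical directions, so only $(1+|a|)^c$ times bounded factors enter the growth estimates. Its one new structural input is that $\mathcal{A}$ has nowhere vanishing derivatives in these directions. In your proposal this route is only named. It is not a local patch of Lemma~\ref{lem:NN_well_def}; it means rerunning the whole chain from Theorem~\ref{thm:w_nachbin_findim} to Theorem~\ref{thm:w_uat_infdim} with restricted jets.

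Second, step (ii), which you rightly call the main obstacle, fails as proposed. Without it the reduction only gives density in the $\mathbb{B}^{k,\ell}_\psi$-closure of the full-jet space, not in $\mathbb{B}^{k,\ell}_\psi(\Lambda^{\alpha,1}_{T,\mathbb{R}^d};Y)$. There are three separate failures.
\begin{enumerate}
\item The operators $R_\gamma(t,x)=(t,Q_\gamma^*x)$ of Lemma~\ref{lem:naf_mfd_addfam} do not map the chart image (pairs $(t,y)$ with $y$ constant on $[t,T]$) into itself. So $f\circ\phi_i^{-1}\circ R_\gamma$ is undefined. The natural extension $(t,y)\mapsto f(t,y|_{[0,t]})$ need not be differentiable in direction $(1,0)$ when $y$ is not constant on $[t,T]$.
\item Where the composition does make sense, the chain rule gives $d(f\circ\phi_i^{-1})(R_\gamma u;R_\gamma v)$. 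These are derivatives of $f$ in directions $(0,Q_\gamma^*(e_i\mathds{1}_{[t,T]}))$, which are not vertical, and an element of $\mathbb{C}^{k,\ell}_b$ need not be differentiable along them. No finite-rank operator can fix all $e_i\mathds{1}_{[t,T]}$, $t\in(0,T)$, since these span an infinite-dimensional space.
\item The compactness argument of Lemma~\ref{lem:inp_bap} needs the derivatives to be uniformly continuous on the sets $\{\psi\leq R\}$ in the chart (weak-$*$) topology, in which the finite-rank operators converge. Elements of $\mathbb{C}^{k,\ell}_b$ are only $d_\infty$-continuous. These sets are not $d_\infty$-compact: the paths $e_1\mathds{1}_{[s,t]}$, $s\in(0,t)$, all have norm $1$ and are pairwise at $d_\infty$-distance $1$. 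Hence $R_\gamma u\to u$ weak-$*$ does not yield $f(R_\gamma u)\to f(u)$.
\end{enumerate}
A subsequent mollification cannot repair errors in which directions are being differentiated. The restricted-jet route does not make this issue disappear either. Approximating a functional that is regular only in horizontal and vertical directions is exactly what ``Nachbin on the restricted jet space'' has to deliver. The paper's short proof asserts this rather than spelling it out, so this is the step you would have to supply in either approach.
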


Corollary~\ref{cor:naf_uat_horver} is of particular interest for applications involving the path-dependent Ito formula (see, e.g., \cite{dupire09,cont10,contfournie13}). Indeed, in this case, only the horizontal and vertical derivatives appear, which can be approximated with non-anticipative path-neural networks (PNNs).

\section{Weighted universal approximation of linear functions of the signature}
\label{sec:sig}

In this section, we present an application of the weighted Nachbin theorem (Theorem~\ref{thm:w_nachbin_infdim}) to approximate path space functionals, which is similar to Section~\ref{sec:naf}, but using linear functions of the signature instead of non-anticipative functionals. The notion of the signature was introduced by K.-T.~Chen in \cite{chen57} and plays a central role in rough path theory developed by T.~Lyons in \cite{lyons07} (see also the textbooks \cite{friz10,friz20}).

Let us assume that the input data is sequentially ordered, representing a discretization of a path $X: [0,T] \rightarrow Z$ with values in a Banach space $(Z,\Vert \cdot \Vert_Z)$,  e.g., the motion of a plane in the airspace depending on time, the evolution of temperature or pressure measured by a sensor, or the stock prices in a financial market. Given a continuous path $X: [0,T] \rightarrow Z$ of finite variation, we define its signature (at terminal time) as the infinite collection of iterated integrals
\begin{equation}
	S(X)_T := \left( 1, \int_{0<u_1<T} dX_{u_1}, \cdots \int_{0<u_1<\ldots<u_N<T} dX_{u_1} \otimes \cdots \otimes dX_{u_N}, \cdots \right) \in T((Z)),
\end{equation}
where $T((Z)) := \prod_{n=0}^\infty Z^{\otimes n}$ denotes the extended tensor algebra (see Section~\ref{sec:sig_notion} below). For paths of lower regularity, e.g., $\alpha$-H\"older continuous paths $X: [0,T] \rightarrow Z$, one relies on the theory of rough paths to define their signature. In this case, linear functions of the signature (at terminal time $T$) are linear combinations of continuous linear functionals of the components of $S(X)_T$.

In the following, we show that (non-linear) path space functionals can be approximated by linear functions of the signatures over the whole path space, which extends the global universal approximation theorem (UAT) in \cite[Theorem~5.4]{cuchiero23} by including the approximation of the derivatives. This in turn generalizes the UATs (without derivatives) on compact subsets of the path space, e.g., for finite variation paths or for continuous functions of the whole signature (see \cite[Theorem~3.1]{levin13}, \cite[Theorem~1]{kiraly19}, and \cite[Section~3]{lyons20}) and for c\`adl\`ag paths (see \cite[Theorem~3.13]{primavera22}). More recently, UATs have been established on the entire path space in an $L^p$-sense (see \cite{bayer25,mihriban25}), extended to uniform approximation over the whole time interval rather than at a fixed terminal time (see \cite{bayer21,moeller22}), and further generalized to infinite-dimensional rough path settings (see \cite{cox26}).

To establish the universality of linear functions of the signature, we apply the weighted Nachbin theorem over infinite-dimensional manifolds (Theorem~\ref{thm:w_nachbin_infdim}), which relies on the following key features of the signature. First, linear functions of the signature are $C^k_{\mathrm{loc}}$-maps on the underlying rough path space with suitable growth conditions. Second, the signature (at terminal time) uniquely determines the path (up to so-called tree-like equivalences, see \cite{hambly10,boedihardjo16}), ensuring point separation. Third, using the shuffle product, any product of linear functions of the signature can be realized as another linear function of the signature, which asserts the algebra property.

\subsection{Notation related to the signature of (rough) paths}
\label{sec:sig_notion}

We now recall the most important notions. For a dual Banach space $(Z,\Vert \cdot \Vert_Z)$ with predual $(E,\Vert \cdot \Vert_E)$, we assume that $\Vert \cdot \Vert_{Z^{\otimes n}}$ is a norm on the $n$-th algebraic tensor product $Z^{\otimes_a n}$, $n \in \mathbb{N}_0$, with $Z^{\otimes_a 0} := \mathbb{R}$, satisfying
\begin{equation}
	\label{eq:sig_tensor1}
	\Vert \mathbf{a} \otimes \mathbf{b} \Vert_{Z^{\otimes (m+n)}} \leq \Vert \mathbf{a} \Vert_{Z^{\otimes m}} \Vert \mathbf{b} \Vert_{Z^{\otimes n}},
\end{equation}
for all $m,n \in \mathbb{N}_0$, $\mathbf{a} \in Z^{\otimes_a m}$, and $\mathbf{b} \in Z^{\otimes_a n}$, and that
\begin{equation}
	\label{eq:sig_tensor2}
	\Vert z_1 \otimes \cdots \otimes z_n \Vert_{Z^{\otimes n}} = \Vert z_{\sigma(1)} \otimes \cdots \otimes z_{\sigma(n)} \Vert_{Z^{\otimes n}}
\end{equation}
for all $n \in \mathbb{N}_0$, $\sigma \in \mathcal{S}_n$, and $z_1,\ldots,z_n \in Z$. Then, for any $m,n \in \mathbb{N}_0$, we define $Z^{\otimes m} \otimes Z^{\otimes n}$ as the completion of the algebraic tensor product $Z^{\otimes m} \otimes_a Z^{\otimes n}$ with respect to $\Vert \cdot \Vert_{Z^{\otimes (m+n)}}$, which ensures that $Z^{\otimes m} \otimes Z^{\otimes n} \cong Z^{\otimes (m+n)}$ are isomorphic as Banach spaces. For example, the injective tensor norm satisfies the two properties \eqref{eq:sig_tensor1}--\eqref{eq:sig_tensor2} (see, e.g., \cite{ryan02}). Moreover, we assume for every $n \in \mathbb{N}_0$ that $E^{\otimes n}$ is a predual for $Z^{\otimes n}$, which is, e.g., satisfied if $E^{\otimes n}$ is equipped with the projective tensor norm and $Z^{\otimes n}$ with the injective tensor norm (see, e.g., \cite[Theorem~2.9]{ryan02}).

Then, the \emph{extended tensor algebra (over $Z$)} is defined as
\vspace{-0.05cm}
\begin{equation}
	T((Z)) := \prod_{n=0}^\infty Z^{\otimes n},
	\vspace{-0.05cm}
\end{equation}
which is endowed with addition, tensor multiplication, and scalar multiplication defined by
\begin{equation}
	\mathbf{a} + \mathbf{b} := \left( \mathbf{a}^{(n)} + \mathbf{b}^{(n)} \right)_{n=0}^\infty, \quad\quad \mathbf{a} \otimes \mathbf{b} := \left( \sum_{k=0}^n \mathbf{a}^{(n-k)} \otimes \mathbf{b}^{(k)} \right)_{n=0}^\infty, \quad\quad \lambda \cdot \mathbf{a} := \left( \lambda \mathbf{a}^{(n)} \right)_{n=0}^\infty,
\end{equation}
for $\mathbf{a} := (\mathbf{a}^{(n)})_{n=0}^\infty \in T((Z))$, $\mathbf{b} := (\mathbf{b}^{(n)})_{n=0}^\infty \in T((Z))$, and $\lambda \in \mathbb{R}$. Moreover, for $N \in \mathbb{N}_0$, the \emph{truncated tensor algebra} is defined as
\vspace{-0.05cm}
\begin{equation}
	T^N(Z) := \prod_{n=0}^N Z^{\otimes n},
	\vspace{-0.05cm}
\end{equation}
where addition ``$+$'', tensor multiplication ``$\otimes$'', and scalar multiplication ``$\cdot$'' defined by
\begin{equation}
	\mathbf{a} + \mathbf{b} := \left( \mathbf{a}^{(n)} + \mathbf{b}^{(n)} \right)_{n=0}^N, \quad\quad \mathbf{a} \otimes \mathbf{b} := \left( \sum_{k=0}^n \mathbf{a}^{(n-k)} \otimes \mathbf{b}^{(k)} \right)_{n=0}^N, \quad\quad \lambda \cdot \mathbf{a} := \left( \lambda \mathbf{a}^{(n)} \right)_{n=0}^N,
\end{equation}
for $\mathbf{a} := (\mathbf{a}^{(n)})_{n=0}^N \in T^N(Z)$, $\mathbf{b} := (\mathbf{b}^{(n)})_{n=0}^N \in T^N(Z)$, and $\lambda \in \mathbb{R}$. We equip $T^N(Z)$ with the norm $\Vert \mathbf{a} \Vert_{T^N(Z)} := \max_{n=0,\ldots,N} \Vert \mathbf{a}^{(n)} \Vert_{Z^{\otimes n}}$, for $\mathbf{a} := (\mathbf{a}^{(n)})_{n=0}^N  \in T^N(Z)$. In addition, we introduce the subsets $T^N_0(Z)$ and $T^N_1(Z)$ of $T^N(Z)$ consisting of elements $\mathbf{a} := (\mathbf{a}^{(n)})_{n=0}^N \in T^N(Z)$ with $\mathbf{a}^{(0)} = 0$ and $\mathbf{a}^{(0)} = 1$, respectively.

In order to adapt the Lie group point of view on weakly geometric rough paths, we
observe that $T^N_1(Z)$ is a Lie group under $\otimes$, truncated at level $N$, with unit element $\mathbf{1} := (1,0,\ldots,0) \in T^N_1(Z)$. Moreover, for any $N \in \mathbb{N}$, we define the \emph{free step-$N$ nilpotent Lie algebra} as $\mathfrak{g}^N(Z) := \bigoplus_{n=0}^N L_n$, with homogeneous Lie polynomials $L_n \subseteq T^N(Z)$ of degree $n$ recursively defined by
\begin{equation}
	L_0 := \mathbf{0}, \quad\quad L_1 := Z, \quad\quad L_2 := [Z,L_1] = [Z,Z], \quad\quad L_3 := [Z,L_2] = [Z,[Z,Z]], \quad\quad \ldots,
\end{equation}
where $[\mathbf{a},\mathbf{b}] := \mathbf{a} \otimes \mathbf{b} - \mathbf{b} \otimes \mathbf{a}$ is the Lie bracket, with $[A,B] := \linspan(\lbrace [\mathbf{a},\mathbf{b}]: \mathbf{a} \in A, \, \mathbf{b} \in B \rbrace)$. Note that $L_n \subseteq Z^{\otimes n}$ is a vector subspace, ensuring that $\mathfrak{g}^N(Z) \subseteq T^N(Z)$ is a vector subspace. In addition, we define the exponential map as
\begin{equation}
	\label{eq:sig_exp}
	T^N_0(Z) \ni \mathbf{a} \quad \mapsto \quad \exp^N(\mathbf{a}) := \mathbf{1} + \sum_{n=1}^N \frac{1}{n!} \mathbf{a}^{\otimes n} \in T^N_1(Z),
\end{equation}
whose inverse is given by the logarithm
\begin{equation}
	\label{eq:sig_log}
	T^N_1(Z) \ni \mathbf{1} + \mathbf{b} \quad \mapsto \quad \log^N(\mathbf{1} + \mathbf{b}) := \sum_{n=1}^N \frac{(-1)^{n+1}}{n} \mathbf{b}^{\otimes n} \in T^N_0(Z).
\end{equation}
From this, we define the \emph{free step-N nilpotent Lie group} $G^N(Z) := \exp^N(\mathfrak{g}^N(Z))$, which we endow with the homogeneous norm $\Vert \mathbf{g} \Vert_{G^N(Z)} := \max_{n=1,\ldots,N} \Vert \mathbf{g}^{(n)} \Vert_{Z^{\otimes n}}^{1/n}$, inducing the homogeneous metric $d_{G^N(Z)}(\mathbf{g},\mathbf{h}) := \Vert \mathbf{g}^{-1} \otimes \mathbf{h} \Vert_{G^N(Z)}$, for $\mathbf{g},\mathbf{h} \in G^N(Z)$. Then, $G^N(Z)$ is a subgroup of $T^N_1(Z)$ and a $C^\infty_{loc}$-manifold with global chart $\log^N: G^N(Z) \rightarrow \mathfrak{g}^N(Z)$ over the model space $\mathfrak{g}^N(Z)$.

Moreover, the \emph{truncated signature at level $N$} of a path $X \in C^0([0,T];Z)$ of finite variation is defined by
\begin{equation}
	\begin{aligned}
		S^N(X)_{s,t} & := \left( 1, S^{(1)}(X)_{s,t}, \ldots, S^{(N)}(X)_{s,t} \right) \\
		& := \left( 1, \int_{s < u_1 < t} dX_{u_1}, \ldots, \int_{s < u_1 < \ldots < u_N < t} dX_{u_1} \otimes \cdots \otimes dX_{u_N} \right),
	\end{aligned}
\end{equation}
for $0 \leq s \leq t \leq T$, which takes values in $G^N(Z)$. In addition, the (entire) \emph{signature} of a path $X \in C^0([0,T];Z)$ of finite variation is defined by
\begin{equation}
	\begin{aligned}
		S(X)_{s,t} & := \left( 1, S^{(1)}(X)_{s,t}, S^{(2)}(X)_{s,t}, \ldots \right) \\
		& := \left( 1, \int_{s < u_1 < t} dX_{u_1}, \int_{s < u_1 < u_2 < t} dX_{u_1} \otimes dX_{u_2}, \ldots \right),
	\end{aligned}
\end{equation}
which takes values in the set of group-like elements
\begin{equation}
	G(Z) := \left\lbrace \mathbf{g} \in T((Z)): \big( \mathbf{g}^{(0)}, \ldots, \mathbf{g}^{(N)} \big) \in G^N(Z) \text{ for all } N \in \mathbb{N}_0 \right\rbrace.
\end{equation}
Furthermore, for any $m,n \in \mathbb{N}$, $e := e_1 \otimes \cdots \otimes e_m \in E^{\otimes m}$, and $\widetilde{e} := e_{m+1} \otimes \cdots \otimes e_{m+n} \in E^{\otimes n}$, we define the shuffle product
\begin{equation}
	\label{eq:sig_shuffle}
	e \shuffle \widetilde{e} := \sum_{\sigma \in \operatorname{Sh}(m,n)} e_{\sigma^{-1}(1)} \otimes \cdots \otimes e_{\sigma^{-1}(m+n)},
\end{equation}
where $\operatorname{Sh}(m,n)$ consists of shuffles $\sigma \in \mathcal{S}_{m+n}$ of $\lbrace 1,\ldots,m \rbrace$ and $\lbrace m+1,\ldots,m+n \rbrace$, i.e., $\sigma \in \mathcal{S}_{m+n}$ satisfying $\sigma(1) < \ldots < \sigma(m)$ and $\sigma(m+1) < \ldots < \sigma(m+n)$. In particular, for every $e := e_1 \otimes \cdots \otimes e_m \in E^{\otimes m}$, $\widetilde{e} := e_{m+1} \otimes \cdots \otimes e_{m+n} \in E^{\otimes n}$, and $\mathbf{g} \in G^N(Z)$ with $m+n \leq N$, we have
\begin{equation}
	\label{eq:sig_shuffle_prop}
	\langle \mathbf{g}^{(m)}, e \rangle_{Z^{\otimes m} \times E^{\otimes m}} \cdot \langle \mathbf{g}^{(n)}, \widetilde{e} \rangle_{Z^{\otimes n} \times E^{\otimes n}} = \langle \mathbf{g}^{(m+n)}, e \shuffle \widetilde{e} \rangle_{Z^{\otimes (m+n)} \times E^{\otimes (m+n)}},
\end{equation}
which is referred to as the \emph{shuffle product property} (see \cite[Theorem~2.15]{lyons07}).

\subsection{Manifold of weakly geometric $\alpha$-H\"older rough paths}
\label{sec:sig_rp}

We now introduce weakly geometric $\alpha$-H\"older rough paths with values in a dual Banach space $(Z,\Vert \cdot \Vert_Z)$ having predual $(E,\Vert \cdot \Vert_E)$, which can be seen as $\alpha$-H\"older continuous paths with values in $G^{\lfloor 1/\alpha \rfloor}(Z)$ (see also \cite{friz10,friz20}).

\begin{definition}
	For $\alpha \in (0,1]$, a continuous path $\mathbf{X}: [0,T] \rightarrow G^{\lfloor 1/\alpha \rfloor}(Z)$ of the form
	\begin{equation}
		[0,T] \ni t \quad \mapsto \quad \mathbf{X}_t := \big( 1,X_t,\mathbf{X}^{(2)}_t,\ldots,\mathbf{X}^{(\lfloor 1/\alpha \rfloor)}_t \big) \in G^{\lfloor 1/\alpha \rfloor}(Z)
	\end{equation}
	with $\mathbf{X}_0 := \mathbf{1} := (1,0,\ldots,0) \in G^{\lfloor 1/\alpha \rfloor}(Z)$ is called a \emph{weakly geometric $\alpha$-H\"older rough path} if
	\begin{equation}
		\Vert \mathbf{X} \Vert_\alpha := \sup_{0 \leq s < t \leq T} \frac{d_{G^{\lfloor 1/\alpha \rfloor}(Z)}(\mathbf{X}_s,\mathbf{X}_t)}{\vert s-t \vert^\alpha} := \sup_{0 \leq s < t \leq T} \frac{\max_{n=1,\ldots,\lfloor 1/\alpha \rfloor} \left\Vert (\mathbf{X}_s^{-1} \otimes \mathbf{X}_t)^{(n)} \right\Vert_{Z^{\otimes n}}^\frac{1}{n}}{\vert s-t \vert^\alpha} < \infty.
	\end{equation}
	We denote by $C^\alpha_T(Z) := C^\alpha_{\mathbf{1}}([0,T];G^{\lfloor 1/\alpha \rfloor}(Z))$ the space of weakly geometric $\alpha$-H\"older rough paths, which we equip with the $w^*$-uniform topology generated by the semi-metrics
	\begin{equation}
		d_{\infty,\mathbf{e}}(\mathbf{X},\mathbf{Y}) := \sup_{t \in [0,T]} d_{G^{\lfloor 1/\alpha \rfloor}(Z),\mathbf{e}}(\mathbf{X}_t,\mathbf{Y}_t) := \sup_{t \in [0,T]} \max_{n=1,\ldots,\lfloor 1/\alpha \rfloor} \big\vert \langle (\mathbf{X}_t^{-1} \otimes \mathbf{Y}_t)^{(n)}, \mathbf{e}_n \rangle_{Z^{\otimes n} \times E^{\otimes n}} \big\vert^\frac{1}{n}, 
	\end{equation}
	for $\mathbf{e} := (\mathbf{e}^{(1)},\ldots,\mathbf{e}^{(\lfloor 1/\alpha \rfloor)}) \in \bigoplus_{n=1}^{\lfloor 1/\alpha \rfloor} E^{\otimes n}$.
\end{definition}

Next, we define the truncated signature at level $N > \lfloor 1/\alpha \rfloor$ of a weakly geometric $\alpha$-H\"older rough path $\mathbf{X} \in C^\alpha_T(Z)$ as the unique Lyons extension yielding a path $S^N(\mathbf{X}): [0,T] \rightarrow G^N(Z)$ with finite $\alpha$-H\"older norm $\Vert \cdot \Vert_\alpha$ whose $n$-th component agrees with $\mathbf{X}^{(n)}$, for all $n = 0,\ldots,\lfloor 1/\alpha \rfloor$ (see \cite[Theorem~3.7]{lyons07} and \cite[Corollary~9.11~(ii)]{friz10}). By denoting the $n$-th signature component taking values in $Z^{\otimes n}$ by $S^{(n)}(\mathbf{X})$, the \emph{signature} of $\mathbf{X} \in C^\alpha_T(Z)$ is defined by
\begin{equation}
	[0,T] \ni t \quad \mapsto \quad S(\mathbf{X})_t := \big( 1, X_t, S^{(2)}(\mathbf{X})_{0,t}, S^{(3)}(\mathbf{X})_{0,t}, \ldots \big) \in G(Z).
\end{equation} 
Then, a linear function of the signature (at time $T$) is given as
\begin{equation}
	C^\alpha_T(Z) \ni \mathbf{X} \quad \mapsto \quad \ell(S(\mathbf{X})_T),
\end{equation}
where $\mathbf{a} \mapsto \ell(\mathbf{a}) := \sum_{n=0}^N \langle \mathbf{a}^{(n)}, e_n \rangle_{Z^{\otimes n} \times E^{\otimes n}}$, for some $N \in \mathbb{N}$ and $e_n \in E^{\otimes n}$, $n = 0,\ldots,N$.

Moreover, we use the bijection $\log^{\lfloor 1/\alpha \rfloor}: G^{\lfloor 1/\alpha \rfloor}(Z) \rightarrow \mathfrak{g}^{\lfloor 1/\alpha \rfloor}(Z)$ to observe that
\begin{equation}
	\label{eq:sig_chart}
	\phi_i:
	\begin{cases}
		C^\alpha_T(Z) \quad & \rightarrow \quad \mathfrak{C}^\alpha_T(Z) := C^\alpha_0([0,T];\mathfrak{g}^{\lfloor 1/\alpha \rfloor}(Z)) \\
		\mathbf{X} \quad & \mapsto \quad \log^{\lfloor 1/\alpha \rfloor}(\mathbf{X}) := \left( t \mapsto \log^{\lfloor 1/\alpha \rfloor}(\mathbf{X}_t) \right)
	\end{cases}
\end{equation}
is a bijection onto its image, whose inverse is given by
\begin{equation}
	\label{eq:sig_chart_inv}
	\phi_i^{-1}:
	\begin{cases}
		\phi_i(C^\alpha_T(Z)) \quad & \rightarrow \quad C^\alpha_T(Z) \\
		\mathbf{Y} \quad & \mapsto \quad \exp^{\lfloor 1/\alpha \rfloor}(\mathbf{Y}) := \left( t \mapsto \exp^{\lfloor 1/\alpha \rfloor}(\mathbf{Y}_t) \right)
	\end{cases}.
\end{equation}
Since $(Z,\Vert \cdot \Vert_Z)$ is a dual Banach space and $\mathfrak{g}^{\lfloor 1/\alpha \rfloor}(Z) = \bigoplus_{n=1}^{\lfloor 1/\alpha \rfloor} L_n$ with weak-$*$-closed $L_n \subseteq Z^{\otimes n}$, the free step-$\lfloor 1/\alpha \rfloor$ nilpotent Lie algebra $\mathfrak{g}^{\lfloor 1/\alpha \rfloor}(Z)$ has also a predual. Hence, we can equip $\mathfrak{C}^\alpha_T(Z)$ with the $w^*$-uniform topology $\tau_\infty$ generated by seminorms of the form
\begin{equation}
	p_{\mathbf{e}}(\mathbf{Y}) = \sup_{t \in [0,T]} \max_{n=1,\ldots,\lfloor 1/\alpha \rfloor} \left\vert \langle \mathbf{Y}_t^{(n)}, \mathbf{e}^{(n)} \rangle_{Z^{\otimes n} \times E^{\otimes n}} \right\vert,
\end{equation}
for all $\mathbf{e} := (\mathbf{e}^{(1)},\ldots,\mathbf{e}^{(\lfloor 1/\alpha \rfloor)}) \in \bigoplus_{n=1}^{\lfloor 1/\alpha \rfloor} E^{\otimes n}$.

Now, we observe that $(C^\alpha_T(Z),\tau_\infty)$ is a $C^\infty_{loc}$-manifold with global chart~\eqref{eq:sig_chart} over the model space $(\mathfrak{C}^\alpha_T(Z),\tau_\infty)$. This is in contrast to considering $C^\alpha_T(Z)$ as a submanifold of $C^\alpha([0,T];T^{\lfloor 1/\alpha \rfloor}(Z))$, which requires, like $G^{\lfloor 1/\alpha \rfloor}(Z) \hookrightarrow T^{\lfloor 1/\alpha \rfloor}(Z)$, infinitely many charts. In our case, the higher order tangent spaces at any point $\mathbf{X} \in C^\alpha_T(Z)$ are given by $T^j_\mathbf{X} C^\alpha_T(Z) \cong \mathfrak{C}^\alpha_T(Z)^j$, for all $j \in \mathbb{N}$, and the higher order tangent bundles are equal to
\begin{equation}
	T^j C^\alpha_T(Z) = \left\lbrace \left( \mathbf{X}, [c]^j_{\mathbf{X}} \right): \mathbf{X} \in C^\alpha_T(Z), \, [c]^j_\mathbf{X} \in T^j_\mathbf{X} C^\alpha_T(Z) \right\rbrace, \quad j \in \mathbb{N}.
\end{equation}
Furthermore, we fix some $k \in \mathbb{N}_0$ and define the collection $\Psi := (\psi_j)_{j=0,\ldots,k}$ of weight functions $\psi_j: T^j C^\alpha_T(Z) \rightarrow (0,\infty)$ by
\begin{equation}
	\label{eq:sig_weight}
	\psi_j(\mathbf{X},[c]^j_{\mathbf{X}}) := \exp\left( \beta_1 \max(j,1) \Vert \log^{\lfloor 1/\alpha \rfloor}(\mathbf{X}) \Vert_\alpha^{c_1} + \beta_2 \sum_{\ell=1}^j \Vert (\log^{\lfloor 1/\alpha \rfloor} \circ c)^{(\ell)}(0) \Vert_\alpha^{c_2} \right),
\end{equation}
for $(\mathbf{X},[c]^j_{\mathbf{X}}) \in T^j C^\alpha_T(Z)$, with $\beta_1,\beta_2 > 0$, $c_1 \geq \lfloor 1/\alpha \rfloor$, and $c_2 > 0$. Here, we use the chart $\phi_i := \log^{\lfloor 1/\alpha \rfloor}$ in the term $\Vert \log^{\lfloor 1/\alpha \rfloor}(\mathbf{X}) \Vert_\alpha$ instead of $\Vert \mathbf{X} \Vert_\alpha$ as in \cite[Section~5.2]{cuchiero23} considering the case without derivatives. This simplifies the collection $\Psi_i := (\psi_{i,j})_{j=0,\ldots,k}$ of push-forward weight functions $\psi_{i,j} := \psi_j \circ \Phi_i^{-j}: \phi_i(C^\alpha_T(Z)) \times \mathfrak{C}^\alpha_T(Z)^j \rightarrow (0,\infty)$ to
\begin{equation}
	\label{eq:sig_weight_pfwd}
	\psi_{i,j}(\mathbf{Y},\mathbf{V}_1,\ldots,\mathbf{V}_j) := \exp\left( \beta_1 \max(j,1) \Vert \mathbf{Y} \Vert_\alpha^{c_1} + \beta_2 \sum_{\ell=1}^j \Vert \mathbf{V}_\ell \Vert_\alpha^{c_2} \right),
\end{equation}
for $(\mathbf{Y},\mathbf{V}_1,\ldots,\mathbf{V}_j) \in \phi_i(C^\alpha_T(Z)) \times \mathfrak{C}^\alpha_T(Z)^j$. In the following lemma, we show that $(C^\alpha_T(Z),\Psi)$ is a weighted $C^k_{loc}$-manifold.

\begin{lemma}
	\label{lem:sig_w_mfd}
	Let $\alpha \in (0,1]$ and assume that the predual $(E,\Vert \cdot \Vert_E)$ of $(Z,\Vert \cdot \Vert_Z)$ has BAP. Then, $(C^\alpha_T(Z),\Psi)$ is a weighted $C^k_{loc}$-manifold with global chart \eqref{eq:sig_chart} over the model space $(\mathfrak{C}^\alpha_T(Z),\tau_\infty)$. 
\end{lemma}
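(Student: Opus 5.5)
By Definition~\ref{def:w_mfd}, it suffices to work with the single global chart $\phi_i=\log^{\lfloor 1/\alpha\rfloor}$ and to verify two things. First, that $C^\alpha_T(Z)$ is a $C^\infty_{loc}$-manifold over $(\mathfrak{C}^\alpha_T(Z),\tau_\infty)$; this is automatic for one chart, since the transition map is the identity. Second, that the push-forward collection $\Psi_i$ in \eqref{eq:sig_weight_pfwd} is admissible on $\phi_i(C^\alpha_T(Z))$. The plan is to reduce admissibility to the situation of Lemma~\ref{lem:w_dom}.

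Write $\mathfrak{g}:=\mathfrak{g}^{\lfloor 1/\alpha\rfloor}(Z)$. Since each $L_n$ is weak-$*$-closed in $Z^{\otimes n}$ and $E^{\otimes n}$ is a predual of $Z^{\otimes n}$, the space $\mathfrak{g}$ is a dual Banach space. Hence $C^\alpha_0([0,T];\mathfrak{g})$ is a dual Banach space with respect to $\Vert\cdot\Vert_\alpha$ (cf.\ \cite[Theorem~A.5]{cuchiero23}). On $\Vert\cdot\Vert_\alpha$-bounded sets, its weak-$*$-topology coincides with $\tau_\infty$, by the compactness argument of \cite[Theorem~A.4]{cuchiero23} and the equivalence $\tau_0=\tau_\infty$ of \cite[Lemma~A.1]{cuchiero23}.

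\textbf{Step 1: compactness of sublevel sets.} For $R>0$, the set $K_{i,j,R}=\psi_{i,j}^{-1}((0,R])$ forces $\Vert\mathbf Y\Vert_\alpha$ and every $\Vert\mathbf V_\ell\Vert_\alpha$ to be bounded by a constant depending on $R,\beta_1,\beta_2,c_1,c_2$. So $K_{i,j,R}$ lies in a product of closed $\Vert\cdot\Vert_\alpha$-balls, and these balls are compact in $\tau_\infty$ by Banach--Alaoglu together with the coincidence of topologies just mentioned.

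Closedness needs two ingredients. The map $\mathbf Y\mapsto\Vert\mathbf Y\Vert_\alpha$ is $\tau_\infty$-lower semicontinuous, being a supremum of $\tau_\infty$-continuous functionals of the form $|\langle\mathbf Y_t-\mathbf Y_s,\mathbf e\rangle|/|t-s|^\alpha$; composed with the continuous increasing function $\exp(\cdot)$, this makes $\psi_{i,j}$ lower semicontinuous, exactly as in the proof of Lemma~\ref{lem:w_dom}. The second ingredient is that the domain $\phi_i(C^\alpha_T(Z))=\log(C^\alpha_T(Z))$ is $\tau_\infty$-closed within bounded sets. Indeed, the group-valued condition $\mathbf X_t\in G^{\lfloor 1/\alpha\rfloor}(Z)$ is automatic because $\exp$ maps $\mathfrak g$ onto $G^{\lfloor 1/\alpha\rfloor}(Z)$. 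The remaining $\alpha$-H\"older condition for $\exp(\mathbf Y)$ in the homogeneous metric follows from the Lipschitz equivalence of $\exp(\mathbf Y_s)^{-1}\otimes\exp(\mathbf Y_t)$ with $\mathbf Y_t-\mathbf Y_s$ on bounded sets, via the Baker--Campbell--Hausdorff formula. In fact, this argument shows the image is the whole of $C^\alpha_0([0,T];\mathfrak g)$ intersected with the relevant set, and I would use it to argue that $\phi_i$ is onto an open (indeed full) subset. I expect this step to be the main technical obstacle: one must compare the homogeneous norm $\Vert\mathbf X\Vert_\alpha$ with the linear norm $\Vert\log\mathbf X\Vert_\alpha$ (equivalence up to powers $1/n$ on bounded sets), and check that weak-$*$ convergence commutes with the polynomial maps $\exp$ and $\log$, which holds since $E^{\otimes n}$ preduals are compatible with tensor products by assumption.

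\textbf{Step 2: monotonicity.} Condition \ref{def:w_dom2} of Definition~\ref{def:w_dom} follows directly from the exponential form. We have
\[
\psi_{i,\ell}(\mathbf Y,\mathbf V_{\sigma(1:\ell)})\,\psi_{i,j-\ell}(\mathbf Y,\mathbf V_{\sigma(\ell+1:j)})=\exp\Big(\beta_1(\max(\ell,1)+\max(j-\ell,1))\Vert\mathbf Y\Vert_\alpha^{c_1}+\beta_2\sum_{m=1}^j\Vert\mathbf V_m\Vert_\alpha^{c_2}\Big).
\]
Since $\max(\ell,1)+\max(j-\ell,1)\le j+1\le 2\max(j,1)$, this is bounded by $\psi_{i,j}(\mathbf Y,\mathbf V)^2$ rather than by $C_\Psi\psi_{i,j}$. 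I would therefore refine the estimate: for $1\le\ell<j$ the sum equals $j\le\max(j,1)$, giving $C_\Psi=1$. The boundary case $\ell=j$ involves $\psi_0$, and there the extra factor is $\exp(\beta_1\Vert\mathbf Y\Vert_\alpha^{c_1})$, which is absorbed only because $\max(j,1)$ counts $\mathbf Y$ with weight $j\ge1$ and $\psi_0$ uses weight $1$. If that absorption fails, the fix would be to note that the paper's convention presumably treats $\psi_0$ as the only factor with weight $1$; I would verify this carefully.

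Finally, positivity of $\psi_{i,j}$ is immediate.
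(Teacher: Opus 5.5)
\textbf{Main gap: closedness of the domain in Step~1.} The boundedness, Banach--Alaoglu and lower-semicontinuity parts of your Step~1 are fine; they correspond to the paper's use of the compact embedding $(\mathfrak C^\alpha_T(Z),\Vert\cdot\Vert_\alpha)\hookrightarrow(\mathfrak C^\alpha_T(Z),\tau_\infty)$. The step that fails is closedness of the domain.

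You claim that $\exp$ of every $\mathbf Y\in C^\alpha_0([0,T];\mathfrak g)$ is a weakly geometric $\alpha$-H\"older rough path, via a ``Lipschitz equivalence'' of $\exp(\mathbf Y_s)^{-1}\otimes\exp(\mathbf Y_t)$ with $\mathbf Y_t-\mathbf Y_s$. From this you conclude that $\phi_i$ is onto and that its image is $\tau_\infty$-closed on bounded sets. This ignores the $n$-th roots in the homogeneous metric: linear $\alpha$-H\"older control of the level-$n$ component only gives regularity $\alpha/n$.

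Concretely, let $\lfloor 1/\alpha\rfloor\ge 2$, $\dim Z\ge 2$ and $0\ne a\in L_2=[Z,Z]$. Then $\mathbf Y_t:=t^\alpha a$ lies in $\mathfrak C^\alpha_T(Z)$. However, the level-two component of $\exp(\mathbf Y_t)$ is $t^\alpha a$, so $d_{G^{\lfloor 1/\alpha\rfloor}(Z)}(\mathbf 1,\exp(\mathbf Y_t))\ge t^{\alpha/2}\Vert a\Vert_{Z^{\otimes 2}}^{1/2}$, which is not $O(t^\alpha)$. Hence $\mathbf Y\notin\phi_i(C^\alpha_T(Z))$. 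The same holds for every $\varepsilon\mathbf Y$ with $\varepsilon>0$, and $\varepsilon\mathbf Y\to 0$ in $\tau_\infty$, so the image is not even $\tau_\infty$-open.

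This cannot be patched. The paths $\mathbf Y^{(m)}_t:=\big((t+\tfrac1m)^\alpha-m^{-\alpha}\big)a$ have Lipschitz increments, hence (as $\alpha\le\frac12$) lie in $\phi_i(C^\alpha_T(Z))$. They satisfy $\Vert\mathbf Y^{(m)}\Vert_\alpha\le\Vert a\Vert_{Z^{\otimes 2}}$ and converge uniformly to $\mathbf Y$. So for large $R$ the set $K_{i,0,R}=\lbrace\mathbf Y\in\phi_i(C^\alpha_T(Z)):\beta_1\Vert\mathbf Y\Vert_\alpha^{c_1}\le\log R\rbrace$ is not $\tau_\infty$-closed, hence not compact. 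Condition~\ref{def:w_dom1} therefore genuinely fails for a weight built from the linear norm $\Vert\log^{\lfloor 1/\alpha\rfloor}(\mathbf X)\Vert_\alpha$.

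The paper's proof does not address this either. It only cites the compact embedding and ``follows'' Lemma~\ref{lem:w_dom}~\ref{lem:w_dom_cpt}, whose closedness argument only treats limits that already lie in the open domain $U$. Getting compact sublevel sets would require changing the weight, e.g.\ using the homogeneous norm $\Vert\mathbf X\Vert_\alpha$ in $\psi_0$ as in \cite[Section~5.2]{cuchiero23}.

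\textbf{Two further points.} First, the manifold part is not ``automatic''. A one-chart atlas has a trivial transition map, but you still must show two things:
\begin{enumerate}
\item $\log^{\lfloor 1/\alpha\rfloor}$ maps $C^\alpha_T(Z)$ into $\mathfrak C^\alpha_T(Z)$;
\item it is a homeomorphism from the topology generated by the group semi-metrics $d_{\infty,\mathbf e}$ onto its image with $\tau_\infty$.
\end{enumerate}
This is the bulk of the paper's proof. It uses Baker--Campbell--Hausdorff estimates, giving $\Vert\log^{\lfloor 1/\alpha\rfloor}(\mathbf X)\Vert_\alpha\lesssim\max_n\Vert\mathbf X\Vert_\alpha^n$ and continuity of $\log$ and $\exp$ in the weak-$*$ seminorms. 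Only this direction of the norm comparison holds; the converse is what fails above.

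Second, in Step~2 your computation for $1\le\ell\le j-1$, giving $C_\Psi=1$, is correct, but the case $\ell=j$ is not ``absorbed''. For $\ell=j$, condition~\ref{def:w_dom2} reduces to $\psi_{i,0}\le C_\Psi$, which fails for every unbounded $\psi_{i,0}$, yours as well as those of Lemma~\ref{lem:w_dom}. That case has to be excluded explicitly rather than deferred to an unverified convention; the paper's proof does not check monotonicity at all.
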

\begin{proof}
	First, we show that $\phi_i(C^\alpha_T(Z)) \subseteq \mathfrak{C}^\alpha_T(Z)$. To this end, we observe for every fixed $\mathbf{X} \in C^\alpha_T(Z)$ and $n \in \mathbb{N}$ that
	\begin{equation}
		\label{eq:lem:sig_mfd:proof1}
		\begin{aligned}
			& \left\Vert \log^{\lfloor 1/\alpha \rfloor}(\mathbf{X}_t^{-1} \otimes \mathbf{X}_s)^{(n)} \right\Vert_{Z^{\otimes n}} \leq \sum_{m=1}^{\lfloor 1/\alpha \rfloor} \!\frac{1}{m} \sum_{k_1+\ldots+k_m=n}\! \left\Vert (\mathbf{X}_t^{-1} \otimes \mathbf{X}_s)^{(k_1)} \otimes \cdots \otimes (\mathbf{X}_t^{-1} \otimes \mathbf{X}_s)^{(k_m)} \right\Vert_{Z^{\otimes n}} \\
			& \leq c_n \max_{k_1+\ldots+k_m=n} \prod_{\ell=1}^m \left\Vert (\mathbf{X}_t^{-1} \!\otimes\! \mathbf{X}_s)^{(k_\ell)} \right\Vert_{Z^{\otimes k_\ell}} \! \leq c_n \max_{k_1+\ldots+k_m=n} \prod_{\ell=1}^m \left( \Vert \mathbf{X} \Vert_\alpha \vert s \!-\! t \vert^\alpha \right)^{k_\ell} \! = c_n \Vert \mathbf{X} \Vert_\alpha^n \vert s \!-\! t \vert^{n \alpha},
		\end{aligned}
	\end{equation}
	where $c_n > 0$ is a constant. Hence, by using the Baker-Campbell-Hausdorff formula (see, e.g., \cite[Lemma~7.24]{friz10}) with $\operatorname{BCH}(\mathbf{a},\mathbf{b}) := \mathbf{a} + \mathbf{b} + \frac{1}{2} [\mathbf{a},\mathbf{b}] + \frac{1}{12} [\mathbf{a},[\mathbf{a},\mathbf{b}]] - \frac{1}{12} [\mathbf{b},[\mathbf{a},\mathbf{b}]] + \ldots$ consisting of iterated Lie brackets of at least one $\mathbf{a}$ and $\mathbf{b}$, that $\Vert (\operatorname{BCH}(\mathbf{a},\mathbf{b})-\mathbf{a})^{(n)} \Vert_{Z^{\otimes n}}$ can be bounded via $\Vert [\mathbf{a},\mathbf{b}] \Vert_{Z^{\otimes (l+m)}} = \Vert \mathbf{a} \otimes \mathbf{b} - \mathbf{b} \otimes \mathbf{a} \Vert_{Z^{\otimes (l+m)}} \leq 2 \Vert \mathbf{a} \Vert_{Z^{\otimes l}} \Vert \mathbf{b} \Vert_{Z^{\otimes m}}$ into products of $\Vert \mathbf{a}^{(j)} \Vert_{Z^{\otimes j}}$ and $\Vert \mathbf{b}^{(k)} \Vert_{Z^{\otimes k}}$, with $j,k \geq 1$, and the inequality~\eqref{eq:lem:sig_mfd:proof1}, it holds that
	\begin{equation}
		\label{eq:lem:sig_mfd:proof2}
		\begin{aligned}
			& \Vert \log^{\lfloor 1/\alpha \rfloor}(\mathbf{X}) \Vert_\alpha = \sup_{0 \leq s < t \leq T} \frac{\max_{n=1,\ldots,\lfloor 1/\alpha \rfloor} \left\Vert (\log^{\lfloor 1/\alpha \rfloor}(\mathbf{X}_s) - \log^{\lfloor 1/\alpha \rfloor}(\mathbf{X}_t))^{(n)} \right\Vert_{Z^{\otimes n}}}{\vert s-t \vert^\alpha} \\
			& \leq \sup_{0 \leq s < t \leq T} \frac{\max_{n=1,\ldots,\lfloor 1/\alpha \rfloor} \left\Vert \big( \operatorname{BCH}(\log^{\lfloor 1/\alpha \rfloor}(\mathbf{X}_t),\log^{\lfloor 1/\alpha \rfloor}(\mathbf{X}_t^{-1} \otimes \mathbf{X}_s))-\log^{\lfloor 1/\alpha \rfloor}(\mathbf{X}_t) \big)^{(n)} \right\Vert_{Z^{\otimes n}}}{\vert s-t \vert^\alpha} \\
			& \leq C_2 \max_{n=1,\ldots,\lfloor 1/\alpha \rfloor \atop \vert\mathbf{j}\vert+\vert\mathbf{k}\vert=n} \left( \prod_{\ell=1}^p \left\Vert \log^{\lfloor 1/\alpha \rfloor}(\mathbf{X}_t)^{(j_\ell)} \right\Vert_{Z^{\otimes j_\ell}} \right) \left( \prod_{\ell=1}^q \sup_{0 \leq s < t \leq T} \frac{\left\Vert \log^{\lfloor 1/\alpha \rfloor}(\mathbf{X}_t^{-1} \otimes \mathbf{X}_s)^{(k_\ell)} \right\Vert_{Z^{\otimes k_\ell}} }{\vert s-t \vert^{k_\ell\alpha}} \right) \\
			& \leq C_3 \max_{n=1,\ldots,\lfloor 1/\alpha \rfloor \atop \vert\mathbf{j}\vert+\vert\mathbf{k}\vert=n} \left( \prod_{\ell=1}^p \left( c_{j_\ell} \Vert \mathbf{X} \Vert_\alpha^{j_\ell} \vert s - t \vert^{j_\ell \alpha} \right) \right) \left( \prod_{\ell=1}^q \left( c_{k_\ell} \Vert \mathbf{X} \Vert_\alpha^{k_\ell} \right) \right) \\
			& \leq C_4 \max_{n=1,\ldots,\lfloor 1/\alpha \rfloor} \Vert \mathbf{X} \Vert_\alpha^n < \infty,
		\end{aligned}
	\end{equation}
	where $C_1,\ldots,C_4 > 0$ are constants. This together with $\log^{\lfloor 1/\alpha \rfloor}(\mathbf{X}_0) = \log^{\lfloor 1/\alpha \rfloor}(\mathbf{1}) = \mathbf{0}$ proves that $\log^{\lfloor 1/\alpha \rfloor}(\mathbf{X}) \in \mathfrak{C}^\alpha_T(Z)$. In order to show that $\phi_i := \log^{\lfloor 1/\alpha \rfloor}: (C^\alpha_T(Z),\tau_\infty) \rightarrow (\mathfrak{C}^\alpha_T(Z),\tau_\infty)$ is continuous, we fix some $\mathbf{e} := (\mathbf{e}^{(1)},\ldots,\mathbf{e}^{(\lfloor 1/\alpha \rfloor)}) \in \bigoplus_{n=1}^{\lfloor 1/\alpha \rfloor} E^{\otimes n}$. Then, by using the Baker-Campbell-Hausdorff formula and similar arguments as in \eqref{eq:lem:sig_mfd:proof1}--\eqref{eq:lem:sig_mfd:proof2} but now with the $w^*$-seminorms of $Z^{\otimes n}$, there exists a finite subset $\mathcal{E} \subseteq \bigoplus_{n=1}^{\lfloor 1/\alpha \rfloor} E^{\otimes n}$ such that for every $\mathbf{X},\mathbf{Z} \in C^\alpha_T(Z)$, we conclude that
	\begin{equation}
		\label{eq:lem:sig_mfd:proof3}
		\begin{aligned}
			& p_\mathbf{e}\left( \log^{\lfloor 1/\alpha \rfloor}(\mathbf{X})-\log^{\lfloor 1/\alpha \rfloor}(\mathbf{Z}) \right) \\
			& = \sup_{t \in [0,T]} \max_{n=1,\ldots,\lfloor 1/\alpha \rfloor} \left\vert \langle (\log^{\lfloor 1/\alpha \rfloor}(\mathbf{X})-\log^{\lfloor 1/\alpha \rfloor}(\mathbf{Z}))^{(n)}, \mathbf{e}_n \rangle_{Z^{\otimes n} \times E^{\otimes n}} \right\vert \\
			& \leq \sup_{t \in [0,T]} \max_{n=1,\ldots,\lfloor 1/\alpha \rfloor} \left\vert \big\langle \big( \operatorname{BCH}(\log^{\lfloor 1/\alpha \rfloor}(\mathbf{Z}_t),\log^{\lfloor 1/\alpha \rfloor}(\mathbf{Z}_t^{-1} \otimes \mathbf{X}_t)) - \log^{\lfloor 1/\alpha \rfloor}(\mathbf{Z}_t) \big)^{(n)}, \mathbf{e}_n \rangle_{Z^{\otimes n} \times E^{\otimes n}} \right\vert \\
			& \leq C_5 \sup_{t \in [0,T]} \max_{n=1,\ldots,\lfloor 1/\alpha \rfloor \atop \vert\mathbf{j}\vert+\vert\mathbf{k}\vert = n} \left( \prod_{\ell=1}^p \left\vert \langle \log^{\lfloor 1/\alpha \rfloor}(\mathbf{Z}_t)^{(j_\ell)}, \mathbf{e}^{(\mathbf{j},\mathbf{k})}_{j,\ell} \rangle \right\vert \right) \left( \prod_{\ell=1}^q \left\vert \langle \log^{\lfloor 1/\alpha \rfloor}(\mathbf{Z}_t^{-1} \otimes \mathbf{X}_t)^{(k_\ell)}, \widetilde{\mathbf{e}}^{(\mathbf{j},\mathbf{k})}_{j,\ell} \rangle \right\vert \right) \\
			& \leq C_6 \sup_{t \in [0,T]} \max_{n=1,\ldots,\lfloor 1/\alpha \rfloor \atop \vert\mathbf{j}\vert+\vert\mathbf{k}\vert = n} \left( \prod_{\ell=1}^p \max_{\vert\mathbf{r}\vert=j_\ell} \prod_{m=1}^M \left\vert \langle \mathbf{Z}_t^{(r_m)}, \mathbf{e}^{(\mathbf{j},\mathbf{k},\mathbf{r})}_{l,m} \rangle \right\vert \right) \left( \prod_{\ell=1}^q \max_{\vert\mathbf{s}\vert=k_\ell} \left\vert \langle (\mathbf{Z}_t^{-1} \otimes \mathbf{X}_t)^{(s_m)}, \widetilde{\mathbf{e}}^{(\mathbf{j},\mathbf{k},\mathbf{s})}_{l,m} \rangle \right\vert \right) \\
			& \leq C_7 \left( \sup_{t \in [0,T]} \Vert \mathbf{Z}_t \Vert_{G^{\lfloor 1/\alpha \rfloor}(Z)} \right) \max_{k=1,\ldots\lfloor 1/\alpha \rfloor} \max_{\mathbf{e} \in \mathcal{E}} \left( \sup_{t \in [0,T]} \left\vert \langle (\mathbf{Z}_t^{-1} \otimes \mathbf{X}_t)^{(k)}, \mathbf{e}_k \rangle_{Z^{\otimes k} \times E^{\otimes k}} \right\vert^\frac{1}{k} \right)^k \\
			& \leq C_7 \left( \sup_{t \in [0,T]} \Vert \mathbf{Z}_t \Vert_{G^{\lfloor 1/\alpha \rfloor}(Z)} \right) \max_{k=1,\ldots\lfloor 1/\alpha \rfloor} \max_{\mathbf{e} \in \mathcal{E}} d_{\infty,\mathbf{e}}(\mathbf{Z},\mathbf{X})^k,
		\end{aligned}
	\end{equation}
	where $C_5,C_6,C_7 > 0$ are constants. This proves that $\phi_i := \log^{\lfloor 1/\alpha \rfloor}: (C^\alpha_T(Z),\tau_\infty) \rightarrow (\mathfrak{C}^\alpha_T(Z),\tau_\infty)$ is continuous at $\mathbf{Z} \in C^\alpha_T(Z)$. Conversely, in order to show that $\phi_i^{-1} := \exp^{\lfloor 1/\alpha \rfloor}: \! (\phi_i(C^\alpha_T(Z)),\tau_\infty) \!\rightarrow (C^\alpha_T(Z),\tau_\infty)$ is continuous, we use again the Baker-Campbell-Hausdorff formula and similar arguments as in \eqref{eq:lem:sig_mfd:proof3} to obtain that for every $\mathbf{e} := (\mathbf{e}^{(1)},\ldots,\mathbf{e}^{(\lfloor 1/\alpha \rfloor)}) \in \bigoplus_{n=1}^{\lfloor 1/\alpha \rfloor} E^{\otimes n}$ there exists a finite subset $\mathcal{E} \subseteq \bigoplus_{n=1}^{\lfloor 1/\alpha \rfloor} E^{\otimes n}$ such that for every $\mathbf{Y},\mathbf{Z} \in \phi_i(C^\alpha_T(Z))$ it holds that
	\begin{equation}
		\begin{aligned}
			& d_{\infty,\mathbf{e}}(\exp^{\lfloor 1/\alpha \rfloor}(\mathbf{Z}),\exp^{\lfloor 1/\alpha \rfloor}(\mathbf{Y})) \\
			& = \sup_{t \in [0,T]} \max_{n=1,\ldots,\lfloor 1/\alpha \rfloor} \left\vert \big\langle (\exp^{\lfloor 1/\alpha \rfloor}(\mathbf{Z}_t)^{-1} \otimes \exp^{\lfloor 1/\alpha \rfloor}(\mathbf{Y}_t))^{(n)}, \mathbf{e}_n \big\rangle_{Z^{\otimes n} \times E^{\otimes n}} \right\vert^\frac{1}{n} \\
			& = \sup_{t \in [0,T]} \max_{n=1,\ldots,\lfloor 1/\alpha \rfloor} \left\vert \big\langle \exp^{\lfloor 1/\alpha \rfloor}(\operatorname{BCH}(-\mathbf{Z}_t,\mathbf{Y}_t))^{(n)}, \mathbf{e}_n \big\rangle_{Z^{\otimes n} \times E^{\otimes n}} \right\vert^\frac{1}{n} \\
			& \leq C_8 \sup_{t \in [0,T]} \max_{n=1,\ldots,\lfloor 1/\alpha \rfloor} \left\vert \langle \operatorname{BCH}(-\mathbf{Z}_t,\mathbf{Y}_t)^{(n)}, \mathbf{e}_n \rangle_{Z^{\otimes n} \times E^{\otimes n}} \right\vert^\frac{1}{n} \\
			& \leq C_9 \left( \sup_{t \in [0,T]} \Vert \mathbf{Z}_t \Vert_{T^{\lfloor 1/\alpha \rfloor}(Z)} \right) \max_{n=1,\ldots\lfloor 1/\alpha \rfloor} \max_{\mathbf{e} \in \mathcal{E}} \left( \sup_{t \in [0,T]} \left\vert \langle (\mathbf{Y}_t - \mathbf{Z}_t)^{(n)}, \mathbf{e}_n \rangle_{Z^{\otimes n} \times E^{\otimes n}} \right\vert \right)^\frac{1}{n} \\
			& \leq C_9 \left( \sup_{t \in [0,T]} \Vert \mathbf{Z}_t \Vert_{T^{\lfloor 1/\alpha \rfloor}(Z)} \right) \max_{n=1,\ldots\lfloor 1/\alpha \rfloor} \max_{\mathbf{e} \in \mathcal{E}} p_{\mathbf{e}}(\mathbf{Y} - \mathbf{Z})^\frac{1}{n},
		\end{aligned}
	\end{equation}
	where $C_8, C_9 > 0$ are some constants. This proves that $\phi_i^{-1} := \exp^{\lfloor 1/\alpha \rfloor}: (\phi_i(C^\alpha_T(Z)),\tau_\infty) \rightarrow (C^\alpha_T(Z),\tau_\infty)$ is continuous at $\mathbf{Z} \in \phi_i(C^\alpha_T(Z))$. Hence, $\phi_i := \log^{\lfloor 1/\alpha \rfloor}: (C^\alpha_T(Z),\tau_\infty) \rightarrow (\mathfrak{C}^\alpha_T(Z),\tau_\infty)$ is a homeomorphism onto its image, which shows that $(C^\alpha_T(Z),\tau_\infty)$ is a $C^\infty_{loc}$-manifold.
	
	Finally, we show that the collection of push-forward weight functions $\Psi_i = (\psi_{i,j})_{j=0,\ldots,k}$ defined in \eqref{eq:sig_weight} is admissible. Since the identity $\Gamma: (\mathfrak{C}^\alpha_T(Z),\Vert \cdot \Vert_\alpha) \hookrightarrow (\mathfrak{C}^\alpha_T(Z),\tau_\infty)$ is a compact embedding (see \cite[Theorem~A.4]{cuchiero23}), we can follow the proof of Lemma~\ref{lem:w_dom}~\ref{lem:w_dom_cpt} to conclude that $\Psi_i := (\psi_{i,j})_{j=0,\ldots,k}$ is admissible, which shows that $(C^\alpha_T(Z),\Psi)$ is a weighted $C^k_{loc}$-manifold.
\end{proof}

In order to ensure point separation for the application of the weighted Nachbin theorem (Theorem~\ref{thm:w_nachbin_infdim}), we need to ensure that the signature (at terminal time) uniquely determines the path (up to so-called tree-like equivalences, see \cite{hambly10,boedihardjo16}). To this end, we define the subspace
\begin{equation}
	\widehat{C}^\alpha_T(Z) := \left\lbrace \widehat{\mathbf{X}} \in C^\alpha_T(\widehat{Z}): \widehat{X}_t = (t,X_t) \text{ for all } t \in [0,T] \text{ and some } \mathbf{X} \in C^\alpha_T(Z) \right\rbrace \subseteq C^\alpha_T(\widehat{Z}),
\end{equation}
equipped with the subspace topology of $(C^\alpha_T(\widehat{Z}),\tau_\infty)$, where running time is now added. Then,
\begin{equation}
	\label{eq:sig_time_ext}
	C^\alpha_T(Z) \ni \mathbf{X} \quad \mapsto \quad \widehat{\mathbf{X}} := \left( t \mapsto \exp^{\lfloor 1/\alpha \rfloor}(t \mathbf{a}_0) \otimes \iota(\mathbf{X}_t) \right) \in \widehat{C}^\alpha_T(Z)
\end{equation}
is a $C^\infty_{loc}$-diffeomorphism, where $\mathbf{a}_0 := (0,(1,0),0,\ldots,0) \in T^{\lfloor 1/\alpha \rfloor}_0(\widehat{Z})$ and where $\iota: G^{\lfloor 1/\alpha \rfloor}(Z) \hookrightarrow G^{\lfloor 1/\alpha \rfloor}(\widehat{Z})$ is the canonical embedding, where $\widehat{Z} := \mathbb{R} \oplus Z$ and $\widehat{E} := \mathbb{R} \oplus E$. Its inverse is given by
\begin{equation}
	\label{eq:sig_time_red}
	\widehat{C}^\alpha_T(Z) \ni \widehat{\mathbf{X}} \quad \mapsto \quad \mathbf{X} := \left( t \mapsto \pi\big( \exp^{\lfloor 1/\alpha \rfloor}(-t \mathbf{a}_0) \otimes \widehat{\mathbf{X}}_t \big) \right) \in C^\alpha_T(Z),
\end{equation}
where $\pi: G^{\lfloor 1/\alpha \rfloor}(\widehat{Z}) \rightarrow G^{\lfloor 1/\alpha \rfloor}(Z) $ is the canonical projection. Hence, we define the collection $\widehat{\Psi} := (\widehat{\psi}_j)_{j=0,\ldots,k}$ of weight functions $\widehat{\psi}_j: T^j \widehat{C}^\alpha_T(Z) \rightarrow (0,\infty)$ by
\begin{equation}
	\label{eq:sig_te_weight}
	\widehat{\psi}_j(\widehat{\mathbf{X}},[c]^j_{\widehat{\mathbf{X}}}) := \exp\left( \beta_1 \max(j,1) \Vert \log^{\lfloor 1/\alpha \rfloor}(\mathbf{X}) \Vert_\alpha^{c_1} + \beta_2 \sum_{\ell=1}^j \Vert (\widehat{\phi}_i \circ c)^{(\ell)}(0) \Vert_\alpha^{c_2} \right),
\end{equation}
for $(\widehat{\mathbf{X}},[c]^j_{\widehat{\mathbf{X}}}) \in T^j \widehat{C}^\alpha_T(Z)$, with $\beta_1,\beta_2 > 0$, $c_1 \geq \lfloor 1/\alpha \rfloor$, and $c_2 > 0$, where $\widehat{\phi}_i: \widehat{C}^\alpha_T(Z) \rightarrow \mathfrak{C}^\alpha_T(Z)$ is defined in \eqref{eq:sig_te_chart} below. Note that the corresponding push-forward weights $\widehat{\Psi}_i := (\widehat{\psi}_{i,j})_{j=0,\ldots,k}$ coincide with $\Psi_i := (\psi_{i,j})_{j=0,\ldots,k}$ defined in \eqref{eq:sig_weight_pfwd} since $\widehat{\psi}_{i,j} = \widehat{\psi}_j \circ \widehat{\Phi}_i^{-j} = \psi_j \circ \Phi_i^j = \psi_{i,j}$, for all $j = 0,\ldots,k$. Hence, $(\widehat{C}^\alpha_T(Z),\Psi)$ is also a weighted $C^k_{loc}$-manifold.

\begin{lemma}
	Let $\alpha \in (0,1]$ and assume that the predual $(E,\Vert \cdot \Vert_E)$ of $(Z,\Vert \cdot \Vert_Z)$ has BAP. Then, $(\widehat{C}^\alpha_T(Z),\widehat{\Psi})$ is a weighted $C^k_{loc}$-manifold over the model space $(\mathfrak{C}^\alpha_T(Z),\tau_\infty)$ with global chart
	\begin{equation}
		\label{eq:sig_te_chart}
		\widehat{C}^\alpha_T(Z) \ni \widehat{\mathbf{X}} \quad \mapsto \quad \widehat{\phi}_i(\widehat{\mathbf{X}}) := \log^{\lfloor 1/\alpha \rfloor}(\mathbf{X}) \in \mathfrak{C}^\alpha_T(Z)
	\end{equation}
	whose inverse is given by
	\begin{equation}
		\label{eq:sig_te_chart_inv}
		\widehat{\phi}_i(\widehat{C}^\alpha_T(Z)) \ni \mathbf{Y} \quad \mapsto \quad \widehat{\phi}_i^{-1}(\mathbf{Y}) := \reallywidehat{\exp^{\lfloor 1/\alpha \rfloor}(\mathbf{Y})} \in \widehat{C}^\alpha_T(Z).
	\end{equation}
\end{lemma}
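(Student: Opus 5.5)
The plan is to reduce everything to Lemma~\ref{lem:sig_w_mfd} by factoring the proposed chart through the time-extension map. Write $\Theta: C^\alpha_T(Z) \rightarrow \widehat{C}^\alpha_T(Z)$ for the map \eqref{eq:sig_time_ext} and $\Theta^{-1}$ for its inverse \eqref{eq:sig_time_red}. By construction, $\widehat{\phi}_i = \phi_i \circ \Theta^{-1}$ with $\phi_i = \log^{\lfloor 1/\alpha \rfloor}$ as in \eqref{eq:sig_chart}, and $\widehat{\phi}_i^{-1} = \Theta \circ \phi_i^{-1}$, which is exactly \eqref{eq:sig_te_chart_inv}. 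In particular $\widehat{\phi}_i(\widehat{C}^\alpha_T(Z)) = \phi_i(C^\alpha_T(Z)) \subseteq \mathfrak{C}^\alpha_T(Z)$, so the model space and the image of the chart are the same as in Lemma~\ref{lem:sig_w_mfd}.

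The first step is to show that $\Theta$ is a homeomorphism between $(C^\alpha_T(Z),\tau_\infty)$ and $(\widehat{C}^\alpha_T(Z),\tau_\infty)$, where the latter carries the subspace topology of $C^\alpha_T(\widehat{Z})$.
\begin{itemize}
\item Well-definedness: $\widehat{\mathbf{X}}_t = \exp(t\mathbf{a}_0)\otimes\iota(\mathbf{X}_t)$ takes values in $G^{\lfloor 1/\alpha \rfloor}(\widehat{Z})$. Its increments are $\widehat{\mathbf{X}}_s^{-1}\otimes\widehat{\mathbf{X}}_t$. By Chen's relation, every level of $\widehat{\mathbf{X}}_s^{-1}\otimes\widehat{\mathbf{X}}_t$ is a finite sum of tensor products of increments of the time component $t$ and of $\mathbf{X}$, possibly shuffled. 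Hence $\Vert \widehat{\mathbf{X}} \Vert_\alpha \lesssim 1+T^{1-\alpha}+\Vert \mathbf{X} \Vert_\alpha$, and $\widehat{\mathbf{X}}$ is $\alpha$-H\"older.
\item Continuity of $\Theta$: each pairing $\langle \widehat{\mathbf{X}}_t^{(n)},\widehat{\mathbf{e}}_n \rangle$ with $\widehat{\mathbf{e}}_n \in \widehat{E}^{\otimes n}$ is a polynomial in $t$ and in finitely many pairings $\langle \mathbf{X}_t^{(m)}, \mathbf{e} \rangle$ with $\mathbf{e} \in E^{\otimes m}$. On $[0,T]$ the coefficients in $t$ are uniformly bounded. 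So each $\widehat{\mathbf{e}}$-seminorm on $\widehat{C}^\alpha_T(Z)$ is controlled by finitely many $w^*$-seminorms on $C^\alpha_T(Z)$, by the same argument as \eqref{eq:lem:sig_mfd:proof3}.
\item Continuity of $\Theta^{-1}$: the same argument applies to \eqref{eq:sig_time_red}, since $\pi$ is the restriction to $Z$-letters and multiplication by $\exp(-t\mathbf{a}_0)$ is polynomial in $t$.
\end{itemize}
Composing with the homeomorphism $\phi_i$ from Lemma~\ref{lem:sig_w_mfd}, $\widehat{\phi}_i$ is a homeomorphism onto $\phi_i(C^\alpha_T(Z))$. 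A single global chart has no transition maps to check, so $(\widehat{C}^\alpha_T(Z),\tau_\infty)$ is a $C^\infty_{loc}$-manifold.

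The second step is admissibility of the weights. By Definition~\ref{def:w_mfd}, we need the push-forward weights $\widehat{\psi}_{i,j} = \widehat{\psi}_j \circ \widehat{\Phi}_i^{-j}$ to be admissible on $\widehat{\phi}_i(\widehat{C}^\alpha_T(Z))$. Evaluate \eqref{eq:sig_te_weight} at $\widehat{\Phi}_i^{-j}(\mathbf{Y},\mathbf{V}_1,\ldots,\mathbf{V}_j)$:
\begin{itemize}
\item the underlying path is $\widehat{\mathbf{X}} = \Theta(\exp(\mathbf{Y}))$, so $\log^{\lfloor 1/\alpha \rfloor}(\mathbf{X}) = \mathbf{Y}$;
\item the representing curve $c$ satisfies $(\widehat{\phi}_i \circ c)^{(\ell)}(0) = \mathbf{V}_\ell$.
\end{itemize}
Therefore $\widehat{\psi}_{i,j}$ equals $\psi_{i,j}$ from \eqref{eq:sig_weight_pfwd}. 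That collection was already shown in the proof of Lemma~\ref{lem:sig_w_mfd} to be admissible on $\phi_i(C^\alpha_T(Z))$, using the compact embedding $(\mathfrak{C}^\alpha_T(Z),\Vert\cdot\Vert_\alpha)\hookrightarrow(\mathfrak{C}^\alpha_T(Z),\tau_\infty)$ and Lemma~\ref{lem:w_dom}~\ref{lem:w_dom_cpt}. This gives the claim.

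The main obstacle is the first step, specifically the topological compatibility of $\Theta$ with the $w^*$-uniform topologies. The time-extended path lives over $\widehat{Z} = \mathbb{R}\oplus Z$, and its tensor levels mix time and space letters, so the predual elements in $\widehat{E}^{\otimes n}$ must be decomposed carefully into mixed words. Because $T$ is finite and the time component is smooth, I expect a Chen/BCH bookkeeping argument mirroring \eqref{eq:lem:sig_mfd:proof3} to settle it.
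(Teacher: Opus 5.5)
Your proposal is correct and follows the paper's proof: write $\widehat{\phi}_i=\phi_i\circ\Theta^{-1}$, use that the time extension and $\log^{\lfloor 1/\alpha\rfloor}$ are homeomorphisms, and observe that the push-forward weights $\widehat{\Psi}_i$ coincide with $\Psi_i$, so admissibility is inherited from Lemma~\ref{lem:sig_w_mfd}. The paper simply takes the homeomorphism property of $\Theta$ as given, and your extra verification of it can be shortened considerably: $\Theta(\mathbf X)_t^{-1}\otimes\Theta(\mathbf Y)_t=\iota(\mathbf X_t^{-1}\otimes\mathbf Y_t)$, so the defining semi-metrics of $\tau_\infty$ pull back exactly under $\Theta$ and no BCH bookkeeping is needed.
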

\begin{proof}
	By using that \eqref{eq:sig_time_red} is a homeomorphism (with inverse \eqref{eq:sig_time_ext}) and $\phi_i = \log^{\lfloor 1/\alpha \rfloor}: (C^\alpha_T(Z),\tau_\infty) \rightarrow (\mathfrak{C}^\alpha_T(Z),\tau_\infty)$ in \eqref{eq:sig_chart} is a homeomorphism onto its image (with inverse $\phi_i^{-1} = \exp^{\lfloor 1/\alpha \rfloor}: (\phi_i(C^\alpha_T(Z)),\tau_\infty) \rightarrow (C^\alpha_T(Z),\tau_\infty)$ in \eqref{eq:sig_chart_inv}), we conclude that $\widehat{\phi}_i: (\widehat{C}^\alpha_T(Z),\tau_\infty) \rightarrow (\mathfrak{C}^\alpha_T(Z),\tau_\infty)$ in \eqref{eq:sig_te_chart} is also a homeomorphism onto its image (with inverse \eqref{eq:sig_te_chart_inv}), which shows that $(\widehat{C}^\alpha_T(Z),\Psi)$ is a $C^k_{loc}$-manifold over $(\mathfrak{C}^\alpha_T(Z),\tau_\infty)$. Moreover, by using that the push-forward weights $\widehat{\Psi}_i = (\widehat{\psi}_{i,j})_{j=0,\ldots,k}$ coincide with $\Psi_i = (\psi_{i,j})_{j=0,\ldots,k}$, we can follow the proof of Lemma~\ref{lem:sig_w_mfd} (invoking Lemma~\ref{lem:w_dom}~\ref{lem:w_dom_cpt}) to obtain that $(\widehat{C}^\alpha_T(Z),\widehat{\Psi})$ is a weighted $C^k_{loc}$-manifold.
\end{proof}

\subsection{Weighted universal approximation for differentiable functionals of rough paths}
\label{sec:sig_uat}

We now present the universal approximation theorem (UAT) for linear functions of the
signature, which can approximate any path space functional in $\mathcal{B}^k_{\widehat{\Psi}}(\widehat{C}^\alpha_T(Z))$ introduced in Section~\ref{sec:BPsik_maps}. In order to show their universality, we apply the weighted Nachbin theorem (Theorem~\ref{thm:w_nachbin_infdim}) over the infinite-dimensional weighted $C^k_{loc}$-manifold $(\widehat{C}^\alpha_T(Z),\widehat{\Psi})$ consisting of (time-extended) weakly $\alpha$-H\"older rough paths with values in a dual Banach space $(Z,\Vert \cdot \Vert_Z)$ having predual $(E,\Vert \cdot \Vert_E)$.

The application of the weighted Nachbin theorem (Theorem~\ref{thm:w_nachbin_infdim}) relies on the following properties of the signature. By using the Magnus expansion of the log-signature, we show that linear functions of the signature are $C^k_{\mathrm{loc}}$-maps on $(\widehat{C}^\alpha_T(Z),\tau_\infty)$ with appropriate growth conditions. Moreover, the time-extension ensures that the signature (at terminal time) uniquely determines the path (up to so-called tree-like equivalences, see \cite{hambly10,boedihardjo16}), which ensures point separation. Third, the shuffle product can be used to express any product of linear functions of the signature as another linear function of the signature, which asserts the algebra property.

\begin{theorem}[Universal approximation on $\mathcal{B}^k_{\widehat{\Psi}}(\widehat{C}^\alpha_T(Z))$]
	\label{thm:sig_uat}
	Let $\alpha \in (0,1]$ and assume that the predual $(E,\Vert \cdot \Vert_E)$ of $(Z,\Vert \cdot \Vert_Z)$ has BAP. Then, the linear span of the set
	\begin{equation}
		\left\lbrace \widehat{C}^\alpha_T(Z) \ni \widehat{\mathbf{X}} \mapsto \langle S^{(n)}(\widehat{\mathbf{X}})_T, \widehat{e}_n \rangle_{\widehat{Z}^{\otimes n} \times \widehat{E}^{\otimes n}} \in \mathbb{R}: \widehat{e}_n \in \widehat{E}^{\otimes n}, \, n \in \mathbb{N}_0 \right\rbrace
	\end{equation}
	is a dense subset of $\mathcal{B}^k_{\widehat{\Psi}}(\widehat{C}^\alpha_T(Z))$, i.e., for every $f \in \mathcal{B}^k_{\widehat{\Psi}}(\widehat{C}^\alpha_T(Z))$ and $\varepsilon > 0$ there exists some $N \in \mathbb{N}$ and a linear function $\mathbf{a} \mapsto \ell(\mathbf{a}) := \sum_{n=0}^N \langle \mathbf{a}^{(n)}, \widehat{e}_n \rangle_{\widehat{Z}^{\otimes n} \times \widehat{E}^{\otimes n}}$, with $\widehat{e}_n \in \widehat{E}^{\otimes n}$, such that
	\begin{equation}
		\max_{j=0,\ldots,k} \sup_{(\widehat{\mathbf{X}},[c]^j_{\widehat{\mathbf{X}}}) \in T^j \widehat{C}^\alpha_T(Z)} \frac{\left\vert (f \circ c)^{(j)}(0) - (\ell \circ S(\cdot)_T \circ c)^{(j)}(0) \right\vert}{\widehat{\psi}_j(\widehat{\mathbf{X}},[c]^j_{\widehat{\mathbf{X}}})} < \varepsilon.
	\end{equation}
\end{theorem}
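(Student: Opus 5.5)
The plan is to apply the weighted Nachbin theorem over infinite-dimensional manifolds (Theorem~\ref{thm:w_nachbin_infdim}) to the weighted $C^k_{loc}$-manifold $(\widehat{C}^\alpha_T(Z),\widehat{\Psi})$. We take
\[
\mathcal{G} := \linspan\big\lbrace \widehat{\mathbf{X}} \mapsto \langle S^{(n)}(\widehat{\mathbf{X}})_T, \widehat{e}_n \rangle : \widehat{e}_n \in \widehat{E}^{\otimes n}, \, n \in \mathbb{N}_0 \big\rbrace .
\]
The first step is to show that $\mathcal{G}$ is a subalgebra. By the shuffle product property \eqref{eq:sig_shuffle_prop}, applied to the truncated signature $S^N(\widehat{\mathbf{X}})_T \in G^N(\widehat Z)$ with $N \geq m+n$, every product of two generators is again a generator. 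Constants come from $n=0$.

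The second step, which I expect to be the main obstacle, is to show $\mathcal{G} \subseteq \mathcal{B}^k_{\widehat{\Psi}}(\widehat{C}^\alpha_T(Z))$ together with the growth condition \ref{M5'}. In the chart $\widehat{\phi}_i$, a generator becomes $\mathbf{Y} \mapsto \langle S^{(n)}(\widehat{\exp(\mathbf{Y})})_T,\widehat{e}_n\rangle$. I would first verify that the Lyons extension is a continuous map in the $w^*$-uniform topology on $\Vert\cdot\Vert_\alpha$-bounded sets. Then I would argue polynomiality: via the Magnus/BCH expansion of the log-signature, the map $\mathbf{Y} \mapsto S^{(n)}$ is a polynomial of finite degree in the level components of $\mathbf{Y}$, and so is $C^\infty_{loc}$. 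Differentiating this polynomial gives, for every $j$,
\[
|d^j g(\mathbf{Y};\mathbf{V}_1,\ldots,\mathbf{V}_j)| \leq C\,(1+\Vert\mathbf{Y}\Vert_\alpha)^{n}\prod_{\ell}(1+\Vert\mathbf{V}_\ell\Vert_\alpha)^{n}.
\]
Since $\widehat{\psi}$ is exponential, with $c_1\geq\lfloor 1/\alpha\rfloor$ and $\beta_1,\beta_2>0$, polynomial growth is dominated. For \ref{M5'} we additionally need $\exp(\lambda|g|)$ with small $\lambda$ to be dominated. Here I would use the factorial decay bound $\Vert S^{(n)}\Vert \leq C^n\Vert\mathbf{X}\Vert_\alpha^{n\alpha\cdot}/ (n\alpha)!$, so that $|g| \lesssim (1+\Vert \mathbf{Y}\Vert_\alpha)^{\lfloor 1/\alpha\rfloor\cdot n\alpha}$ is of lower order than $\Vert\mathbf{Y}\Vert_\alpha^{c_1}$ in the exponent. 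If this margin fails for large $n$, the fallback is to take $\widetilde{\mathcal{G}}$ to be only the linear span of the generators with $n\leq\lfloor 1/\alpha\rfloor$ (levels 1 to $\lfloor 1/\alpha\rfloor$, which are linear in $\mathbf{X}$ up to BCH), together with constants. This fallback $\widetilde{\mathcal{G}}$ still satisfies \ref{M1}–\ref{M4'}, since these are conditions on $\widetilde{\mathcal{G}}$ and not on $\mathcal{G}$. Membership in the closure of $C^k_b$ then follows from Proposition~\ref{prop:BPsik_equiv}~\ref{prop:BPsik_equiv:2}, using the BAP of $(\mathfrak{C}^\alpha_T(Z),\tau_\infty)$ obtained from the BAP of $E$ as in Lemma~\ref{lem:ap_dual} and Theorem~\ref{thm:C0_bap}.

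The third step is to verify \ref{M1}–\ref{M4'} for $\widetilde{\mathcal{G}}$.
\begin{itemize}
\item[\ref{M1}] Point separation holds because time-extended paths have trivial tree-like equivalence classes, so $S(\widehat{\mathbf{X}})_T$ determines $\widehat{\mathbf{X}}$; by Hahn–Banach, some $\widehat e_n$ then separates two distinct paths.
\item[\ref{M2}] Nowhere vanishing is given by the constant $n=0$.
\item[\ref{M3}] For nowhere vanishing derivatives, the derivative at $\mathbf{Y}$ in direction $\mathbf{V}\neq 0$ of the functionals $\langle S^{(n)}_T,\cdot\rangle$ with time-weighted words, e.g.\ $e\otimes \mathbf{1}_{t}^{\otimes m}$, recovers $\int_0^T t^m\langle dV_t,e\rangle$. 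These integrals cannot all vanish for $V\neq0$.
\item[\ref{M4'}] On the finite-dimensional range of $T_\gamma$, finitely many such functionals give a linear-up-to-polynomial-isomorphism embedding with bounded derivatives on compacta. Remark~\ref{rem:psi_mod_growth} then supplies the cutoff estimate.
\end{itemize}

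The final step is condition \ref{thm:w_nachbin_infdim:2}. For $g\in\mathcal{G}$, the composition $g\circ\widehat{\phi}_i^{-1}\circ T_\gamma$ is a polynomial of finite type in the finitely many linear functionals defining $T_\gamma=Q_\gamma^*$. By the same approximation via time-weighted words, each such linear functional $\mathbf{Y}\mapsto\langle \mathbf{Y}_{t},e\rangle$ lies in the $\mathcal{B}^k_{\Psi}$-closure of $\mathcal{G}$. Since $\mathcal{G}$ is an algebra and the weights are exponential, the closure is closed under products, which would give \ref{thm:w_nachbin_infdim:2}. Theorem~\ref{thm:w_nachbin_infdim} then yields density. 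The displayed formula in the statement is just the norm of $\mathcal{B}^k_{\widehat{\Psi}}$ written intrinsically via $(f\circ c)^{(j)}(0)$ and the chart.
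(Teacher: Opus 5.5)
Your overall architecture matches the paper's. You apply Theorem~\ref{thm:w_nachbin_infdim} to the span $\mathcal G$, use the shuffle product for the algebra property, use Proposition~\ref{prop:BPsik_equiv}~\ref{prop:BPsik_equiv:2} for $\mathcal G\subseteq\mathcal B^k_{\widehat\Psi}$, and use evaluation functionals for condition~\ref{thm:w_nachbin_infdim:2}. However, the argument breaks at the choice of $\widetilde{\mathcal G}$, and neither of your two options works.

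\emph{Option $\widetilde{\mathcal G}=\mathcal G$.} Here \ref{M5'} fails. Factorial decay improves constants, not the degree. By the shuffle identity, $\langle S^{(n)}(\widehat{\mathbf X})_T,(0,e)^{\otimes n}\rangle=\langle \mathbf Y^{(1)}_T,e\rangle^n/n!$. Along canonical lifts of $r f(\cdot)z_0$ (with $f$ scalar) this grows like $r^n$, while $\psi_{i,0}=\exp(\beta_1\|\mathbf Y\|_\alpha^{c_1})$ grows like $\exp(\mathrm{const}\cdot r^{c_1})$. For $n>c_1$, no $\lambda>0$ makes $\exp(\lambda|g|)/\psi_{i,0}$ vanish at infinity. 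Your own exponent $\lfloor 1/\alpha\rfloor n\alpha$ also grows linearly in $n$, so it does not help.

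\emph{Fallback option (generators with $n\le\lfloor 1/\alpha\rfloor$ plus constants).} Every such function depends only on the endpoint $\widehat{\mathbf X}_T\in G^{\lfloor 1/\alpha\rfloor}(\widehat Z)$. So two rough paths with the same endpoint are not separated, and \ref{M1} fails. All differentials also vanish in directions $\mathbf V$ with $\mathbf V_T=0$, so \ref{M3} fails. Your arguments for \ref{M1} (uniqueness of the full signature) and for \ref{M3} (words $e\otimes(1,0)^{\otimes m}$ of unbounded level) use generators outside this fallback space. Hence your claim that the fallback still satisfies \ref{M1}--\ref{M4'} is false.

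The missing idea is to decouple the signature level from the polynomial degree in $\mathbf Y$ by padding with time letters. The paper takes $\widetilde{\mathcal G}$ spanned by
$\widehat{\mathbf X}\mapsto\langle S^{(n+k+1)}(\widehat{\mathbf X})_T,((0,e_n)\shuffle(1,0)^{\otimes k})\otimes(1,0)\rangle$, with $n\le\lfloor 1/\alpha\rfloor$, $e_n\in E^{\otimes n}$ and $k\in\mathbb N_0$ arbitrary. By the shuffle property this equals $\int_0^T\langle \mathbf X^{(n)}_t,e_n\rangle\frac{t^k}{k!}\,dt$.
\begin{itemize}
\item These functions have unbounded signature level but degree at most $\lfloor 1/\alpha\rfloor$ in $\mathbf Y$. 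So \ref{M5'} holds with $\lambda$ a small multiple of $\beta_1$, precisely because $c_1\ge\lfloor 1/\alpha\rfloor$.
\item Polynomials in $t$ are weakly dense in $L^1([0,T])$, so these functions recover $t\mapsto\langle \mathbf X^{(n)}_t,e_n\rangle$ for every $n\le\lfloor1/\alpha\rfloor$, i.e.\ the whole rough path. This gives \ref{M1} elementarily, with no tree-like uniqueness theorem needed.
\item Differentiating the same functions gives \ref{M3}, including directions $\mathbf V$ that live only in higher levels.
\end{itemize}
With this $\widetilde{\mathcal G}$, the rest of your plan runs as in the paper.

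Two smaller points.
\begin{itemize}
\item In your last step, the $\mathcal B^k_{\widehat\Psi}$-closure of an algebra is not automatically closed under products, because $\mathcal B^k_{\widehat\Psi}$ is not an algebra. What you need is that the approximants of the evaluation functionals converge with errors of polynomial size in $\|\mathbf Y\|_\alpha$, which the exponential weights then absorb.
\item The uniform bounds required in \eqref{eq:prop:BPsik_equiv:3} come from Corollary~\ref{cor:hol_bap} and Remark~\ref{rem:hol_bap0}, not from Lemma~\ref{lem:ap_dual} and Theorem~\ref{thm:C0_bap}.
\end{itemize}
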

\begin{proof}
	We aim to apply the weighted Nachbin theorem (Theorem~\ref{thm:w_nachbin_infdim}) to
	\begin{equation}
		\label{eq:thm:sig_uat:proof1}
		\mathcal{G} := \linspan\left(\left\lbrace \widehat{C}^\alpha_T(Z) \ni \widehat{\mathbf{X}} \mapsto \langle S^{(n)}(\widehat{\mathbf{X}})_T, \widehat{e}_n \rangle_{\widehat{Z}^{\otimes n} \times \widehat{E}^{\otimes n}} \in \mathbb{R}: \widehat{e}_n \in \widehat{E}^{\otimes n}, \, n \in \mathbb{N}_0 \right\rbrace\right).
	\end{equation}
	To this end, we need to show that $\mathcal{G} \subseteq \mathcal{B}^k_{\widehat{\Psi}}(\widehat{C}^\alpha_T(Z))$ is a subalgebra satisfying the conditions \ref{thm:w_nachbin_infdim:1}--\ref{thm:w_nachbin_infdim:2} of Theorem~\ref{thm:w_nachbin_infdim}, where
	\begin{equation}
		\label{eq:thm:sig_uat:proof2}
		\begin{aligned}
			\widetilde{\mathcal{G}} & := \linspan\Big(\Big\lbrace \widehat{C}^\alpha_T(Z) \ni \widehat{\mathbf{X}} \mapsto \big\langle S^{(n+k+1)}(\widehat{\mathbf{X}})_T, \left( (0,e_n) \shuffle (1,0)^{\otimes k} \right) \otimes (1,0) \big\rangle_{\widehat{Z}^{\otimes (n+k+1)},\widehat{E}^{\otimes (n+k+1)}} \in \mathbb{R}: \\
			& \quad\quad\quad\quad\quad\quad e_n \in E^{\otimes n}, n \in \lbrace 0,\ldots,\lfloor 1/\alpha \rfloor \rbrace, k \in \mathbb{N}_0 \Big\rbrace\Big) \subseteq \mathcal{G}
		\end{aligned}
	\end{equation}
	is a possible candidate for a strongly point separating and nowhere vanishing vector subspace of $\widehat{\Psi}$-moderate growth, with $\langle (t,z), (0,e_n) \rangle_{\widehat{Z} \times \widehat{E}} := \langle z,e_n \rangle_{Z \times E}$ and $\langle (t,z), (1,0) \rangle_{\widehat{Z} \times \widehat{E}} := t$.
	
	First, we show that the vector space $\mathcal{G}$ is contained in $\mathcal{B}^k_{\widehat{\Psi}}(\widehat{C}^\alpha_T(Z))$. By following \cite{schur91,hausdorff06}, we observe that the (truncated) log-signature $(L^N(\widehat{\mathbf{X}})_{s,t})_{0 \leq s \leq t \leq T} := (\log^N(S^N(\widehat{\mathbf{X}})_{s,t}))_{0 \leq s \leq t \leq T}$ at level $N \in \mathbb{N}$ of $\widehat{\mathbf{X}} \in \widehat{C}^\alpha_T(Z)$ satisfies the (backward) controlled rough differential equation (CRDE)
	\begin{equation}
		\begin{aligned}
			dL^N(\widehat{\mathbf{X}})_{s,t} & = H\big(\!\ad L^N(\widehat{\mathbf{X}})_{s,t}\big)(d\widehat{\mathbf{X}}_s), \qquad s \in [0,t], \\
			L^N(\widehat{\mathbf{X}})_{t,t} & = \mathbf{0}, 
		\end{aligned}
	\end{equation}
	in the Lie algebra $\mathfrak{g}^N(\widehat{Z})$, where $T^N_0(\widehat{Z}) \ni \mathbf{b} \mapsto (\ad \mathbf{a})(\mathbf{b}) := [\mathbf{a},\mathbf{b}] \in T^N_0(\widehat{Z})$, and where $H(z) := \frac{z}{e^z-1} =  \sum_{k=0}^\infty \frac{B_k}{k!} z^k$ with Bernoulli numbers $(B_k)_{k \in \mathbb{N}_0} := (1,-\frac{1}{2},\frac{1}{6},\ldots)$. Hence, by following \cite{magnus54,strichartz87,iserles99}, the log-signature $(L^N(\widehat{\mathbf{X}})_{s,t})_{0 \leq s \leq t \leq T}$ of $\widehat{\mathbf{X}} \in \widehat{C}^\alpha_T(Z)$ admits the Magnus expansion
	\begin{equation}
		\label{eq:thm:sig_uat:proof3}
		L^N(\widehat{\mathbf{X}})_{s,t} = \sum_{n=1}^\infty \sum_{\sigma \in \mathcal{S}_n} c_\sigma \int_{s < u_1 < \ldots < u_n < t} \Big[ d\widehat{\mathbf{X}}_{u_{\sigma(1)}}, \big[ \ldots , [d\widehat{\mathbf{X}}_{u_{\sigma(n-1)}}, d\widehat{\mathbf{X}}_{u_{\sigma(n)}}] \big] \ldots \Big],
	\end{equation}
	for some universal coefficients $(c_\sigma)_\sigma \subseteq \mathbb{R}$ ensuring that the series converges. Thus, by inserting
	\begin{equation}
		\begin{aligned}
			d\widehat{\mathbf{X}}_t & = d\left( \exp^{\lfloor 1/\alpha \rfloor}(t \mathbf{a}_0) \otimes \iota(\mathbf{X}_t) \right) \\
			& = d\exp^{\lfloor 1/\alpha \rfloor}(t \mathbf{a}_0) \otimes \exp^{\lfloor 1/\alpha \rfloor}(\iota(\mathbf{Y}_t)) + \exp^{\lfloor 1/\alpha \rfloor}(t \mathbf{a}_0) \otimes d\exp^{\lfloor 1/\alpha \rfloor}(\iota(\mathbf{Y}_t)) \\
			& = \exp^{\lfloor 1/\alpha \rfloor}(-t \mathbf{a}_0) \otimes \underbrace{\big( -G(\ad(t\mathbf{a}_0))(\mathbf{a}_0 dt) \big)}_{=-\mathbf{a}_0 dt} \otimes \exp^{\lfloor 1/\alpha \rfloor}(\iota(\mathbf{Y}_t)) \\
			& \quad\quad + \exp^{\lfloor 1/\alpha \rfloor}(t \mathbf{a}_0) \otimes \exp^{\lfloor 1/\alpha \rfloor}(-\iota(\mathbf{Y}_t)) \otimes \big( -G(\ad \iota(\mathbf{Y}_t))(\iota(d\mathbf{Y}_t)) \big)
		\end{aligned} 
	\end{equation}
	into \eqref{eq:thm:sig_uat:proof3}, where $G(z) := \frac{e^z-1}{z} = \sum_{k=0}^\infty \frac{z^k}{(k+1)!}$ (see, e.g., \cite[Lemma~7.23]{friz10}), it follows that $L^N(\widehat{\phi}_i^{-1}(\mathbf{Y}))_T := L^N(\widehat{\phi}_i^{-1}(\mathbf{Y}))_{0,T}$ is a finite universal linear combination of polynomial vector fields on the finite-step Lie algebra. Therefore, for every $\big( \widehat{\mathbf{X}} \mapsto g(\widehat{\mathbf{X}}) := \langle S^{(n)}(\widehat{\mathbf{X}})_T, \widehat{e}_n \rangle_{\widehat{Z}^{\otimes n} \times \widehat{E}^{\otimes n}} \big) \in \mathcal{G}$, with fixed $n = N \in \mathbb{N}$ and $\widehat{e}_n \in \widehat{E}^{\otimes n}$, we observe that
	\begin{equation}
		\begin{aligned}
			g_i(\mathbf{Y}) := \big( g \circ \widehat{\phi}_i^{-1} \big)(\mathbf{Y}) = \langle S^{(n)}(\widehat{\phi}_i^{-1}(\mathbf{Y}))_T, \widehat{e}_n \rangle_{\widehat{Z}^{\otimes n} \times \widehat{E}^{\otimes n}} = \langle \exp^n(L^n(\widehat{\mathbf{X}})_T)^{(n)}, \widehat{e}_n \rangle_{\widehat{Z}^{\otimes n} \times \widehat{E}^{\otimes n}},
		\end{aligned}
	\end{equation}
	is a finite universal linear combination of iterated rough integrals (of degree $n$ in $\mathbf{Y}$). Hence, by induction on $j = 1,\ldots,k$, the directional derivatives $\widehat{\phi}_i(\widehat{C}^\alpha_T(Z)) \times \mathfrak{C}^\alpha_T(Z)^j \ni (\mathbf{Y},\mathbf{V}_1,\ldots,\mathbf{V}_j) \mapsto d^j g_i(\mathbf{Y};\mathbf{V}_1,\ldots,\mathbf{V}_j) \in \mathbb{R}$ exist, are continuous on compact subsets of $\widehat{\phi}_i(\widehat{C}^\alpha_T(Z)) \times \mathfrak{C}^\alpha_T(Z)^j$, and form again universal linear combinations of iterated rough integrals (of degree $n-j$ in $\mathbf{Y}$ and of degree $1$ in each tangent direction $\mathbf{V}_1$, \ldots, $\mathbf{V}_j$). Thus, there exists some $\widetilde{e}_1,\ldots,\widetilde{e}_M \in E$ such that for every $j = 1,\ldots,k$ and $(\mathbf{Y},\mathbf{V}_1,\ldots,\mathbf{V}_j) \in \widehat{\phi}_i(\widehat{C}^\alpha_T(Z)) \times \mathfrak{C}^\alpha_T(Z)^j$ it holds that
	\begin{equation}
		\left\vert d^j g_i(\mathbf{Y};\mathbf{V}_1,\ldots,\mathbf{V}_j) \right\vert \leq \left( 1 + \Vert \mathbf{Y} \Vert_{\alpha,\widetilde{e}_{1:M}} \right)^{\max(n-j,0)} \prod_{\ell=1}^j \Vert \mathbf{V}_\ell \Vert_{\alpha,\widetilde{e}_{1:M}},
	\end{equation}
	where $\Vert \mathbf{Y} \Vert_{\alpha,\widetilde{e}_{1:M}} := \max_{m=1,\ldots,M} \Vert \mathbf{Y} \Vert_{\alpha,\widetilde{e}_m}$. Now, we recall from Remark~\ref{rem:hol_bap0} that $(\mathfrak{C}^\alpha_T(Z),\tau_\infty) := (C^\alpha_0([0,T];\mathfrak{g}^{\lfloor 1/\alpha \rfloor}(Z)),\tau_\infty)$ has AP with finite rank operators $(R_\gamma)_\gamma \subseteq (\mathfrak{C}^\alpha_T(Z),\tau_\infty)^* \otimes \mathfrak{C}^\alpha_T(Z)$ such that $\Vert R_\gamma(\mathbf{Y}) \Vert_{\alpha,\widetilde{e}_{1:M}} \leq C_{g_i} \Vert \mathbf{Y} \Vert_\alpha$, for all $\mathbf{Y} \in \mathfrak{C}^\alpha_T(Z)$ and some $C_{g_i} \geq 1$ (depending on $\widetilde{e}_1,\ldots,\widetilde{e}_M \in E$ and therefore on $g_i$). This implies for every $(\mathbf{Y},\mathbf{V}_1,\ldots,\mathbf{V}_j) \in \widehat{\phi}_i(\widehat{C}^\alpha_T(Z)) \times \mathfrak{C}^\alpha_T(Z)^j$ that
	\begin{equation}
		\label{eq:thm:sig_uat:proof4}
		\begin{aligned}
			\left\vert d^j g_i(R_\gamma(\mathbf{Y});R_\gamma(\mathbf{V}_1),\ldots,R_\gamma(\mathbf{V}_j)) \right\vert & \leq \left( 1 + \Vert R_\gamma(\mathbf{Y}) \Vert_{\alpha,\widetilde{e}_{1:M}} \right)^{\max(n-j,0)} \prod_{\ell=1}^j \Vert R_\gamma(\mathbf{V}_\ell) \Vert_{\alpha,\widetilde{e}_{1:M}} \\
			& \leq \left( 1 + C_{g_i} \Vert \mathbf{Y} \Vert_\alpha \right)^{\max(n-j,0)} \prod_{\ell=1}^j \left( C_{g_i} \Vert \mathbf{V}_\ell \Vert_\alpha \right) \\
			& \leq C_{g_i}^n \left( 1 + \Vert \mathbf{Y} \Vert_\alpha \right)^{\max(n-j,0)} \prod_{\ell=1}^j \Vert \mathbf{V}_\ell \Vert_\alpha.
		\end{aligned}
	\end{equation}
	Hence, it follows that
	\begin{equation}
		\label{eq:thm:sig_uat:proof5}
		\begin{aligned}
			& \lim_{R \rightarrow \infty} \sup_\gamma \max_{j=0,\ldots,k} \sup_{(\widehat{\phi}_i(\widehat{C}^\alpha_T(Z)) \times \mathfrak{C}^\alpha_T(Z)^j) \setminus K_{i,j,R}} \frac{\left\vert d^j g_i(R_\gamma(\mathbf{Y});R_\gamma(\mathbf{V}_1),\ldots,R_\gamma(\mathbf{V}_j)) \right\vert}{\psi_{i,j}(\mathbf{Y},\mathbf{V}_1,\ldots,\mathbf{V}_j)} \\
			& \leq C_{g_i}^n \lim_{R \rightarrow \infty} \max_{j=0,\ldots,k} \sup_{(\widehat{\phi}_i(\widehat{C}^\alpha_T(Z)) \times \mathfrak{C}^\alpha_T(Z)^j) \setminus K_{i,j,R}} \frac{\left( 1 + \Vert \mathbf{Y} \Vert_\alpha \right)^{\max(n-j,0)} \prod_{\ell=1}^j \Vert \mathbf{V}_\ell \Vert_\alpha}{\exp\left( \beta_1 \max(j,1) \Vert \mathbf{Y} \Vert_\alpha^{c_1} + \beta_2 \sum_{\ell=1}^j \Vert \mathbf{V}_\ell \Vert_\alpha^{c_2} \right)} \\
			& \leq C_{g_i}^n \lim_{R \rightarrow \infty} \max_{j=0,\ldots,k} \sup_{(\widehat{\phi}_i(\widehat{C}^\alpha_T(Z)) \times \mathfrak{C}^\alpha_T(Z)^j) \setminus K_{i,j,R}} \frac{\left( 1 + \Vert \mathbf{Y} \Vert_\alpha + \sum_{\ell=1}^j \Vert \mathbf{V}_\ell \Vert_\alpha \right)^n}{\exp\left( \beta_1 \max(j,1) \Vert \mathbf{Y} \Vert_\alpha^{c_1} + \beta_2 \sum_{\ell=1}^j \Vert \mathbf{V}_\ell \Vert_\alpha^{c_2} \right)} = 0,
		\end{aligned}
	\end{equation}
	where the supremum is taken over $(\mathbf{Y},\mathbf{V}_1,\ldots,\mathbf{V}_j) \in (\widehat{\phi}_i(\widehat{C}^\alpha_T(Z)) \times \mathfrak{C}^\alpha_T(Z)^j) \setminus K_{i,j,R}$. Thus, Proposition~\ref{prop:BPsik_equiv}~\ref{prop:BPsik_equiv:2} implies that $g_i \in \mathcal{B}^k_{\widehat{\Psi}_i}(\widehat{\phi}_i(\widehat{C}^\alpha_T(Z)))$, which ensures that $\mathcal{G} \subseteq \mathcal{B}^k_{\widehat{\Psi}}(\widehat{C}^\alpha_T(Z))$.
	
	Next, we observe that $\mathcal{G}$ is by the shuffle property \eqref{eq:sig_shuffle_prop} a subalgebra of $\mathcal{B}^k_{\widehat{\Psi}}(\widehat{C}^\alpha_T(Z))$, which also contains the constants (by choosing $n = 0$ in \eqref{eq:thm:sig_uat:proof1}). In order to show that $\mathcal{G}$ is strongly point separating and nowhere vanishing of $\widehat{\Psi}$-moderate growth, we claim that the vector subspace $\widetilde{\mathcal{G}} \subseteq \mathcal{G}$ defined in \eqref{eq:thm:sig_uat:proof2} satisfies the conditions \ref{M1}--\ref{M3} and \ref{M4'}--\ref{M5'}. For \ref{M1}, we fix some distinct $\widehat{\mathbf{X}},\widehat{\mathbf{Z}} \in \widehat{C}^\alpha_T(Z)$ and first assume (by contradiction) that for every fixed $k \in \mathbb{N}_0$, $n \in \lbrace 0,\ldots,\lfloor 1/\alpha \rfloor \rbrace$, and $e_n \in E^{\otimes n}$ it holds that 
	\begin{equation}
		\label{eq:thm:sig_uat:proof6}
		\begin{aligned}
			& \big\langle S^{(n+k+1)}(\widehat{\mathbf{X}})_T, \left( (0,e_n) \shuffle (1,0)^{\otimes k} \right) \otimes (1,0) \big\rangle_{\widehat{Z}^{\otimes (n+k+1)},\widehat{E}^{\otimes (n+k+1)}} \\
			& = \big\langle S^{(n+k+1)}(\widehat{\mathbf{Z}})_T, \left( (0,e_n) \shuffle (1,0)^{\otimes k} \right) \otimes (1,0) \big\rangle_{\widehat{Z}^{\otimes (n+k+1)},\widehat{E}^{\otimes (n+k+1)}}.
		\end{aligned}
	\end{equation}
	Then, by using \eqref{eq:sig_shuffle}--\eqref{eq:sig_shuffle_prop}, we observe for every $\widehat{\mathbf{W}} \in \widehat{C}^\alpha_T(Z)$ that
	\begin{equation}
		\label{eq:thm:sig_uat:proof7}
		\begin{aligned}
			& \big\langle S^{(n+k+1)}(\widehat{\mathbf{W}})_T, \left( (0,e_n) \shuffle (1,0)^{\otimes k} \right) \otimes (1,0) \big\rangle_{\widehat{Z}^{\otimes (n+k+1)},\widehat{E}^{\otimes (n+k+1)}} \\
			& = \int_0^T \langle S^{(n+k)}(\widehat{\mathbf{W}})_t, (0,e_n) \shuffle (1,0)^{\otimes k} \rangle_{\widehat{Z}^{\otimes (n+k)},\widehat{E}^{\otimes (n+k)}} \, dt \\
			& = \int_0^T \langle S^{(n)}(\widehat{\mathbf{W}})_t, (0,e_n) \rangle_{\widehat{Z}^{\otimes n} \times \widehat{E}^{\otimes n}} \, \langle S^{(k)}(\widehat{\mathbf{W}})_t, (1,0)^{\otimes k} \rangle_{\widehat{Z}^{\otimes k} \times \widehat{E}^{\otimes k}} \, dt \\
			& = \int_0^T \langle S^{(n)}(\mathbf{W})_t, e_n \rangle_{Z^{\otimes n} \times E^{\otimes n}} \frac{t^k}{k!} dt.
		\end{aligned}
	\end{equation}
	Hence, by combining \eqref{eq:thm:sig_uat:proof6} with \eqref{eq:thm:sig_uat:proof7}, it follows that
	\begin{equation}
		\int_0^T \langle S^{(n)}(\mathbf{X})_t - S^{(n)}(\mathbf{Z})_t, e_n \rangle_{Z^{\otimes n} \times E^{\otimes n}} \frac{t^k}{k!} dt = 0.
	\end{equation}
	Thus, by using that $\Pol(\mathbb{R})\vert_{[0,T]}$ is weakly dense in $L^1([0,T])$ with $L^1([0,T])^* \cong L^\infty([0,T]) \supseteq C^0([0,T])$, we have $\langle S^{(n)}(\mathbf{X})_t - S^{(n)}(\mathbf{Z})_t, e_n \rangle_{Z^{\otimes n} \times E^{\otimes n}} = 0$, for all $t \in [0,T]$. Since $E^* \cong Z$ is by the Hahn-Banach theorem point separating on $Z$, it follows that $\mathbf{X}_t = \mathbf{Z}_t$, for all $t \in [0,T]$. This however contradicts the assumption that $\widehat{\mathbf{X}},\widehat{\mathbf{Z}} \in \widehat{C}^\alpha_T(Z)$ are distinct, which shows that $\widetilde{\mathcal{G}}$ is point separating on $\widehat{C}^\alpha_T(Z)$. For \ref{M2}, we observe that the map $\widetilde{g}(\cdot) := \big\langle S^{(1)}(\cdot)_T, \big( (0,0) \shuffle (1,0)^{\otimes 0} \big) \otimes (1,0) \big\rangle_T$ satisfies $\widetilde{g}(\widehat{\mathbf{X}}) = T \neq 0$. For \ref{M3}, it suffices to show that $\widetilde{\mathcal{G}}_i := \lbrace \widetilde{g} \circ \widehat{\phi}_i^{-1}: \widetilde{g} \in \widetilde{\mathcal{G}} \rbrace$ has nowhere vanishing derivatives (as $\widehat{\phi}_i: \widehat{C}^\alpha_T(Z) \rightarrow \widehat{\phi}_i(\widehat{C}^\alpha_T(Z))$ is a $C^k_{loc}$-diffeomorphism). To this end, we fix some $\mathbf{Y} \in \widehat{\phi}_i(\widehat{C}^\alpha_T(Z))$ and $\mathbf{V} \in \mathfrak{C}^\alpha_T(Z) \setminus \lbrace 0 \rbrace$, whence there exists some $e_1 \in E$ and $t \in [0,T]$ such that $\langle \mathbf{V}_t, e_1 \rangle_{Z \times E} \neq 0$. Moreover, by using \eqref{eq:thm:sig_uat:proof6} (with $n = 1$) and the definition of $\exp^{\lfloor 1/\alpha \rfloor}$ in \eqref{eq:sig_exp}, it holds for every $k \in \mathbb{N}_0$ that
	\begin{equation}
		\label{eq:thm:sig_uat:proof8}
		\begin{aligned}
			& \big\langle S^{(k+2)}(\widehat{\exp^{\lfloor 1/\alpha \rfloor}(\mathbf{Y})})_T, \left( (0,e_1) \shuffle (1,0)^{\otimes k} \right) \otimes (1,0) \big\rangle_{\widehat{Z}^{\otimes (k+2)},\widehat{E}^{\otimes (k+2)}} \\
			& = \int_0^T \langle S^{(1)}(\exp^{\lfloor 1/\alpha \rfloor}(\mathbf{Y}))_t, e_1 \rangle_{Z \times E} \, \frac{t^k}{k!} dt = \int_0^T \langle \mathbf{Y}_t, e_1 \rangle_{Z \times E} \frac{t^k}{k!} dt.
		\end{aligned}
	\end{equation}
	Thus, by defining $\widetilde{g}_i \in \widetilde{\mathcal{G}}_i$ as the left-hand side of \eqref{eq:thm:sig_uat:proof8}, taking the directional derivative, and using again that polynomials are weakly dense in $L^1([0,T])$, there exists some $k \in \mathbb{N}_0$ such that
	\begin{equation}
		d\widetilde{g}_i(\mathbf{Y};\mathbf{V}) = \int_0^T \langle \mathbf{V}_t, e_1 \rangle_{Z \times E} \, \frac{t^k}{k!} dt \neq 0,
	\end{equation}
	which shows that $\widetilde{\mathcal{G}}_i$ has nowhere vanishing derivatives, and so does $\widetilde{\mathcal{G}}$. For \ref{M4'}, we use for every $\gamma$ that $T_\gamma(\widehat{\phi}_i(\widehat{C}^\alpha_T(Z)))$ is $m$-dimensional (with some $m \in \mathbb{N}$), on which $\widetilde{\mathcal{G}}_i := \big\lbrace \widetilde{g} \circ \widehat{\phi}_i^{-1}: \widetilde{g} \in \widetilde{\mathcal{G}} \big\rbrace$ is point separating and has nowhere vanishing derivatives, to obtain some $\widetilde{g}_1,\ldots,\widetilde{g}_m$ such that
	\begin{equation}
		\eta_{i,\gamma} := \big( \widetilde{g}_1 \circ \widehat{\phi}_i^{-1},\ldots,\widetilde{g}_m \circ \widehat{\phi}_i^{-1} \big)^\top\big\vert_{T_\gamma(\widehat{\phi}_i(\widehat{C}^\alpha_T(Z)))}: T_\gamma(\widehat{\phi}_i(\widehat{C}^\alpha_T(Z))) \rightarrow \mathbb{R}^m
	\end{equation}
	is an embedding, where the cutoff functions are obtained by a finite-dimensional smooth exhaustion argument. For \ref{M5'}, we fix $\widetilde{g}_i \in \widetilde{\mathcal{G}}_i$ and define $\lambda := \beta_1 (C_{g_i} \Lambda)^{-\lfloor 1/\alpha \rfloor}/2 > 0$ with $C_{g_i} \geq 1$ from above, where $(\widehat{\phi}_i(\widehat{C}^\alpha_T(Z)),\tau_\infty)$ has AP with finite rank operators $(T_\gamma)_\gamma$ satisfying \eqref{eq:cor:hol_bap} for some constant $\Lambda \geq 1$. Then, by using \eqref{eq:thm:sig_uat:proof4} and $c_1 \geq \lfloor 1/\alpha \rfloor$, it follows for every $\gamma$ that
	\begin{equation}
		\begin{aligned}
			& \lim_{R \rightarrow \infty} \max_{j=0,\ldots,k \atop \pi \in \mathscr{P}_j} \sup_{(\widetilde{\mathbf{Y}},\widetilde{\mathbf{V}}_1,\ldots,\widetilde{\mathbf{V}}_j) \in (T_\gamma(\widehat{\phi}_i(\widehat{C}^\alpha_T(Z))) \times T_\gamma(\mathfrak{C}^\alpha_T(Z))^j) \setminus K_{i,\gamma,j,R}} \frac{\exp\big( \lambda \vert g_i(\widetilde{\mathbf{Y}}) \vert \big) \vert d^\pi g_i(\widetilde{\mathbf{Y}};\widetilde{\mathbf{V}}_\pi) \vert}{\psi_{i,\gamma,j}(\widetilde{\mathbf{Y}},\widetilde{\mathbf{V}}_1,\ldots,\widetilde{\mathbf{V}}_j)} \\
			& \leq \!\lim_{R \rightarrow \infty} \max_{j=0,\ldots,k \atop \pi \in \mathscr{P}_j} \sup_{K_{i,j,R}^c} \frac{e^{\lambda (1+C_{g_i} \Lambda \Vert \mathbf{Y} \Vert_\alpha)^{\lfloor 1/\alpha \rfloor}} (1\!+\! C_{g_i} \Lambda \Vert \mathbf{Y} \Vert_\alpha)^{\max(\lfloor 1/\alpha \rfloor-\vert\pi\vert,0)} \prod_{\ell=1}^j (C_{g_i} \Lambda \Vert \mathbf{V}_\ell \Vert_\alpha)}{\exp\left( \beta_1 \max(j,1) \Vert \mathbf{Y} \Vert_\alpha^{c_1} + \beta_2 \sum_{\ell=1}^j \Vert \mathbf{V}_\ell \Vert_\alpha^{c_2} \right)} \\
			& \leq \! (C_{g_i} \Lambda)^{\lfloor 1/\alpha \rfloor} \lim_{R \rightarrow \infty} \max_{j=0,\ldots,k \atop \pi \in \mathscr{P}_j} \sup_{K_{i,j,R}^c} \frac{e^{\beta_1/2 (1\!+\!\Vert \mathbf{Y} \Vert_\alpha)^{\lfloor 1/\alpha \rfloor}} (1+\Vert \mathbf{Y} \Vert_\alpha)^{\max(\lfloor 1/\alpha \rfloor-\vert\pi\vert,0)} \prod_{\ell=1}^j \Vert \mathbf{V}_\ell \Vert_\alpha}{\exp\left( \beta_1 \max(j,1) \Vert \mathbf{Y} \Vert_\alpha^{c_1} + \beta_2 \sum_{\ell=1}^j \Vert \mathbf{V}_\ell \Vert_\alpha^{c_2} \right)} = 0,
		\end{aligned}
	\end{equation}
	where the supremum is taken over $(\mathbf{Y},\mathbf{V}_1,\ldots,\mathbf{V}_j) \in (\widehat{\phi}_i(\widehat{C}^\alpha_T(Z)) \times \mathfrak{C}^\alpha_T(Z)^j) \setminus K_{i,j,R}$. This shows that \ref{M5'} is satisfied.
	
	Finally, we show that condition~\ref{thm:w_nachbin_infdim:2} of Theorem~\ref{thm:w_nachbin_infdim} is satisfied. By following the proof of Corollary~\ref{cor:hol_bap}, we may assume that $R_\gamma \in (\mathfrak{C}^\alpha_T(Z),\tau_\infty)^* \otimes \mathfrak{C}^\alpha_T(Z)$ is of the form $\mathfrak{C}^\alpha_T(Z) \ni \mathbf{Y} \mapsto R_\gamma(\mathbf{Y}) := \sum_{j=1}^J \mathbf{Y}(t_j) h_j(\cdot) \in \mathfrak{C}^\alpha_T(Z)$. Hence, for every $g_i \in \mathcal{G}_i := \lbrace g \circ \widehat{\phi}_i^{-1}: g \in \mathcal{G} \rbrace$ and $\mathbf{Y} \in \widehat{\phi}_i(\widehat{C}^\alpha_T(Z))$, we observe that $(g_i \circ R_\gamma)(\mathbf{Y})$ depends only on products of linear functionals of $(\mathbf{Y}(t_j))_{j=1,\ldots,J}$, which can be approximated by elements from $\mathcal{G}_i$ with respect to $\Vert \cdot \Vert_{\mathcal{B}^k_{\widehat{\Psi}_i}(\widehat{\phi}_i(\widehat{C}^\alpha_T(Z)))}$. Now, we can apply Theorem~\ref{thm:w_nachbin_infdim} to conclude that $\mathcal{G}$ is a dense subset of $\mathcal{B}^k_{\widehat{\Psi}}(\widehat{C}^\alpha_T(Z))$.
\end{proof}

\begin{remark}
	Let us point out the following remarks concerning Theorem~\ref{thm:sig_uat}:
	\begin{enumerate}
		\item Theorem~\ref{thm:sig_uat} extends the universal approximation theorem of \cite[Theorem~5.4]{cuchiero23} for linear functions of the signature by including the approximation of the directional derivatives.
		\item A similar result could be obtained for weakly geometric $p$-variation rough paths by intersecting H\"older spaces with $p$-variation spaces (see \cite[Section~5.3]{cuchiero23}).
		\item Theorem~\ref{thm:sig_uat} could be generalized to the space of stopped $\alpha$-H\"older rough paths $\Lambda^\alpha_T(Z)$ given as the vector bundle
		\begin{equation}
			\Lambda^\alpha_T(Z) := \bigcup_{t \in (0,T)} \left\lbrace (t,\mathbf{X}_{[0,t]}): \widehat{\mathbf{X}}_{[0,t]} \in \widehat{C}^\alpha_t(Z) \right\rbrace.
		\end{equation}
		Then, similar universal approximation results as in Corollary~\ref{cor:naf_uat_full} (all directional derivatives) and Corollary~\ref{cor:naf_uat_horver} (only horizontal and vertical derivatives) can be shown, where the approximation holds uniformly in $t \in (0,T)$.
	\end{enumerate}
\end{remark}

\section{Numerical experiments}
\label{sec:numerics}

In this section, we illustrate in two examples\footnote{The experiments have been implemented in \texttt{Python} using the \texttt{TensorFlow} package on an HPC (high-performance computing) cluster of ETH Zurich. The code can be found under the following link: \url{https://github.com/psc25/GlobalUATDerivatives}.} how to learn path space functionals including their horizontal and (an approximation of the) vertical derivatives. More precisely, given a functional $f: \Lambda^{\alpha,1}_{T,\mathbb{R}} \rightarrow \mathbb{R}$, we use non-anticipative path-neural networks (Section~\ref{sec:naf}) and linear functions of the signature (Section~\ref{sec:sig}) to approximate the functional value $f(t,x)$, the horizontal derivative
\begin{equation}
	\mathcal{D} f(t,x) = \lim_{h \rightarrow 0^+} \frac{f(t+h,x^t)-f(t,x^t)}{h},
\end{equation}
and the vertical derivative
\begin{equation}
	\mathscr{D} f(t,x) := \mathscr{D}_{e_1} f(t,x) = \lim_{h \rightarrow 0} \frac{f(t,x^t+h\mathds{1}_{[t,T]})-f(t,x^t)}{h} \approx \lim_{h \rightarrow 0} \frac{f(t,x^t+hg_t)-f(t,x^t)}{h},
\end{equation}
where the latter is applied for linear functions of the signature (allowing only for continuous paths as input). Here, $g_t \in C^\alpha([0,T])$ is an approximation of $\mathds{1}_{[t,T]}$, e.g., given by
\begin{equation}
	g_t(s) :=
	\begin{cases}
		0 & \text{if } s \in [0,t], \\
		\frac{3}{h^2} (s-t)^2 - \frac{2}{h^3} (s-t)^3 & \text{if } s \in (t,\min(t+h,T)], \\
		1 & \text{if } s \in (\min(t+h,T),T],
	\end{cases}
\end{equation}
for some $h \in (0,T)$.

As input data we generate $M = 50 000$ sample paths of a one-dimensional Brownian motion $x(m) := (x(m)_t)_{t \in [0,T]}$, for $m = 1,\ldots,M$, with $T = 1$, which are discretized over $K = 101$ equidistant time points $(t_k)_{k=1,\ldots,K}$. Since the sample paths of Brownian motion are a.s.~$\alpha$-H\"older continuous, for all $\alpha \in (0,\frac{1}{2})$, we consider the weighted space of stopped $\alpha$-H\"older continuous paths $\Lambda^{\alpha,1}_{T,\mathbb{R}}$ introduced in Section~\ref{sec:naf}. On the other hand, every sample path of Brownian motion $x(m)$, $m = 1,\ldots,M$, can therefore be lifted to a weakly geometric $\alpha$-rough path $\mathbf{X}(m) \in C^\alpha_T(\mathbb{R})$, for $\alpha \in (0,1/2)$, from which we compute the time-extended signature $S\big(\widehat{\mathbf{X}(m)}\big)_t$, for $t \in [0,T]$.

Since we only consider two directional derivatives, we define for non-anticipative path-neural networks (PNNs) the weight function as in \eqref{eq:naf_w_horver}, i.e.,
\vspace{-0.05cm}
\begin{equation}
	\!\!\!\!\!\! \Lambda^{\alpha,1}_{T,\mathbb{R}} \ni (t,x) \quad \mapsto \quad \psi_{\mathrm{PNN}}(t,x) := \exp\left( \beta \Vert x \Vert_\alpha^c \right) \in (0,\infty),
	\vspace{-0.05cm}
\end{equation}
for some $\beta,c > 0$. Similarly, for linear functions of the signature, we omit the directional derivative terms in \eqref{eq:sig_weight} and define for $\beta > 0$ and $c \geq \lfloor 1/\alpha \rfloor$ the weight function
\vspace{-0.05cm}
\begin{equation}
	\quad\quad\quad\,\, \Lambda^{\alpha,1}_{T,\mathbb{R}} \ni (t,x) \quad \mapsto \quad \psi_{\mathrm{Sig}}(t,x) := \exp\left( \beta \Vert \log^{\lfloor 1/\alpha \rfloor}(S^2(x)) \Vert_\alpha^c \right) \in (0,\infty).
\end{equation}
In the first example, we consider the non-anticipative functional $f_1: \Lambda^{\alpha,1}_{T,\mathbb{R}} \rightarrow \mathbb{R}$, which is together with its horizontal and vertical derivatives for every $(t,x) \in \Lambda^{\alpha,1}_{T,\mathbb{R}}$ given by
\vspace{-0.05cm}
\begin{equation}
	\label{EqEx1}
	\begin{aligned}
		f_1(t,x) & = x_t \int_0^t x_s ds, \\
		\mathcal{D} f_1(t,x) & = x_t^2, \\
		\mathscr{D} f_1(t,x) & = \int_0^t x_s ds.
	\end{aligned}
\end{equation}
In the second example, we consider the non-anticipative functional $f_2: \Lambda^{\alpha,1}_{T,\mathbb{R}} \rightarrow \mathbb{R}$, which is together with its horizontal and vertical derivatives for every $(t,x) \in \Lambda^{\alpha,1}_{T,\mathbb{R}}$ given by
\vspace{-0.05cm}
\begin{equation}
	\label{EqEx2}
	\begin{aligned}
		\qquad\qquad\quad f_2(t,x) & = \int_0^t \max(x_s,0) ds - \frac{x_t^2}{2}, \\
		\qquad\qquad\quad \mathcal{D} f_2(t,x) & = \max(x_t,0), \\
		\qquad\qquad\quad \mathscr{D} f_2(t,x) & = -x_t.
	\end{aligned}
\end{equation}
We split up the data into $80\%/20\%$ for training and testing, respectively, and then apply the Adam algorithm (see \cite{kb15}) over $4000$ epochs with learning rate $10^{-5}$ and batchsize $500$ to minimize the weighted mean squared error
\vspace{-0.05cm}
\begin{equation}
	\label{eq:loss}
	\begin{aligned}
		& \frac{1}{MK} \sum_{m=1}^M \sum_{k=1}^K \Bigg[ \left( \frac{\left\vert f_i(t_k,x(m)) - g(t_k,x(m)) \right\vert}{\psi_{\mathrm{Method}}(t_k,x(m))} \right)^2 + \left( \frac{\left\vert \mathcal{D} f_i(t_k,x(m)) - \mathcal{D} g(t_k,x(m)) \right\vert}{\psi_{\mathrm{Method}}(t_k,x(m))} \right)^2 \\
		& \quad\quad\quad\quad\quad\quad\quad + \left( \frac{\left\vert \mathscr{D} f_i(t_k,x(m)) - \mathscr{D} g(t_k,x(m)) \right\vert}{\psi_{\mathrm{Method}}(t_k,x(m))} \right)^2 \Bigg]
	\end{aligned}
\end{equation}
for $i \in \lbrace 1,2 \rbrace$ and $\mathrm{Method} \in \lbrace \mathrm{PNN}, \mathrm{Sig} \rbrace$, where $g := \varphi \in \mathcal{PN}^{\widetilde{\rho},\rho,\mathcal{L}}_{\Lambda^{\alpha,1}_{T,\mathbb{R}}}$ for path-NNs (PNN) or $g(t,x) := \sum_{0 \leq \vert I \vert \leq N_{\mathrm{Sig}}} a_I \langle S(\widehat{x})_t, e_I \rangle$ for linear functions of the signature (Sig). In both cases, we compute an approximation of $\Vert x(m) \Vert_\alpha$ and $\Vert \log^{\lfloor 1/\alpha \rfloor}(S^2(x)) \Vert_\alpha$ used in $\psi_{\mathrm{PNN}}$ and $\psi_{\mathrm{Sig}}$, respectively (see the code). Moreover, we choose $\alpha = 0.4$, $\beta = 0.01$, $c = 2.0$, and $N_{\mathrm{Sig}} = 6$. For the PNNs, we consider $\varphi \in \mathcal{PN}^{\widetilde{\rho},\rho,\mathcal{L}}_{\Lambda^{\alpha,1}_{T,\mathbb{R}}}$ (see Definition~\ref{def:naf_nn}) with $N_{\mathrm{PNN}} = 30$ neurons, activation functions $\rho(s) = \widetilde{\rho}(s) = \tanh(s)$, and classical neural networks $(\phi_{n,1})_{n=1,\ldots,N_{\mathrm{PNN}}}$ with one hidden layer of $N_1 = 20$ neurons, where the time integral inside is approximated with a left Riemann sum.

Figures~\ref{fig:ex1} and \ref{fig:ex2} empirically demonstrate that the values of the functionals $f_1$ and $f_2$ together with their horizontal and vertical derivatives can be approximated both by non-anticipative path-neural networks (PNN) and by linear functions of the signature (Sig). The approximations of the PNNs (dotted lines) and linear functions of the signature (dash-dotted lines) are very accurate as they almost overlap the true values (solid lines).

Notice that the weighted mean squared error \eqref{eq:loss} reflects the weighted aspect of our universal approximation theorems (UATs), analogously to classical UATs over compact subsets, for which the unweighted mean squared error is applied. However, unlike classical UATs on compacta, our framework ensures the existence of an approximation beyond compact subsets, including the derivatives. This overcomes the limitation that, for a pre-specified compact training set (e.g., sample paths of Brownian motion), the test data may lie outside the chosen compactum.

\begin{figure}[htbp]
	\subfigure[Learning performance]{
		\includegraphics[height=4.8cm]{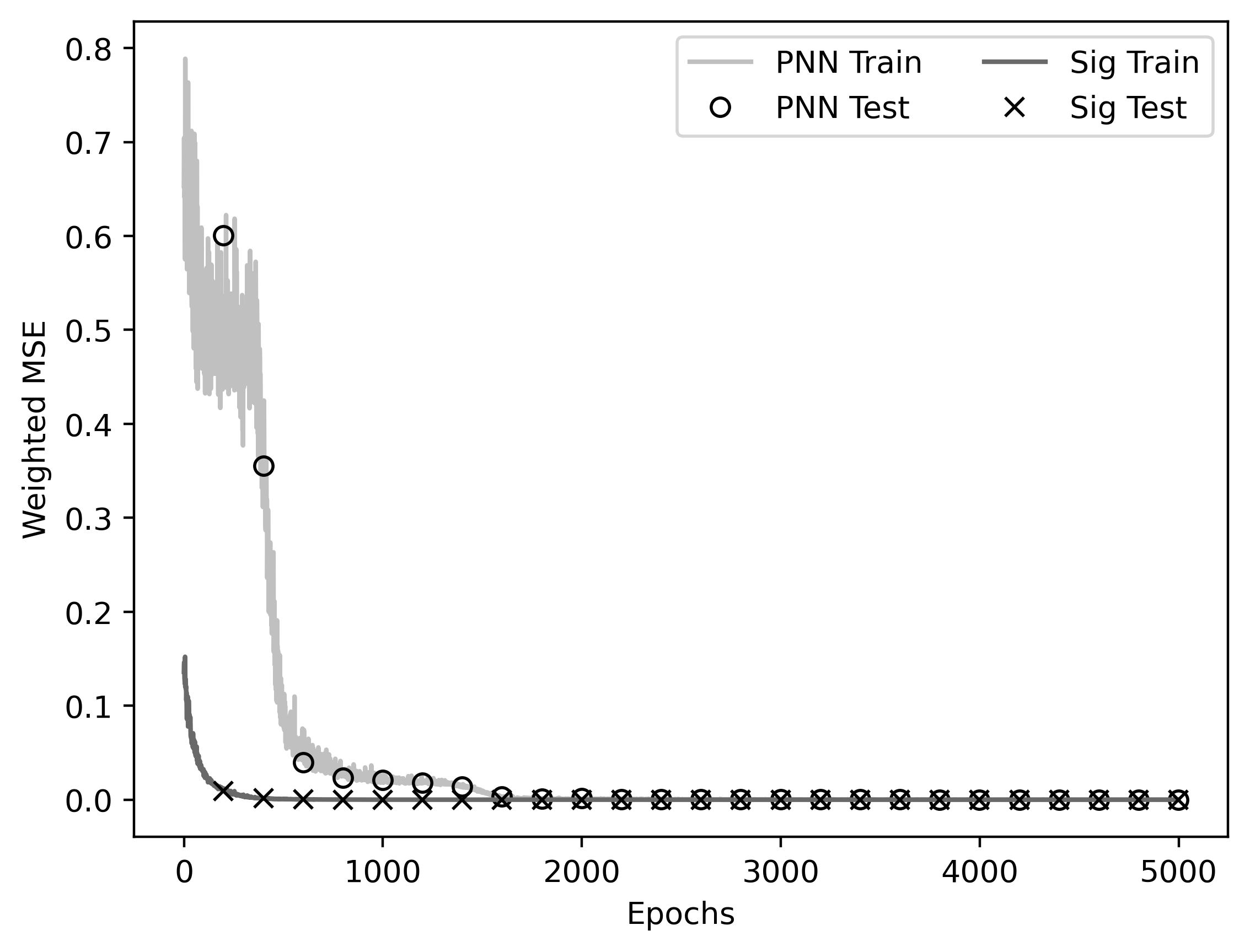}
	}
	\subfigure[$t \mapsto f_1(t,x)$ for three samples $x$ of test set]{
		\includegraphics[height=4.8cm]{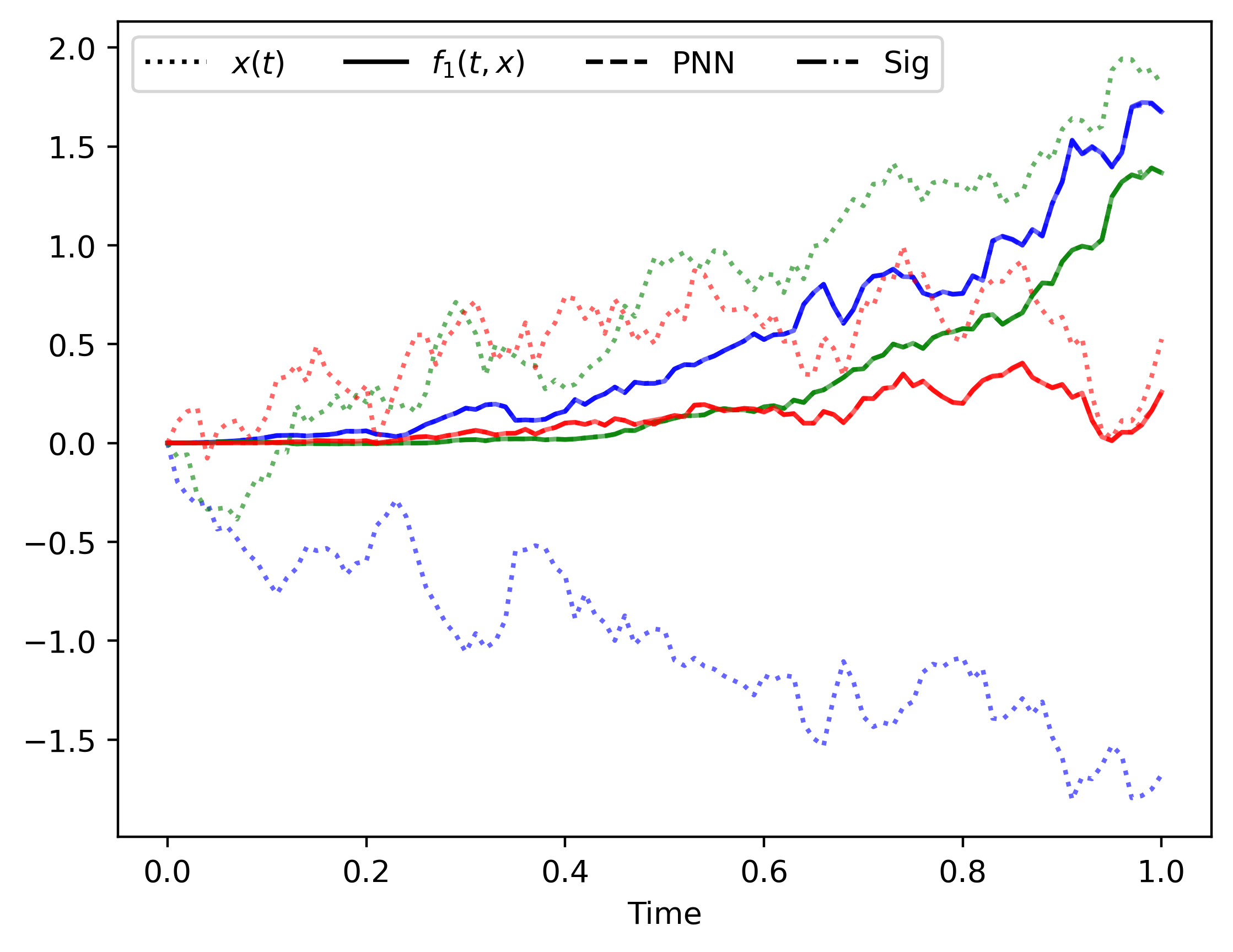}
	}
	\subfigure[$t \mapsto \mathcal{D} f_1(t,x)$ for three samples $x$ of test set]{
		\includegraphics[height=4.8cm]{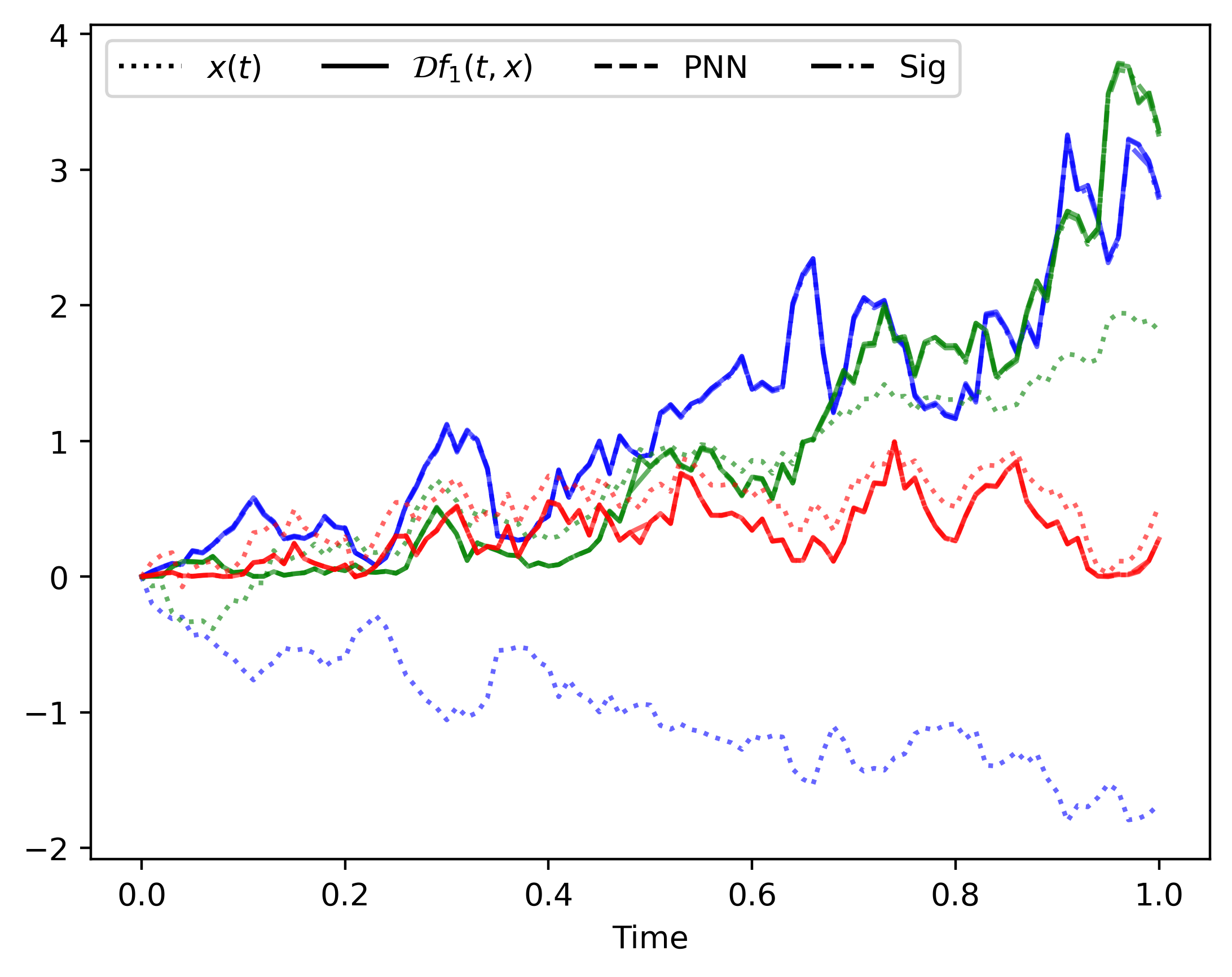}
	}
	\subfigure[$t \mapsto \mathscr{D} f_1(t,x)$ for three samples $x$ of test set]{
		\includegraphics[height=4.8cm]{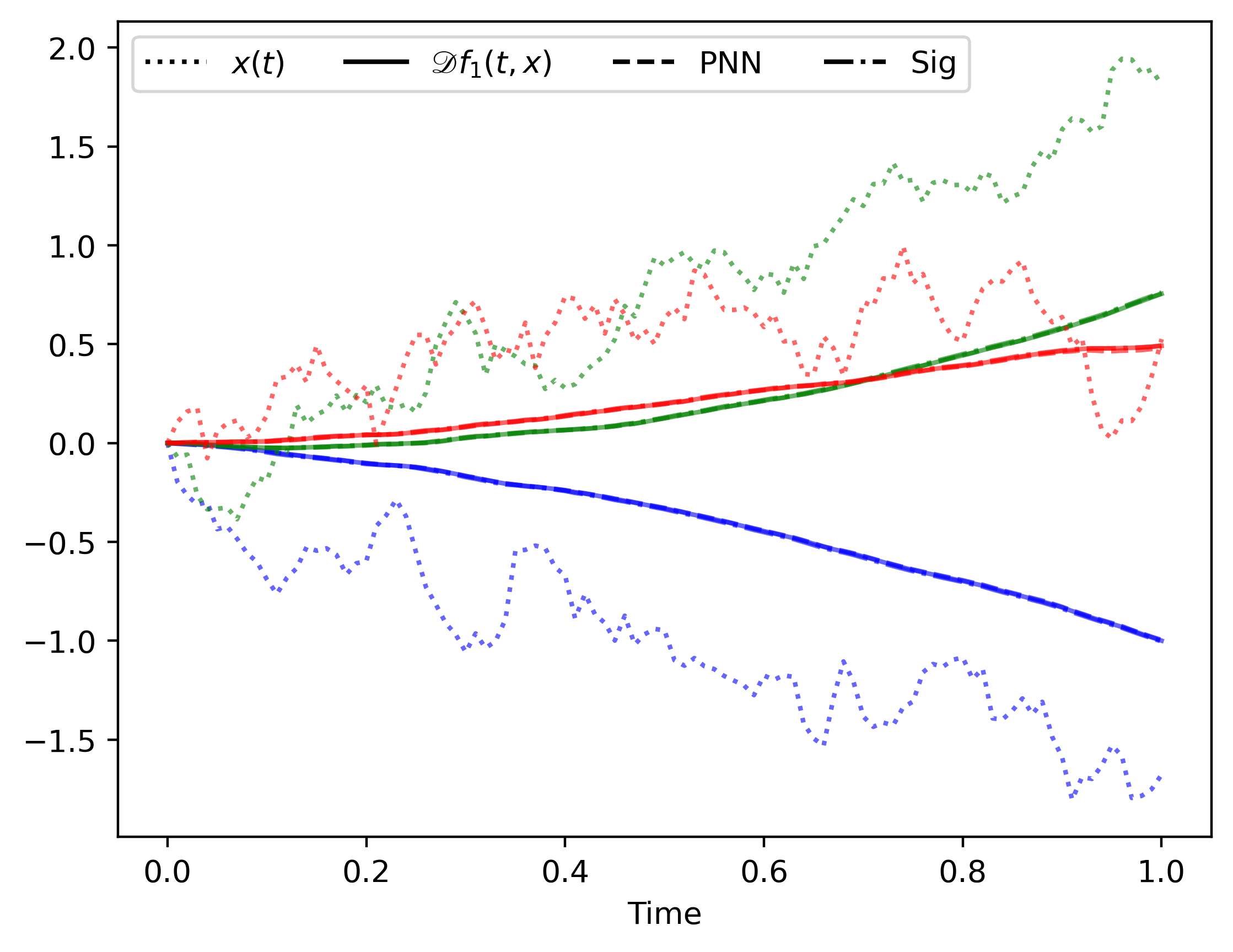}
	}
	\caption{\small Learning $f_1$ defined in \eqref{EqEx1} by path-NN $\varphi \in \mathcal{PN}^{\widetilde{\rho},\rho,\mathcal{L}}_{\Lambda^{\alpha,1}_{T,\mathbb{R}}}$ (label FNN) and linear function of the signature $\sum_{0 \leq \vert I \vert \leq N_{\mathrm{Sig}}} a_I \langle S(\widehat{\mathbf{X}})_t, e_I \rangle$ (label Sig). In (a), the weighted mean squared error \eqref{eq:loss} is evaluated on the training set in each epoch (solid line) as well as on the test set after every 200-th epoch (dots). In (b)--(d), three samples $x(m)$ of the test set are shown together with $f_1(\cdot,x(m))$, $\mathcal{D} f_1(\cdot,x(m))$, $\mathscr{D} f_1(\cdot,x(m))$ and their approximations.}
	\label{fig:ex1}
\end{figure}

\begin{figure}[htbp]
	\subfigure[Learning performance]{
		\includegraphics[height=4.8cm]{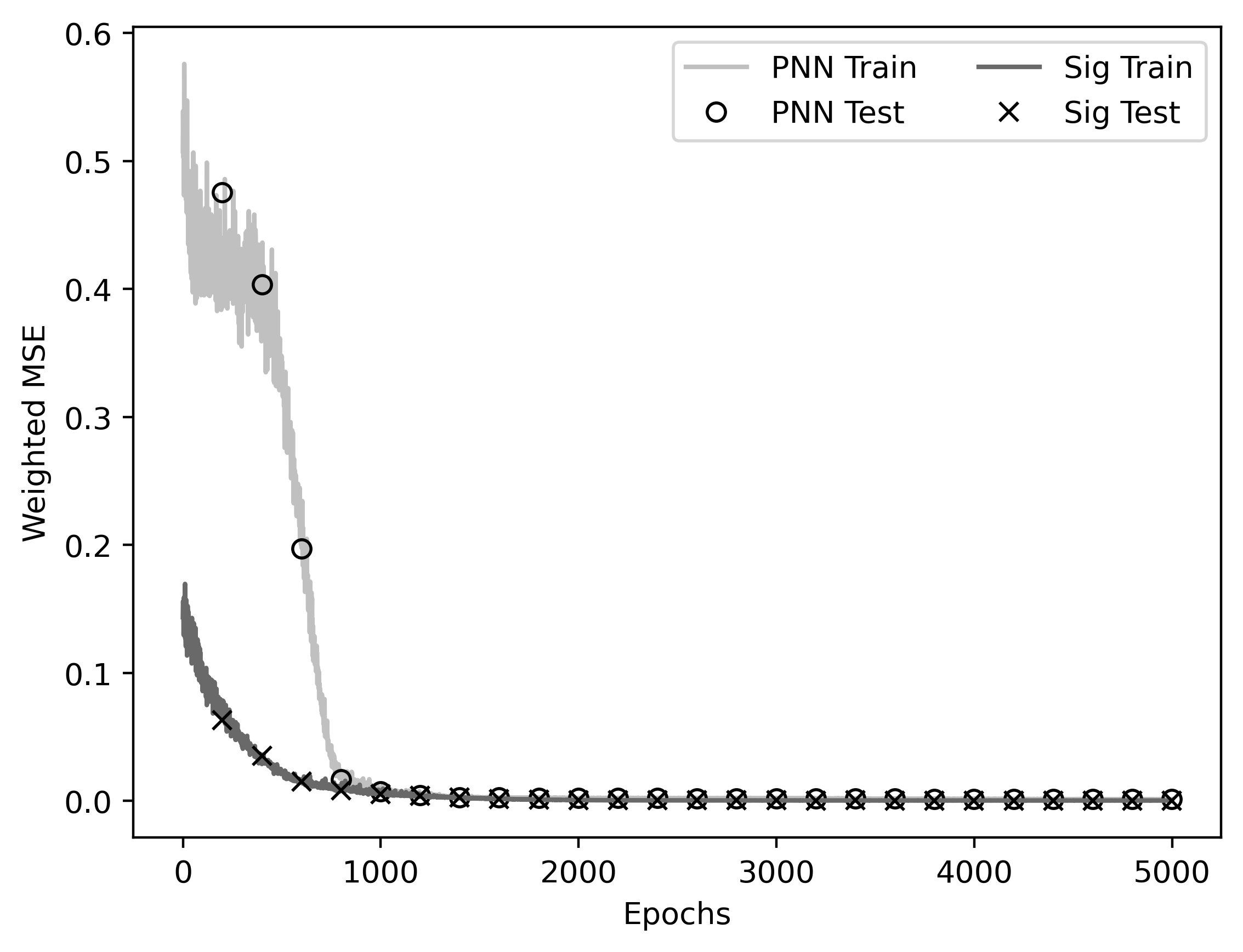}
	}
	\subfigure[$t \mapsto f_2(t,x)$ for three samples $x$ of test set]{
		\includegraphics[height=4.8cm]{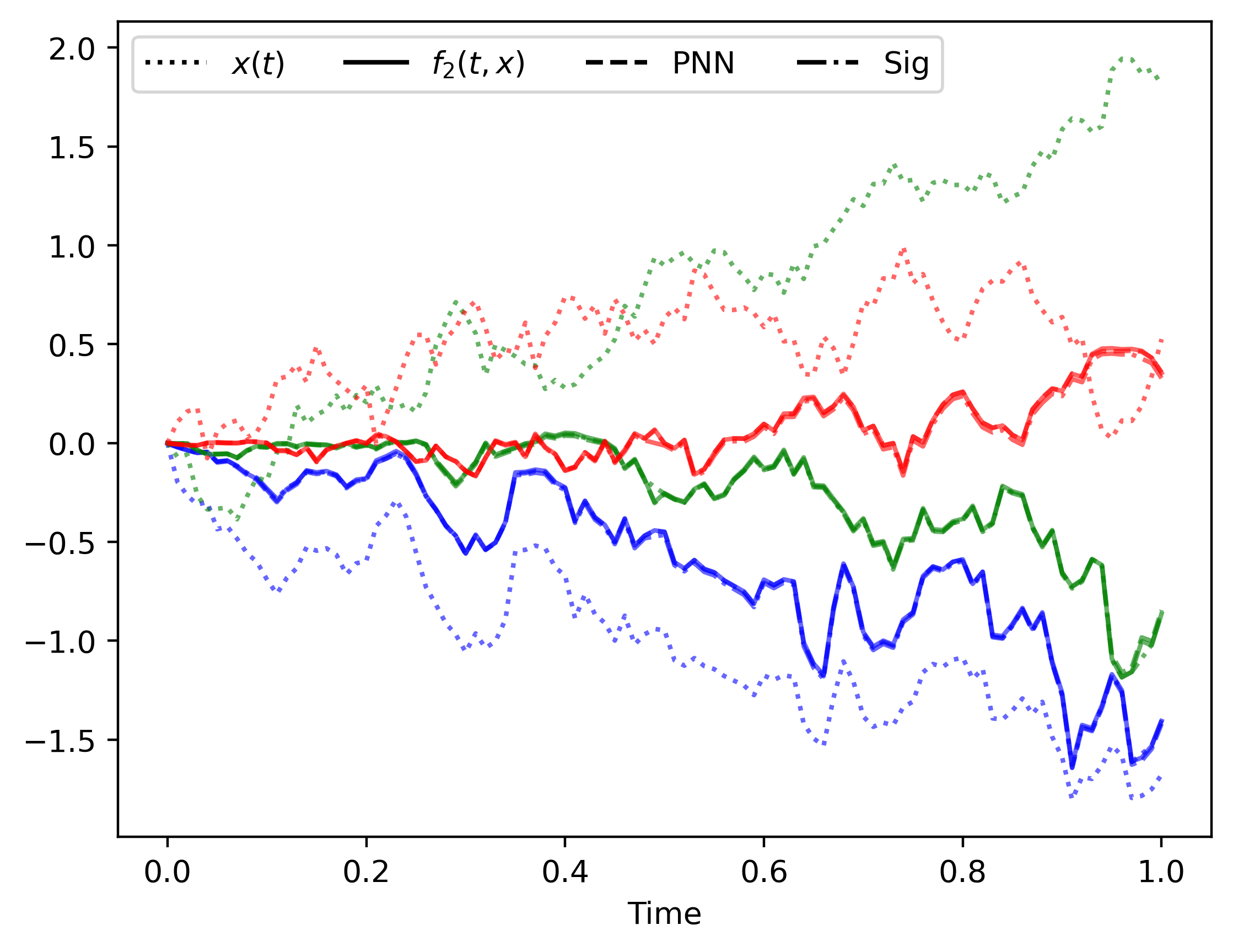}
	}
	\subfigure[$t \mapsto \mathcal{D} f_2(t,x)$ for three samples $x$ of test set]{
		\includegraphics[height=4.8cm]{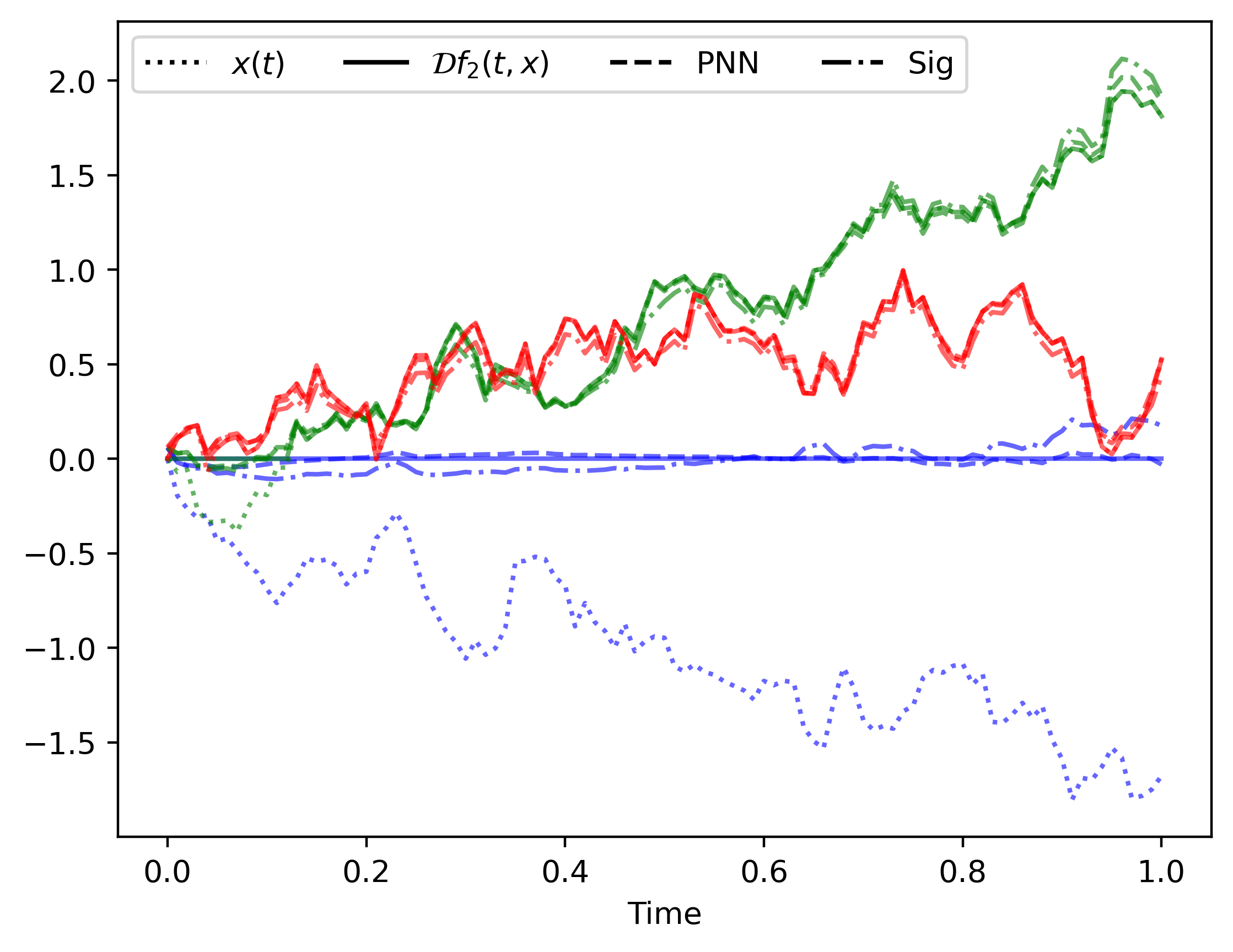}
	}
	\subfigure[$t \mapsto \mathscr{D} f_2(t,x)$ for three samples $x$ of test set]{
		\includegraphics[height=4.8cm]{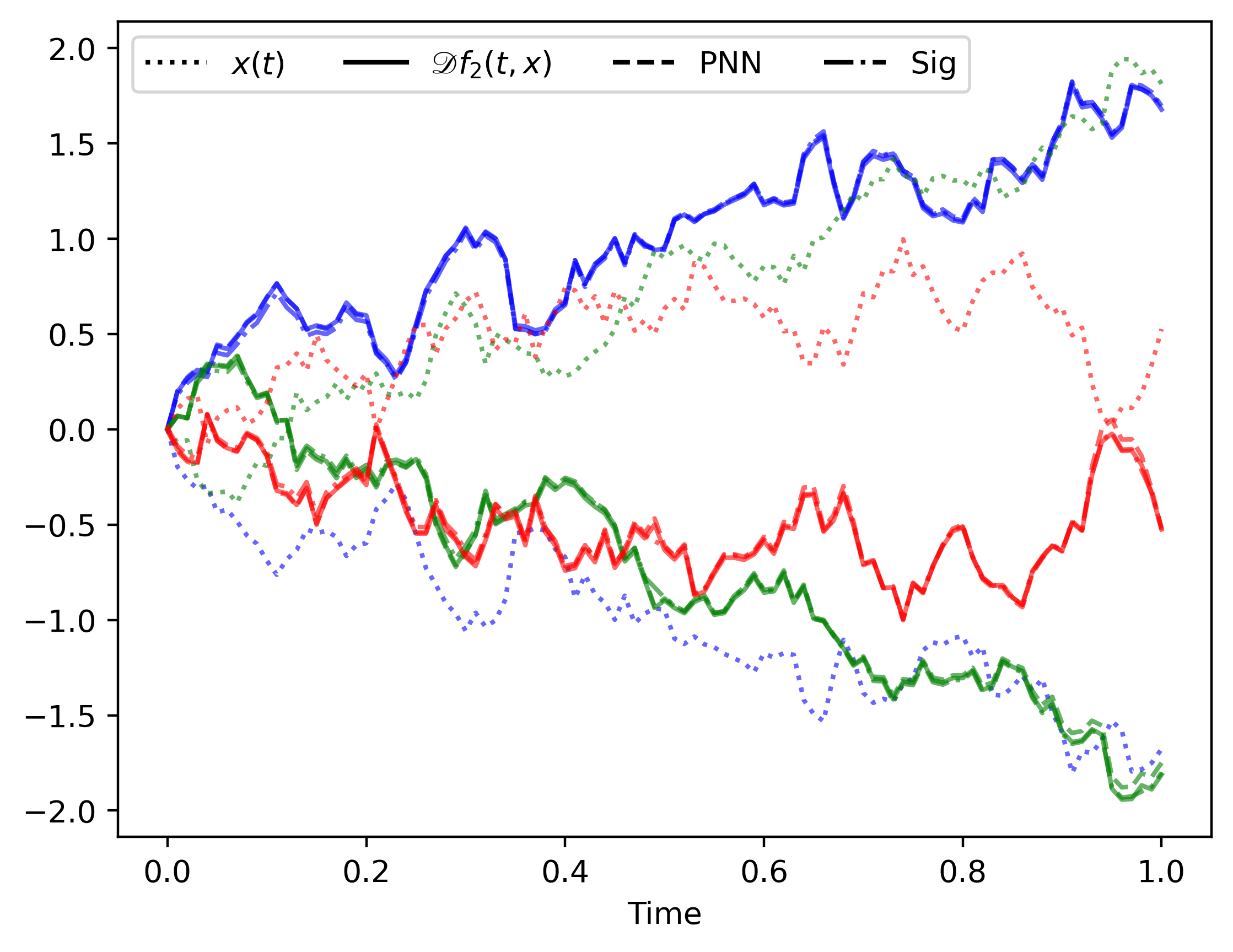}
	}
	\caption{\small Learning $f_2$ defined in \eqref{EqEx2} by path-NN $\varphi \in \mathcal{PN}^{\widetilde{\rho},\rho,\mathcal{L}}_{\Lambda^{\alpha,1}_{T,\mathbb{R}}}$ (label FNN) and linear function of the signature $\sum_{0 \leq \vert I \vert \leq N_{\mathrm{Sig}}} a_I \langle S(\widehat{\mathbf{X}})_t, e_I \rangle$ (label Sig). In (a), the weighted mean squared error \eqref{eq:loss} is evaluated on the training set in each epoch (solid line) as well as on the test after every 200-th epoch (dots). In (b)--(d), three samples $x(m)$ of the test set are shown together with $f_2(\cdot,x(m))$, $\mathcal{D} f_2(\cdot,x(m))$, $\mathscr{D} f_2(\cdot,x(m))$, and their approximations.}
	\label{fig:ex2}
\end{figure}

\pagebreak

\appendix

\section{\texorpdfstring{$\alpha$}{alpha}-H\"older Skorokhod space \texorpdfstring{$D^{\alpha,1}([0,T];Z)$}{Dalphar}}
\label{app:skorokhod}

In this section, we fix some $\alpha \in [0,1)$, $T > 0$, and a dual Banach space $(Z,\Vert \cdot \Vert_Z)$ with predual $(E,\Vert \cdot \Vert_E)$. Then, we first show that the $\alpha$-H\"older Skorokhod space $D^{\alpha,1}([0,T];Z) \subseteq D^0([0,T];Z)$ introduced in Section~\ref{sec:model_sp} is a Banach space, which is isometrically isomorphic to the direct sum of the $\alpha$-H\"older space $(C^\alpha([0,T];Z),\Vert \cdot \Vert_\alpha)$ and the Banach space $(\ell^1((0,T];Z),\Vert \cdot \Vert_{\ell^1})$ consisting of sequences $(z_t)_{t \in (0,T]} \subseteq Z$ with $\Vert z \Vert_{\ell^1} := \sum_{t \in (0,T]} \Vert z_t \Vert_Z < \infty$.

\begin{theorem}
	\label{thm:sk_banach}
	Let $\alpha \in [0,1)$. Then, $(D^{\alpha,1}([0,T];Z),\Vert \cdot \Vert_{\alpha,\ell^1})$ is a Banach space, which is isometrically isomorphic to $C^\alpha([0,T];Z) \oplus_\infty \ell^1((0,T];Z)$.
\end{theorem}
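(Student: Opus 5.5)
The plan is to build an explicit isometric isomorphism
\[
\Theta: D^{\alpha,1}([0,T];Z) \rightarrow C^\alpha([0,T];Z) \oplus_\infty \ell^1((0,T];Z), \qquad \Theta(x) := (x^c, \Delta x),
\]
and then to deduce completeness from the completeness of the two summands. By the definition of $\Vert \cdot \Vert_{\alpha,\ell^1}$ in \eqref{eq:def:sk_norm}, we have $\Vert \Theta(x) \Vert_{C^\alpha \oplus_\infty \ell^1} = \max(\Vert x^c \Vert_\alpha, \Vert \Delta x \Vert_{\ell^1}) = \Vert x \Vert_{\alpha,\ell^1}$. So $\Theta$ is well defined, linear (the maps $x \mapsto \Delta x$ and $x \mapsto x^c$ are linear) and isometric, hence injective. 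It also follows that $\Vert \cdot \Vert_{\alpha,\ell^1}$ is a norm, since $\Vert x \Vert_{\alpha,\ell^1} = 0$ forces $x^c = 0$ and $\Delta x = 0$, hence $x = 0$. For $\alpha = 0$ I read $C^0$ as continuous paths with the sup-type norm $\Vert x(0) \Vert_Z + \vert x \vert_0$, so that $x^c$ continuous is exactly what the definition requires.

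\textbf{Surjectivity} is the main step. Take $(y,z) \in C^\alpha([0,T];Z) \oplus \ell^1((0,T];Z)$. Since $\sum_t \Vert z_t \Vert_Z < \infty$, $z$ has countable support, and we set
\[
x(t) := y(t) + J(t), \qquad J(t) := \sum_{s \in (0,t]} z_s .
\]
The series defining $J$ converges absolutely in $Z$. I will check three things.
\begin{enumerate}
\item $J$ is càdlàg. Right continuity follows from $\Vert J(t+h) - J(t) \Vert_Z \le \sum_{s \in (t,t+h]} \Vert z_s \Vert_Z \to 0$ as $h \downarrow 0$, by dominated convergence for sums. Likewise the left limit exists and equals $J(t-) = \sum_{s \in (0,t)} z_s$.
\item $\Delta x(t) = z_t$. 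This holds because $y$ is continuous and $J(t) - J(t-) = z_t$.
\item $x^c = y$. This follows because $x^c(t) = x(t) - \sum_{s \le t} \Delta x(s) = y(t)$.
\end{enumerate}
Hence $x \in D^{\alpha,1}([0,T];Z)$ and $\Theta(x) = (y,z)$. The one point needing care is that $x^c$ really is the Hölder part, i.e.\ that the jump sum in the definition of $x^c$ converges. For a general $x \in D^{\alpha,1}$ this is guaranteed by the summability assumption \eqref{eq:def:sk_jumps}, and the same càdlàg argument applied to $t \mapsto \sum_{s \le t} \Delta x(s)$ shows that this sum is càdlàg with exactly the jumps of $x$. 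Consequently $x^c$ has no jumps and is continuous, which makes $\Theta$ well defined.

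\textbf{Completeness.} $C^\alpha([0,T];Z)$ is a Banach space by the cited results (\cite[Theorem~5.25]{friz10}, \cite{weaver99}), and $\ell^1((0,T];Z)$ is the standard Banach space $\ell^1(\Gamma;Z)$ over the index set $\Gamma = (0,T]$, complete because $Z$ is complete. Their $\oplus_\infty$-sum is therefore complete. Since $\Theta$ is a surjective linear isometry, $D^{\alpha,1}([0,T];Z)$ is complete as well. I expect the main obstacle to be the bookkeeping with uncountable index sums, namely justifying that the càdlàg limits commute with the countable absolutely convergent sums; the dominated-convergence bound in step 1 handles this.
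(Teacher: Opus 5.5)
Your proposal is correct and follows the same route as the paper. Both use the isometry $x \mapsto (x^c,\Delta x)$ onto $C^\alpha([0,T];Z) \oplus_\infty \ell^1((0,T];Z)$ with inverse $(y,z) \mapsto y + \sum_{s \in (0,\cdot]} z_s$, and then transfer completeness from the direct sum; you additionally supply the details the paper leaves implicit, namely that the jump sum is c\`adl\`ag with jumps exactly $z$, so that $\Delta x = z$ and $x^c = y$.
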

\begin{proof}
	First, we observe that the embedding
	\begin{equation}
		\label{eq:thm:sk_banach:proof1}
		(D^{\alpha,1}([0,T];Z),\Vert \cdot \Vert_{\alpha,\ell^1}) \ni x \quad \mapsto \quad \Xi(x) := (x^c,\Delta x) \in (C^\alpha([0,T];Z) \oplus_\infty \ell^1((0,T];Z),\Vert \cdot \Vert_{\oplus_\infty})
	\end{equation}
	is continuous, where $C^\alpha([0,T];Z) \oplus_\infty \ell^1((0,T];Z)$ is a Banach space under the norm $\Vert (y,z) \Vert_{\oplus_\infty} := \max(\Vert y \Vert_\alpha,\Vert z \Vert_{\ell^1})$, see, e.g., \cite[Section~II.B.20]{wojtaszczyk96}. Moreover, the linear mapping
	\begin{equation}
		(C^\alpha([0,T];Z) \oplus_\infty \ell^1((0,T];Z),\Vert \cdot \Vert_{\oplus_\infty}) \ni (y,z) \mapsto \left( t \mapsto y(t) \!+\!\! \sum_{s \in (0,t]} \!\! z_s \right) \in (D^{\alpha,1}([0,T];Z),\Vert \cdot \Vert_{\alpha,\ell^1})
	\end{equation}
	is well-defined, continuous, and an inverse of $\Xi$, which shows that $\Xi$ is an isometric isomorphism. Hence, $(D^{\alpha,1}([0,T];Z),\Vert \cdot \Vert_{\alpha,\ell^1})$ is a Banach space as an isometrically isomorphic image of the Banach space $C^\alpha([0,T];Z) \oplus_\infty \ell^1((0,T];Z)$. 
\end{proof}

In addition, we use the preduals of the Banach spaces $(C^\alpha([0,T];Z),\Vert \cdot \Vert_\alpha)$ and $(\ell^1((0,T];Z),\Vert \cdot \Vert_{\ell^1})$ to show that $(D^{\alpha,1}([0,T];Z),\Vert \cdot \Vert_{\alpha,\ell^1})$ is a dual Banach space.

\begin{theorem}
	\label{thm:sk_dual}
	Let $\alpha \in (0,1)$. Then, $(D^{\alpha,1}([0,T];Z),\Vert \cdot \Vert_{\alpha,\ell^1})$ is a dual Banach space. Moreover, its weak-$*$-topology $\tau_{w^*}$ coincides on every $\Vert \cdot \Vert_{\alpha,\ell^1}$-bounded subset of $C^\alpha([0,T];Z) \subseteq D^{\alpha,1}([0,T];Z)$ with the $w^*$-uniform topology $\tau_\infty$.
\end{theorem}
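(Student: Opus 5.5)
The plan is to exploit the isometric isomorphism $\Xi$ of Theorem~\ref{thm:sk_banach}, which identifies $D^{\alpha,1}([0,T];Z)$ with $C^\alpha([0,T];Z) \oplus_\infty \ell^1((0,T];Z)$, and to show that each summand is a dual Banach space. For the first summand, $C^\alpha([0,T];Z)$ with $\alpha \in (0,1)$ is a dual Banach space by \cite[Theorem~A.5]{cuchiero23}; call its predual $F_\alpha$. For the second summand, $\ell^1((0,T];Z)$ is the dual of $c_0((0,T];E)$, the space of $E$-valued families indexed by $(0,T]$ that vanish at infinity (for every $\varepsilon>0$ only finitely many have norm $\geq\varepsilon$), normed by the supremum. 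The pairing is $\langle z, e\rangle := \sum_{t} \langle z_t, e_t\rangle_{Z\times E}$, and it is an isometric isomorphism because $E^*\cong Z$ isometrically and the standard $c_0$--$\ell^1$ duality argument carries over coordinatewise. The uncountable index set causes no trouble here, since $\ell^1$ elements have countable support.

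Next I use that the $\ell^\infty$-direct sum of two duals is the dual of the $\ell^1$-direct sum of the preduals: $(F_\alpha \oplus_1 c_0((0,T];E))^* \cong C^\alpha([0,T];Z) \oplus_\infty \ell^1((0,T];Z)$ isometrically. Composing this with $\Xi$ shows that $D^{\alpha,1}([0,T];Z)$ is a dual Banach space with predual $F_\alpha \oplus_1 c_0((0,T];E)$, and the weak-$*$-topology pulled back through $\Xi$ is the product of the two weak-$*$-topologies.

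For the second claim, take a $\Vert\cdot\Vert_{\alpha,\ell^1}$-bounded set $B \subseteq C^\alpha([0,T];Z)$. On continuous paths we have $\Delta x = 0$, so $\Xi(x) = (x,0)$. The weak-$*$-topology of $D^{\alpha,1}$ restricted to $B$ is therefore just the weak-$*$-topology of $C^\alpha([0,T];Z)$ restricted to the bounded set $B$. By \cite[Theorem~A.5]{cuchiero23}, together with \cite[Lemma~A.1]{cuchiero23} identifying $\tau_0$ with $\tau_\infty$, this coincides with $\tau_\infty$ on bounded sets.

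The main obstacle is this last identification. If the cited result only gives the predual abstractly, I would argue directly as follows. The predual $F_\alpha$ is spanned (densely) by the evaluation functionals $\delta_t\otimes e$ and the difference quotients $(\delta_t-\delta_s)\otimes e/|t-s|^\alpha$. On bounded sets, weak-$*$ convergence is therefore equivalent to pointwise weak-$*$ convergence. On $\Vert\cdot\Vert_\alpha$-bounded sets, pointwise convergence upgrades to uniform convergence in each seminorm $\Vert\cdot\Vert_{\infty,e}$ by equicontinuity (an Arzel\`a--Ascoli argument), and conversely. This is the step I would check most carefully.
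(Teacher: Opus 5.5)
Your proposal is correct and follows essentially the same route as the paper: you transport through $\Xi$ to $C^\alpha([0,T];Z)\oplus_\infty \ell^1((0,T];Z)$, use $c_0((0,T];E)^*\cong\ell^1((0,T];Z)$ and the $\oplus_1$/$\oplus_\infty$ duality of direct sums, and reduce the topology claim to \cite[Theorem~A.5]{cuchiero23}, since continuous paths have vanishing jump component. The only difference is cosmetic: the paper writes the predual of $C^\alpha([0,T];Z)$ explicitly as $E\oplus_1\big(\text{Arens--Eells space}\,\widehat{\otimes}_\pi E\big)$ via Weaver's representation of $C^\alpha_0([0,T];Z)$, whereas you take that predual as a black box from the cited theorem.
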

\begin{proof}
	By using the linear isomorphism $C^\alpha([0,T];Z) \ni x \mapsto (x(0),x-x(0)) \in Z \oplus_\infty C^\alpha_0([0,T];Z)$ and that $C^\alpha_0([0,T];Z) \cong L(\textrm{\AE}([0,T],d_\alpha);Z)$ (see \cite[Theorem~3.6]{weaver99}, where $\textrm{\AE}([0,T],d_\alpha)$ denotes the Arens-Eells space defined in \cite[Definition~3.2]{weaver99} over the snow-flaked metric space $([0,T],d_\alpha)$, with $d_\alpha(s,t) = \vert s-t \vert^\alpha$), we observe that
	\begin{equation}
		C^\alpha([0,T];Z) \cong Z \oplus_\infty C^\alpha_0([0,T];Z) \cong Z \oplus_\infty L(\textrm{\AE}([0,T],d_\alpha);Z).
	\end{equation}
	Hence, by combining this with Theorem~\ref{thm:sk_banach}, it follows that
	\begin{equation}
		D^{\alpha,1}([0,T];Z) \cong C^\alpha([0,T];Z) \oplus_\infty \ell^1((0,T];Z) \cong Z \oplus_\infty L(\textrm{\AE}([0,T],d_\alpha);Z) \oplus_\infty \ell^1((0,T];Z).
	\end{equation}
	Thus, by using that $L(\textrm{\AE}([0,T],d_\alpha);Z) \cong (\textrm{\AE}([0,T],d_\alpha) \widehat{\otimes}_\pi E)^*$ (see \cite[Theorem~2.9]{ryan02}, where $\widehat{\otimes}_\pi$ denotes the completed projective tensor product) and that $c_0((0,T];E)^* \cong \ell^1((0,T];Z)$ (where $c_0((0,T];E) := \lbrace (e_t)_{t \in (0,T]}: \#\lbrace t \in (0,T]: \Vert e_t \Vert_E \geq \varepsilon \rbrace < \infty \text{ for all } \varepsilon > 0 \rbrace$ is a Banach space under the norm $\Vert e \Vert_\infty := \sup_{t \in (0,T]} \Vert e_t \Vert_E$), we can apply \cite[Section~II.B.21]{wojtaszczyk96} to conclude that
	\begin{equation}
		\label{eq:thm:sk_dual:proof1}
		\begin{aligned}
			D^{\alpha,1}([0,T];Z) & \cong Z \oplus_\infty L(\textrm{\AE}([0,T],d_\alpha);Z) \oplus_\infty \ell^1((0,T];Z) \\
			& \cong \left( E \oplus_1 \left( \textrm{\AE}([0,T],d_\alpha) \widehat{\otimes}_\pi E \right) \oplus_1 c_0((0,T];E) \right)^*
		\end{aligned}
	\end{equation}
	is a dual Banach space, where the predual $G := E \oplus_1 \left( \textrm{\AE}([0,T],d_\alpha) \widehat{\otimes}_\pi E \right) \oplus_1 c_0((0,T];E)$ is equipped with the norm $\Vert (e,T,y) \Vert_{\oplus_1} := \Vert e \Vert_E + \Vert T \Vert_{\textrm{\AE}([0,T],d_\alpha) \widehat{\otimes}_\pi E} + \Vert y \Vert_\infty$.
	
	Finally, we show for every fixed $\Vert \cdot \Vert_{\alpha,\ell^1}$-bounded subset $B \subseteq C^\alpha([0,T];Z) \subseteq D^{\alpha,1}([0,T];Z)$ that $\tau_{w^* \vert B} := \lbrace U \cap B: U \in \tau_{w^*} \rbrace = \lbrace U \cap B: U \in \tau_\infty \rbrace := \tau_{\infty \vert B}$. For $\tau_{w^* \vert B} \subseteq \tau_{\infty \vert B}$, we fix some $x \in B$ and a set $\left\lbrace y \in B: \max_{m=1,\ldots,M} \left\vert \langle x-y, g_m \rangle_{D^{\alpha,1}([0,T];Z) \times G} \right\vert < \delta \right\rbrace$ of the $x$-neighborhood basis of $(B,\tau_{w^* \vert B})$, where $\delta > 0$ and $g_1,\ldots,g_M \in G := F \oplus_1 c_0((0,T];E)$ with $F := E \oplus_1 \left( \textrm{\AE}([0,T],d_\alpha) \widehat{\otimes}_\pi E \right)$. Then, by using the canonical projection $\pi: G \rightarrow F$ and that the weak-$*$-topology of $C^\alpha([0,T];Z)$ coincides with $\tau_\infty$ on the $\Vert \cdot \Vert_\alpha$-bounded subset $B$ (see \cite[Theorem~A.5]{cuchiero23}), there exist $e_1,\ldots,e_N \in E$ and $\varepsilon > 0$ such that
	\begin{equation}
		\begin{aligned}
			\Big\lbrace y \!\in\! B: \max_m \left\vert \langle x-y, g_m \rangle_{D^{\alpha,1}([0,T];Z) \times G} \right\vert < \delta \Big\rbrace & = \Big\lbrace y \!\in\! B: \max_m \left\vert \langle x-y, \pi(g_m) \rangle_{C^\alpha([0,T];Z) \times F} \right\vert < \delta \Big\rbrace \\
			& \subseteq \bigg\lbrace y \!\in\! B: \max_n \sup_{t \in [0,T]} \left\vert \langle (x-y)(t), e_n \rangle_{Z \times E} \right\vert < \varepsilon \bigg\rbrace.
		\end{aligned}
	\end{equation}
	Since the set on the right-hand side belongs to the $x$-neighborhood basis of $(B,\tau_{\infty \vert B})$, we obtain that $\tau_{w^* \vert B} \subseteq \tau_{\infty \vert B}$. Conversely, for $\tau_{w^* \vert B} \supseteq \tau_{\infty \vert B}$, we fix again some $x \in B$ and a set $\big\lbrace y \in B: \max_{m=1,\ldots,M} \sup_{t \in [0,T]} \left\vert \langle (x-y)(t), e_m \rangle_{Z \times E} \right\vert < \delta \big\rbrace$ of the $x$-neighborhood basis of $(B,\tau_{\infty \vert B})$, where $e_1,\ldots,e_M \in E$ and $\delta > 0$. Then, by using the canonical embedding $\iota: F \rightarrow G$ and again that $\tau_\infty$ coincides on the $\Vert \cdot \Vert_\alpha$-bounded subset $B$ with the weak-$*$-topology of $C^\alpha([0,T];Z)$ (see \cite[Theorem~A.5]{cuchiero23}), there exist some $f_1,\ldots,f_N \in F$ and $\varepsilon > 0$ such that
	\begin{equation}
		\begin{aligned}
			\left\lbrace y \!\in\! B: \max_m \sup_{t \in [0,T]} \left\vert \langle (x \!-\! y)(t), e_m \rangle_{Z \times E} \right\vert < \delta \right\rbrace & \subseteq \left\lbrace y \!\in\! B: \max_n \left\vert \langle x \!-\! y, f_n \rangle_{C^\alpha([0,T];Z) \times F} \right\vert < \varepsilon \right\rbrace \\
			& = \left\lbrace y \!\in\! B: \max_n \left\vert \langle x \!-\! y, \iota(f_n) \rangle_{D^{\alpha,1}([0,T];Z) \times G} \right\vert < \varepsilon \right\rbrace.
		\end{aligned}
	\end{equation}
	Since the set on the right-hand side belongs to the $x$-neighborhood basis of $(B,\tau_{w^*})$, we obtain that $\tau_{w^* \vert B} \supseteq \tau_{\infty \vert B}$, which shows that $\tau_{w^* \vert B} = \tau_{\infty \vert B}$.
\end{proof}

\section{BAP of \texorpdfstring{$(C^\alpha(S;Z),\tau_\infty)$}{Calpha} and \texorpdfstring{$(D^{\alpha,1}([0,T];Z),\tau_{w^*})$}{Dalphar}}
\label{app:ap}

In this section, we first show when $(C^\alpha(S;Z),\tau_\infty)$ has the bounded approximation property (BAP), where $\alpha \in [0,1)$, $(S,d_S)$ is a compact metric space, and $(Z,\Vert \cdot \Vert_Z)$ is a dual Banach space equipped with the weak-$*$-topology $\tau_{w^*}$. To this end, we start with the case $\alpha = 0$.

\begin{theorem}
	\label{thm:C0_bap}
	Let $(S,d_S)$ be a compact metric space and let $(Z,\Vert \cdot \Vert_Z)$ be a dual Banach space with predual $(E,\Vert \cdot \Vert_E)$ having BAP. Then, $(C^0(S;Z),\tau_\infty)$ has $\Vert \cdot \Vert_\infty$-BAP.
\end{theorem}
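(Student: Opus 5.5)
I plan to build the finite rank operators by tensoring a partition of unity on $S$ with the BAP operators of the predual, dualized. First I fix the bounded approximation data of $E$: a constant $\lambda \geq 1$ and a net $(Q_\beta)_\beta \subseteq E^* \otimes E$ with $\Vert Q_\beta \Vert_{L(E;E)} \leq \lambda$ converging to $\id_E$ uniformly on norm-compact subsets of $E$. The adjoints $Q_\beta^*: Z \to Z$ are then weak-$*$-continuous, have finite rank with range in $Z$, and satisfy $\Vert Q_\beta^* \Vert \leq \lambda$. Second, for $\delta > 0$ I choose a finite $\delta$-net $s_1,\ldots,s_J$ of the compact space $S$ and a subordinate continuous partition of unity $(h_j)_{j=1}^J$ on $S$, with $h_j \geq 0$, $\sum_j h_j = 1$, and $\supp h_j \subseteq B_\delta(s_j)$. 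For the index $\gamma = (\beta,\delta)$ I then define
\begin{equation}
	T_\gamma(x)(s) := \sum_{j=1}^J h_j(s)\, Q_\beta^*\big(x(s_j)\big), \quad x \in C^0(S;Z).
\end{equation}

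I will verify three properties. \emph{Finite rank and continuity:} every $\langle T_\gamma(x)(\cdot), e\rangle = \sum_j h_j(\cdot)\langle x(s_j), Q_\beta e\rangle$ depends on finitely many $\tau_\infty$-continuous functionals $x \mapsto \langle x(s_j), e'\rangle$, and the range lies in $\linspan\{h_j z : z \in Q_\beta^*(Z)\}$, which is finite-dimensional. \emph{Boundedness:} since $(h_j)$ is a partition of unity, $\Vert T_\gamma(x)(s) \Vert_Z \leq \lambda \Vert x \Vert_\infty$, and moreover $p_e(T_\gamma x) \leq \lambda \Vert e\Vert_E \Vert x\Vert_\infty$ for each $w^*$-seminorm $p_e(x) := \sup_s |\langle x(s), e\rangle|$. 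This yields the $\Vert\cdot\Vert_\infty$-BAP bound.

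\emph{Approximation on compacta:} I fix a $\tau_\infty$-relatively compact $K$ and a seminorm $p_e$, and split
\begin{equation}
	\langle x(s) - T_\gamma x(s), e\rangle = \sum_j h_j(s)\langle x(s) - x(s_j), e\rangle + \sum_j h_j(s)\langle x(s_j), e - Q_\beta e\rangle .
\end{equation}
The second term is at most $\sup_{x\in K}\Vert x\Vert_\infty \,\Vert e - Q_\beta e\Vert_E$. Norm-boundedness of $K$ requires care: I would obtain it via uniform boundedness, since $K$ is $\tau_\infty$-compact and therefore each $p_e$ is bounded on $K$, which makes $\{x(s) : x\in K, s\in S\}$ weak-$*$-bounded and hence norm-bounded. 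The first term needs the family $\{s\mapsto\langle x(s),e\rangle : x\in K\}$ to be uniformly equicontinuous. This follows from Arzel\`a--Ascoli, because the map $x\mapsto\langle x(\cdot),e\rangle$ sends the $\tau_\infty$-compact set $K$ continuously into a compact subset of $C^0(S)$. Taking $\delta$ small and $\beta$ large makes both terms small.

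The main obstacle I expect is the directedness of the index set together with the norm-boundedness of relatively compact sets. If the uniform boundedness argument fails, for instance because $E$ is incomplete, I would instead use the $\Vert\cdot\Vert_\infty$-bound built into the seminorm system $\mathfrak{Q}_X$ and restrict the argument to norm-bounded sets.
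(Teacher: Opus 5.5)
Your proposal is correct and follows essentially the same route as the paper: the same operators $x \mapsto \sum_j h_j(\cdot)\, Q_\beta^*(x(s_j))$ built from a partition of unity and the adjoints of the predual's BAP net, the same two-term splitting (controlled by equicontinuity via Arzel\`a--Ascoli and by norm-boundedness of $\{x(s): x \in K, s \in S\}$ via uniform boundedness), and the same bound $\lambda \Vert e \Vert_E \Vert x \Vert_\infty$. The obstacles you flag do not arise: the predual $E$ is a Banach space by definition, so the uniform boundedness principle applies, and your index set $(\beta,\delta)$ is genuinely directed because the first term is monotone in $\delta$, which is slightly tidier than the paper's indexing by $(\varepsilon,K,e_{1:N},\gamma)$.
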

\begin{proof}
	Since $(E,\Vert \cdot \Vert_E)$ has BAP, there exists some $\lambda \geq 1$ and $(Q_\gamma)_\gamma \subseteq E^* \otimes E$ with $\Vert Q_\gamma \Vert_{L(E;E)} \leq \lambda$, for all $\gamma$, such that for every relatively compact subset $\widetilde{K} \subseteq E$ it holds that
	\begin{equation}
		\label{eq:thm:C0_bap:proof1}
		\lim_\gamma \sup_{e \in \widetilde{K}} \Vert e - Q_\gamma(e) \Vert_E = 0.
	\end{equation}
	Now, we fix some $\varepsilon > 0$, a relatively compact subset $K$ of $(C^0(S;Z),\tau_\infty)$, and some $e_1,\ldots,e_N \in E$ defining the seminorms $\big( z \mapsto p_Z(z) := \max_{n=1,\ldots,N} \vert \langle z, e_n \rangle_{Z \times E} \vert \big) \in \mathfrak{P}_{(Z,\tau_{w^*})}$ and $\big( x \mapsto p_{C^0(S;Z)}(x) := \sup_{s \in S} p_Z(x(s)) \big) \in \mathfrak{P}_{(C^0(S;Z),\tau_\infty)}$. Then, by applying the vector-valued Arzel\`a-Ascoli theorem in \cite[Theorem~43.15]{willard04}, the relatively compact set $K$ is equicontinuous (with respect to $p_Z$). Hence, there exists 
	$\delta \in (0,1)$ such that for every $s,t \in S$ and $x \in K$ it holds that
	\begin{equation}
		\label{eq:thm:C0_bap:proof2}
		d_S(s,t) < \delta \quad \Longrightarrow \quad p_Z(x(t)-x(s)) < \frac{\varepsilon}{2}.
	\end{equation}
	For $0 < r < \frac{\delta}{2}$, we now use that $(S,d_S)$ is compact and thus totally bounded to obtain a finite maximal $r$-separated set of points $(t_j)_{j=1,\ldots,J} \subset S$, i.e., $d_S(t_i,t_j) \geq r$, for all $i \neq j$, such that $\bigcup_{j=1}^J B_r(t_j) = S$. In addition, there exists a partition of unity $(g_j)_{j=1,\ldots,J}$ subordinate to $(B_{2r}(t_j))_{j=1,\ldots,J}$, i.e., $\supp(g_j) \subseteq B_{2r}(t_j)$, $0 \leq g_j \leq 1$, and $\sum_{j=1}^J g_j = 1$. Furthermore, since $B := \lbrace x(s): x \in K, \, s \in S \rbrace$ is bounded in $(Z,\tau_{w^*})$, which implies that $C := 1 + \sup_{z \in B} \Vert z \Vert < \infty$ by the uniform boundedness principle, we use \eqref{eq:thm:C0_bap:proof1} to obtain some $\gamma$ satisfying
	\begin{equation}
		\max_{n=1,\ldots,N} \Vert e_n - Q_\gamma e_n \Vert_E < \frac{\varepsilon}{2C}.
	\end{equation}
	Next, we define $\big( x \mapsto T_{\varepsilon,K,e_{1:N},\gamma}(x) := \sum_{j=1}^J g_j(\cdot) Q_\gamma^*(x(t_j)) \big) \in (C^0(S;Z),\tau_\infty)^* \otimes C^0(S;Z)$. Then, by using that $s \in \supp(g_j)$ implies $d_S(s,t_j) < r < \frac{\delta}{2} \leq \delta$ and therefore $p_Z(x(s)-x(t_j)) < \frac{\varepsilon}{2}$ by \eqref{eq:thm:C0_bap:proof2}, and that $p_Z\left( x(t_j) - Q_\gamma^*(x(t_j)) \right) = \max_{n=1,\ldots,N} \vert \langle e_n, x(t_j) - Q_\gamma^*(x(t_j)) \rangle \vert = \max_{n=1,\ldots,N} \vert \langle e_n - Q_\gamma(e_n), x(t_j) \rangle \vert \leq \max_{n=1,\ldots,N} \Vert e_n - Q_\gamma(e_n) \Vert_E \Vert x(t_j) \Vert_Z < \frac{\varepsilon}{2C} C = \frac{\varepsilon}{2}$, we have
	\begin{equation}
		\begin{aligned}
			& \sup_{x \in K} p_{C^0(S;Z)}\left( x-T_{\varepsilon,K,e_{1:N},\gamma}(x) \right) = \sup_{x \in K} \sup_{s \in S} p_Z\left( x(s) - T_{\varepsilon,K,e_{1:N},\gamma}(x)(s) \right) \\
			& = \sup_{x \in K} \sup_{s \in S} p_Z\left( \sum_{j=1}^J g_j(s) \left( x(s) - Q_\gamma^*(x(t_j)) \right) \right) \\
			& \leq \sup_{x \in K} \sup_{s \in S} \sum_{j=1}^J g_j(s) p_Z\left( x(s) - Q_\gamma^*(x(t_j)) \right) \\
			& \leq \sup_{x \in K} \sup_{s \in S} \sum_{j=1}^J g_j(s) p_Z\left( x(s) - x(t_j) \right) + \sup_{x \in K} \sup_{s \in S} \sum_{j=1}^J g_j(s) p_Z\left( x(t_j) - Q_\gamma^*(x(t_j)) \right) \\
			& < \frac{\varepsilon}{2} + \frac{\varepsilon}{2} = \varepsilon.
		\end{aligned}
	\end{equation}
	Hence, the net $(T_{\varepsilon,K,e_{1:N},\gamma})_{\varepsilon,K,e_{1:N},\gamma} \subseteq (C^0(S;Z),\tau_\infty)^* \otimes C^0(S;Z)$ converges to the identity $\id_{C^0(S;Z)}: C^0(S;Z) \rightarrow C^0(S;Z)$, uniformly on each relatively compact subset of $(C^0(S;Z),\tau_\infty)$, showing that $(C^0(S;Z),\tau_\infty)$ has AP. Moreover, for every $x \in C^0(S;Z)$ and $\widetilde{e}_1,\ldots,\widetilde{e}_M \in E$ defining the seminorms $\big( z \mapsto \widetilde{p}_Z(z) := \max_{n=1,\ldots,M} \vert \langle \widetilde{e}_n, z \rangle_{Z \times E} \vert \big) \in \mathfrak{P}_{(Z,\tau_{w^*})}$ and $\big( x \mapsto \widetilde{p}_{C^0(S;Z)}(x) := \sup_{s \in S} \widetilde{p}_Z(x(s)) \big) \in \mathfrak{P}_{(C^0(S;Z),\tau_\infty)}$, we observe that
	\begin{equation}
		\begin{aligned}
			\widetilde{p}_{C^0(S;Z)}\left( T_{\varepsilon,K,e_{1:N},\gamma}(x) \right) & = \sup_{s \in S} \widetilde{p}_Z\left( \sum_{j=1}^J g_j(s) Q_\gamma^*(x(t_j)) \right) \\
			& \leq \sup_{s \in S} \sum_{j=1}^J g_j(s) \max_{n=1,\ldots,M} \left\vert \langle x(t_j), Q_\gamma(\widetilde{e}_n) \rangle_{Z \times E} \right\vert \\
			& \leq \Big( \max_{n=1,\ldots,M} \Vert Q_\gamma \widetilde{e}_n \Vert_E \Big) \Vert x \Vert_\infty \\
			& \leq \Big( \lambda \max_{n=1,\ldots,M} \Vert \widetilde{e}_n \Vert_E \Big) \Vert x \Vert_\infty,
		\end{aligned}
	\end{equation}
	which proves that $(C^0(S;Z),\tau_\infty)$ has $\Vert \cdot \Vert_\infty$-BAP.
\end{proof}

For the BAP of $(C^\alpha(S;Z),\tau_\infty)$ with $\alpha \in (0,1)$, we impose the following condition on $(S,d_S)$ to obtain a specific partition of unity $(g_j)_{j=1,\ldots,J}$. Here, a metric space $(S,d_S)$ is called \emph{doubling} if there exists a doubling constant $M > 0$ such that for every $s \in S$ and $r > 0$ the open ball $B_r(s)$ can be covered with $M$ open balls of radius $r/2$. Moreover, $\Lip(S)$ denotes the vector space of Lipschitz continuous functions $g: S \rightarrow \mathbb{R}$ with $\vert g \vert_1 := \sup_{s,t \in S, \, s \neq t} \frac{\vert g(s) - g(t) \vert}{d_S(s,t)} < \infty$.

\begin{lemma}
	\label{lem:partition_of_unity}
	Let $\alpha \in (0,1)$, let $(S,d_S)$ be a compact doubling metric space. Then, for every $\varepsilon > 0$, relatively compact subset $K$ of $(C^0(S;Z),\tau_\infty)$, and $e_1,\ldots,e_N \in E$, the partition of unity $(g_j)_{j=1,\ldots,J}$ in the proof of Theorem~\ref{thm:C0_bap} can be chosen to satisfy $g_1,\ldots,g_J \in \Lip(S)$ with $\supp(g_j) \subseteq B_{2r}(t_j)$, $\vert g_j \vert_1 \leq 20 C r^{-1}$, and $\#\lbrace j: s \in \supp(g_j) \rbrace \leq C$, where $r := \frac{\delta}{4}$ and $C > 0$ is a universal constant independent of $\varepsilon$, $K$, $e_1,\ldots,e_N$, $\delta$, and $r$.
\end{lemma}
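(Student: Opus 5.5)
The plan is to replace the arbitrary partition of unity from the proof of Theorem~\ref{thm:C0_bap} by an explicit Lipschitz one built from distance functions to a maximal separated net. The doubling property controls both the overlap number and the Lipschitz constants.

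First, fix $\varepsilon$, $K$, $e_1,\ldots,e_N$, and obtain $\delta\in(0,1)$ from the equicontinuity \eqref{eq:thm:C0_bap:proof2} exactly as before. Set $r:=\delta/4<\delta/2$. Choose a finite maximal $r$-separated set $(t_j)_{j=1,\ldots,J}\subset S$, which exists by compactness; maximality gives $\bigcup_j B_r(t_j)=S$. Define the tent functions
\[
\widetilde g_j(s):=\max\Big(0,\,2-\tfrac{d_S(s,t_j)}{r}\Big),
\]
so that $\widetilde g_j$ is Lipschitz with $|\widetilde g_j|_1\le r^{-1}$, $0\le\widetilde g_j\le 2$, and $\supp(\widetilde g_j)\subseteq \overline{B}_{2r}(t_j)$. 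If an open ball is strictly required, use $\max(0,\tfrac32-d_S/r)$ instead; the constants change harmlessly. Moreover, $\widetilde g_j\ge1$ on $B_r(t_j)$, so $\Sigma(s):=\sum_j\widetilde g_j(s)\ge1$ for every $s$. Set $g_j:=\widetilde g_j/\Sigma$. Then $0\le g_j\le1$, $\sum_j g_j=1$, and the support property is inherited.

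Second, we bound the overlap. If $s\in\supp(g_j)$ then $t_j\in B_{2r}(s)$. By doubling, $B_{2r}(s)$ is covered by $M^3$ balls of radius $r/4$, and each such ball contains at most one $t_j$ by $r$-separation. Hence $\#\{j:s\in\supp(g_j)\}\le M^3=:C_0$, which depends only on the doubling constant.

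Third, we bound the Lipschitz constants. For $s,t\in S$,
\[
|g_j(s)-g_j(t)|\le\frac{|\widetilde g_j(s)-\widetilde g_j(t)|}{\Sigma(s)}+\widetilde g_j(t)\,\frac{|\Sigma(s)-\Sigma(t)|}{\Sigma(s)\Sigma(t)}.
\]
This uses $\Sigma\ge1$ and $\widetilde g_j\le2$. The term $|\Sigma(s)-\Sigma(t)|$ involves only indices with $s$ or $t$ in the support, at most $2C_0$ of them, so $|\Sigma|_1\le 2C_0r^{-1}$. This gives $|g_j|_1\le(1+4C_0)r^{-1}\le 20\,C\,r^{-1}$ with $C:=\max(C_0,1)$, which is universal.

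The main obstacle is this last step. A naive Lipschitz estimate of $\Sigma$ would count all $J$ indices, and $J$ blows up as $r\to0$. The doubling-based local finiteness is exactly what keeps the constant independent of $\varepsilon$, $K$, the $e_n$, $\delta$, and $r$. To make the bound valid for far-apart $s,t$ as well, I would apply the triangle inequality only for $d_S(s,t)<r$. For $d_S(s,t)\ge r$, use $|g_j(s)-g_j(t)|\le1\le d_S(s,t)/r$ directly. Finally, the verification in Theorem~\ref{thm:C0_bap} goes through unchanged, since $s\in\supp(g_j)$ still implies $d_S(s,t_j)\le 2r<\delta$.
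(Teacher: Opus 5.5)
Your proposal is correct and follows essentially the paper's construction: tent functions $\max(\tfrac32 - r^{-1}d_S(\cdot,t_j),0)$ around a maximal $r$-separated net, normalized by their sum, with the doubling property bounding the overlap and the quotient rule giving $\vert g_j\vert_1\le 20Cr^{-1}$. The only difference is cosmetic: you bound $\vert\Sigma(s)-\Sigma(t)\vert$ for all $s,t$ by counting the at most $2C_0$ indices active at $s$ or $t$, so the paper's case split at $d_S(s,t)=r$ is unnecessary, and so is your own extra split.
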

\begin{proof}
	Let $(t_j)_{j=1,\ldots,J} \subset S$ be the finite maximal $r$-separated set of points from the proof of Theorem~\ref{thm:C0_bap}, i.e., $d_S(t_i,t_j) \geq r$, for all $i \neq j$. Then, for every $j = 1,\ldots,J$, we define the function
	\begin{equation}
		S \ni s \quad \mapsto \quad h_j(s) := \max\left( \frac{3}{2} - r^{-1} d_S(s,t_j), 0 \right) \in [0,\tfrac{3}{2}],
	\end{equation} 
	which satisfies $\supp(h_j) \subseteq B_{2r}(t_j)$ and $\vert h_j \vert_1 \leq r^{-1}$, for all $j = 1,\ldots,J$. Since $\bigcup_{j=1}^J B_r(t_j) = S$, there exists for every $s \in S$ some $j = 1,\ldots,J$ with $d_S(s,t_j) < r$, which implies that $h_j(s) \geq \frac{1}{2}$. Hence, by using the function $S \ni s \mapsto H(s) := \sum_{j=1}^J h_j(s) \in [0,\infty)$, which satisfies $H(s) \geq \frac{1}{2}$, for all $s \in S$, we can define for every $j = 1,\ldots,J$ the function
	\begin{equation}
		S \ni s \quad \mapsto \quad g_j(s) := \frac{h_j(s)}{H(s)} \in [0,\infty),
	\end{equation}
	which satisfies $\supp(g_j) \subseteq \supp(h_j) \subseteq B_{2r}(t_j)$, $0 \leq g_j \leq 1$, and $\sum_{j=1}^J g_j = 1$. Thus, by using that $(t_j)_{j=1,\ldots,J} \subset S$ are $r$-separated, there exists a constant $C > 0$ (depending only on the doubling constant $M > 0$) such that every ball of radius $5r/2$ contains at most $C$ of the points $t_j$, which implies that $\#\lbrace j: s \in \supp(h_j) \rbrace = \#\lbrace j: d_S(s,t_j) < 2r \rbrace \leq C$ and therefore $\frac{1}{2} \leq H(s) \leq \frac{3C}{2}$. Thus, if $d_S(s,t) < r$, we use that $\vert H(s) - H(t) \vert \leq \sum_{j=1, \, h_j(s) \neq h_j(t)} \vert h_j(s)-h_j(t) \vert \leq C r^{-1} d_S(s,t)$ with $h_j(s) \neq h_j(t)$ implying $h_j(s) \neq 0$ or $h_j(t) \neq 0$ and therefore $t_j \in B_{5r/2}(s)$, and if $d_S(s,t) \geq r$, we insert that $\vert H(s) - H(t) \vert \leq \vert H(s) \vert + \vert H(t) \vert \leq 2 \frac{3}{2} C \leq 3 C r^{-1} d_S(s,t)$ to conclude for every $s,t \in S$ that
	\begin{equation}
		\begin{aligned}
			\vert g_j(s) - g_j(t) \vert & = \left\vert \frac{h_j(s)}{H(s)} - \frac{h_j(t)}{H(t)} \right\vert \leq \frac{\vert h_j(s) - h_j(t) \vert}{H(s)} + \vert h_j(t) \vert \frac{\vert H(s) - H(t) \vert}{H(s) H(t)} \\
			& \leq 2 \vert h_j(s) - h_j(t) \vert + \frac{3}{2} 4 \vert H(s) - H(t) \vert \\
			& \leq 2 r^{-1} d_S(s,t) + 18 C r^{-1} d_S(s,t) \\
			& \leq 20 C r^{-1} d_S(s,t),
		\end{aligned}
	\end{equation}
	which proves that $\vert g_j \vert_1 \leq 20 C r^{-1}$.
\end{proof}

\begin{corollary}
	\label{cor:hol_bap}
	Let $\alpha \in (0,1)$, let $(S,d_S)$ be a compact doubling metric space with designated origin $0 \in S$, and let $(Z,\Vert \cdot \Vert_Z)$ be a dual Banach space with predual $(E,\Vert \cdot \Vert_E)$ having BAP. Then, $(C^\alpha(S;Z),\tau_\infty)$ has AP with finite rank operators $(T_\vartheta)_\vartheta \subseteq (C^\alpha(S;Z),\tau_\infty)^* \otimes C^\alpha(S;Z)$ and for every $\widetilde{e}_1,\ldots,\widetilde{e}_M \in E$ there exists some $\Lambda > 0$ such that for every $\vartheta$ and $x \in C^\alpha(S;Z)$ it holds that
	\begin{equation}
		\label{eq:cor:hol_bap}
		\Vert T_\vartheta(x) \Vert_{\alpha,\widetilde{e}_{1:M}} \leq \Lambda \Vert x \Vert_\alpha,
	\end{equation}
	where $\Vert x \Vert_{\alpha,\widetilde{e}_{1:M}} := \max_{m=1,\ldots,M} \Vert x \Vert_{\alpha,\widetilde{e}_m}$.
\end{corollary}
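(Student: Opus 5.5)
We build $T_\vartheta$ in the same way as in the proof of Theorem~\ref{thm:C0_bap}, but with the Lipschitz partition of unity from Lemma~\ref{lem:partition_of_unity}, and we correct by the value at the origin. The index is $\vartheta = (\varepsilon,K,e_{1:N},\gamma)$. For such an index we set
\begin{equation}
	T_\vartheta(x) := Q_\gamma^*(x(0)) + \sum_{j=1}^J g_j(\cdot)\, Q_\gamma^*\big(x(t_j)-x(0)\big),
\end{equation}
where $(g_j)_j$ is the partition of unity of Lemma~\ref{lem:partition_of_unity}. It is a finite-rank operator, and it is $\tau_\infty$-continuous because point evaluations composed with $Q_\gamma^*$ are weak-$*$ continuous. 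Its image lies in $C^\alpha(S;Z)$ because the $g_j$ are Lipschitz and $S$ is bounded, so Lipschitz functions are $\alpha$-H\"older.

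\textbf{AP.} Take a relatively compact set $K$ in $(C^\alpha(S;Z),\tau_\infty)$. Using $\sum_j g_j = 1$,
\begin{equation}
	x(s) - T_\vartheta(x)(s) = \sum_j g_j(s)\,\big(x(s)-x(t_j)\big) + \sum_j g_j(s)\,(\mathrm{id}-Q_\gamma^*)\big(x(t_j)\big).
\end{equation}
The estimate is therefore the same as in Theorem~\ref{thm:C0_bap}. $K$ is also relatively compact in $(C^0(S;Z),\tau_\infty)$, hence equicontinuous, and $\sup_{x\in K}\sup_s\|x(s)\|_Z<\infty$ by uniform boundedness. This gives $\sup_{x \in K} p(x - T_\vartheta x) < \varepsilon$ for the relevant seminorm $p$.

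\textbf{Bound \eqref{eq:cor:hol_bap}.} Fix $\widetilde e_m$. Duality moves $Q_\gamma$ onto $\widetilde e_m$, so with $y := x - x(0)$ we get
\begin{equation}
	|\langle T_\vartheta x(0),\widetilde e_m\rangle| \le \lambda \|\widetilde e_m\|_E \|x(0)\|_Z, \qquad F_m(s) := \langle T_\vartheta x(s),\widetilde e_m\rangle = \sum_j g_j(s)\,\langle y(t_j), Q_\gamma \widetilde e_m\rangle + \text{const}.
\end{equation}
It remains to bound the $\alpha$-H\"older seminorm of $F_m$ uniformly in $\vartheta$; this is the main step. Write $a_j := \langle y(t_j), Q_\gamma\widetilde e_m\rangle$. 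Then $|a_i - a_j| \le \lambda\|\widetilde e_m\|_E |x|_\alpha\, d_S(t_i,t_j)^\alpha$.

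We split into two cases according to the distance $d_S(s,t)$, with $r$ the scale from Lemma~\ref{lem:partition_of_unity}.

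If $d_S(s,t) \ge r$, pick $j_0$ with $g_{j_0}(s)>0$, so $d_S(s,t_{j_0}) < 2r$. Then
\begin{equation}
	|F_m(s) - F_m(t)| \le \Big|\sum_j g_j(s)(a_j - a_{j_0})\Big| + \Big|\sum_j g_j(t)(a_j - a_{j_0})\Big| .
\end{equation}
In the first sum, every active $j$ satisfies $d_S(t_j,t_{j_0}) \le 4r$. In the second sum, every active $j$ satisfies $d_S(t_j,t_{j_0}) \le 4r + d_S(s,t) \le 5\,d_S(s,t)$. Both terms are therefore $\lesssim |x|_\alpha\, d_S(s,t)^\alpha$.

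If $d_S(s,t) < r$, use $\sum_j (g_j(s)-g_j(t)) = 0$ to subtract $a_{j_0}$:
\begin{equation}
	|F_m(s)-F_m(t)| = \Big|\sum_j \big(g_j(s)-g_j(t)\big)(a_j-a_{j_0})\Big| .
\end{equation}
At most $2C$ indices are active. For each of them $|g_j(s)-g_j(t)| \le 20 C r^{-1} d_S(s,t)$ and $|a_j-a_{j_0}| \lesssim |x|_\alpha r^\alpha$. The sum is thus $\lesssim |x|_\alpha\, r^{\alpha-1} d_S(s,t) \le |x|_\alpha\, d_S(s,t)^\alpha$, using $d_S(s,t)<r$ and $\alpha<1$.

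Both constants depend only on $C$, $\lambda$ and $\|\widetilde e_m\|_E$, not on $r$ or $\vartheta$. This is exactly why the doubling assumption and the scale-invariant estimate of Lemma~\ref{lem:partition_of_unity} are needed. Taking the maximum over $m$ yields $\Lambda$.
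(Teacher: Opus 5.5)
Your approach is the paper's proof. Because $\sum_j g_j\equiv 1$, your operator $Q_\gamma^*(x(0))+\sum_j g_j\,Q_\gamma^*(x(t_j)-x(0))$ is literally the paper's $\sum_j g_j(\cdot)\,Q_\gamma^*(x(t_j))$. The AP part is the same reduction to Theorem~\ref{thm:C0_bap}. Your two-case H\"older estimate matches \eqref{eq:cor:hol_bap:proof2}--\eqref{eq:cor:hol_bap:proof3}: you subtract $a_{j_0}$, use $|g_j|_1\le 20Cr^{-1}$ with boundedly many active indices, and use $r^{\alpha-1}d_S(s,t)\le d_S(s,t)^\alpha$ for $d_S(s,t)<r$. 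The only change is $a_j-a_{j_0}$ in place of $\langle x(t_j)-x(s),Q_\gamma\widetilde e_m\rangle$. That part is correct.

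One step is false as written: the bound $|\langle T_\vartheta x(0),\widetilde e_m\rangle|\le\lambda\Vert\widetilde e_m\Vert_E\Vert x(0)\Vert_Z$.

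\begin{itemize}
\item Your ``correction by the value at the origin'' changes nothing, precisely because $\sum_j g_j=1$. So $T_\vartheta x(0)=Q_\gamma^*(x(0))+\sum_j g_j(0)\,Q_\gamma^*(x(t_j)-x(0))$.
\item The second term need not vanish. In your notation $F_m(0)=\langle x(0),Q_\gamma\widetilde e_m\rangle+\sum_j g_j(0)\,a_j$, and you bounded only the first summand.
\item With the partition of Lemma~\ref{lem:partition_of_unity}, $g_j(0)>0$ whenever $d_S(t_j,0)<3r/2$, and the net $(t_j)$ need not contain $0$.
\item Concretely, take $S=[0,1]$, $Z=E=\mathbb{R}$, $Q_\gamma=\mathrm{id}$ and $x(s)=s^\alpha$. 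Then $x(0)=0$, yet $T_\vartheta x(0)=\sum_j g_j(0)\,t_j^\alpha>0$ as soon as some $t_j\neq 0$ satisfies $d_S(t_j,0)<3r/2$.
\end{itemize}

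The repair is what the paper does in \eqref{eq:cor:hol_bap:proof1}. For active $j$ one has $d_S(t_j,0)<2r\le 1$, hence $|a_j|\le\lambda\Vert\widetilde e_m\Vert_E|x|_\alpha$. This gives $|F_m(0)|\le\lambda\Vert\widetilde e_m\Vert_E\big(\Vert x(0)\Vert_Z+|x|_\alpha\big)$. Combined with your H\"older seminorm bound, \eqref{eq:cor:hol_bap} still holds with a $\vartheta$-independent $\Lambda$, so the conclusion survives.
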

\begin{proof}
	By using the embedding $C^\alpha(S;Z) \hookrightarrow C^0(S;Z)$, we adapt the proof of Theorem~\ref{thm:C0_bap} with finite rank operators $\big( x \mapsto T_{\varepsilon,K,e_{1:N},\gamma}(x) := \sum_{j=1}^J g_j(\cdot) Q_\gamma^*(x(t_j)) \big) \in (C^\alpha(S;Z),\tau_\infty)^* \otimes C^\alpha(S;Z)$, where $(Q_\gamma)_\gamma \subseteq E^* \otimes E$ with $\Vert Q_\gamma \Vert_{L(E;E)} \leq \lambda$. Note that, by Lemma~\ref{lem:partition_of_unity}, we may choose the partition of unity $(g_j)_{j=1,\ldots,J}$ to satisfy $\supp(g_j) \subseteq B_{2r}(t_j)$, $\vert g_j \vert_1 \leq 20 C r^{-1}$, and $\#\lbrace j: s \in \supp(g_j) \rbrace \leq C$, where $r = \frac{\delta}{4}$. Hence, by the proof of Theorem~\ref{thm:C0_bap}, it follows for every $\varepsilon > 0$, $e_1,\ldots,e_N \in E$ defining the seminorm $\big( x \mapsto p_{C^\alpha(S;Z)}(x) := \sup_{s \in S} p_Z(x(s)) \big) \in \mathfrak{P}_{(C^\alpha(S;Z),\tau_\infty)}$, and relatively compact subset $K$ of $(C^\alpha(S;Z),\tau_\infty)$ that there exists some $\gamma$ such that
	\begin{equation}
		\sup_{x \in K} p_{C^\alpha(S;Z)}\left( x - T_{\varepsilon,K,e_{1:N},\gamma}(x) \right) < \varepsilon,
	\end{equation}
	which shows that $(C^\alpha(S;Z),\tau_\infty)$ has AP. Moreover, for every fixed $x \in C^\alpha(S;Z)$ and $\widetilde{e}_1,\ldots,\widetilde{e}_M \in E$, we use that $0 \in \supp(g_j) \subseteq B_{2r}(t_j)$ implies $d_S(t_j,0) < 2r \leq 1$ to obtain that
	\begin{equation}
		\label{eq:cor:hol_bap:proof1}
		\begin{aligned}
			& \left\vert \langle T_{\varepsilon,K,e_{1:N},\gamma}(x)(0), \widetilde{e}_m \rangle_{Z \times E} \right\vert \leq \left\vert \sum_{j=1}^J g_j(0) \langle Q_\gamma^*(x(t_j)), \widetilde{e}_m \rangle_{Z \times E} \right\vert \\
			& \leq \vert \langle x(0), Q_\gamma(\widetilde{e}_m) \rangle_{Z \times E} \vert + \left\vert \sum_{j=1}^J g_j(0) \langle x(t_j)-x(0), Q_\gamma(\widetilde{e}_m) \rangle_{Z \times E} \right\vert  \\
			& \leq \Vert Q_\gamma \Vert_{L(E;E)} \Vert \widetilde{e}_m \Vert_E \Vert x(0) \Vert_Z + \Vert Q_\gamma \Vert_{L(E;E)} \Vert \widetilde{e}_m \Vert_E \vert x \vert_\alpha \sum_{j=1 \atop g_j(0) \neq 0}^J g_j(0) d_S(t_j,0)^\alpha  \\
			& \leq \lambda \Vert \widetilde{e}_m \Vert_E \left( \Vert x(0) \Vert_Z + \vert x \vert_\alpha \right).
		\end{aligned}
	\end{equation}
	In addition, for $s,t \in S$ with $d_S(s,t) < r$, we add and subtract $\langle x(s), Q_\gamma(\widetilde{e}_m) \rangle_{Z \times E}$, use that $g_j(s) \neq g_j(t)$ implies $s \in \supp(g_j) \subseteq B_{2r}(t_j)$ or $t \in \supp(g_j) \subseteq B_{2r}(t_j)$, ensuring that $d_S(t_j,s) \leq 2 r$ (if $s \in \supp(g_j)$) or $d_S(t_j,s) \leq d_S(t_j,t) + d_S(t,s) < 2 r + r = 3r$ (if $t \in \supp(g_j)$), and that $\sum_{j=1, \, g_j(s) \neq g_j(t)}^J d_S(t_j,s)^\alpha \vert g_j(s) - g_j(t) \vert \leq C \cdot (3r)^\alpha \vert g_j \vert_1 d_S(s,t) \leq 40 \cdot 3^\alpha r^{\alpha-1} C^2 d_S(s,t)^\alpha \leq 120 C^2 d_S(s,t)^\alpha$ to deduce that
	\begin{equation}
		\label{eq:cor:hol_bap:proof2}
		\begin{aligned}
			& \left\vert \langle T_{\varepsilon,K,e_{1:N},\gamma}(x)(s) - T_{\varepsilon,K,e_{1:N},\gamma}(x)(t), \widetilde{e}_m \rangle_{Z \times E} \right\vert \leq \left\vert \sum_{j=1}^J (g_j(s) - g_j(t)) \langle Q_\gamma^*(x(t_j)), \widetilde{e}_m \rangle_{Z \times E} \right\vert \\
			& \leq \sum_{j=1 \atop g_j(s) \neq g_j(t)}^J \vert g_j(s) - g_j(t) \vert \vert \langle x(t_j)-x(s), Q_\gamma(\widetilde{e}_m) \rangle_{Z \times E} \vert \\
			& \leq \Vert Q_\gamma \Vert_{L(E;E)} \Vert \widetilde{e}_m \Vert_E \vert x \vert_\alpha \sum_{j=1 \atop g_j(s) \neq g_j(t)}^J d_S(t_j,s)^\alpha \vert g_j(s) - g_j(t) \vert \\
			& \leq 120 C^2 \lambda \Vert \widetilde{e}_m \Vert_E \vert x \vert_\alpha d_S(s,t)^\alpha.
		\end{aligned}	
	\end{equation}
	Furthermore, for $s,t \in S$ with $d_S(s,t) \geq r$, we use that $s \in \supp(g_j) \subseteq B_{2r}(t_j)$ implies $d_S(t_j,s)^\alpha \leq (2r)^\alpha \leq 2 d_S(s,t)^\alpha$ to conclude that
	\begin{equation}
		\label{eq:cor:hol_bap:proof3}
		\begin{aligned}
			& \left\vert \langle T_{\varepsilon,K,e_{1:N},\gamma}(x)(s) - T_{\varepsilon,K,e_{1:N},\gamma}(x)(t), \widetilde{e}_m \rangle_{Z \times E} \right\vert \leq \left\vert \sum_{j=1}^J (g_j(s) - g_j(t)) \langle Q_\gamma^*(x(t_j)), \widetilde{e}_m \rangle_{Z \times E} \right\vert \\
			& \leq \left\vert \langle x(s) - x(t), Q_\gamma(\widetilde{e}_m) \rangle_{Z \times E} \right\vert + \left\vert \sum_{j=1}^J g_j(s) \langle x(t_j)-x(s), Q_\gamma(\widetilde{e}_m) \rangle_{Z \times E} \right\vert \\
			& \quad\quad + \left\vert \sum_{j=1}^J g_j(t) \langle x(t_j)-x(t), Q_\gamma(\widetilde{e}_m) \rangle_{Z \times E} \right\vert \\
			& \leq \Vert Q_\gamma \Vert_{L(E;E)} \Vert \widetilde{e}_m \Vert_E \left( \Vert x(s) - x(t) \Vert_Z + \vert x \vert_\alpha \left( \sum_{j=1}^J g_j(s) d_S(t_j,s)^\alpha + \sum_{j=1}^J g_j(t) d_S(t_j,t)^\alpha \right) \right) \\
			& \leq \lambda \Vert \widetilde{e}_m \Vert_E \left( \vert x \vert_\alpha d_S(s,t)^\alpha + 2 \vert x \vert_\alpha d_S(s,t)^\alpha + 2  \vert x \vert_\alpha d_S(s,t)^\alpha \right) \\
			& \leq 5 \lambda \Vert \widetilde{e}_m \Vert_E \vert x \vert_\alpha d_S(s,t)^\alpha.
		\end{aligned}		
	\end{equation}
	Thus, by using \eqref{eq:cor:hol_bap:proof1}, \eqref{eq:cor:hol_bap:proof2}, and \eqref{eq:cor:hol_bap:proof3}, it follows that
	\begin{equation}
		\begin{aligned}
			& \Vert T_{\varepsilon,K,e_{1:N},\gamma}(x) \Vert_{\alpha,\widetilde{e}_{1:M}} \\
			& = \max_{m=1,\ldots,M} \left( \left\vert \langle T_{\varepsilon,K,e_{1:N},\gamma}(x)(0), \widetilde{e}_m \rangle_{Z \times E} \right\vert + \sup_{s,t \in S \atop s \neq t} \frac{\left\vert \langle T_{\varepsilon,K,e_{1:N},\gamma}(x)(s) - T_{\varepsilon,K,e_{1:N},\gamma}(x)(t), \widetilde{e}_m \rangle_{Z \times E} \right\vert}{d_S(s,t)^\alpha} \right) \\
			& \leq \left( 121 C^2 \lambda \max_{m=1,\ldots,M} \Vert \widetilde{e}_m \Vert_E \right) \Vert x \Vert_\alpha,
		\end{aligned}
	\end{equation}
	which proves \eqref{eq:cor:hol_bap}.
\end{proof}

\begin{remark}
	\label{rem:hol_bap0}
	Let $\alpha \in (0,1)$, let $(S,d_S)$ be a compact doubling metric space with designated origin $0 \in S$, and let $(Z,\Vert \cdot \Vert_Z)$ be a dual Banach space with predual $(E,\Vert \cdot \Vert_E)$ having BAP. Then, for the vector space $C^\alpha_0(S;Z)$ of $\alpha$-H\"older continuous functions $x: S \rightarrow Z$ preserving the origin, i.e., $x(0) = 0$, it is possible to choose the partition of unity $(g_j)_{j=1,\ldots,J}$ to satisfy $g_j(0)=0$, which ensures that $(C^\alpha_0(S;Z),\tau_\infty)$ has AP with \eqref{eq:cor:hol_bap}.
\end{remark}

Moreover, we follow the proof of Lemma~\ref{lem:ap_dual} to give conditions when $(D^{\alpha,1}([0,T];Z),\tau_{w^*})$ has the bounded approximation property (BAP), where $\alpha \in (0,1)$, $T > 0$, and $(Z,\Vert \cdot \Vert_Z)$ is a dual Banach space equipped with the weak-$*$-topology $\tau_{w^*}$.

\begin{theorem}
	\label{thm:D0_bap}
	Let $\alpha \in (0,1)$ and assume that the predual $(E,\Vert \cdot \Vert_E)$ of $(Z,\Vert \cdot \Vert_Z)$ has BAP. Then, $(D^{\alpha,1}([0,T];Z),\tau_{w^*})$ has $\Vert \cdot \Vert_{\alpha,\ell^1}$-BAP.
\end{theorem}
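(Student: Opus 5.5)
The plan is to follow the proof of Lemma~\ref{lem:ap_dual}, using the duality structure established in Theorem~\ref{thm:sk_dual}. There we identified
\begin{equation}
	D^{\alpha,1}([0,T];Z) \cong G^*, \qquad G := E \oplus_1 \left( \textrm{\AE}([0,T],d_\alpha) \widehat{\otimes}_\pi E \right) \oplus_1 c_0((0,T];E).
\end{equation}
By the argument of Lemma~\ref{lem:ap_dual}, it suffices to show that the predual $G$ has BAP with finite rank operators $(Q_\gamma)_\gamma$ satisfying $\sup_\gamma \Vert Q_\gamma \Vert_{L(G;G)} \leq \lambda$. The adjoints $T_\gamma := Q_\gamma^*$ then lie in $(D^{\alpha,1}([0,T];Z),\tau_{w^*})^* \otimes D^{\alpha,1}([0,T];Z)$. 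The computation in Lemma~\ref{lem:ap_dual} gives $\lim_\gamma \sup_{x \in K} p(x - T_\gamma x) = 0$ on relatively compact $K$: such $K$ are norm bounded by the uniform boundedness principle. The same computation gives $\widetilde p(T_\gamma x) \leq \lambda \max_m \Vert \widetilde g_m \Vert_G \Vert x \Vert_{\alpha,\ell^1}$, which is exactly $\Vert \cdot \Vert_{\alpha,\ell^1}$-BAP. Since the domain is the whole space, the invariance condition $T_\gamma(U) \subseteq U$ is automatic.

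The BAP of $G$ reduces to the three summands. A finite $\oplus_1$-sum of spaces with BAP has BAP: take the block-diagonal operators $Q_\gamma = Q^1_\gamma \oplus Q^2_\gamma \oplus Q^3_\gamma$ indexed by a common directed set (the product of the index sets). Their norm is bounded by the maximum of the three bounds, and convergence on compacta holds because compact sets project to compact sets in each summand.
\begin{itemize}
	\item[(1)] $E$ has BAP by assumption.
	\item[(2)] For $c_0((0,T];E)$, I would take $Q_{F,\gamma}(e) := \big(\mathds{1}_F(t) Q_\gamma(e_t)\big)_t$ for finite $F \subseteq (0,T]$. These have norm at most $\lambda$. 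They converge on compacta because a relatively compact set in $c_0$ has uniformly small tails outside some finite $F$, and on the finitely many coordinates of $F$ one uses the BAP of $E$.
	\item[(3)] The summand $\textrm{\AE}([0,T],d_\alpha) \widehat{\otimes}_\pi E$ has BAP because both factors do, and the projective tensor product of BAP spaces has BAP via $A_\vartheta \otimes Q_\gamma$, with $\Vert A \otimes Q \Vert_\pi \leq \Vert A \Vert \Vert Q \Vert$ (standard; cf.\ \cite{ryan02}).
\end{itemize}

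The main obstacle is the BAP of the Arens--Eells space $\textrm{\AE}([0,T],d_\alpha)$, the Lipschitz-free space over the snowflaked interval. I would obtain it by duality with Corollary~\ref{cor:hol_bap} and Remark~\ref{rem:hol_bap0} in the scalar case $Z = E = \mathbb{R}$. The operators
\begin{equation}
	T(x) := \sum_{j} g_j(\cdot)\, x(t_j)
\end{equation}
with Lipschitz partitions of unity, $g_j(0) = 0$, are weak-$*$ continuous on $C^\alpha_0 \cong \textrm{\AE}^*$, since point evaluations $\delta_{t_j} \in \textrm{\AE}$. By the estimates \eqref{eq:cor:hol_bap:proof2}--\eqref{eq:cor:hol_bap:proof3} they are uniformly bounded in $\Vert \cdot \Vert_\alpha$ by $121 C^2$, with $[0,T]$ doubling. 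Hence they are adjoints of finite rank operators $A_\vartheta$ on $\textrm{\AE}$ with $\Vert A_\vartheta \Vert \leq 121 C^2$, namely $A_\vartheta(\delta_s) = \sum_j g_j(s) \delta_{t_j}$.

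It remains to show $A_\vartheta \to \id$ pointwise on $\textrm{\AE}$; uniform boundedness then upgrades this to convergence on compacta. By uniform boundedness it suffices to check it on the dense set of molecules $\delta_s - \delta_{s'}$. Here
\begin{equation}
	\Vert A_\vartheta \delta_s - \delta_s \Vert_{\textrm{\AE}} \leq \sum_j g_j(s)\, d_\alpha(t_j,s) \leq (2r)^\alpha \to 0 \quad \text{as the mesh } r \to 0.
\end{equation}
This identifies the delicate point: checking the weak-$*$ continuity and the norm duality carefully. A fallback is to cite the known BAP (indeed metric AP) of Lipschitz-free spaces over doubling metric spaces.
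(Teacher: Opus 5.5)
Your proposal is correct and essentially follows the paper's proof. It uses the predual $G$ from \eqref{eq:thm:sk_dual:proof1}, BAP of $E$ by assumption, the operators $\big(\mathds{1}_F(t)Q_\gamma(e_t)\big)_t$ on $c_0((0,T];E)$, stability of BAP under $\widehat{\otimes}_\pi$ and finite $\oplus_1$-sums, and adjoints as in Lemma~\ref{lem:ap_dual}.

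The only genuine difference is the Arens--Eells factor. The paper cites \cite[Corollary~2.2]{lancien13} (BAP of free spaces over doubling metric spaces). You instead dualize the partition-of-unity estimates of Corollary~\ref{cor:hol_bap} in the scalar case. This gives a self-contained argument with explicit $r$-independent constants. The weak-$*$ continuity you call delicate is immediate, since your operator is $x \mapsto \sum_j \langle x,\delta_{t_j}\rangle h_j$ with $\delta_{t_j} \in \textrm{\AE}([0,T],d_\alpha)$. The norm duality is just $\Vert A\Vert = \Vert A^*\Vert$ under $C^\alpha_0([0,T]) \cong \textrm{\AE}([0,T],d_\alpha)^*$.

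One detail must be repaired. A partition of unity cannot have $g_j(0)=0$ for all $j$, since the $g_j$ sum to $1$ at $0$; the same slip occurs in Remark~\ref{rem:hol_bap0}. Because $\delta_0 = 0$ in $\textrm{\AE}([0,T],d_\alpha)$, your formula $A_\vartheta(\delta_s) = \sum_j g_j(s)\delta_{t_j}$ is not well defined as written.

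Take $h_j := g_j - g_j(0)$ and set $A_\vartheta(\delta_s) := \sum_j h_j(s)\,\delta_{t_j}$ instead. Its adjoint $x \mapsto \sum_j x(t_j)h_j$ maps $C^\alpha_0([0,T])$ into itself. It differs from $x \mapsto \sum_j x(t_j) g_j$ only by a constant function, so it has the same H\"older seminorm, and the bound from \eqref{eq:cor:hol_bap:proof2}--\eqref{eq:cor:hol_bap:proof3} still applies. Moreover,
\[
A_\vartheta\delta_s - \delta_s = \sum_j g_j(s)\big(\delta_{t_j}-\delta_s\big) - \sum_j g_j(0)\big(\delta_{t_j}-\delta_0\big),
\]
which gives $\Vert A_\vartheta\delta_s - \delta_s\Vert_{\textrm{\AE}} \le 2(2r)^\alpha \to 0$. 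With this change the rest of your argument goes through unchanged.
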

\begin{proof}
	First, we recall from \eqref{eq:thm:sk_dual:proof1} that a predual of $D^{\alpha,1}([0,T];Z)$ is given by
	\begin{equation}
		G := E \oplus_1 \left( \textrm{\AE}([0,T],d_\alpha) \widehat{\otimes}_\pi E \right) \oplus_1 c_0((0,T];E).
	\end{equation}
	Now, we observe that $(E,\Vert \cdot \Vert_E)$ has BAP by assumption. Moreover, $(\textrm{\AE}([0,T],d_\alpha),\Vert \cdot \Vert_{\textrm{\AE}([0,T],d_\alpha)})$ has BAP by \cite[Corollary~2.2]{lancien13}, which is preserved under the completed projective tensor product $\widehat{\otimes}_\pi$ (see \cite[Section~4.1]{ryan02}). In addition, $(c_0((0,T];E),\Vert \cdot \Vert_\infty)$ has BAP with finite rank operators $\big( y \mapsto T_{F,\gamma}(y) := (\mathds{1}_F(s) Q_\gamma(y_s))_{s \in (0,T]} \big) \in c_0((0,T];E)^* \otimes c_0((0,T];E)$, where $F \subseteq (0,T]$ is finite and $(Q_\gamma)_\gamma$ is the BAP-net of $(E,\Vert \cdot \Vert_E)$. Hence, by using that finite $\ell^1$-sums preserve norm-BAP, the predual $(G,\Vert \cdot \Vert_{\oplus_1})$ has BAP. Finally, we can use adjoints as in the proof of Lemma~\ref{lem:ap_dual} to conclude that $(D^{\alpha,1}([0,T];Z),\tau_{w^*})$ has $\Vert \cdot \Vert_{\alpha,\ell^1}$-BAP.
\end{proof}

\section{Proof of results in Section~\ref{sec:w_dom}}

\subsection{Proof of Proposition~\ref{prop:BPsik_equiv}}
\label{app:BPsik_equiv}

\begin{proof}[Proof of Proposition~\ref{prop:BPsik_equiv}]
	For \ref{prop:BPsik_equiv:1}, let $f \in \mathcal{B}^k_\Psi(U;Y)$ and fix some $\varepsilon > 0$. Then, by definition of $\mathcal{B}^k_\Psi(U;Y)$, there exists some $g \in C^k_b(U;Y)$ with $\Vert d^j g(u;v_1,\ldots,v_j) \Vert_Y \leq C_g p_X(v_1) \cdots p_X(v_j)$ for all $j = 0,\ldots,k$, $u \in U$, $v_1,\ldots,v_j \in X$, and some $C_g \geq 0$ and $p_X \in \mathfrak{P}_{(X,\tau_X)}$, such that
	\begin{equation}
		\label{eq:prop:BPsik_equiv:proof1}
		\Vert f-g \Vert_{\mathcal{B}^k_\Psi(U;Y)} = \max_{j=0,\ldots,k} \sup_{(u,v_1,\ldots,v_j) \in U \times X^j} \frac{\Vert d^j f(u;v_1,\ldots,v_j) - d^j g(u;v_1,\ldots,v_j) \Vert_Y}{\psi_j(u,v_1,\ldots,v_j)} < \frac{\varepsilon}{2}.
	\end{equation}
	Moreover, by using \eqref{eq:ass:w_growth}, there exists some $R > 0$ such that
	\begin{equation}
		\label{eq:prop:BPsik_equiv:proof2}
		\max_{j=0,\ldots,k} \sup_{(u,v_1,\ldots,v_j) \in (U \times X^j) \setminus K_{j,R}} \frac{p_X(v_1) \cdots p_X(v_j)}{\psi_j(u,v_1,\ldots,v_j)} < \frac{\varepsilon}{2 \max(C_g,1)}.
	\end{equation}
	Hence, by using the inequalities \eqref{eq:prop:BPsik_equiv:proof1}, \eqref{eq:ass:w_growth}, and \eqref{eq:prop:BPsik_equiv:proof2}, it follows that
	\begin{equation}
		\begin{aligned}
			& \max_{j=0,\ldots,k} \sup_{(u,v_1,\ldots,v_j) \in (U \times X^j) \setminus K_{j,R}} \frac{\Vert d^j f(u;v_1,\ldots,v_j) \Vert_Y}{\psi_j(u,v_1,\ldots,v_j)} \\
			& \leq \max_{j=0,\ldots,k} \sup_{(U \times X^j) \setminus K_{j,R}} \frac{\Vert d^j f(u;v_1,\ldots,v_j) - d^j g(u;v_1,\ldots,v_j) \Vert_Y}{\psi_j(u,v_1,\ldots,v_j)} \\
			& \quad\quad + \max_{j=0,\ldots,k} \sup_{(u,v_1,\ldots,v_j) \in (U \times X^j) \setminus K_{j,R}} \frac{\Vert d^j g(u;v_1,\ldots,v_j) \Vert_Y}{\psi_j(u,v_1,\ldots,v_j)} \\
			& \leq \Vert f-g \Vert_{\mathcal{B}^k_\Psi(U;Y)} + C_g \max_{j=0,\ldots,k} \sup_{(u,v_1,\ldots,v_j) \in (U \times X^j) \setminus K_{j,R}} \frac{p_X(v_1) \cdots p_X(v_j)}{\psi_j(u,v_1,\ldots,v_j)} \\
			& < \frac{\varepsilon}{2} + C_g \frac{\varepsilon}{2 \max(C_g,1)} \leq \varepsilon.
		\end{aligned}
	\end{equation} 
	Since $\varepsilon > 0$ was chosen arbitrarily small, we obtain \eqref{eq:prop:BPsik_equiv:1}. Moreover, with $g \in C^k_b(U;Y)$ as above, we observe that for every fixed $j = 0,\ldots,k$ and $R > 0$ it holds that
	\begin{equation}
		\begin{aligned}
			& \sup_{(u,v_1,\ldots,v_j) \in K_{j,R}} \Vert d^j f(u;v_1,\ldots,v_j) - d^j g(u;v_1,\ldots,v_j) \Vert_Y \\
			& \leq R \sup_{(u,v_1,\ldots,v_j) \in K_{j,R}} \frac{\Vert d^j f(u;v_1,\ldots,v_j) - d^j g(u;v_1,\ldots,v_j) \Vert_Y}{\psi_j(u,v_1,\ldots,v_j)} \\
			& \leq R \Vert f-g \Vert_{\mathcal{B}^k_\Psi(U;Y)} < R \frac{\varepsilon}{2}.
		\end{aligned}
	\end{equation}
	This implies that $K_{j,R} \ni (u,v_1,\ldots,v_j) \mapsto d^j f(u;v_1,\ldots,v_j) \in Y$ is continuous as uniform limit of the continuous mappings $K_{j,R} \ni (u,v_1,\ldots,v_j) \mapsto d^j g(u;v_1,\ldots,v_j) \in Y$, showing $f \in C^k_{loc}(U;Y)$.
	
	For \ref{prop:BPsik_equiv:2}, we first consider the case when $(X,\tau_X)$ is not locally compact and $(U,\tau_X)$ has AP with net of finite rank operators $(T_\gamma)_\gamma \subseteq X^* \otimes X$ satisfying $T_\gamma(U) \subseteq U$ and \eqref{eq:prop:BPsik_equiv:3}. Moreover, we define the constant $C_{\inf} := \left( \min_{j=0,\ldots,k} \inf_{(u,v_1,\ldots,v_j) \in U \times X^j} \psi_j(u,v_1,\ldots,v_j) \right)^{-1} > 0$ and fix some $\varepsilon > 0$. Then, by using \eqref{eq:prop:BPsik_equiv:2}--\eqref{eq:prop:BPsik_equiv:3}, there exists some $R > 0$ such that
	\begin{align}
		\label{eq:prop:BPsik_equiv:proof3a}
		\max_{j=0,\ldots,k} \sup_{(u,v_1,\ldots,v_j) \in (U \times X^j) \setminus K_{j,R}} \frac{\Vert d^j f(u;v_1,\ldots,v_j) \Vert_Y}{\psi_j(u,v_1,\ldots,v_j)} & < \frac{\varepsilon}{3}, \\
		\label{eq:prop:BPsik_equiv:proof3b}
		\sup_\gamma \max_{j=0,\ldots,k \atop \mathcal{L} \subseteq \lbrace 1,\ldots,j \rbrace} \sup_{(u,v_1,\ldots,v_j) \in (U \times X^j) \setminus K_{j,R}} \frac{\Vert d^{\vert\mathcal{L}\vert} f(T_\gamma(u);(T_\gamma(v_\ell))_{\ell \in \mathcal{L}}) \Vert_Y}{\psi_\mathcal{L}(u,v_\mathcal{L})} & < \frac{\varepsilon}{3 \cdot 2^k C_\Psi}.
	\end{align}
	Now, we use that $K_{j,R} \ni (u,v_1,\ldots,v_j) \mapsto d^j f(u;v_1,\ldots,v_j) \in Y$ is continuous, thus uniformly continuous on the compact set $K_{j,R}$, to conclude that there exists an open $0$-neighborhood $V_{j,0} \times V_{j,1} \times \ldots \times V_{j,j}$ of $(X \times X^j,\tau_X \times \tau_X^j)$, with $V_{j,0}, \ldots, V_{j,j} \in \tau_X$, such that for every $(u,v_1,\ldots,v_j) \in K_{j,R}$ and $(\widetilde{u},\widetilde{v}_1,\ldots,\widetilde{v}_j) \in U \times X^j$ with
	\begin{equation}
		\label{eq:prop:BPsik_equiv:proof4}
		(u,v_1,\ldots,v_j) - (\widetilde{u},\widetilde{v}_1,\ldots,\widetilde{v}_j) \,\, \in \,\, V_{j,0} \times V_{j,1} \times \ldots \times V_{j,j},
	\end{equation}
	it holds that
	\begin{equation}
		\label{eq:prop:BPsik_equiv:proof5}
		\Vert d^j f(u;v_1,\ldots,v_j) - d^j f(\widetilde{u};\widetilde{v}_1,\ldots,\widetilde{v}_j) \Vert_Y < \frac{\varepsilon}{3C_{\inf}}.
	\end{equation}
	From this, we define the set $K := \bigcup_{j=0}^k \bigcup_{\ell=0}^j \pi_\ell(K_{j,R})$ that is compact as a finite union of compact images under the continuous projection $U \times X^j \ni (v_0,v_1,\ldots,v_j) \mapsto \pi_\ell(v_0,v_1,\ldots,v_j) := v_\ell \in X$. Then, by using that $(U,\tau_X)$ has AP, there exists some $T_\gamma \in X^* \otimes X$ such that for every $x \in K$ we have $x-T_\gamma(x) \in \bigcap_{j=0}^k \bigcap_{\ell=0}^j V_{j,\ell}$. Hence, for every $(u,v_1,\ldots,v_j) \in K_{j,R}$, it holds that
	\begin{equation}
		(u,v_1,\ldots,v_j) - (T_\gamma(u),T_\gamma(v_1),\ldots,T_\gamma(v_j)) \,\, \in \,\, V_{j,0} \times V_{j,1} \times \ldots \times V_{j,j}.
	\end{equation}
	Thus, by combining this with \eqref{eq:prop:BPsik_equiv:proof4} as well as using the chain rule and \eqref{eq:prop:BPsik_equiv:proof5}, it follows for every $j = 0,\ldots,k$ and $(u,v_1,\ldots,v_j) \in K_{j,R}$ that
	\begin{equation}
		\label{eq:prop:BPsik_equiv:proof6}
		\begin{aligned}
			& \Vert d^j f(u;v_1,\ldots,v_j) - d^j (f \circ T_\gamma)(u;v_1,\ldots,v_j) \Vert_Y \\
			& = \Vert d^j f(u;v_1,\ldots,v_j) - d^j f(T_\gamma(u);T_\gamma(v_1),\ldots,T_\gamma(v_j)) \Vert_Y < \frac{\varepsilon}{3C_{\inf}}.
		\end{aligned}
	\end{equation}
	Now, for every $j = 0,\ldots,k$ and $R > 0$, we observe that $\prod_{\ell=0}^j T_\gamma(\pi_\ell(K_{j,R})) \ni (u,v_1,\ldots,v_j) \mapsto d^j f(u;v_1,\ldots,v_j) \in Y$ is continuous as restriction of the continuous map $K_{j,R} \ni (u,v_1,\ldots,v_j) \mapsto d^j f(u;v_1,\ldots,v_j) \in Y$, where $\prod_{\ell=0}^j T_\gamma(\pi_\ell(K_{j,R}))$ is compact in $T_\gamma(U) \times T_\gamma(X)^j$. Hence, by using that the finite-dimensional spaces $T_\gamma(U)$ and $T_\gamma(X)$ are locally compact, it follows from \cite[Lemma~46.3+46.4]{munkres14} that $T_\gamma(U) \times T_\gamma(X)^j \ni (u,v_1,\ldots,v_j) \mapsto d^j f(u;v_1,\ldots,v_j) \in Y$ is globally continuous. Since $T_\gamma \in X^* \otimes X$ is also continuous, we conclude that $U \times X^j \ni (u,v_1,\ldots,v_j) \mapsto d^j (f \circ T_\gamma)(u;v_1,\ldots,v_j) = d^j f(T_\gamma(u);T_\gamma(v_1),\ldots,T_\gamma(v_j)) \in Y$ is continuous, which shows that $f \circ T_\gamma \in C^k(U;Y)$. Moreover, by using again that $T_\gamma(U)$ is locally compact, there exists some $h \in C^\infty_c(T_\gamma(X))$ such that
	\begin{equation}
		\label{eq:prop:BPsik_equiv:proof7}
		\begin{cases}
			h(u) = 1, & \text{for all } u \in \bigcup_{j=0}^k T_\gamma(\pi_0(K_{j,R})), \\[4pt]
			0 \leq h(u) \leq 1, & \text{for all } u \in T_\gamma(U), \\[6pt]
			\left\vert d^j (h \circ T_\gamma)(u;v_1,\ldots,v_j) \right\vert \leq \psi_j(u,v_1,\ldots,v_j), & 
			\begin{matrix*}[l]
				\text{for all } j = 1,\ldots,k \text{ and} \\
				(u,v_1,\ldots,v_j) \in U \times X^j,
			\end{matrix*}
		\end{cases}
	\end{equation}
	From this, we define the map $U \ni u \mapsto g(u) := (h \circ T_\gamma)(u) (f \circ T_\gamma)(u) \in Y$. Then, by using the Leibniz product rule, the monotonicity of $\Psi := (\psi_j)_{j=0,\ldots,k}$, the last property of \eqref{eq:prop:BPsik_equiv:proof7}, the chain rule, and \eqref{eq:prop:BPsik_equiv:proof3b}, it holds for every $j = 0,\ldots,k$ and $(u,v_1,\ldots,v_j) \in (U \times X^j) \setminus K_{j,R}$ that
	\begin{equation}
		\label{eq:prop:BPsik_equiv:proof8}
		\begin{aligned}
			\frac{\Vert d^j g(u;v_1,\ldots,v_j) \Vert_Y}{\psi_j(u,v_1,\ldots,v_j)} & = \frac{\big\Vert \sum_{\mathcal{L} \subseteq \lbrace 1,\ldots,j \rbrace} d^{j-\vert\mathcal{L}\vert} (h \circ T_\gamma)(u;v_{\mathcal{L}^c}) d^{\vert\mathcal{L}\vert} (f \circ T_\gamma)(u;v_\mathcal{L}) \big\Vert_Y}{\psi_j(u,v_1,\ldots,v_j)} \\
			& \leq C_\Psi \frac{\sum_{\mathcal{L} \subseteq \lbrace 1,\ldots,j \rbrace} \vert d^{j-\vert\mathcal{L}\vert} (h \circ T_\gamma)(u;v_{\mathcal{L}^c}) \vert \Vert d^{\vert\mathcal{L}\vert} (f \circ T_\gamma)(u;v_\mathcal{L}) \Vert_Y}{\psi_{j-\vert\mathcal{L}\vert}(u,v_{\mathcal{L}^c}) \psi_\mathcal{L}(u,v_\mathcal{L})} \\
			& \leq 2^k C_\Psi \max_{\mathcal{L} \subseteq \lbrace 1,\ldots,j \rbrace} \frac{\Vert d^{\vert\mathcal{L}\vert} (f \circ T_\gamma)(u;v_\mathcal{L}) \Vert_Y}{\psi_\mathcal{L}(u,v_\mathcal{L})} \\
			& \leq 2^k C_\Psi \max_{\mathcal{L} \subseteq \lbrace 1,\ldots,j \rbrace} \frac{\Vert d^{\vert\mathcal{L}\vert} f(T_\gamma(u);(T_\gamma(v_\ell))_{\ell \in \mathcal{L}}) \Vert_Y}{\psi_\mathcal{L}(u,v_\mathcal{L})} \\
			& < 2^k C_\Psi \frac{\varepsilon}{3 \cdot 2^k C_\Psi} = \frac{\varepsilon}{3}.
		\end{aligned}
	\end{equation}
	Hence, by using \eqref{eq:prop:BPsik_equiv:proof3a}, \eqref{eq:prop:BPsik_equiv:proof6}, and \eqref{eq:prop:BPsik_equiv:proof8}, we conclude for $g \in C^k_b(U;Y)$ that
	\begin{equation}
		\begin{aligned}
			& \Vert f - g \Vert_{\mathcal{B}^k_\Psi(U;Y)} = \max_{j=0,\ldots,k} \sup_{(u,v_1,\ldots,v_j) \in U \times X^j} \frac{\left\Vert d^j f(u;v_1,\ldots,v_j) - d^j g(u;v_1,\ldots,v_j) \right\Vert_Y}{\psi_j(u,v_1,\ldots,v_j)} \\
			& \leq \max_{j=0,\ldots,k} \sup_{(u,v_1,\ldots,v_j) \in (U \times X^j) \setminus K_{j,R}} \frac{\left\Vert d^j f(u;v_1,\ldots,v_j) \right\Vert_Y}{\psi_j(u,v_1,\ldots,v_j)} \\
			& \quad\quad + C_{\inf} \max_{j=0,\ldots,k} \sup_{(u,v_1,\ldots,v_j) \in K_{j,R}} \left\Vert d^j f(u;v_1,\ldots,v_j) - d^j g(u;v_1,\ldots,v_j) \right\Vert_Y \\
			& \quad\quad + \max_{j=0,\ldots,k} \sup_{(u,v_1,\ldots,v_j) \in (U \times X^j) \setminus K_{j,R}} \frac{\left\Vert d^j g(u;v_1,\ldots,v_j) \right\Vert_Y}{\psi_j(u,v_1,\ldots,v_j)} \\
			& < \frac{\varepsilon}{3} + C_{\inf} \frac{\varepsilon}{3C_{\inf}} + \frac{\varepsilon}{3} = \varepsilon.
		\end{aligned}
	\end{equation}
	Since $\varepsilon > 0$ was chosen arbitrarily and $\mathcal{B}^k_\Psi(U;Y)$ is defined as the closure of $C^k_b(U;Y)$ with respect to $\Vert \cdot \Vert_{\mathcal{B}^k_\Psi(U;Y)}$, we conclude that $f \in \mathcal{B}^k_\Psi(U;Y)$. In the other case, if $(X,\tau_X)$ is locally compact, we do not need to concatenate with finite rank operators and can directly obtain some $h \in C^\infty_c(X)$ satisfying \eqref{eq:prop:BPsik_equiv:proof7}, i.e., we can replace $T_\gamma: X \rightarrow X$ with the identity $\id_X: X \rightarrow X$.
\end{proof}

\section{Proof of results in Section~\ref{sec:nachbin}}

\subsection{Proof of Lemma~\ref{lem:real_anal}}
\label{app:real_anal}

For more background on Banach space-valued real-analytic functions, we refer to \cite[Chapter~IX]{dieudonne69}.

\begin{proof}[Proof of Lemma~\ref{lem:real_anal}]
	Let $V := \lbrace z \in \mathbb{C}: \vert \imag(z) \vert < \lambda/2 \rbrace$, fix some distinct $z_0,z_1 \in \mathbb{C}$, and define the function $\mathbb{R} \ni s \mapsto h_{z_0}(s) := s \cos(z_0 s) \in \mathbb{C}$ satisfying $h_{z_0}^{(j)}(s) = s \cos^{(j)}(z_0 s) z_0^j + j \cos^{(j-1)}(z_0 s) z_0^{j-1}$. Then, by splitting into cosine change and monomial change, by applying Taylor’s theorem to the holomorphic functions $\cos: \mathbb{C} \rightarrow \mathbb{C}$ and $\mathbb{C} \ni z \mapsto z^j \in \mathbb{C}$ (see also \cite[Equation~3.1]{cuchiero23}), and by using that $\vert s \vert^2 = \frac{8}{\lambda^2} \frac{(\frac{\lambda}{2} \vert s \vert)^2}{2!} \leq \frac{8}{\lambda^2} \sum_{n=1}^\infty \frac{(\frac{\lambda}{2} \vert s \vert)^n}{n!} \leq \frac{8}{\lambda^2} \exp\left( \frac{\lambda}{2} \vert s \vert \right)$ together with $1 \leq \exp(\lambda \vert s \vert)$, it follows for every $j = 0,\ldots,k$ and $s \in \mathbb{R}$ that
	\begin{equation}
		\begin{aligned}
			& \left\vert \frac{\cos^{(j)}\left( z_1 s \right) z_1^j - \cos^{(j)}\left( z_0 s \right) z_0^j}{z_1 - z_0} - h_{z_0}^{(j)}(s) \right\vert \\
			& \leq \left\vert s z_1^j \int_0^1 \left( \cos^{(j)}((z_0+t(z_1-z_0))s) - \cos^{(j)}(z_0 s) \right) dt \right\vert + \left\vert s z_1^j \cos^{(j)}(z_0 s) - s z_0^j \cos^{(j)}(z_0 s) \right\vert \\
			& \quad\quad + \left\vert j \cos^{(j-1)}(z_0 s) \int_0^1 \left( (z_0+t(z_1-z_0))^{j-1} - z_0^{j-1} \right) dt \right\vert \\
			& \leq \vert s \vert \vert z_1 \vert^j \int_0^1 \left\vert \cos^{(j)}((z_0+t(z_1-z_0))s) - \cos^{(j)}(z_0 s) \right\vert dt + \vert s \vert \left\vert z_1^j - z_0^j \right\vert \\
			& \quad\quad + j \int_0^1 \left\vert (z_0+t(z_1-z_0))^{j-1} - z_0^{j-1} \right\vert dt \\
			& \leq \vert s \vert \vert z_1 \vert^j \! \int_0^1 \! \vert s (z_1 \!-\! z_0) \vert e^{\vert s \vert \max(\vert\imag(z_0+t(z_1-z_0))\vert,\vert\imag(z_0)\vert)} dt \!+\! j \vert z_0 \vert^{j-1} \vert z_1 \!-\! z_0 \vert \!+\! j (j-1) \vert z_0 \vert^{j-2} \vert z_1 \!-\! z_0 \vert \\
			& \leq (1 + \vert z_0 \vert + \vert z_1 \vert)^k \left( \vert s \vert^2 e^{\frac{\lambda}{2} \vert s \vert} \vert z_1-z_0 \vert + 2 k^2 \vert z_1 - z_0 \vert \right) \\
			& \leq (1 + \vert z_0 \vert + \vert z_1 \vert)^k \left( \frac{8}{\lambda^2} + 2k^2 \right) \vert z_1-z_0 \vert e^{\lambda \vert s \vert}.
		\end{aligned}
	\end{equation}
	Hence, the Fa\`a di Bruno formula implies for every $j = 0,\ldots,k$ and $(u,v_1,\ldots,v_j) \in U \times (\mathbb{R}^d)^j$ that
	\begin{equation}
		\begin{aligned}
			& \left\vert d^j \left( \frac{\cos\left( z_1 \widetilde{g}(\cdot) \right) - \cos\left( z_0 \widetilde{g}(\cdot) \right)}{z_1 - z_0} \right)(u;v_1,\ldots,v_j) - d^j \left( \widetilde{g}(\cdot) \cos'\left( z_0 \widetilde{g}(\cdot) \right) \right)(u;v_1,\ldots,v_j) \right\vert \\
			& \leq \left\vert \sum_{\pi \in \mathscr{P}_j} \frac{\cos^{(\vert\pi\vert)}\left( z_1 \widetilde{g}(u) \right) z_1^{\vert\pi\vert} - \cos^{(\vert\pi\vert)}\left( z_0 \widetilde{g}(u) \right) z_0^{\vert\pi\vert}}{z_1 - z_0} d^\pi \widetilde{g}(u;v_\pi) - \sum_{\pi \in \mathscr{P}_j} h_{z_0}^{(\vert\pi\vert)}(\widetilde{g}(u)) d^\pi \widetilde{g}(u;v_\pi) \right\vert \\
			& \leq \sum_{\pi \in \mathscr{P}_j} \left\vert \frac{\cos^{(\vert\pi\vert)}\left( z_1 \widetilde{g}(u) \right) z_1^{\vert\pi\vert} - \cos^{(\vert\pi\vert)}\left( z_0 \widetilde{g}(u) \right) z_0^{\vert\pi\vert}}{z_1 - z_0} - h_{z_0}^{(\vert\pi\vert)}(\widetilde{g}(u)) \right\vert \left\vert d^\pi \widetilde{g}(u;v_\pi) \right\vert \\
			& \leq k! k^2 (1 + \vert z_0 \vert + \vert z_1 \vert)^k \frac{8 \vert z_1 - z_0 \vert}{\lambda^2} e^{\lambda \vert g(u) \vert} \max_{\pi \in \mathscr{P}_j} \left\vert d^\pi \widetilde{g}(u;v_\pi) \right\vert.
		\end{aligned}
	\end{equation}
	Thus, by using \ref{M2} and the continuity of $U \times (\mathbb{R}^d)^j \!\ni\! (u,v_1,\ldots,v_j) \mapsto \exp\left( \lambda \vert g(u) \vert \right) \left\vert d^\pi \widetilde{g}(u;v_\pi) \right\vert$ on the compact pre-images $K_{j,R}$, for all $R > 0$, we conclude that
	\begin{equation}
		\begin{aligned}
			& \left\Vert \frac{\cos\left( z_1 \widetilde{g}(\cdot) \right) - \cos\left( z_0 \widetilde{g}(\cdot) \right)}{z_1 - z_0} - \widetilde{g}(\cdot) \cos'\left( z_0 \widetilde{g}(\cdot) \right) \right\Vert_{\mathcal{B}^k_\Psi(U)} \\
			& \leq k! (1 + \vert z_0 \vert + \vert z_1 \vert)^k \left( \frac{8}{\lambda^2} + 2k^2 \right) \vert z_1 - z_0 \vert \underbrace{\max_{j=0,\ldots,k \atop \pi \in \mathscr{P}_j} \sup_{(u,v_1,\ldots,v_j) \in U \times (\mathbb{R}^d)^j} \frac{\exp\left( \lambda \vert g(u) \vert \right) \left\vert d^\pi \widetilde{g}(u;v_\pi) \right\vert}{\psi_j(u,v_1,\ldots,v_j)}}_{< \infty} \\
			& \quad\quad \overset{z_1 \rightarrow z_0}{\longrightarrow} \quad 0,
		\end{aligned}
	\end{equation}
	which shows that $V \ni z \mapsto \cos(z \widetilde{g}(\cdot)) \in \mathcal{B}^k_\Psi(U)$ is holomorphic, implying that the mapping $\mathbb{R} \ni t \mapsto \cos(t \widetilde{g}(\cdot)) \in \mathcal{B}^k_\Psi(U)$ is real-analytic. By a similar argument, $V \ni z \mapsto \sin(z \widetilde{g}(\cdot)) \in \mathcal{B}^k_\Psi(U)$ is holomorphic, which ensures that $\mathbb{R} \ni t \mapsto \sin(t \widetilde{g}(\cdot)) \in \mathcal{B}^k_\Psi(U)$ is real-analytic.
\end{proof}

\subsection{Proof of Lemma~\ref{lem:inp_bap}}
\label{app:lem:inp_bap}

\begin{proof}[Proof of Lemma~\ref{lem:inp_bap}]
	Since $f \in C^k_b(U;Y)$, there exists some $C_f \geq 0$ and $p_X \in \mathfrak{P}_{(X,\tau_X)}$ such that for every $j = 0,\ldots,k$ and $(u,v_1,\ldots,v_j) \in U \times X^j$ it holds that
	\begin{equation}
		\label{eq:lem:inp_bap:proof1}
		\left\Vert d^j f(u;v_1,\ldots,v_j) \right\Vert_Y \leq C_f p_X(v_1) \cdots p_X(v_j).
	\end{equation}
	Moreover, by using that $(U,\tau_X)$ has $\mathfrak{Q}_X$-BAP, there exists a net of finite rank operators $(T_\gamma)_\gamma \subseteq X^* \otimes X$ with $T_\gamma(U) \subseteq U$ approximating the identity $\id_X: X \rightarrow X$ uniformly on each relatively compact subset of $(X,\tau_X)$ such that for every $p_X \in \mathfrak{P}_{(X,\tau_X)}$ there exists some $\lambda > 0$ and $q_X \in \mathfrak{Q}_X$ satisfying for every $\gamma$ and $v \in X$ that
	\begin{equation}
		\label{eq:lem:inp_bap:proof2a}
		p_X(T_\gamma(v)) \leq \lambda q_X(v).
	\end{equation}
	In addition, by using \eqref{eq:ass:w_growth}, there exists some $R > 0$ such that
	\begin{equation}
		\label{eq:lem:inp_bap:proof2}
		\begin{aligned}
			\max_{j=0,\ldots,k} \sup_{(u,v_1,\ldots,v_j) \in (U \times X^j) \setminus K_{j,R}} \frac{p_X(v_1) \cdots p_X(v_j)}{\psi_j(u,v_1,\ldots,v_j)} & < \frac{\varepsilon}{3 (1+C_f)}, \\
			\max_{j=0,\ldots,k} \sup_{(u,v_1,\ldots,v_j) \in (U \times X^j) \setminus K_{j,R}} \frac{q_X(v_1) \cdots q_X(v_j)}{\psi_j(u,v_1,\ldots,v_j)} & < \frac{\varepsilon}{3 \lambda^k (1+C_f)}.
		\end{aligned}
	\end{equation}
	Furthermore, we observe that $K_{j,R} \ni (u,v_1,\ldots,v_j) \mapsto d^j f(u;v_1,\ldots,v_j) \in Y$ is continuous, thus uniformly continuous on the compact set $K_{j,R}$, to conclude that there exists an open $0$-neighborhood $V_{j,0} \times V_{j,1} \times \ldots \times V_{j,j}$ of $(X \times X^j,\tau_X \times \tau_X^j)$, with $V_{j,0}, \ldots, V_{j,j} \in \tau_X$, such that for every $(u,v_1,\ldots,v_j) \in K_{j,R}$ and $(\widetilde{u},\widetilde{v}_1,\ldots,\widetilde{v}_j) \in U \times X^j$ with
	\begin{equation}
		\label{eq:lem:inp_bap:proof3}
		(u,v_1,\ldots,v_j) - (\widetilde{u},\widetilde{v}_1,\ldots,\widetilde{v}_j) \,\, \in \,\, V_{j,0} \times V_{j,1} \times \ldots \times V_{j,j},
	\end{equation}
	it holds that
	\begin{equation}
		\label{eq:lem:inp_bap:proof4}
		\left\Vert d^j f(u;v_1,\ldots,v_j) - d^j f(\widetilde{u};\widetilde{v}_1,\ldots,\widetilde{v}_j) \right\Vert_Y < \frac{\varepsilon}{3C_{\inf}}.
	\end{equation}
	From this, we define the set $K := \bigcup_{j=0}^k \bigcup_{\ell=0}^j \pi_\ell(K_{j,R})$, which is compact as a finite union of compact images under the continuous projection $U \times X^j \ni (v_0,\ldots,v_\ell) \mapsto \pi_\ell(v_0,v_1,\ldots,v_j) := v_\ell \in X$. Hence, there exists some $\gamma$ such that for every $(u,v_1,\ldots,v_j) \in K_{j,R}$, we have
	\begin{equation}
		(u,v_1,\ldots,v_j) - (T_\gamma(u),T_\gamma(v_1),\ldots,T_\gamma(v_j)) \,\, \in \,\, V_{j,0} \times V_{j,1} \times \ldots \times V_{j,j}.
	\end{equation}
	Thus, by combining this with \eqref{eq:lem:inp_bap:proof3}, we conclude from the chain rule and \eqref{eq:lem:inp_bap:proof4} that
	\begin{equation}
		\label{eq:lem:inp_bap:proof5}
		\begin{aligned}
			& \left\Vert d^j f(u;v_1,\ldots,v_j) - d^j (f \circ T_\gamma)(u;v_1,\ldots,v_j) \right\Vert_Y \\
			& = \left\Vert d^j f(u;v_1,\ldots,v_j) - d^j f(T_\gamma(u);T_\gamma(v_1),\ldots,T_\gamma(v_j)) \right\Vert_Y < \frac{\varepsilon}{3 C_{\inf}}.
		\end{aligned}
	\end{equation}
	Moreover, by using \eqref{eq:lem:inp_bap:proof1}--\eqref{eq:lem:inp_bap:proof2a}, we observe for every $(u,v_1,\ldots,v_j) \in U \times X^j$ that
	\begin{equation}
		\label{eq:lem:inp_bap:proof6}
		\begin{aligned}
			\left\Vert d^j (f \circ T_\gamma)(u;v_1,\ldots,v_j) \right\Vert_Y & = \left\Vert d^j f(T_\gamma(u);T_\gamma(v_1),\ldots,T_\gamma(v_j)) \right\Vert_Y \\
			& \leq C_f p_X(T_\gamma(v_1)) \cdots p_X(T_\gamma(v_j)) \\
			& \leq C_f \lambda^j q_X(v_1) \cdots q_X(v_j).
		\end{aligned}
	\end{equation}
	Finally, by combining the inequalities \eqref{eq:lem:inp_bap:proof1}, \eqref{eq:lem:inp_bap:proof2}, \eqref{eq:lem:inp_bap:proof5}, and \eqref{eq:lem:inp_bap:proof6}, it follows that
	\begin{equation}
		\begin{aligned}
			& \Vert f - f \circ T_\gamma \Vert_{\mathcal{B}^k_\Psi(U;Y)} = \max_{j=0,\ldots,k} \sup_{(u,v_1,\ldots,v_j) \in U \times X^j} \frac{\left\Vert d^j f(u;v_1,\ldots,v_j) - d^j (f \circ T_\gamma)(u;v_1,\ldots,v_j) \right\Vert_Y}{\psi_j(u,v_1,\ldots,v_j)} \\
			& \leq \max_{j=0,\ldots,k} \sup_{(u,v_1,\ldots,v_j) \in (U \times X^j) \setminus K_{j,R}} \frac{\left\Vert d^j f(u;v_1,\ldots,v_j) \right\Vert_Y}{\psi_j(u,v_1,\ldots,v_j)} \\
			& \quad\quad + C_{\inf} \max_{j=0,\ldots,k} \sup_{(u,v_1,\ldots,v_j) \in K_{j,R}} \left\Vert d^j f(u;v_1,\ldots,v_j) - d^j (f \circ T_\gamma)(u;v_1,\ldots,v_j) \right\Vert_Y \\
			& \quad\quad + \max_{j=0,\ldots,k} \sup_{(u,v_1,\ldots,v_j) \in (U \times X^j) \setminus K_{j,R}} \frac{\left\Vert d^j (f \circ T_\gamma)(u;v_1,\ldots,v_j) \right\Vert_Y}{\psi_j(u,v_1,\ldots,v_j)} \\
			& < C_f \max_{j=0,\ldots,k} \sup_{(u,v_1,\ldots,v_j) \in (U \times X^j) \setminus K_{j,R}} \frac{p_X(v_1) \cdots p_X(v_j)}{\psi_j(u,v_1,\ldots,v_j)} + C_{\inf} \frac{\varepsilon}{3 C_{\inf}} \\
			& \quad\quad + C_f \max_{j=0,\ldots,k} \sup_{(u,v_1,\ldots,v_j) \in (U \times X^j) \setminus K_{j,R}} \frac{q_X(v_1) \cdots q_X(v_j)}{\psi_j(u,v_1,\ldots,v_j)} \\
			& < C_f \frac{\varepsilon}{3 (1+C_f)} + C_{\inf} \frac{\varepsilon}{3 C_{\inf}} + C_f \lambda^k \frac{\varepsilon}{3 \lambda^k (1+C_f)} \leq \varepsilon.
		\end{aligned}
	\end{equation}
	Since $\varepsilon > 0$ was chosen arbitrarily, we obtain the conclusion.
\end{proof}

\subsection{Proof of Lemma~\ref{lem:out_bap}}
\label{app:lem:out_bap}

\begin{proof}[Proof of Lemma~\ref{lem:out_bap}]
	Let $(Y,\Vert \cdot \Vert_Y)$ have BAP (with constant $\lambda \in [1,\infty)$) and fix some $\varepsilon > 0$. Then, by Lemma~\ref{prop:BPsik_equiv}~\ref{prop:BPsik_equiv:1}, there exists some $R > 0$ such that
	\begin{equation}
		\label{eq:lem:out_bap:proof1}
		\max_{j=0,\ldots,k} \sup_{(u,v_1,\ldots,v_j) \in (U \times X^j) \setminus K_{j,R}} \frac{\Vert d^j f(u;v_1,\ldots,v_j) \Vert_Y}{\psi_j(u,v_1,\ldots,v_j)} < \frac{\varepsilon}{2(1+\lambda)}.
	\end{equation}
	Moreover, we define the constant $C_{\inf} := \big( \min_{j=0,\ldots,k} \inf_{(u,v_1,\ldots,v_j) \in U \times X^j} \psi_j(u,v_1,\ldots,v_j) \big)^{-1} > 0$ and the set $K := \bigcup_{j=0}^k \left\lbrace d^j f(u;v_1,\ldots,v_j): (u,v_1,\ldots,v_j) \in K_{j,R} \right\rbrace$, which is compact as a finite union of continuous images of compact sets. Then, by using that $(Y,\Vert \cdot \Vert_Y)$ has BAP (with constant $\lambda \in [1,\infty)$), there exists some $\big( y \mapsto T(y) := \sum_{n=1}^N \ell_n(y) y_n \big) \in Y^* \otimes Y$, with $\ell_1,\ldots,\ell_N \in Y^*$ and $y_1,\ldots,y_N \in Y$, satisfying $\Vert T \Vert_{L(Y;Y)} \leq \lambda$ such that
	\begin{equation}
		\sup_{y \in K} \Vert y - T(y) \Vert_Y < \frac{\varepsilon}{2 C_{\inf}}.
	\end{equation}
	This together with the chain rule implies that
	\begin{equation}
		\label{eq:lem:out_bap:proof3}
		\begin{aligned}
			& \max_{j=0,\ldots,k} \sup_{(u,v_1,\ldots,v_j) \in K_{j,R}} \left\Vert d^j f(u;v_1,\ldots,v_j) - d^j (T \circ f)(u;v_1,\ldots,v_j) \right\Vert_Y \\
			& = \max_{j=0,\ldots,k} \sup_{(u,v_1,\ldots,v_j) \in K_{j,R}} \left\Vert d^j f(u;v_1,\ldots,v_j) - T\left( d^j f(u;v_1,\ldots,v_j) \right) \right\Vert_Y \\
			& \leq \sup_{y \in K} \Vert y - T(y) \Vert_Y < \frac{\varepsilon}{2 C_{\inf}}.
		\end{aligned}
	\end{equation}
	In addition, $\Vert T \Vert_{L(Y;Y)} \leq \lambda$ ensures for every $j = 0,\ldots,k$ and $(u,v_1,\ldots,v_j) \in U \times X^j$ that
	\begin{equation}
		\label{eq:lem:out_bap:proof4}
		\begin{aligned}
			\left\Vert d^j (T \circ f)(u;v_1,\ldots,v_j) \right\Vert_Y & = \left\Vert T\left( d^j f(u;v_1,\ldots,v_j) \right) \right\Vert_Y \\
			& \leq \Vert T \Vert_{L(Y;Y)} \left\Vert d^j f(u;v_1,\ldots,v_j) \right\Vert_Y \\
			& \leq \lambda \left\Vert d^j f(u;v_1,\ldots,v_j) \right\Vert_Y.
		\end{aligned}
	\end{equation}
	Furthermore, we claim for every fixed $n = 1,\ldots,N$ that $\ell_n \circ f \in \mathcal{B}_\Psi^k(U)$. Indeed, by definition, $f \in \mathcal{B}_\Psi^k(U;Y)$ can be approximated by a sequence $(g_m)_{m \in \mathbb{N}} \subseteq C^k_b(U;Y)$ with respect to $\Vert \cdot \Vert_{\mathcal{B}_\Psi^k(U;Y)}$, whence $(\ell_n \circ g_m)_{m \in \mathbb{N}} \subseteq C^k_b(U)$ approximates the function $\ell_n \circ f: U \rightarrow \mathbb{R}$ with respect to $\Vert \cdot \Vert_{\mathcal{B}_\Psi^k(U)}$, ensuring that $\ell_n \circ f \in \mathcal{B}_\Psi^k(U)$. Thus, \eqref{eq:lem:out_bap:proof1}--\eqref{eq:lem:out_bap:proof4} imply that
	\begin{equation}
		\begin{aligned}
			& \Vert f - T \circ f \Vert_{\mathcal{B}^k_\Psi(U;Y)} = \max_{j=0,\ldots,k} \sup_{(u,v_1,\ldots,v_j) \in U \times X^j} \frac{\left\Vert d^j f(u;v_1,\ldots,v_j) - d^j (T \circ f)(u;v_1,\ldots,v_j) \right\Vert_Y}{\psi_j(u,v_1,\ldots,v_j)} \\
			& \leq \max_{j=0,\ldots,k} \sup_{(u,v_1,\ldots,v_j) \in (U \times X^j) \setminus K_{j,R}} \frac{\left\Vert d^j f(u;v_1,\ldots,v_j) \right\Vert_Y}{\psi_j(u,v_1,\ldots,v_j)} \\
			& \quad\quad + C_{\inf} \max_{j=0,\ldots,k} \sup_{(u,v_1,\ldots,v_j) \in K_{j,R}} \left\Vert d^j f(u;v_1,\ldots,v_j) - d^j (T \circ f)(u;v_1,\ldots,v_j) \right\Vert_Y \\
			& \quad\quad + \max_{j=0,\ldots,k} \sup_{(u,v_1,\ldots,v_j) \in (U \times X^j) \setminus K_{j,R}} \frac{\left\Vert d^j (T \circ f)(u;v_1,\ldots,v_j) \right\Vert_Y}{\psi_j(u,v_1,\ldots,v_j)} \\
			& < (1+\lambda) \max_{j=0,\ldots,k} \sup_{(u,v_1,\ldots,v_j) \in (U \times X^j) \setminus K_{j,R}} \frac{\left\Vert d^j f(u;v_1,\ldots,v_j) \right\Vert_Y}{\psi_j(u,v_1,\ldots,v_j)} + C_{\inf} \frac{\varepsilon}{2 C_{\inf}} < \varepsilon.
		\end{aligned}
	\end{equation}
	Since $f \in \mathcal{B}^k_\Psi(U;Y)$ and $\varepsilon > 0$ were chosen arbitrarily, we obtain the conclusion.
\end{proof}

\section{Proof of results in Section~\ref{sec:w_uat}}
\label{app:w_uat}

\subsection{Auxiliary lemma for the proof of Theorem~\ref{thm:w_uat_findim}}
\label{app:thm:w_uat_findim}

\begin{lemma}
	\label{lem:NN_well_def}
	Let $(U,\Psi)$ be a weighted domain and let $(Y,\Vert \cdot \Vert_Y)$ be a Banach space. Moreover, for $c \in (0,\infty)$, let $\rho \in \mathscr{B}^k_c(\mathbb{R})$ and assume that $\mathcal{A} \subseteq \mathcal{B}^k_\Psi(U)$ satisfies for every $a \in \mathcal{A}$ that
	\begin{equation}
		\label{eq:lem:NN_well_def:1}
		C_a := \max_{j=0,\ldots,k \atop \pi \in \mathscr{P}_j} \sup_{(u,v_1,\ldots,v_j) \in U \times X^j} \frac{\left( 1 + \left\vert a(u) \right\vert \right)^c \left\vert d^\pi a(u;v_\pi) \right\vert}{\psi_j(u,v_1,\ldots,v_j)} < \infty.
	\end{equation}
	In addition, let $\mathcal{L} \subseteq Y$ be a vector subspace. Then, $\mathcal{NN}^{\mathcal{A},\rho,\mathcal{L}}_{U,Y} \subseteq \mathcal{B}^k_\Psi(U;Y)$.
\end{lemma}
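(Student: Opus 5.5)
By linearity of $\mathcal{B}^k_\Psi(U;Y)$ and since $y\rho(a(\cdot)+b)$ with $y\in\mathcal{L}$ is the generic summand of an element of $\mathcal{NN}^{\mathcal{A},\rho,\mathcal{L}}_{U,Y}$, it suffices to show that $\rho(a(\cdot)+b)\in\mathcal{B}^k_\Psi(U)$ for every $a\in\mathcal{A}$ and $b\in\mathbb{R}$. Indeed, if $h\in\mathcal{B}^k_\Psi(U)$ is approximated by $(h_m)_m\subseteq C^k_b(U)$, then $h_m(\cdot)y\in C^k_b(U;Y)$ approximates $h(\cdot)y$ with error at most $\Vert h-h_m\Vert_{\mathcal{B}^k_\Psi(U)}\Vert y\Vert_Y$. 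Moreover, replacing $\rho$ by $\rho(\cdot+b)$ keeps us in $\mathscr{B}^k_c(\mathbb{R})$, since $(1+\vert s+b\vert)^c\le (1+\vert b\vert)^c(1+\vert s\vert)^c$. So we may take $b=0$.

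The approximation follows the pattern of \eqref{eq:thm:w_nachbin_findim:proof9}. Given $\varepsilon>0$, pick $r\in C^k_b(\mathbb{R})$ with $\Vert \rho-r\Vert_{\mathscr{B}^k_c(\mathbb{R})}<\varepsilon$, which is possible by the definition of $\mathscr{B}^k_c(\mathbb{R})$ as a closure. By Fa\`a di Bruno,
\begin{equation}
	d^j\big((\rho-r)\circ a\big)(u;v_{1:j})=\sum_{\pi\in\mathscr{P}_j}(\rho-r)^{(\vert\pi\vert)}(a(u))\, d^\pi a(u;v_\pi).
\end{equation}
Multiplying and dividing by $(1+\vert a(u)\vert)^c$ and using \eqref{eq:lem:NN_well_def:1} then gives $\Vert \rho\circ a-r\circ a\Vert_{\mathcal{B}^k_\Psi(U)}\le k!\,C_a\,\varepsilon$. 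Hence it suffices to show $r\circ a\in\mathcal{B}^k_\Psi(U)$ for every $r\in C^k_b(\mathbb{R})$. The same computation also shows that $\rho\circ a$ has finite weighted norm.

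For $r\in C^k_b(\mathbb{R})$, approximate $a$ by $a_m\in C^k_b(U)$ in $\Vert\cdot\Vert_{\mathcal{B}^k_\Psi(U)}$; this uses $a\in\mathcal{B}^k_\Psi(U)$. Then $r\circ a_m\in C^k_b(U)$, because each term of Fa\`a di Bruno is bounded by $\Vert r^{(\vert\pi\vert)}\Vert_\infty$ times products of the bounds \eqref{eq:Ckb:glob_bd} for $a_m$. It remains to show $\Vert r\circ a-r\circ a_m\Vert_{\mathcal{B}^k_\Psi(U)}\to0$, and this is the main obstacle. The difference splits into two families of terms.

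The first family is $r^{(\vert\pi\vert)}(a)\big(d^\pi a-d^\pi a_m\big)$. Telescope the products exactly as in \eqref{eq:thm:w_nachbin_findim:proof9}, using monotonicity of $\Psi$ together with boundedness of $r^{(\vert\pi\vert)}$. This bounds the family by a constant times $\Vert a-a_m\Vert_{\mathcal{B}^k_\Psi(U)}(1+\Vert a\Vert+\Vert a_m\Vert)^{k}$, which tends to zero.

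The second family is $\big(r^{(\vert\pi\vert)}(a)-r^{(\vert\pi\vert)}(a_m)\big)d^\pi a_m$, and this is where care is needed: $r^{(j)}$ is only bounded and continuous, not Lipschitz, at top order $j=k$. Split $U\times X^j$ into $K_{j,R}$ and its complement.
\begin{enumerate}
	\item On the complement, bound the factor $r^{(\vert\pi\vert)}(a)-r^{(\vert\pi\vert)}(a_m)$ by $2\Vert r^{(\vert\pi\vert)}\Vert_\infty$. Then use the uniform tail smallness of $d^\pi a_m/\psi_j$ for large $R$, uniformly in $m$. This follows from Proposition~\ref{prop:BPsik_equiv}~\ref{prop:BPsik_equiv:1} applied to $a$, from monotonicity to handle products, and from the convergence $a_m\to a$.
	\item On the compact $\pi_0(K_{j,R})$, the functions $a_m$ converge to $a$ uniformly, since $\psi_0$ is bounded there, and the range of $a$ is compact. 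Uniform continuity of $r^{(\vert\pi\vert)}$ on a compact neighbourhood of that range makes the factor uniformly small. The remaining factor $d^\pi a_m/\psi_j$ is bounded by monotonicity.
\end{enumerate}
Combining both parts gives $r\circ a\in\mathcal{B}^k_\Psi(U)$, hence $\rho(a(\cdot)+b)\in\mathcal{B}^k_\Psi(U)$, and the claim follows by linearity.
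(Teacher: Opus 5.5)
Your proof is correct. It has the same skeleton as the paper's proof: reduce to one neuron, replace $\rho$ by a $C^k_b$ function using \eqref{eq:lem:NN_well_def:1} and Fa\`a di Bruno, then replace $a$ by some $\widetilde a\in C^k_b(U)$ and telescope using the monotonicity of $\Psi$.

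In the last step your version is the more careful one. The paper bounds $\Vert\widetilde\rho\circ(a+b)-\widetilde\rho\circ(\widetilde a+b)\Vert_{\mathcal{B}^k_\Psi(U)}$ only by $k!\Vert\widetilde\rho\Vert_{C^k_b(\mathbb{R})}\max_\pi\sup\vert d^\pi a-d^\pi\widetilde a\vert/\psi_j$. This silently drops your second family $\bigl(\widetilde\rho^{(\vert\pi\vert)}(a+b)-\widetilde\rho^{(\vert\pi\vert)}(\widetilde a+b)\bigr)\,d^\pi\widetilde a$. That term is not controlled by $\Vert a-\widetilde a\Vert_{\mathcal{B}^k_\Psi(U)}$ alone, because $\widetilde\rho^{(k)}$ need not be Lipschitz and $a-\widetilde a$ is small only relative to $\psi_0$.

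Your split into $K_{j,R}$ and its complement supplies exactly this term. On $K_{j,R}$ you use uniform continuity of $r^{(\vert\pi\vert)}$ near the compact range of $a$; on the complement you use tail decay. Taking $\limsup_m$ first and then $R\to\infty$ handles the quantifiers. Your shift $\rho\mapsto\rho(\cdot+b)$ also repairs a small slip in the paper, which divides by $(1+\vert a(u)+b\vert)^c$ but uses the constant $C_a$, defined via $(1+\vert a(u)\vert)^c$.

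Two details should be spelled out.
\begin{enumerate}
\item The tail smallness of $\vert d^\pi a\vert/\psi_j$ outside $K_{j,R}$ for $\vert\pi\vert\ge 2$ does not follow from monotonicity alone; it needs a case split. Either some block $(u,v_{\pi_\ell})$ lies outside $K_{\vert\pi_\ell\vert,R'}$, in which case monotonicity together with Proposition~\ref{prop:BPsik_equiv}~\ref{prop:BPsik_equiv:1} makes that factor small while the others stay bounded by $\Vert a\Vert_{\mathcal{B}^k_\Psi(U)}$. Or all blocks lie in $K_{\vert\pi_\ell\vert,R'}$, in which case $\vert d^\pi a\vert\le(\Vert a\Vert_{\mathcal{B}^k_\Psi(U)}R')^{\vert\pi\vert}$ while $\psi_j>R$. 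The bound for $d^\pi a_m$ then follows from the telescoping estimate; it holds up to an error of order $\Vert a-a_m\Vert_{\mathcal{B}^k_\Psi(U)}$, not literally uniformly in $m$.
\item The boundedness of $\psi_0$ on $\pi_0(K_{j,R})$, which you need for uniform convergence $a_m\to a$ there, should be justified. For instance, it follows from the monotonicity condition with $\ell=j$, or from $\psi_0(u)\le\psi_j(u,v)$ for weights of the form \eqref{eq:lem:w_dom}.
\end{enumerate}
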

\begin{proof}
	Since $\mathcal{NN}^{\mathcal{A},\rho,\mathcal{L}}_{U,Y}$ is defined as the linear span of maps of the form $U \ni u \mapsto y \rho(a(u)+b) \in Y$, with $a \in \mathcal{A}$ and $y \in \mathcal{L}$, and the mapping $\mathcal{B}^k_\Psi(U) \ni g \mapsto y \cdot g \in \mathcal{B}^k_\Psi(U;Y)$ is well-defined and continuous, it suffices to prove that $\rho \circ (a(\cdot)+b) \in \mathcal{B}^k_\Psi(U)$. To this end, we fix some $a \in \mathcal{A}$, $b \in \mathbb{R}$, and $\varepsilon \in (0,1)$. Then, by using that $\mathscr{B}^k_c(\mathbb{R})$ is defined as the closure of $C^k_b(\mathbb{R})$ with respect to $\Vert \cdot \Vert_{\mathscr{B}^k_c(\mathbb{R})}$, there exists some $\widetilde{\rho} \in C^k_b(\mathbb{R})$ such that
	\begin{equation}
		\label{eq:lem:NN_well_def:proof1}
		\Vert \rho - \widetilde{\rho} \Vert_{\mathscr{B}^k_c(\mathbb{R})} = \max_{j=0,\ldots,k} \sup_{z \in \mathbb{R}} \frac{\left\vert \rho^{(j)}(z) - \widetilde{\rho}^{(j)}(z) \right\vert}{(1+\vert z \vert)^c} < \frac{\varepsilon}{2 k! C_a},
	\end{equation}
	where $C_a > 0$ is defined in \eqref{eq:lem:NN_well_def:1}. Moreover, for $\Vert \widetilde{\rho} \Vert_{C^k_b(\mathbb{R})} := \max_{j=0,\ldots,k} \sup_{s \in \mathbb{R}} \vert \widetilde{\rho}^{(j)}(s) \vert < \infty$, we use that $a \in \mathcal{A} \subseteq \mathcal{B}^k_\Psi(U)$ to obtain from the definition of $\mathcal{B}^k_\Psi(U)$ some $\widetilde{a} \in C^k_b(U)$ such that
	\begin{equation}
		\begin{aligned}
			\Vert a - \widetilde{a} \Vert_{\mathcal{B}^k_\Psi(U)} & = \max_{j=0,\ldots,k} \sup_{(u,v_1,\ldots,v_j) \in U \times X^j} \frac{\left\vert d^j a(u;v_1,\ldots,v_j) - d^j \widetilde{a}(u;v_1,\ldots,v_j) \right\vert}{\psi_j(u,v_1,\ldots,v_j)} \\
			& < \frac{\varepsilon}{2 k \cdot k! C_\Psi^k \big( 1 + \Vert a \Vert_{\mathcal{B}^k_\Psi(U)} \big)^k \big( 1 + \Vert \widetilde{\rho} \Vert_{C^k_b(\mathbb{R})} \big)},
		\end{aligned}
	\end{equation}
	which implies that $\Vert \widetilde{a} \Vert_{\mathcal{B}^k_\Psi(U)} \leq 1 + \Vert a \Vert_{\mathcal{B}^k_\Psi(U)}$. Hence, by using a telescoping sum together with the monotonicity of $\Psi := (\psi_j)_{j=0,\ldots,k}$, it follows that
	\begin{equation}
		\label{eq:lem:NN_well_def:proof2}
		\begin{aligned}
			& \max_{j=0,\ldots,k \atop \pi \in \mathscr{P}_j} \sup_{(u,v_1,\ldots,v_j) \in U \times X^j} \frac{\left\vert d^\pi a(u;v_\pi) - d^\pi \widetilde{a}(u;v_\pi) \right\vert}{\psi_j(u,v_1,\ldots,v_j)} \\
			& \leq C_\Psi^k \!\! \max_{j=0,\ldots,k \atop \pi \in \mathscr{P}_j} \! \sup_{(u,v_1,\ldots,v_j) \in U \times X^j} \!\! \frac{\sum_{r=1}^{\vert\pi\vert} \big\vert \prod_{\ell=1}^{r-1} \! d^{\pi_\ell} a(u;v_{\pi_\ell}) \!\left( d^{\pi_r} a(u;v_{\pi_r}) \!-\! d^{\pi_r} \widetilde{a}(u;v_{\pi_r}) \right) \! \prod_{\ell=r}^{\vert\pi\vert} \! d^{\pi_\ell} \widetilde{a}(u;v_{\pi_\ell}) \big\vert}{\psi_{\pi_1}(u,v_{\pi_1}) \cdots \psi_{\pi_{\vert\pi\vert}}(u,v_{\pi_{\vert\pi\vert}})} \\
			& \leq k C_\Psi^k (1+\Vert a \Vert_{\mathcal{B}^k_\Psi(U)})^k \max_{j=0,\ldots,k \atop \pi \in \mathscr{P}_j} \max_{r=1,\ldots,\vert\pi\vert} \sup_{(u,v_1,\ldots,v_j) \in U \times X^j} \frac{\left\vert d^{\pi_r} a(u;v_{\pi_r}) - d^{\pi_r} \widetilde{a}(u;v_{\pi_r}) \right\vert}{\psi_{\pi_r}(u,v_{\pi_r})} \\
			& \leq k C_\Psi^k (1+\Vert a \Vert_{\mathcal{B}^k_\Psi(U)})^k \frac{\varepsilon}{2 k \cdot k! C_\Psi^k \big( 1 + \Vert a \Vert_{\mathcal{B}^k_\Psi(U)} \big)^k \big( 1 + \Vert \widetilde{\rho} \Vert_{C^k_b(\mathbb{R})} \big)} = \frac{\varepsilon}{2 k! \big( 1 + \Vert \widetilde{\rho} \Vert_{C^k_b(\mathbb{R})} \big)}.
		\end{aligned} 
	\end{equation}
	Thus, the Fa\`a di Bruno formula and \eqref{eq:lem:NN_well_def:proof1}--\eqref{eq:lem:NN_well_def:proof2} imply for $\widetilde{\rho} \circ (\widetilde{a}(\cdot)+b) \in C^k_b(U)$ that
	\begin{equation}
		\begin{aligned}
			& \Vert \rho \circ (a(\cdot)+b) - \widetilde{\rho} \circ (\widetilde{a}(\cdot)+b) \Vert_{\mathcal{B}^k_\Psi(U)} \\
			& \leq \Vert \rho \circ (a(\cdot)+b) - \widetilde{\rho} \circ (a(\cdot)+b) \Vert_{\mathcal{B}^k_\Psi(U)} + \Vert \widetilde{\rho} \circ (a(\cdot)+b) - \widetilde{\rho} \circ (\widetilde{a}(\cdot)+b) \Vert_{\mathcal{B}^k_\Psi(U)} \\
			& = \max_{j=0,\ldots,k} \sup_{(u,v_1,\ldots,v_j) \in U \times X^j} \frac{\left\vert d^j (\rho \circ (a(\cdot)+b))(u;v_1,\ldots,v_j) - d^j (\widetilde{\rho} \circ (a(\cdot)+b))(u;v_1,\ldots,v_j) \right\vert}{\psi_j(u,v_1,\ldots,v_j)} \\
			& \quad + \max_{j=0,\ldots,k} \sup_{(u,v_1,\ldots,v_j) \in U \times X^j} \frac{\left\vert d^j (\widetilde{\rho} \circ (a(\cdot)+b))(u;v_1,\ldots,v_j) - d^j (\widetilde{\rho} \circ (\widetilde{a}(\cdot)+b))(u;v_1,\ldots,v_j) \right\vert}{\psi_j(u,v_1,\ldots,v_j)} \\
			& \leq \max_{j=0,\ldots,k} \sup_{(u,v_1,\ldots,v_j) \in U \times X^j} \frac{\sum_{\pi \in \mathscr{P}_j} \left\vert \rho^{(\vert\pi\vert)}(a(u)+b) d^\pi a(u;v_\pi) - \widetilde{\rho}^{(\vert\pi\vert)}(a(u)+b) d^\pi a(u;v_\pi) \right\vert}{\psi_j(u,v_1,\ldots,v_j)} \\
			& \quad + \max_{j=0,\ldots,k} \sup_{(u,v_1,\ldots,v_j) \in U \times X^j} \frac{\sum_{\pi \in \mathscr{P}_j} \left\vert \widetilde{\rho}^{(\vert\pi\vert)}(a(u)+b) d^\pi a(u;v_\pi) - \widetilde{\rho}^{(\vert\pi\vert)}(\widetilde{a}(u)+b) d^\pi \widetilde{a}(u;v_\pi) \right\vert}{\psi_j(u,v_1,\ldots,v_j)} \\
			& \leq C_a k! \max_{j=0,\ldots,k \atop \pi \in \mathscr{P}_j} \sup_{(u,v_1,\ldots,v_j) \in U \times X^j} \frac{\left\vert \rho^{(\vert\pi\vert)}(a(u)+b) - \widetilde{\rho}^{(\vert\pi\vert)}(a(u)+b) \right\vert}{(1+\vert a(u)+b \vert)^c} \\
			& \quad + k! \Vert \widetilde{\rho} \Vert_{C^k_b(\mathbb{R})} \max_{j=0,\ldots,k \atop \pi \in \mathscr{P}_j} \frac{\left\vert d^\pi a(u;v_\pi) - d^\pi \widetilde{a}(u;v_\pi) \right\vert}{\psi_j(u,v_1,\ldots,v_j)} \\
			& < C_a k! \frac{\varepsilon}{2 C_a k!} + k! \Vert \widetilde{\rho} \Vert_{C^k_b(\mathbb{R})} \frac{\varepsilon}{2 k! \big( 1 + \Vert \widetilde{\rho} \Vert_{C^k_b(\mathbb{R})} \big)} \leq \varepsilon.
		\end{aligned}
	\end{equation}
	Since $\varepsilon \in (0,1)$ was chosen arbitrarily and $\mathcal{B}^k_\Psi(U)$ is defined as the closure of $C^k_b(U)$ with respect to $\Vert \cdot \Vert_{\mathcal{B}^k_\Psi(U)}$, this shows that $\rho \circ (a(\cdot)+b) \in \mathcal{B}^k_\Psi(U)$.
\end{proof}

\section{Proof of results in Section~\ref{sec:naf}}
\label{app:naf}

\subsection{Proof of Lemma~\ref{lem:naf_mfd_addfam}}
\label{app:lem:naf_mfd_addfam}

\begin{proof}[Proof of Lemma~\ref{lem:naf_mfd_addfam}]
	First, we show that $(\Lambda^{\alpha,1}_{T,Z},\Psi)$ is a weighted $C^k_{loc}$-manifold. To this end, we conclude from Theorem~\ref{thm:sk_dual} that $(\mathbb{R} \times D^{\alpha,1}([0,T];Z),\Vert \cdot \Vert_{\mathbb{R} \times D^{\alpha,1}([0,T];Z)})$ is a dual Banach space with some predual $(G,\Vert \cdot \Vert_G)$, where $\Vert (t,x) \Vert_{\mathbb{R} \times D^{\alpha,1}([0,T];Z)} := \vert t \vert + \Vert x \Vert_{\alpha,\ell^1}$. Thus, $(\phi_i(\Lambda^{\alpha,1}_{T,Z}),\Psi_i)$ is by Lemma~\ref{lem:w_dom}~\ref{lem:w_dom_dual} a weighted domain, whence $(\Lambda^{\alpha,1}_{T,Z},\Psi)$ is by definition a weighted $C^k_{loc}$-manifold.
	
	Now, we prove that $(\phi_i(\Lambda^{\alpha,1}_{T,Z}),\tau_\mathbb{R} \times \tau_{w^*})$ has $\Vert \cdot \Vert_{\mathbb{R} \times D^{\alpha,1}([0,T];Z)}$-BAP. Indeed, as $(D^{\alpha,1}([0,T];Z),\Vert \cdot \Vert_{\alpha,\ell^1})$ is by Theorem~\ref{thm:sk_dual} a dual Banach space with some predual $(G,\Vert \cdot \Vert_G)$, we can apply Theorem~\ref{thm:D0_bap} to conclude that $(D^{\alpha,1}([0,T];Z),\tau_{w^*})$ has $\Vert \cdot \Vert_{\alpha,\ell^1}$-BAP with finite rank operators $(Q_\gamma^*)_\gamma \subseteq (D^{\alpha,1}([0,T];Z),\tau_{w^*})^* \otimes D^{\alpha,1}([0,T];Z)$, where $(Q_\gamma)_\gamma \subseteq G^* \otimes G$ is the BAP net of $(G,\Vert \cdot \Vert_G)$ with $\Vert Q_\gamma \Vert_{L(G;G)} \leq C_G$, for some $C_G \geq 1$. From this, we define the finite rank operators $(R_\gamma)_\gamma \subseteq (\mathbb{R} \times D^{\alpha,1}([0,T];Z),\tau_\mathbb{R} \times \tau_{w^*})^* \otimes (\mathbb{R} \times D^{\alpha,1}([0,T];Z))$ by
	\begin{equation}
		\mathbb{R} \times D^{\alpha,1}([0,T];Z) \ni (t,x) \quad \mapsto \quad R_\gamma(t,x) := \left( t, Q_\gamma^*(x) \right) \in \mathbb{R} \times D^{\alpha,1}([0,T];Z).
	\end{equation}
	Then, for every $\big( (t,x) \mapsto p_{\mathbb{R} \times D^{\alpha,1}([0,T];Z)}(t,x) := \vert t \vert + p_{D^{\alpha,1}([0,T];Z)}(x) \big) \in \mathfrak{P}_{(\mathbb{R} \times D^{\alpha,1}([0,T];Z),\tau_\mathbb{R} \times \tau_{w^*})}$ with $p_{D^{\alpha,1}([0,T];Z)} \in \mathfrak{P}_{(D^{\alpha,1}([0,T];Z),\tau_{w^*})}$, we conclude for every relatively compact subset $K$ of $(\mathbb{R} \times D^{\alpha,1}([0,T];Z),\tau_\mathbb{R} \times \tau_{w^*})$ that
	\begin{equation}
		\begin{aligned}
			\lim_\gamma \sup_{(t,x) \in K} p_{\mathbb{R} \times D^{\alpha,1}([0,T];Z)}\left( (t,x) - R_\gamma(t,x) \right) & = \lim_\gamma \sup_{(t,x) \in K} p_{\mathbb{R} \times D^{\alpha,1}([0,T];Z)}\left( (t,x) - \left( t, Q_\gamma^*(x) \right) \right) \\
			& = \lim_\gamma \sup_{(t,x) \in K} \left( \vert t - t \vert + p_{D^{\alpha,1}([0,T];Z)}\left( x - Q_\gamma^*(x) \right) \right) \\
			& = \lim_\gamma \sup_{(t,x) \in K} p_{D^{\alpha,1}([0,T];Z)}\left( x - Q_\gamma^*(x) \right) = 0,
		\end{aligned}
	\end{equation}
	which shows that $(\phi_i(\Lambda^{\alpha,1}_{T,Z}),\tau_\mathbb{R} \times \tau_{w^*})$ has AP. In addition, it holds for every $g_1,\ldots,g_N \in G$, $(t,x) \in \mathbb{R} \times D^{\alpha,1}([0,T];Z)$, and $\big( (t,x) \mapsto \widetilde{p}_{\mathbb{R} \times D^{\alpha,1}([0,T];Z)}(t,x) := \vert t \vert + \max_{n=1,\ldots,N} \vert \langle x, g_n \rangle_{D^{\alpha,1}([0,T];Z) \times G} \vert \big) \in \mathfrak{P}_{(\mathbb{R} \times D^{\alpha,1}([0,T];Z),\tau_\mathbb{R} \times \tau_{w^*})}$ that
	\begin{equation}
		\label{eq:lem:naf_mfd_addfam:proof0}
		\begin{aligned}
			\widetilde{p}_{\mathbb{R} \times D^{\alpha,1}([0,T];Z)}(R_\gamma(t,x)) & = \vert t \vert + \max_{n=1,\ldots,N} \vert \langle Q_\gamma^*(x), g_n \rangle_{D^{\alpha,1}([0,T];Z) \times G} \vert \\
			& = \vert t \vert + \max_{n=1,\ldots,N} \vert \langle x, Q_\gamma(g_n) \rangle_{D^{\alpha,1}([0,T];Z) \times G} \vert \\
			& \leq \vert t \vert + \Vert x \Vert_{\alpha,\ell^1} \Vert Q_\gamma \Vert_{L(E;E)} \max_{n=1,\ldots,N} \Vert g_n \Vert_G \\
			& = \left( 1 + C_G \max_{n=1,\ldots,N} \Vert g_n \Vert_G \right) \Vert (t,x) \Vert_{\mathbb{R} \times D^{\alpha,1}([0,T];Z)},
		\end{aligned}
	\end{equation}
	which proves that $(\phi_i(\Lambda^{\alpha,1}_{T,Z}),\tau_\mathbb{R} \times \tau_{w^*})$ has $\Vert \cdot \Vert_{\mathbb{R} \times D^{\alpha,1}([0,T];Z)}$-BAP.
	
	In order to show that $\mathcal{A} \subseteq \mathcal{B}^k_\Psi(\Lambda^{\alpha,1}_{T,Z})$ is well-defined, we aim to apply Proposition~\ref{prop:BPsik_equiv}~\ref{prop:BPsik_equiv:2}. To this end, we fix some $\lambda \in \mathbb{R}$ as well as $\widetilde{\varphi} \in \mathcal{NN}^{\mathbb{R},\widetilde{\rho},E}_{\mathbb{R},E}$ and consider the map $\Lambda^{\alpha,1}_{T,Z} \ni (t,x) \mapsto a(t,x) \!:=\! \lambda t + \int_0^T \langle \overline{x}^t_s, \widetilde{\varphi}(s) \rangle_{Z \times E} \, ds \in \mathbb{R}$. Then, for every $j = 0,\ldots,k$ and $\big( (t,\overline{x}^t),(s_1,v_1),\ldots,(s_j,v_j) \big) \in \phi_i(\Lambda^{\alpha,1}_{T,Z}) \times (\mathbb{R} \times D^{\alpha,1}([0,T];Z))^j$, we have
	\begin{equation}
		\label{eq:lem:naf_mfd_addfam:proof1}
		d^j \left( a \circ \phi_i^{-1} \right)((t,x);(s_1,v_1),\ldots,(s_j,v_j)) =
		\begin{cases}
			\lambda t + \int_0^T \langle \overline{x}^t_s, \widetilde{\varphi}(s) \rangle_{Z \times E} \, ds, & \text{if } j = 0, \\
			\lambda s_1 + \int_0^T \langle v_{1,s}, \widetilde{\varphi}(s) \rangle_{Z \times E} \, ds, & \text{if } j = 1, \\
			0, & \text{if } j \geq 2.
		\end{cases}
	\end{equation}
	which shows that $a \circ \phi_i^{-1} \in C^k(\phi_i(\Lambda^{\alpha,1}_{T,Z}))$. Moreover, by using a similar argument as in \eqref{eq:lem:naf_mfd_addfam:proof0} and the constant $C := 1 + \vert\lambda\vert + \max(1,T)^\alpha C_G \int_0^T \Vert \widetilde{\varphi}(s) \Vert_E ds > 0$, it holds that
	\begin{equation}
		\label{eq:lem:naf_mfd_addfam:proof2}
		\begin{aligned}
			& \left\vert d^j \left( a \circ \phi_i^{-1} \right)(R_\gamma(t,\overline{x}^t);R_\gamma(s_1,v_1),\ldots,R_\gamma(s_j,v_j)) \right\vert \\
			& \leq \vert \lambda \vert (\vert t \vert + \vert s_1 \vert) + \int_0^T \left\vert \langle Q_\gamma^*(\overline{x}^t)_s, \widetilde{\varphi}(s) \rangle_{Z \times E} \right\vert ds + \int_0^T \left\vert \langle Q_\gamma^*(v_1)_s, \widetilde{\varphi}(s) \rangle_{Z \times E} \right\vert ds \\
			& \leq \vert \lambda \vert (\vert t \vert + \vert s_1 \vert) + \left( \int_0^T \Vert \widetilde{\varphi}(s) \Vert_E ds \right) \max(1,T)^\alpha \left( \Vert Q_\gamma^*(\overline{x}^t) \Vert_{\alpha,\ell^1} + \Vert Q_\gamma^*(v_1) \Vert_{\alpha,\ell^1} \right) \\
			& \leq \vert \lambda \vert (\vert t \vert + \vert s_1 \vert) + \max(1,T)^\alpha C_G \left( \int_0^T \Vert \widetilde{\varphi}(s) \Vert_E ds \right) \left( \Vert \overline{x}^t \Vert_{\alpha,\ell^1} + \Vert v_1 \Vert_{\alpha,\ell^1} \right) \\
			& \leq C \left( \vert t \vert + \Vert \overline{x}^t \Vert_\alpha + \vert s_1 \vert + \Vert v_1 \Vert_\alpha \right) \\
			& \leq C \left( \max(j,1) \Vert (t,\overline{x}^t) \Vert_{\mathbb{R} \times D^{\alpha,1}([0,T];Z)} + \sum_{\ell=1}^j \Vert (s_\ell,v_\ell) \Vert_{\mathbb{R} \times D^{\alpha,1}([0,T];Z)} \right)
		\end{aligned}
	\end{equation}
	Therefore, by using this together with the assumption $\lim_{r \rightarrow \infty} \frac{r^k}{\eta(r)} = 0$, we obtain that
	\begin{equation}
		\begin{aligned}
			& \lim_{R \rightarrow \infty} \max_{j=0,\ldots,k \atop \mathcal{L} \subseteq \lbrace 1,\ldots,j \rbrace} \sup_{\big((t,\overline{x}^t), (s_1,v_1),\ldots,(s_j,v_j)\big) \in K_{ij,R}^c} \frac{\left\vert d^{\vert\mathcal{L}\vert} \left( a \circ \phi_i^{-1} \right)(R_\gamma(t,\overline{x}^t);(R_\gamma(s_\ell,v_\ell))_{\ell \in \mathcal{L}}) \right\vert}{\psi_{i,\vert\mathcal{L}\vert}\big( (t,\overline{x}^t), (s_\ell,v_\ell)_{\ell \in \mathcal{L}} \big)} \\
			& \leq C \lim_{R \rightarrow \infty} \max_{j=0,\ldots,k \atop \mathcal{L} \subseteq \lbrace 1,\ldots,j \rbrace} \sup_{K_{i,j,R}^c} \frac{\max(\vert\mathcal{L}\vert,1) \Vert (t,\overline{x}^t) \Vert_{\mathbb{R} \times D^{\alpha,1}([0,T];Z)} + \sum_{\ell \in \mathcal{L}} \Vert (s_\ell,v_\ell) \Vert_{\mathbb{R} \times D^{\alpha,1}([0,T];Z)}}{\eta\left( \max(\vert\mathcal{L}\vert,1) \Vert (t,\overline{x}^t) \Vert_{\mathbb{R} \times D^{\alpha,1}([0,T];Z)} + \sum_{\ell\in\mathcal{L}} \Vert (s_\ell,v_\ell) \Vert_{\mathbb{R} \times D^{\alpha,1}([0,T];Z)} \right)} \\
			& = C \lim_{r \rightarrow \infty} \frac{r}{\eta(r)} = 0,
		\end{aligned}
	\end{equation}
	with supremum taken over $\big( (t,\overline{x}^t), (s_1,v_1),\ldots,(s_j,v_j) \big) \in (\phi_i(\Lambda^{\alpha,1}_{T,Z}) \times (\mathbb{R} \times D^{\alpha,1}([0,T];Z))^j) \setminus K_{i,j,R}$. Hence, Proposition~\ref{prop:BPsik_equiv}~\ref{prop:BPsik_equiv:2} implies $a \circ \phi_i^{-1} \in \mathcal{B}^k_{\Psi_i}(\phi_i(\Lambda^{\alpha,1}_{T,Z}))$, showing that $\mathcal{A} \subseteq \mathcal{B}^k_\Psi(\Lambda^{\alpha,1}_{T,Z})$.
	
	Finally, we prove that $\mathcal{A}$ is an additive family on $\Lambda^{\alpha,1}_{T,Z}$ by showing that $\mathcal{A}_i = \left\lbrace a \circ \phi_i^{-1}: a \in \mathcal{A} \right\rbrace$ is an additive family on $\phi_i(\Lambda^{\alpha,1}_{T,Z})$. For \ref{A1}, we observe that $\mathcal{A}_i$ is a vector space and therefore closed under addition. For \ref{A2}, let $(t_1,\overline{x}_1^{t_1}), (t_2,\overline{x}_2^{t_2}) \in \phi_i(\Lambda^{\alpha,1}_{T,Z})$ be two distinct points. If $t_1 \neq t_2$, the map $\big( (t,\overline{x}^t) \mapsto (a \circ \phi_i^{-1})(t,\overline{x}^t) := t \big) \in \mathcal{A}_i$ satisfies $a(t_1,\overline{x}_1^{t_1}) = t_1 \neq t_2 = a(t_2,\overline{x}_2^{t_2})$. Otherwise, if $t_1 = t_2 =: t$ and $\overline{x}_1^t = \overline{x}_2^t$ on $[0,t)$ but $\overline{x}_{1,t}^t \neq \overline{x}_{2,t}^t$, there exists some $e \in E$ such that $\langle \overline{x}_{1,t}^t, e \rangle_{Z \times E} \neq \langle \overline{x}_{2,t}^t, e \rangle_{Z \times E}$. Thus, there exists some $\widetilde{\varphi}_0 \in \mathcal{NN}^{\mathbb{R},\widetilde{\rho},\mathbb{R}}_{\mathbb{R},\mathbb{R}}$ with $\int_t^T \widetilde{\varphi}_0(s) ds \neq 0$, whence $\big( (t,\overline{x}^t) \mapsto (a \circ \phi_i^{-1})(t,\overline{x}^t) := \int_0^T \langle \overline{x}^t_s, \widetilde{\varphi}_0(s) e \rangle_{Z \times E} ds \big) \in \mathcal{A}_i$ satisfies
	\begin{equation}
		\label{eq:lem:naf_mfd_addfam:proof3}
		\begin{aligned}
			& a(t_1,\overline{x}_1^{t_1}) = \int_0^T \langle \overline{x}^t_{1,s}, \widetilde{\varphi}_0(s) e \rangle_{Z \times E} ds = \int_0^t\langle \overline{x}^t_{1,s}, \widetilde{\varphi}_0(s) e \rangle_{Z \times E} ds + \Big\langle \overline{x}^t_{1,t}, e \int_t^T \widetilde{\varphi}_0(s) ds \Big\rangle_{Z \times E} \\
			& \neq \int_0^t \langle \overline{x}^t_{2,s}, \widetilde{\varphi}_0(s) e \rangle_{Z \times E} ds + \Big\langle \overline{x}^t_{2,t}, e \int_t^T \widetilde{\varphi}_0(s) ds \Big\rangle_{Z \times E} = \int_0^T \langle \overline{x}^t_{2,s}, \widetilde{\varphi}_0(s) e \rangle_{Z \times E} ds = a(t_2,\overline{x}_2^{t_2}).
		\end{aligned}
	\end{equation}
	Otherwise, if $t_1 = t_2 =: t$ and $\overline{x}_{1,t}^t = \overline{x}_{2,t}^t$ but $\overline{x}_1^t \neq \overline{x}_2^t$ on $[0,t)$, there exists some $e \in E$ such that $s \mapsto \langle \overline{x}_{1,s}^t, e \rangle_{Z \times E}$ differs from $s \mapsto \langle \overline{x}_{2,s}^t, e \rangle_{Z \times E}$ on $[0,t)$. Thus, by using that $\mathcal{NN}^{\mathbb{R},\widetilde{\rho},\mathbb{R}}_{\mathbb{R},\mathbb{R}}$ is weakly dense in $L^1([0,t])$ (see \cite[p.~31]{cuchiero23}), there exists some $\widetilde{\varphi}_0 \in \mathcal{NN}^{\mathbb{R},\widetilde{\rho},\mathbb{R}}_{\mathbb{R},\mathbb{R}}$ with $\int_0^t \langle \overline{x}_{1,s}^t, \widetilde{\varphi}_0(s) e \rangle_{Z \times E} ds \neq \int_0^t \langle \overline{x}_{2,s}^t, \widetilde{\varphi}_0(s) e \rangle_{Z \times E} ds$, whence $\big( (t,\overline{x}^t) \mapsto (a \circ \phi_i^{-1})(t,\overline{x}^t) := \int_0^T \langle \overline{x}^t_s, \widetilde{\varphi}_0(s) e \rangle_{Z \times E} ds \big) \in \mathcal{A}_i$ also satisfies \eqref{eq:lem:naf_mfd_addfam:proof3}, which shows that $\mathcal{A}_i$ is point separating on $\phi_i(\Lambda^{\alpha,1}_{T,Z})$. For \ref{A3}, we fix some $(t,\overline{x}^t) \in \phi_i(\Lambda^{\alpha,1}_{T,Z})$ and $(s,v) \in \mathbb{R} \times D^{\alpha,1}([0,T];Z)$. Then, by applying the Hahn-Banach theorem, there exists some $\lambda \in \mathbb{R}$, $\widetilde{\varphi}_0 \in \mathcal{NN}^{\mathbb{R},\widetilde{\rho},\mathbb{R}}_{\mathbb{R},\mathbb{R}}$, and $e \in E$ such that $\lambda s + \int_0^T \langle v_s, \widetilde{\varphi}_0(s) e \rangle_{Z \times E} ds \neq 0$. Hence, the map $\big( (t,\overline{x}^t) \mapsto (a \circ \phi_i^{-1})(t,\overline{x}^t) := \lambda t + \int_0^T \langle \overline{x}^t_s, \widetilde{\varphi}_0(s) e \rangle_{Z \times E} ds \big) \in \mathcal{A}_i$ satisfies
	\begin{equation}
		\begin{aligned}
			d\left( a \circ \phi_i^{-1} \right)((t,\overline{x}^t);(s,v)) & = \lambda s + \int_0^T \langle v_s, \widetilde{\varphi}(s) \rangle_{Z \times E} \, ds = \lambda s + \int_0^T \langle v_s, \widetilde{\varphi}_0(s) e \rangle_{Z \times E} ds \neq 0,
		\end{aligned}			
	\end{equation}
	which shows that $\mathcal{A}_i$ has nowhere vanishing derivatives. For \ref{A4'}, we use for every $\gamma$ that $T_\gamma(\phi_i(\Lambda^{\alpha,1}_{T,Z}))$ is $m$-dimensional (with some $m \in \mathbb{N}$), on which $\mathcal{A}_i$ is point separating and has nowhere vanishing derivatives, to obtain some $a_1,\ldots,a_m \in \mathcal{A}$ such that
	\begin{equation}
		\eta_{i,\gamma} := \big( a_1 \circ \phi_i^{-1},\ldots,a_m \circ \phi_i^{-1} \big)^\top\big\vert_{T_\gamma(\phi_i(\Lambda^{\alpha,1}_{T,Z}))}: T_\gamma(\phi_i(\Lambda^{\alpha,1}_{T,Z})) \rightarrow \mathbb{R}^m
	\end{equation}
	is an embedding, where $a_1,\ldots,a_m \in \mathcal{A}$ have uniformly bounded derivatives ensuring that the corresponding limit in \ref{A4} is finite (see also Remark~\ref{rem:psi_mod_growth}).
\end{proof}

\subsection{Proof of Corollary~\ref{cor:naf_uat_full}}
\label{app:cor:naf_uat_full}

\begin{proof}[Proof of Corollary~\ref{cor:naf_uat_full}]
	We aim to apply Theorem~\ref{thm:w_uat_infdim} to obtain that $\mathcal{PN}^{\widetilde{\rho},\rho,\mathcal{L}}_{\Lambda^{\alpha,1}_{T,Z},Y} = \mathcal{NN}^{\mathcal{A},\rho,\mathcal{L}}_{\Lambda^{\alpha,1}_{T,Z},Y}$ is a dense subset of $\mathcal{B}^k_\Psi(\Lambda^{\alpha,1}_{T,Z};Y)$. To this end, we fix some $a \in \mathcal{A}$ of the form $\Lambda^{\alpha,1}_{T,Z} \ni (t,x) \mapsto a(t,x) := \lambda t + \int_0^T \langle \overline{x}^t_s, \widetilde{\varphi}(s) \rangle_{Z \times E} \, ds \in \mathbb{R}$, with some $\lambda \in \mathbb{R}$ and $\widetilde{\varphi} \in \mathcal{NN}^{\mathbb{R},\widetilde{\rho},E}_{\mathbb{R},E}$, and first show that the constant $C_{a,i} > 0$ defined in \eqref{eq:thm:w_uat_infdim1} is finite. Indeed, by using \eqref{eq:lem:naf_mfd_addfam:proof1}--\eqref{eq:lem:naf_mfd_addfam:proof2}, we observe that
	\begin{equation}
		\begin{aligned}
			C_{a,i} & := \max_{j=0,\ldots,k \atop \pi \in \mathscr{P}_j} \sup_{(u,v_1,\ldots,v_j) \in \phi_i(\Lambda^{\alpha,1}_{T,Z}) \times (\mathbb{R} \times D^{\alpha,1}([0,T];Z))^j} \frac{\left( 1 + \left\vert \left( a \circ \phi_i^{-1} \right)(u) \right\vert \right)^c \left\vert d^\pi \left( a \circ \phi_i^{-1} \right)(u;v_{\pi}) \right\vert}{\psi_{i,j}(u,v_{1:j})} \\
			& \leq \max_{j=0,\ldots,k} \sup_{(u,v_1,\ldots,v_j)} \frac{\left( 1 + \max(j,1) \Vert u \Vert_{\mathbb{R} \times D^{\alpha,1}([0,T];Z)} + \sum_{\ell=1}^j \Vert v_\ell \Vert_{\mathbb{R} \times D^{\alpha,1}([0,T];Z)} \right)^{\max(j,c)}}{\eta\left( \max(j,1) \Vert u \Vert_{\mathbb{R} \times D^{\alpha,1}([0,T];Z)} + \sum_{\ell=1}^j \Vert v_\ell \Vert_{\mathbb{R} \times D^{\alpha,1}([0,T];Z)} \right)} \\
			& \leq \sup_{r \in (0,\infty)} \frac{(1+r)^{k\max(1,c)}}{\eta(r)} < \infty.
		\end{aligned}
	\end{equation}
	Next, we show \eqref{eq:thm:w_uat_infdim2}, i.e., for every $a_i := a \circ \phi_i^{-1} \in \mathcal{A}_i := \left\lbrace a \circ \phi_i^{-1}: a \in \mathcal{A} \right\rbrace$, $b \in \mathbb{R}$, and every finite rank operator $(T_\gamma)_{g_{1:N}} \subseteq (\mathbb{R} \times D^{\alpha,1}([0,T];Z),\tau_\mathbb{R} \times \tau_{w^*})^* \otimes (\mathbb{R} \times D^{\alpha,1}([0,T];Z))$ from Lemma~\ref{lem:naf_mfd_addfam}, we prove that the composition $\rho((a_i \circ T_\gamma)(\cdot) + b)$ belongs to the closure of $\mathcal{NN}^{\mathcal{A}_i,\rho,\mathbb{R}}_{\phi_i(\Lambda^{\alpha,1}_{T,Z}),\mathbb{R}}$ with respect to $\Vert \cdot \Vert_{\mathcal{B}^k_{\Psi_i}(\phi_i(\Lambda^{\alpha,1}_{T,Z}))}$, where $D^{\alpha,1}([0,T];Z) \ni x \mapsto T_\gamma(x) := \sum_{n=1}^N \langle x, g_n \rangle_{D^{\alpha,1}([0,T];Z) \times G} \, x_n \in D^{\alpha,1}([0,T];Z)$, for some $g_1,\ldots,g_N \in G$ and $x_1,\ldots,x_N \in D^{\alpha,1}([0,T];Z)$, where $(G,\Vert \cdot \Vert_G)$ is a predual of $(D^{\alpha,1}([0,T];Z),\Vert \cdot \Vert_{\alpha,\ell^1})$. To this end, we fix some $\lambda \in \mathbb{R}$, $\widetilde{\varphi} \in \mathcal{NN}^{\mathbb{R},\widetilde{\rho},E}_{\mathbb{R},E}$, $b \in \mathbb{R}$, $g_1,\ldots,g_N \in G$, and $\varepsilon \in (0,1)$. Then, for every fixed $n = 1,\ldots,N$, we apply the weighted UAT in \cite[Theorem~4.13]{cuchiero23} for $\mathcal{B}^0_\Psi$-maps without derivatives (onto $(D^{\alpha,1}([0,T];Z),\tau_{w^*})$ equipped with the weight function $(1+\Vert \cdot \Vert_{\alpha,\ell^1})^c$) to obtain some $\widetilde{\varphi}_n \in \mathcal{NN}^{\mathbb{R},\widetilde{\rho},E}_{\mathbb{R},E}$ satisfying
	\begin{equation}
		\label{eq:cor:naf_uat_full:proof1}
		\sup_{x \in D^{\alpha,1}([0,T];Z)} \frac{\left\vert \langle x, g_n \rangle_{D^{\alpha,1}([0,T];Z) \times G} - \int_0^T \langle x_s, \widetilde{\varphi}_n(s) \rangle_{Z \times E} \, ds \right\vert}{(1+\Vert x \Vert_{\alpha,\ell^1})^c} < \frac{\varepsilon}{C_1 C_\eta C_\rho k N \max(1,\vert r_n \vert)},
	\end{equation}
	where $C_1 := \Big(1 + \sup_{u \in \mathbb{R} \times D^{\alpha,1}([0,T];Z)} \frac{\vert (a_i \circ T_\gamma)(u) \vert}{(1+\Vert u \Vert_{\mathbb{R} \times D^{\alpha,1}([0,T];Z)})^c} \Big)^k \geq 1$, $C_\eta := 1 + \sup_{r \in (0,\infty)} \frac{r^{k\max(1,c)}}{\eta(r)} \geq 1$, $C_\rho := 1 + \max_{j=1,\ldots,k+1} \sup_{s \in \mathbb{R}} \vert \rho^{(j)}(s) \vert \geq 1$, and $r_n := \int_0^T \vert \langle x_{n,s}, \widetilde{\varphi}(s) \rangle_{Z \times E} \vert ds \geq 0$. From this, we define $D^{\alpha,1}([0,T];Z) \ni x \mapsto Q_\gamma(x) := \sum_{n=1}^N x_n \int_0^T \langle x_s, \widetilde{\varphi}_n(s) \rangle_{Z \times E} \, ds \in D^{\alpha,1}([0,T];Z)$ and $\mathbb{R} \times D^{\alpha,1}([0,T];Z) \ni (t,x) \mapsto R_\gamma(t,x) := (t,Q_\gamma(x)) \in \mathbb{R} \times D^{\alpha,1}([0,T];Z)$. Hence, for every $(t,\overline{x}^t) \in \phi_i(\Lambda^{\alpha,1}_{T,Z})$, we observe that
	\begin{equation}
		\begin{aligned}
			(a_i \circ R_\gamma)(t,\overline{x}^t) & = \lambda t + \int_0^T \langle Q_\gamma(\overline{x}^t)_s, \widetilde{\varphi}(s) \rangle_{Z \times E} \, ds \\
			& = \lambda t + \sum_{n=1}^N \left( \int_0^T \langle \overline{x}^t_s, \widetilde{\varphi}_n(s) \rangle_{Z \times E} \, ds \right) \left( \int_0^T \langle x_{n,s}, \widetilde{\varphi}(s) \rangle_{Z \times E} \, ds \right),
		\end{aligned}
	\end{equation}
	showing that $a_i \circ R_\gamma \in \mathcal{A}_i$. Moreover, \eqref{eq:cor:naf_uat_full:proof1} ensures for every $(t,x) \in \mathbb{R} \times D^{\alpha,1}([0,T];Z)$ that
	\begin{equation}
		\begin{aligned}
			& \left\vert (a_i \circ T_\gamma)(t,x) - (a_i \circ R_\gamma)(t,x) \right\vert \\
			& = \Bigg\vert \left( \lambda t + \sum_{n=1}^N \langle x, g_n \rangle_{D^{\alpha,1}([0,T];Z) \times G} \int_0^T \langle x_{n,s}, \widetilde{\varphi}(s) \rangle_{Z \times E} \, ds \right) \\
			& \quad\quad - \left( \lambda t + \sum_{n=1}^N \left( \int_0^T \langle x_s, \widetilde{\varphi}_n(s) \rangle_{Z \times E} \, ds \right) \left( \int_0^T \langle x_{n,s}, \widetilde{\varphi}(s) \rangle_{Z \times E} \, ds \right) \right) \Bigg\vert \\
			& \leq \sum_{n=1}^N \vert r_n \vert \left\vert \langle x, g_n \rangle_{D^{\alpha,1}([0,T];Z) \times G} - \int_0^T \langle x_s, \widetilde{\varphi}_n(s) \rangle_{Z \times E} \, ds \right\vert \\
			& \leq \sum_{n=1}^N \vert r_n \vert \frac{\varepsilon}{C_1 C_\eta C_\rho k N \max(1,\vert r_n \vert)} \left( 1 + \Vert x \Vert_{\alpha,\ell^1} \right)^c \\
			& \leq \frac{\varepsilon}{C_1 C_\eta C_\rho k} \left( 1 + \Vert x \Vert_{\alpha,\ell^1} \right)^c,
		\end{aligned}
	\end{equation}
	which implies that $\vert (a_i \circ R_\gamma)(t,x) \vert \leq \vert (a_i \circ T_\gamma)(t,x) \vert + \left( 1 + \Vert (t,x) \Vert_{\mathbb{R} \times D^{\alpha,1}([0,T];Z)} \right)^c$. Thus, by using a telescoping sum, it holds for every $j = 1,\ldots,k$ and $v_1,\ldots,v_j \in \mathbb{R} \times D^{\alpha,1}([0,T];Z)$ that
	\begin{equation}
		\begin{aligned}
			& \left\vert \prod_{\ell=1}^j (a_i \circ T_\gamma)(v_\ell) - \prod_{\ell=1}^j (a_i \circ R_\gamma)(v_\ell) \right\vert \\
			& \leq \sum_{\ell=1}^j \left( \prod_{m=1}^{\ell-1} \vert (a_i \circ T_\gamma)(v_m) \vert \right) \vert (a_i \circ T_\gamma)(v_\ell) - (a_i \circ R_\gamma)(v_\ell) \vert \left( \prod_{m=\ell+1}^j \vert (a_i \circ R_\gamma)(v_m) \vert \right) \\
			& \leq j \frac{\varepsilon}{k C_1 C_\eta C_\rho} \prod_{\ell=1}^j \left( \left\vert (a_i \circ T_\gamma)(v_\ell) \right\vert + \left( 1 + \Vert v_\ell \Vert_{\mathbb{R} \times D^{\alpha,1}([0,T];Z)} \right)^c \right) \\
			& \leq \frac{\varepsilon}{C_1 C_\eta C_\rho} C_1 \prod_{\ell=1}^j \left( 1 + \Vert v_\ell \Vert_{\mathbb{R} \times D^{\alpha,1}([0,T];Z)} \right)^c \\
			& = \frac{\varepsilon}{C_\eta C_\rho} \left( 1 + \sum_{\ell=1}^j \Vert v_\ell \Vert_{\mathbb{R} \times D^{\alpha,1}([0,T];Z)} \right)^{cj}.
		\end{aligned}
	\end{equation}
	Therefore, by using the chain rule, it follows for every $j = 1,\ldots,k$ and $(u,v_1,\ldots,v_j) \in \phi_i(\Lambda^{\alpha,1}_{T,Z}) \times (\mathbb{R} \times D^{\alpha,1}([0,T];Z))^j$ that
	\begin{equation}
		\begin{aligned}
			& \left\vert d^j \rho\left( (a_i \circ T_\gamma)(\cdot)+b \right)(u;v_1,\ldots,v_j) - d^j \rho\left( (a_i \circ R_\gamma)(\cdot)+b \right)(u;v_1,\ldots,v_j) \right\vert \\
			& \leq \left\vert \rho^{(\vert j \vert)}((a_i \circ T_\gamma)(u)+b) \prod_{\ell=1}^j (a_i \circ T_\gamma)(v_\ell) - \rho^{(\vert j \vert)}((a_i \circ R_\gamma)(u)+b) \prod_{\ell=1}^j (a_i \circ R_\gamma)(v_\ell) \right\vert \\
			& \leq \left\vert \rho^{(\vert j \vert)}((a_i \circ T_\gamma)(u)+b) - \rho^{(\vert j \vert)}((a_i \circ R_\gamma)(u)+b) \right\vert \prod_{\ell=1}^j \left\vert (a_i \circ T_\gamma)(v_\ell) \right\vert \\
			& \quad\quad + \left\vert \rho^{(\vert j \vert)}((a_i \circ R_\gamma)(u)+b) \right\vert \left\vert \prod_{\ell=1}^j (a_i \circ T_\gamma)(v_\ell) - \prod_{\ell=1}^j (a_i \circ R_\gamma)(v_\ell) \right\vert \\
			& \leq C_\rho \left\vert (a_i \circ T_\gamma)(u) - (a_i \circ R_\gamma)(u) \right\vert C_1 + C_\rho \left\vert \prod_{\ell=1}^j (a_i \circ T_\gamma)(v_\ell) - \prod_{\ell=1}^j (a_i \circ R_\gamma)(v_\ell) \right\vert \\
			& \leq C_1 C_\rho \frac{\varepsilon}{C_1 C_\eta C_\rho k} \left( 1 + \Vert u \Vert_{\mathbb{R} \times D^{\alpha,1}([0,T];Z)} \right)^c + C_\rho \frac{\varepsilon}{C_\eta C_\rho} \prod_{\ell=1}^j \left( 1 + \Vert v_\ell \Vert_{\mathbb{R} \times D^{\alpha,1}([0,T];Z)} \right)^c \\
			& \leq \frac{\varepsilon}{C_\eta} \left( 1 + \max(j,1) \Vert u \Vert_{\mathbb{R} \times D^{\alpha,1}([0,T];Z)} + \sum_{\ell=1}^j \Vert v_\ell \Vert_{\mathbb{R} \times D^{\alpha,1}([0,T];Z)} \right)^{cj}.
		\end{aligned}
	\end{equation}
	Hence, we conclude that
	\begin{equation}
		\begin{aligned}
			& \left\Vert \rho\left( (a_i \circ T_\gamma)(\cdot) + b \right) - \rho\left( (a_i \circ R_\gamma)(\cdot) + b \right) \right\Vert_{\mathcal{B}^k_{\Psi_i}(\phi_i(\Lambda^{\alpha,1}_{T,Z}))} \\
			& = \max_{j=0,\ldots,k} \sup_{(u,v_{1:j})} \frac{\left\vert d^j \rho\left( (a_i \circ T_\gamma)(\cdot) + b \right)(u;v_{1:j}) - d^j \rho\left( (a_i \circ R_\gamma)(\cdot) + b \right)(u;v_{1:j}) \right\vert}{\psi_{i,j}(u,v_{1:j})} \\
			& \leq \frac{\varepsilon}{C_\eta} \max_{j=0,\ldots,k} \sup_{(u,v_{1:j})} \frac{\left( 1 + \max(j,1) \Vert u \Vert_{\mathbb{R} \times D^{\alpha,1}([0,T];Z)} + \sum_{\ell=1}^j \Vert v_\ell \Vert_{\mathbb{R} \times D^{\alpha,1}([0,T];Z)} \right)^{cj}}{\eta\left( \max(j,1) \Vert u \Vert_{\mathbb{R} \times D^{\alpha,1}([0,T];Z)} + \sum_{\ell=1}^j \Vert v_\ell \Vert_{\mathbb{R} \times D^{\alpha,1}([0,T];Z)} \right)} \\
			& \leq \frac{\varepsilon}{C_\eta} \max_{j=0,\ldots,k} \sup_{r \in (0,\infty)} \frac{(1+r)^{j\max(1,c)}}{\eta(r)} = \frac{\varepsilon}{C_\eta} C_\eta = \varepsilon,
		\end{aligned}
	\end{equation}
	where the supremum is taken over $(u,v_{1:j}) \in \phi_i(\Lambda^{\alpha,1}_{T,Z}) \times (\mathbb{R} \times D^{\alpha,1}([0,T];Z))^j$. Since $\varepsilon \in (0,1)$ was chosen arbitrarily, this shows that $\rho((a_i \circ T_\gamma)(\cdot)+b)$ belongs to the closure of $\mathcal{NN}^{\mathcal{A}_i,\rho,\mathbb{R}}_{\phi_i(\Lambda^{\alpha,1}_{T,Z}),Y}$ with respect to $\Vert \cdot \Vert_{\mathcal{B}^k_{\Psi_i}(\phi_i(\Lambda^{\alpha,1}_{T,Z}))}$. Finally, we can apply Theorem~\ref{thm:w_uat_infdim} to obtain that $\mathcal{PN}^{\widetilde{\rho},\rho,\mathcal{L}}_{\Lambda^{\alpha,1}_{T,Z},Y} = \mathcal{NN}^{\mathcal{A},\rho,\mathcal{L}}_{\Lambda^{\alpha,1}_{T,Z},Y}$ is a dense subset of $\mathcal{B}^k_\Psi(\Lambda^{\alpha,1}_{T,Z};Y)$.
\end{proof}

\subsection{Proof of Corollary~\ref{cor:naf_uat_horver}}
\label{app:cor:naf_uat_horver}

\begin{proof}[Proof of Corollary~\ref{cor:naf_uat_horver}]
	Compared to Corollary~\ref{cor:naf_uat_full}, we now restrict ourselves to maps that have derivatives only in certain directions, i.e., the horizontal derivatives $\mathcal{D}^j f: \Lambda^{\alpha,1}_{T,\mathbb{R}^d} \rightarrow Y$, $j = 1,\ldots,k$, and vertical derivatives $\mathscr{D}_\beta f: \Lambda^{\alpha,1}_{T,\mathbb{R}^d} \rightarrow Y$, $\beta \in \mathbb{N}^d_{0,\ell}$, which are represented on the model space $\phi_i(\Lambda^{\alpha,1}_{T,\mathbb{R}^d}) \subseteq \mathbb{R} \times D^{\alpha,1}([0,T];\mathbb{R}^d)$ by the directional derivatives
	\begin{equation}
		d^j \left( f \circ \phi_i^{-1} \right)((t,\overline{x}^t);(1,0),\ldots,(1,0)) \quad\quad \text{and} \quad d^{\vert\beta\vert} \left( f \circ \phi_i^{-1} \right)((t,\overline{x}^t);(0,e_1 \mathds{1}_{[t,T]}),\ldots,(0,e_d \mathds{1}_{[t,T]})),
	\end{equation}
	respectively, where the $i$-th unit vector $e_i \in \mathbb{R}^d$ appears $\beta_i$-times. Hence, in order to show that $\mathcal{PN}^{\widetilde{\rho},\rho,\mathcal{L}}_{\Lambda^{\alpha,1}_{T,\mathbb{R}^d},Y}$ is dense in $\mathbb{B}^{k,\ell}_\psi(\Lambda^{\alpha,1}_{T,\mathbb{R}^d};Y)$, we apply Nachbin's theorem not to the full jet space, but to the restricted jet space generated by the horizontal and vertical directions. To this end, we only need that the additive family $\mathcal{A}$ has nowhere vanishing derivatives in the directions of interest, i.e., $(1,0) \in \mathbb{R} \times D^{\alpha,1}([0,T];\mathbb{R}^d)$ and $(0,e_i \mathds{1}_{[t,T]}) \in \mathbb{R} \times D^{\alpha,1}([0,T];\mathbb{R}^d)$, to obtain the conclusion.
\end{proof}

\noindent\textbf{Acknowledgments.} P.~Schmocker gratefully acknowledges financial support by the FinsureTech Hub of ETH Zurich. J.~Teichmann gratefully acknowledges financial support by ETH Foundation.

\bibliographystyle{abbrv}
\bibliography{mybib}

\end{document}